\theoremstyle{plain}
\newtheorem{theorem}{Theorem}
\newtheorem{remark}[theorem]{Remark}
\newtheorem{lemma}[theorem]{Lemma}
\newtheorem{corollary}[theorem]{Corollary}
\newtheorem{definition}[theorem]{Definition}
\newcommand{\weak}{\rightharpoonup}
\renewcommand{\l}{\left}
\renewcommand{\r}{\right}
\numberwithin{theorem}{section}
\numberwithin{equation}{section}
\def\section{\@startsection {section}{1}{\z@}{-3.5ex plus -1ex minus
        -.2ex}{2.3ex plus .2ex}{\large\bf}}
\makeatother \numberwithin{equation}{section}
\DeclareMathOperator{\dist}{dist} 
\begin{document}
\title[Global Existence and Finite-Time Blow-Up of Solutions]{Global Existence and Finite-Time Blow-Up of Solutions for Parabolic Equations Involving the Fractional Musielak $g_{x,y}$-Laplacian}
\author[R. Arora]{Rakesh Arora}
\address[R. Arora]{ Department of Mathematical Sciences, Indian Institute of Technology Varanasi (IIT-BHU), Uttar Pradesh 221005, India}
\email{rakesh.mat@iitbhu.ac.in, arora.npde@gmail.com}

\author[A. Bahrouni]{Anouar Bahrouni}
\address[A. Bahrouni]{
Mathematics Department, Faculty of Sciences, University of Monastir,
5019 Monastir, Tunisia}
\email{Anouar.Bahrouni@fsm.rnu.tn; bahrounianouar@yahoo.fr}

\author[N. K. Maurya]{Nitin Kumar Maurya}
\address[N. K. Maurya]{Department of Mathematical Sciences, Indian Institute of Technology Varanasi (IIT-BHU), Uttar Pradesh 221005, India}
\email{nitinkumarmaurya.rs.mat23@itbhu.ac.in }

\maketitle
\begin{abstract}
In this work, we study the parabolic fractional Musielak $g_{x,y}$-Laplacian equation:

\begin{equation*}
\left\{
\begin{aligned}
    u_{t} + (-\Delta)_{{g}_{x,y}}^{s} u &= f(x,u), && \text{in } \Omega \times (0, \infty), \\
    u &= 0, && \text{on } \mathbb{R}^N \setminus \Omega \times (0, \infty), \\
    u(x,0) &= u_0(x), && \text{in } \Omega,
\end{aligned}
\right.
\end{equation*}
where $(-\Delta)_{{g}_{x,y}}^{s}$ denotes the fractional Musielak $g_{x,y}$-Laplacian, and $f$ is a Carathéodory function satisfying subcritical growth conditions. Using the modified potential well method and Galerkin's method, we establish results on the local and global existence of weak and strong solutions, as well as finite-time blow-up, depending on the initial energy level (low, critical, or high). Moreover, we explore a class of nonlocal operators to highlight the broad applicability of our approach.  

This study contributes to the developing theory of fractional Musielak-Sobolev spaces, a field that has received limited attention in the literature. To our knowledge, this is the first work addressing the parabolic fractional $g_{x,y}$-Laplacian equation.
\end{abstract}
\smallskip\noindent \textbf{Keywords:}  Fractional Musielak Laplacian, Parabolic equation, Potential well; Local and global existence; Finite time blow-up.\\
\smallskip\noindent \textbf{2020 Mathematics Subject Classification:} 35R11, 35J60, 35J70, 47J10, 46E35.
\tableofcontents
\section{Introduction}
In this article, we study the following parabolic equation involving the new fractional Musielak $g_{x,y}$--Laplacian
\begin{equation} \label{main:problem}
        \left\{
    \begin{aligned}
      u_{t}+(-\Delta)_{{g}_{x,y}}^{s} u &=  f(x,u), &&
 \text{in } \Omega \times (0, \infty),\\
 u &= 0, && \text{in}~  \mathbb{R}^N \setminus \Omega \times (0, \infty),\\
 u(x,0)&= u_0(x), && \text{in}~ \Omega,
\end{aligned}
    \right.
 \end{equation}
 where $\Omega\subseteq\mathbb{R}^N$, $N\geq sg^{-}$ is a bounded domain with smooth boundary and $f: \Omega \times \mathbb{R}\longrightarrow \mathbb{R}$ is a Carath\'eodory function satisfying subcritical growth conditions. For $s \in (0,1)$ and $g: \Omega \times \Omega \times [0,+\infty) \longrightarrow [0,+\infty)$, the operator $(-\Delta)^{s}_{g_{x,y}}$ is the new fractional Musielak ${g_{x,y}}$--Laplace operator defined by
 \begin{equation}\label{main:operator}
     (-\Delta)_{g_{x, y}}^{s} u(x):= \text{P.V.} \int_{\mathbb{R}^{N}} g_{x, y}\left(\frac{|u(x)-u(y)|}{|x-y|^{s}}\right) \frac{d y}{|x-y|^{N+s}}, \quad \text { for all } x \in \mathbb{R}^{N},
 \end{equation}
 where  $g_{x,y}(t) = g(x,y,t)$ and P.V. is a commonly used abbreviation for in the ``Principal Value" sense.\\

In recent years, the study of nonlinear equations involving the nonlocal operators e.g. the fractional Laplacian $(-\Delta)^s$, the $p$-fractional Laplacian $(-\Delta)^s_p$, $1<p< \infty$, has gained significant attention due to its rich analytic structure and wide-ranging applications in fields such as optimization, finance, anomalous diffusion, phase transitions, flame propagation, and minimal surfaces. The appropriate functional framework for such equations is provided by the fractional Sobolev spaces. For a comprehensive overview of the basic properties of these spaces and the operator $(-\Delta)^s$, as well as their applications to partial differential equations, we refer interested readers to \cite{Valdinoci-2012, Molica-2016} and the references therein.

Motivated by the above real-world applications, a few extensions of the fractional Sobolev space and the nonlocal operators have been introduced. Some of the extensions are the fractional Sobolev space with variable exponent in \cite{Kauf} and fractional Orlicz-Sobolev spaces in \cite{Bonder-Salort-2019} providing a bridge between fractional order theory, Orlicz-Sobolev theory. Since then, several foundational results have been proved, including embedding theorems, density results, and topological properties that allow for the applications of the variational approaches; see \cite{Alberico-Cianchi-Pick-Slavikova-2022, Alberico-Cianchi-Pick-Slavikova-2021-1, Alberico-Cianchi-Pick-Slavikova-2021-2,  Bahrouni-Radulescu-2018, Bonder-Salort-2019, Ho-Kim-2019, Azroul-Benkirane-Shimi-2019, Missaoui-Ounaies-2023, Silva-Carvalho-Albuquerque-Bahrouni-2021}.  

Recently, a more general functional space, the fractional Musielak-Orlicz-Sobolev space $W^{s, G_{x,y}}(\Omega)$ (see Section \ref{subsection:functions:spaces} below), was introduced in \cite{Azroul-Benkirane-Shimi-Srati-2020}. This space naturally generalizes both the fractional Sobolev space with variable exponent and the fractional Orlicz-Sobolev space. Moreover, the authors also defined the fractional Musielak $g_{x,y}$-Laplace operator $(-\Delta)^s_{g_{x,y}}$ for all $s \in (0,1)$ (see \eqref{main:operator}), which encompasses several particular cases, including the fractional operator with variable exponent, the fractional double-phase operator with variable exponent, and the anisotropic fractional $p$-Laplacian.


The study of nonlinear equations driven by the fractional Musielak $g_{x,y}$--Laplacian $(-\Delta)^s_{g_{x,y}}$ has found crucial applications in image processing, particularly in tasks such as denoising, inpainting, and edge detection. The fractional nature of this operator accounts for long-range interactions, enabling efficient smoothing while preserving fine details and sharp edges. Moreover, the variable growth conditions introduced by the Musielak function provide the flexibility to adapt to different image regions, allowing the model to handle diverse textures, noise levels, and transitions effectively. This adaptability makes the fractional Musielak $g_{x,y}$-Laplacian a powerful tool for image restoration and enhancement; for more details, see \cite{app}.

Very recently, the authors in \cite{M1, Azroul-Shimi-Srati-2023, Bahrouni-Missaoui-Ounaies-2024} established several abstract results in the framework of fractional Musielak-Sobolev spaces, including uniform convexity, the Radon-Riesz property with respect to the modular function, the $(S_+)$-property, a Brezis-Lieb type lemma for the modular function, various monotonicity results, as well as continuous and compact embedding theorems.
 As an application of above properties, they studied the existence of weak solutions to the following elliptic problem:
$$
\left\lbrace
\begin{array}{ll}
(-\Delta)^s_{g_{x,y}}u = f(x,u), & \text{in}\ \Omega, \\
u = 0, & \text{on}\ \mathbb{R}^N \setminus \Omega,
\end{array}
\right.
$$
where $N \geq 2$, $\Omega \subset \mathbb{R}^N$ is a bounded domain with a Lipschitz boundary, and $f : \Omega \times \mathbb{R} \longrightarrow \mathbb{R}$ is a Carathéodory function not necessarily satisfying the Ambrosetti–Rabinowitz condition. The study of elliptic problems involving the fractional Musielak $g_{x,y}$-Laplacian is very limited, as it is still an evolving area of research. Furthermore, the existing studies primarily focused on stationary problems.

To the best of our knowledge, there has been no investigation into parabolic equations related to the new fractional Musielak $g_{x,y}$-Laplacian. This work aims to bridge that gap by examining the global existence and finite-time blow-up of solutions for the parabolic equations featuring the new fractional $g_{x,y}$-Laplacian.

Concerning the parabolic equations, in the local case, \textit{i.e.}, when $s = 1$ and $g_{x,y}(t) = |t|^{p(x)-2}t$, equation \eqref{main:problem} goes back to the following problem:
\[
u_{t} - \operatorname{div}(|\nabla u|^{p(x)-2} \nabla u) = f(x,u).
\]
Although we do not aim to provide a comprehensive review of the extensive literature on solutions to these equations, it is worth emphasizing that some of the most intriguing aspects involve studying the existence and regularity of solutions, as well as their asymptotic behavior. Using the sub-differential approach and potential well-method, the authors in \cite{Akagi-Matsuura-2011, Nhan-Chuong-Truong-2020} established the local/global well-posedness of solutions for this equation. Moreover, they also investigated the large-time behavior of solutions. For additional results on global existence, uniqueness, regularity properties, and blow-up of solutions for the parabolic equation driven by the $p(x)$-Laplacian, we refer the reader to \cite{Antontsev-Shmarev-2009, Antontsev-Shmarev-2015, Arora-Shmarev-2021, Diening-Nagele-Ruzicka, Ant}. 
For a generalized function $g_{x,y}(\cdot)$ covering multi-phase problem and anisotropic problems, the authors in \cite{Chlebicka-Gwiazda-Goldstein-2019, Chlebicka-Gwiazda-Goldstein-2018, Arora-Shmarev-2023, Arora-Shmarev-2023-2, Arora-Shmarev-2023-3} investigate the existence, uniqueness, and qualitative properties of solutions.

Returning to the nonlocal case, \textit{i.e.}, when $s \in (0,1)$ and $g_{x,y}(t) = |t|^{p-2}t$, the author in \cite{Vazquez-2016} studied the existence, uniqueness, and various quantitative properties of strong nonnegative solutions for a Dirichlet problem involving the fractional $p$-Laplacian evolution equation:
\begin{equation}\label{vazz}
u_t(x,t) + (-\Delta)^s_p u(x,t) = 0.
\end{equation}
When equation \eqref{vazz} is coupled with a Neumann boundary condition and a Cauchy initial condition, the existence, uniqueness, and asymptotic behavior of strong solutions were established using semigroup methods in \cite{rossi}. In \cite{abd}, the authors examined equation \eqref{vazz} with a general nonlinearity depending only on $x$ and $t$, proving the existence and properties of entropy solutions. Specifically, they investigated aspects such as finite-time extinction and finite speed of propagation. See also \cite{boud, jak, Vazquez-2014, Xiang-Radulescu-Zhang-2018} for further results in this setting.

The concept of the potential well method was first introduced by Sattinger in \cite{Sattinger} to investigate nonlinear hyperbolic initial-boundary value problems. Since then, many researchers have applied potential well theory to study the existence of solutions for evolution equations; see \cite{Liu, Liu2, Payne}.\\  
In \cite{Pan}, the authors examined the following parabolic equation involving the fractional $p$-Laplacian:
\begin{equation} \label{intro:problem}
        \left\{
    \begin{aligned}
      u_{t}(x,t)+ (-\Delta)_{p}^{s} u(x,t) &= |u|^{q-2}u, &&
 \text{in}~ \Omega \times (0, T),\\
 u &= 0, && \text{on}~  \mathbb{R}^N \setminus \Omega \times (0, T),\\
 u(x,0)&= u_0(x), && \text{in}~  \Omega.
\end{aligned}
    \right.
 \end{equation}
When $1 < p < \frac{N}{s}$ and $p < q < p^\ast_s$, the authors established the existence of a global solution to problem \eqref{intro:problem} using the Galerkin method and potential well theory in the cases of low and critical initial energy, \textit{i.e.}, when $0<E(u_{0})<d$ or $E(u_{0})=d$, where $E(u_{0})$ denotes the initial energy. However, the case of high initial energy ($E(u_{0})>d$) and the possibility of finite-time blow-up for arbitrary initial energy were explored in \cite{Liao-Liu-Ye-2020}.

In contrast to the above cases, there has been limited research on evolution equations involving nonlocal operators with variable exponents. In \cite{boudd}, the author studied a nonlocal diffusion equation involving the fractional $p(x,y)$-Laplacian, \textit{i.e.}, when $g_{x,y}(t) = |t|^{p(x,y)-2}t$ and $s \in (0,1)$ in equation \eqref{main:problem}. Using the sub-differential approach, the well-posedness of the problem was established. Moreover, by combining potential well theory with the Nehari manifold, the existence of global solutions and finite-time blow-up of solutions was demonstrated. However, the study focused exclusively on the case of low initial energy and the case of critical and high energy remains an open problem.

As far as we are aware, no existing results address the well-posedness, finite-time extinction, and asymptotic behavior of solutions for fractional parabolic equations \eqref{main:problem} driven by the new fractional Musielak $g_{x,y}$--Laplacian $(-\Delta)^s_{g_{x,y}}$. This work aims to fill this gap by thoroughly investigating this important aspect. We establish results on the local and global existence of weak and strong solutions, as well as finite-time blow-up, depending on the initial energy level (low, critical, or high).  

 We follow the approach outlined in \cite{Liao-Gao-2017, Liao-Liu-Ye-2020} for the fractional $p$-Laplacian, which corresponds to the homogeneous case. However, the nonhomogeneity in our setting introduces challenges that prevent the direct application of the analysis presented in the aforementioned paper. To overcome these limitations, we develop specific tools tailored to this framework. 
 
 Comparing with the previous literature, the main contributions of this paper are as follows:
\begin{enumerate}
    \item[\textnormal{(a)}] In Theorem \ref{loc}, we prove the local existence of a strong solution to problem \eqref{main:problem} by adapting the sub-differential approach to our setting and converting \eqref{main:problem} into a first-order abstract evolution equation (see \eqref{subdifferenial main prob}) in $L^2(\Omega)$. The existence of a strong solution plays an important role in studying the asymptotic behavior of the solution at infinity and the finite-time blow-up; see Theorems \ref{thm:global-existence-strong}, \ref{Blow-up thm}, and \ref{Main thm3}. In particular, it is essential for showing the continuity properties of the energy functional $E(u(\cdot, t))$ and the Nehari functional $I(u(\cdot, t))$ with respect to time, and the invariance of the solution; see Lemmas \ref{continuity of E(u) with t} and \ref{u in W delta}. Such continuity properties are not addressed in \cite{Liao-Liu-Ye-2020} for the fractional $p$-Laplacian.

    \item[\textnormal{(b)}] For the low and critical initial energy cases, in Theorems \ref{thm:global-existence-weak} and \ref{main-exist-critical-weak}, we establish the global existence of a weak solution and prove its uniqueness if it is bounded. By constructing a suitable Galerkin scheme of approximations in the stable set $W$ and deriving uniform energy estimates, we show the local existence of a weak solution. Moreover, if the initial data $u_0$ belongs to the stable set $W$, the solution $u(\cdot, t)$ remains in $W$ for all $t \geq 0$, which further implies the global existence of a solution. In Theorems \ref{thm:global-existence-strong} and \ref{main-exist-critical-strong}, we establish the global existence of a strong solution and analyze its asymptotic behavior, including finite-time vanishing or decay to zero at infinity, depending on the energy norm of the solution and the bounds of $g_{x,y}$.

    In Theorem \ref{Blow-up thm}, we show that a strong solution blows up in finite time if the initial data belongs to the unstable set $V$. In \cite{Liao-Liu-Ye-2020, boudd, Aberqi}, finite-time blow-up is proved for the fractional $p(x,y)$-Laplacian restricted to the case $p^- > 2$ and for the fractional double-phase operator restricted to the subcritical case (see, e.g., \cite[(68)]{Aberqi}). The nonhomogeneity present in both the operator and the nonlinearities introduces various challenges, preventing the direct application of the analyses in the aforementioned papers. To address these issues, we introduce a new auxiliary function, distinct from those in \cite{Liao-Liu-Ye-2020, boudd, Aberqi}, which plays a crucial role in the application of Levine’s concavity method. We also develop new ideas tailored to this framework. Moreover, we derive estimates for the minimal blow-up time of the strong solution.

    \item[\textnormal{(c)}] For the high-energy case, in Theorem \ref{high initial energy main thm}, we provide sufficient conditions on the initial data for the global existence of a strong solution or blow-up in finite time.

    \item[\textnormal{(d)}] We extend the global existence and finite-time blow-up results for parabolic equations involving the fractional $p$-Laplacian from \cite{Liao-Liu-Ye-2020}, and the fractional $p(x,y)$-Laplacian from \cite{boudd}, to a more general class of nonlinear parabolic equations involving the fractional Musielak $g_{x,y}$--Laplacian $(-\Delta)^s_{g_{x,y}}$, including the nonlocal counterparts of local double-phase type operators studied in \cite{Arora-Shmarev-2023, Arora-Shmarev-2023-2}.
\end{enumerate}
 \vspace{0,3 cm}
 
 Throughout the paper, we assume that \(g\) satisfies the following conditions:
\begin{enumerate}[leftmargin=1.5cm,label=\textnormal{($g_0$)},ref=\textnormal{$g_0$}]
    \item \label{cond:g0} \(\lim_{t\to 0^{+}}g_{x,y}(t) = 0\) and \(\lim_{t\to \infty} g_{x,y}(t) = \infty\) for all \((x,y) \in (\Omega \times \Omega)\).
\end{enumerate}

\begin{enumerate}[leftmargin=1.5cm,label=\textnormal{($g_1$)},ref=\textnormal{$g_1$}]
    \item \label{cond:g1} The map \(t \mapsto g_{x,y}(t)\) is of class \(C^1\) on \((0,+\infty)\) for all \((x,y) \in \Omega \times \Omega\).
\end{enumerate}

\begin{enumerate}[leftmargin=1.5cm,label=\textnormal{($g_2$)},ref=\textnormal{$g_2$}]
    \item \label{assump:g2} The map \(t \mapsto g_{x,y}(t)\) is increasing on \((0,+\infty)\) for all \((x,y) \in \Omega \times \Omega\).
\end{enumerate}

\begin{enumerate}[leftmargin=1.5cm,label=\textnormal{($g_3$)},ref=\textnormal{$g_3$}]
    \item \label{Assump:g_3} There exist constants \(g^-, g^+ \in (1,+\infty)\) such that
    \[
    1 < g^- \leq \frac{g_{x,y}(t)t}{G_{x,y}(t)} \leq g^+ < g_{*,s}^-, \qquad 
     g^--1 \leq  \frac{g_{x,y}'(t)t}{g_{x,y}(t)} \leq g^+-1,
    \]
for all \((x,y) \in \Omega \times \Omega\) and \(t > 0\) where  \( G_{x, y}  : \Omega \times \Omega \times \mathbb{R} \to \mathbb{R}\) is defined as
    \[
     G_{x, y}(t):=\int_{0}^{|t|} g_{x, y}(\tau) d \tau, \quad \quad  g_{*,s}^- := 
    \begin{cases}
      \frac{Ng^-}{N - sg^-} & \text{if} \ N > s g^-,\\
      +\infty  & \text{if} \ N \leq s g^-.
    \end{cases}
    \]
\end{enumerate}

\begin{enumerate}[leftmargin=1.5cm,label=\textnormal{($g_4$)},ref=\textnormal{$g_4$}]
    \item \label{assump:g_4} The following integrability conditions hold for all \(x \in \Omega\):
    \[
    \int_{0}^{1} \frac{\widehat{G}^{-1}_x(\tau)}{\tau^{\frac{N+s}{N}}} \, d\tau < +\infty, \quad
    \int_{1}^{\infty} \frac{\widehat{G}^{-1}_x(\tau)}{\tau^{\frac{N+s}{N}}} \, d\tau = +\infty,
    \]
    where \(\widehat{G}_x : \mathbb{R} \to \mathbb{R}\) is defined as
    \begin{equation}\label{eqc}
    \widehat{G}_x(t) = \int_{0}^{|t|} \widehat{g}_x(\tau) \, d\tau, \quad \widehat{g}_x(\tau) = g(x,x,\tau).
    \end{equation}
\end{enumerate}

We also assume the function \(f : \Omega \times \mathbb{R} \to \mathbb{R}\) satisfies the following conditions:

\begin{enumerate}[leftmargin=1.5cm,label=\textnormal{($f_0$)},ref=\textnormal{$f_0$}]
    \item \label{assump:f_0}For all \( x \in \Omega \), we have \( f(x, \cdot) \in C^1(\mathbb{R}) \), and it satisfies \( f(x,0) = f'(x,0) = 0 \). Moreover, there exists a positive constant \( B \) such that  
\[
\min \{ F(x,1), F(x,-1) \} \geq B > 0, \quad \forall x \in \Omega.
\]
\end{enumerate}

\begin{enumerate}[leftmargin=1.5cm,label=\textnormal{($f_1$)},ref=\textnormal{$f_1$}]
    \item \label{Assump:f_1} For all \(x \in \Omega\), \(f(x,\cdot)\) is convex for \(t > 0\), and concave for \(t < 0\).
\end{enumerate}

\begin{enumerate}[leftmargin=1.5cm,label=\textnormal{($f_2$)},ref=\textnormal{$f_2$}]
    \item \label{Assump:f_2} There exist measurable, positive, and bounded functions \(h_1, h_2 : \Omega \to (1, +\infty)\) such that
    \[
    h_1(x) F(x,t) \leq t f(x,t), \quad t f(x,t) \leq h_2(x) F(x,t) \quad \forall (x,t) \in \Omega \times \mathbb{R},
    \]
    where \(F(x,t) := \int_{0}^{t} f(x,\tau) \, d\tau\). Additionally, \(\Phi \prec\prec \widehat{G}_x^*\) (see Subsection 2.1), where $\Phi:\Omega\times(0,+\infty)\rightarrow \mathbb{R}$ and \(\Phi(x,t) := t^{h_2(x)}\).
\end{enumerate}
\begin{enumerate}[leftmargin=1.5cm,label=\textnormal{($f_3$)},ref=\textnormal{$f_3$}]
    \item \label{Assump:f_3} For every \((x,t) \in \Omega \times \mathbb{R}\),
    \[
    t \left( f'(x,t)t - (g^+ -1) f(x,t) \right) > 0.
    \]
\end{enumerate}

Finally, we impose the following technical conditions on the functions \(g\) and \(h_2\):

\begin{enumerate}[leftmargin=1.5cm,label=\textnormal{($g_5$)},ref=\textnormal{$g_5$}]
    \item \label{assump:g_5} \(\max\{2, g^+\} < \min\{h_1^-, h_2^-\}\), where
    \[
    h_1^- := \min_{x \in \Omega} h_1(x) \quad \mbox{and} \quad h_2^- := \min_{x \in \Omega} h_2(x).
    \]
\end{enumerate}

\begin{enumerate}[leftmargin=1.5cm,label=\textnormal{($g_6$)},ref=\textnormal{$g_6$}]
    \item \label{Cond:g_6} \(h_2^+ < g_{*,s}^{-}\), where \(h_2^+ := \max_{x \in \Omega} h_2(x)\).
\end{enumerate}

\begin{remark}\label{rrem1}
Let  
\[
g_{x,y}(t) = |t|^{p(x,y)-2}t, \quad \forall x,y \in \Omega, \ \forall t \in \mathbb{R},
\]
and  
\[
f(x,t) = a(x)|t|^{q_1(x)-2}t + b(x)|t|^{q_2(x)-2}t, \quad \forall x \in \Omega, \ \forall t \in \mathbb{R},
\]
where \( a(x) \) and \( b(x) \) are two bounded functions satisfying  
\[
a(x)+b(x)\ \geq B > 0, \quad \forall x \in \Omega,
\]
for some positive constant \( B \).  

Moreover, we assume that \( p: \Omega \times \Omega \to (1,+\infty) \) is a continuous function such that  
\[
1 < p^- = \min_{(x,y) \in \overline{\Omega} \times \overline{\Omega}} p(x,y) \leq p(x,y) < p^+ = \max_{(x,y) \in \overline{\Omega} \times \overline{\Omega}} p(x,y) < +\infty.
\]
Similarly, the functions \( q_1, q_2: \Omega \to (1,+\infty) \) are continuous and satisfy  
\[
p^+ < q_1(x) \leq q_2(x) < p^{\ast}_{s} = \frac{N p(x,x)}{N - s p(x,x)}, \quad \forall \ x \in \Omega,
\]
as well as  
\[
\max\{2, p^+\} < q_1^- \quad \text{and} \quad q_2^+ < \frac{N p^-}{N - s p^-}.
\]
Thus, the functions \( f \) and \( g \) satisfy the conditions \eqref{assump:f_0}-\eqref{Assump:f_3} and \eqref{cond:g0}-\eqref{Cond:g_6}, respectively.
\end{remark}

\vspace{0,5 cm}
This paper is organized as follows. In Section $2$, we outline the definitions and properties of the Fractional Musielak-Orlicz spaces, introduce relevant notations, and prove a series of results related to potential well theory. In Section $3$, we demonstrate the well-posedness of the problem and show the existence of local solutions utilizing the sub-differential approach. Section $4$ discusses the existence of global solutions and examines the possibility for finite time blow-up and asymptotic behavior concerning problem \eqref{main:problem}. Finally, in Section 5, we provide several examples and discuss open problems.
\section{ Preliminaries}
This section is organized into two subsections. The first subsection revisits key definitions and known results concerning Musielak-Orlicz Sobolev spaces and fractional Musielak-Orlicz Sobolev spaces. In the second subsection, we introduce additional notations and definitions related to specific functionals and sets, which are essential for presenting our main results.
\subsection{Fractional Musielak-Orlicz Sobolev spaces}\label{subsection:functions:spaces}
In this subsection, we delve into the definitions and properties of Musielak-Orlicz spaces and fractional Musielak-Orlicz Sobolev spaces. For further details, we refer to the works in \cite{Alberico-Cianchi-Pick-Slavikova-2021-2,Bahrouni-Missaoui-Ounaies-2024,M1,M32}. Furthermore, we present several important properties of the function
$f$.
\begin{definition}
Let $\Omega$ be an open subset of $\mathbb{R}^{N}$. A function $G:\Omega \times \Omega \times \mathbb{R} \to \mathbb{R}$ is called a generalized N-function if it satisfies the following conditions:
\begin{enumerate}
\item[\text{(i)}] $G_{x,y}(t):=G(x,y,t)$ is even, continuous, increasing and convex in t, and for each $t\in \mathbb{R}$, $G(x,y,t)$ is measurable in $(x,y)$;
\item[\text{(ii)}] $\lim_{t\to 0} \frac{G_{x,y}(t)}{t} =0$ for a.e. $(x,y)\in \Omega\times\Omega;$
\item[\text{(iii)}] $\lim_{t\to \infty} \frac{G_{x,y}(t)}{t} =\infty$ for a.e. $(x,y)\in \Omega\times\Omega;$
\item[\text{(iv)}] $G_{x,y}(t) >0$  for all $t>0$ and a.e. $(x,y)\in\Omega\times\Omega$.
 \end{enumerate}
 \end{definition}
 \begin{definition}
We say that a generalized N-function $G_{x,y}$ satisfies the $\Delta_2$-condition if there exists $K > 0$ such that
$$G_{x,y}(2t)\leq KG_{x,y}(t),\ \ \text{for all}\ (x,y)\in \Omega\times\Omega\ \text{and all}\ t>0.$$
\end{definition}
\begin{definition}
For any generalized N-function $G_{x,y}$, the function
$\widetilde{G}_{x,y}:\Omega\times\Omega\times\mathbb{R}\longrightarrow
\mathbb{R}_+$ defined by
\begin{equation}\label{2eq50}
\widetilde{G}_{x,y}(t)=\widetilde{G}(x,y,t):=\sup_{\tau\geq 0}\left( t\tau-G_{x,y}(\tau)\right),\ \ \text{for all}\ (x,y)\in \Omega\times\Omega\ \text{and all}\ t>0
\end{equation}
 is called the complementary function of $G_{x,y}$.
\end{definition}
The assumptions $(g_0)-(g_3)$ ensure that $G_{x,y}$ and its
complementary function $\widetilde{G}_{x,y}$ are  generalized
N-functions (see \cite{M32}).\\

\begin{remark}(see \cite{M30})\label{rem1}
Assumption \eqref{Assump:g_3} gives that
 $$g^-\leq \frac{\widehat{g}(x,t)t}{\widehat{G}(x,t)}\leq g^+\ \text{and}\ \widetilde{g}^-\leq \frac{\widetilde{\widehat{g}}(x,t)t}{\widetilde{\widehat{G}}(x,t)}\leq \widetilde{g}^+,\ \text{for all}\ x\in \Omega\ \text{and all}\ t>0 $$
 where $\displaystyle{\widetilde{g}^-=\frac{g^+}{g^+-1}}$ and $\displaystyle{\widetilde{g}^+=\frac{g^-}{g^--1}}$.
 Moreover, $G_{x,y}$, $\widehat{G}_x$  and  $\widetilde{\widehat{G}}_x$ satisfy the $\Delta_2$-condition.
 \end{remark}
In view of the definition of the complementary function
$\widetilde{G}_{x,y}$, we have the following Young's type
inequality:
\begin{equation}\label{2eq95}
    \tau\sigma\leq G_{x,y}(\tau)+\widetilde{G}_{x,y}(\sigma),\ \text{for all}\ (x,y)\in \Omega\times\Omega\ \text{and all}\ \tau,\sigma\geq0.
\end{equation}

 We are now prepared to introduce the Musielak-Orlicz space.
 \begin{definition}
 Let $G_{x,y}$ be a generalized N-function. In correspondence to $\widehat{G}_{x}=G_{x,x}$ and an open subset $\Omega\subseteq \mathbb{R}^N$, the Musielak-Orlicz space is defined as follows
 $$L^{\widehat{G}_{x}}(\Omega):=\left\{u:\Omega\longrightarrow\mathbb{R} \text{ measurable }:J_{\widehat{G}_{x}}(\lambda u)<\infty \text{ for some } \lambda>0\right\},$$
where
$$J_{\widehat{G}_{x}}(u):=\int_{\Omega}\widehat{G}_x(|u|)\,dx.$$
 The space $L^{\widehat{G}_{x}}(\Omega)$ is endowed with the Luxemburg norm
 $$\|u\|_{L^{\widehat{G}_{x}}(\Omega)}:=\inf \{\lambda >0:J_{\widehat{G}_{x}}\left(\frac{u}{\lambda}\right)\leq1\}.$$
 \end{definition}
 We would like to mention that our assumptions $(g_0)-(g_3)$ ensure
that $\displaystyle{\left(L^{\widehat{G}_x}(\Omega),\Vert
\cdot\Vert_{L^{\widehat{G}_x}(\Omega)}\right)}$ is a separable and
reflexive Banach space.
 \begin{lemma}
 \label{RE:Gt}
(see \cite[Lemma 2.5]{Bahrouni-Missaoui-Ounaies-2024})     Assume that the assumptions \eqref{cond:g0}-\eqref{Assump:g_3} hold. Then, the function $\widehat{G}_x$ and $G_{x,y}$ satisfy the following properties:
     \begin{enumerate}
         \item[\text{(i)}] $\min\{\tau^{g^-} ,\tau^{g^+}\}G_{x,y}(t)\leq G_{x,y}(\tau t )\leq \max\{\tau^{g^-} ,\tau^{g^+}\}G_{x,y}(t)$ for all $x$, $y$ in $\Omega$ and for all $\tau,t>0.$
         \item[\text{(ii)}] $\min\{ \|u\|_{L^{\widehat{G}_{x}}(\Omega)}^{g^-},\|u\|_{L^{\widehat{G}_{x}}(\Omega)}^{g^+}\} \leq J_{\widehat{G}_{x}}(u)\leq\max\{ \|u\|_{L^{\widehat{G}_{x}}(\Omega)}^{g^{-}},\|u\|_{L^{\widehat{G}_{x}}(\Omega)}^{g^{+}}\}$ for all $u \in L^{\widehat{G}_{x}}(\Omega).$
     \end{enumerate}
 \end{lemma}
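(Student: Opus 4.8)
The plan is to extract both estimates from assumption \eqref{Assump:g_3}, which is precisely a two-sided control on the logarithmic growth of $G_{x,y}$, and then to lift the pointwise inequality (i) to the modular inequality (ii) by a normalization argument against the Luxemburg norm.

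For (i), I would fix $x,y\in\Omega$ and $t>0$ and study $\Psi(\tau):=G_{x,y}(\tau t)$ as a function of $\tau>0$. By \eqref{cond:g1} the map $s\mapsto g_{x,y}(s)$ is continuous, so $G_{x,y}$ is $C^1$ on $(0,\infty)$ with $G_{x,y}'=g_{x,y}$, whence $\Psi'(\tau)=t\,g_{x,y}(\tau t)$. Evaluating the first chain in \eqref{Assump:g_3} at the point $\tau t$ gives $g^-G_{x,y}(\tau t)\le g_{x,y}(\tau t)\,(\tau t)\le g^+G_{x,y}(\tau t)$, and dividing by $\tau>0$ turns this into
\[
\frac{g^-}{\tau}\,\Psi(\tau)\le \Psi'(\tau)\le \frac{g^+}{\tau}\,\Psi(\tau),\qquad \tau>0.
\]
Since $\Psi(\tau)>0$ for $\tau>0$ (property (iv) of a generalized N-function), I may divide by $\Psi$ and read the result as $g^-/\tau\le(\log\Psi)'(\tau)\le g^+/\tau$.

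Integrating this differential inequality then yields (i). For $\tau\ge 1$, integrating over $[1,\tau]$ gives $\tau^{g^-}\le G_{x,y}(\tau t)/G_{x,y}(t)\le\tau^{g^+}$, and here $\tau^{g^-}=\min\{\tau^{g^-},\tau^{g^+}\}$, $\tau^{g^+}=\max\{\tau^{g^-},\tau^{g^+}\}$. For $0<\tau<1$, integrating over $[\tau,1]$ (where $-\log\tau>0$) reverses the roles and gives $\tau^{g^+}\le G_{x,y}(\tau t)/G_{x,y}(t)\le\tau^{g^-}$, with $\tau^{g^+}=\min$ and $\tau^{g^-}=\max$ now. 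The two cases combine into the bound in (i), and the value $t=0$ is trivial since $G_{x,y}(0)=0$.

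For (ii), I would specialize (i) to $\widehat{G}_x=G_{x,x}$ and normalize. For $u\neq 0$ set $\lambda:=\|u\|_{L^{\widehat{G}_x}(\Omega)}$. The $\Delta_2$-property of $\widehat{G}_x$ (Remark \ref{rem1}) guarantees that $\mu\mapsto J_{\widehat{G}_x}(u/\mu)$ is finite, continuous and strictly decreasing, with limit $0$ as $\mu\to\infty$ and $+\infty$ as $\mu\to 0^+$, so that $J_{\widehat{G}_x}(u/\lambda)=1$. Applying (i) pointwise with $\tau=\lambda$ and $t=|u(x)|/\lambda$ gives, a.e. in $\Omega$,
\[
\min\{\lambda^{g^-},\lambda^{g^+}\}\,\widehat{G}_x\!\Big(\tfrac{|u|}{\lambda}\Big)\le \widehat{G}_x(|u|)\le \max\{\lambda^{g^-},\lambda^{g^+}\}\,\widehat{G}_x\!\Big(\tfrac{|u|}{\lambda}\Big),
\]
and integrating over $\Omega$ while using $J_{\widehat{G}_x}(u/\lambda)=1$ produces exactly (ii); the case $u=0$ is immediate. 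The integrations in (i) are routine, so the one step I expect to require care is the identity $J_{\widehat{G}_x}(u/\|u\|_{L^{\widehat{G}_x}(\Omega)})=1$ on which the normalization in (ii) rests: it depends on $J_{\widehat{G}_x}(u/\mu)$ being finite for every $\mu>0$ and continuous in $\mu$, both of which follow from the $\Delta_2$-condition for $\widehat{G}_x$ recorded in Remark \ref{rem1}. Without $\Delta_2$ the modular could cross the level $1$ by a jump, leaving only $J(u/\|u\|)\le 1$ and hence too weak a conclusion to pin down the exponents $g^-,g^+$.
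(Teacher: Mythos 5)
Your proof is correct, and it is worth noting that the paper itself gives no argument for this lemma: it simply cites \cite[Lemma 2.5]{Bahrouni-Missaoui-Ounaies-2024}. Your route is the standard one and fills that gap completely: part (i) follows from integrating the logarithmic differential inequality $g^-/\tau \le (\log G_{x,y}(\tau t))' \le g^+/\tau$ encoded in \eqref{Assump:g_3}, splitting the cases $\tau\ge 1$ and $\tau<1$; part (ii) follows by specializing to $\widehat{G}_x$ and normalizing by the Luxemburg norm, where you correctly isolate the one nontrivial point, namely that the $\Delta_2$-condition (Remark \ref{rem1}) guarantees finiteness and continuity of $\mu\mapsto J_{\widehat{G}_x}(u/\mu)$, hence the identity $J_{\widehat{G}_x}\bigl(u/\|u\|_{L^{\widehat{G}_x}(\Omega)}\bigr)=1$ rather than merely the inequality $\le 1$.
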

 As a consequence of \eqref{2eq95}, we have the following lemma:
\begin{lemma}
\label{lemhol}
(H\"older's type
inequality)
  Let $\Omega$ be an open subset of $\mathbb{R}^N$. Let $\widehat{G}_x$ be a generalized N-function and $\widetilde{\widehat{G}}_x$ its complementary function, then
  \begin{equation}\label{2eq96}
     \left\vert \int_{\Omega} uv\,dx \right\vert \leq 2 \Vert u\Vert_{L^{\widehat{G}_x}(\Omega)} \Vert v\Vert_{L^{\widetilde{\widehat{G}}_x}(\Omega)},\ \text{for all}\ u\in L^{\widehat{G}_x}(\Omega)\ \text{and all}\ v\in L^{\widetilde{\widehat{G}}_x}(\Omega).
  \end{equation}
\end{lemma}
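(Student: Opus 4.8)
The plan is to obtain \eqref{2eq96} as a direct consequence of the pointwise Young-type inequality \eqref{2eq95}, specialized to the diagonal function $\widehat{G}_x = G_{x,x}$ and its complementary $\widetilde{\widehat{G}}_x$, through the classical normalization trick for Luxemburg norms. First I would dispose of the degenerate cases: if $\Vert u\Vert_{L^{\widehat{G}_x}(\Omega)} = 0$ then $u = 0$ a.e., so $uv = 0$ a.e.\ and both sides of \eqref{2eq96} vanish; likewise if $\Vert v\Vert_{L^{\widetilde{\widehat{G}}_x}(\Omega)} = 0$. Hence I may assume $a := \Vert u\Vert_{L^{\widehat{G}_x}(\Omega)} > 0$ and $b := \Vert v\Vert_{L^{\widetilde{\widehat{G}}_x}(\Omega)} > 0$ and work with the normalized functions $u/a$ and $v/b$, each of unit norm in the respective space.

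The key step is to record that the modular of a unit-norm function is at most $1$, namely $J_{\widehat{G}_x}(u/a) \le 1$ and $J_{\widetilde{\widehat{G}}_x}(v/b) \le 1$. I would read the first off Lemma \ref{RE:Gt}(ii): since $\Vert u/a\Vert_{L^{\widehat{G}_x}(\Omega)} = 1$, the two-sided modular--norm bound gives $\min\{1,1\} \le J_{\widehat{G}_x}(u/a) \le \max\{1,1\}$, i.e.\ $J_{\widehat{G}_x}(u/a) = 1$. The same reasoning applied to the analogue of Lemma \ref{RE:Gt}(ii) for $\widetilde{\widehat{G}}_x$ (valid by the growth bounds on $\widetilde{\widehat{g}}/\widetilde{\widehat{G}}$ and the $\Delta_2$-property recorded in Remark \ref{rem1}) yields $J_{\widetilde{\widehat{G}}_x}(v/b) = 1$. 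Equivalently, this bound follows directly from the definition of the Luxemburg norm together with the $\Delta_2$-condition, which makes $\lambda \mapsto J_{\widehat{G}_x}(u/\lambda)$ continuous so that the defining infimum is attained.

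With these modular bounds in hand, I would apply \eqref{2eq95} pointwise with $\tau = |u(x)|/a$ and $\sigma = |v(x)|/b$, obtaining
\[
\frac{|u(x)|\,|v(x)|}{ab} \le \widehat{G}_x\!\left(\frac{|u(x)|}{a}\right) + \widetilde{\widehat{G}}_x\!\left(\frac{|v(x)|}{b}\right) \quad \text{for a.e.\ } x \in \Omega,
\]
and integrate over $\Omega$ to get $\frac{1}{ab}\int_{\Omega} |uv|\,dx \le J_{\widehat{G}_x}(u/a) + J_{\widetilde{\widehat{G}}_x}(v/b) \le 2$. Multiplying through by $ab$ and using $\left\vert \int_{\Omega} uv\,dx\right\vert \le \int_{\Omega} |uv|\,dx$ then yields \eqref{2eq96}. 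There is essentially no obstacle beyond the modular-at-the-norm bound of the middle paragraph; the constant $2$ is precisely the $1+1$ produced by Young's inequality, and while it is not sharp it suffices for all the estimates needed later.
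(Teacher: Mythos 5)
Your proposal is correct and follows essentially the same route as the paper: the paper presents Lemma \ref{lemhol} as a direct consequence of the Young-type inequality \eqref{2eq95}, and your normalization argument (reduce to unit Luxemburg norms, bound the modulars by $1$, apply \eqref{2eq95} pointwise on the diagonal and integrate) is precisely that standard derivation, with the constant $2$ arising as $1+1$ exactly as you say. The only cosmetic remark is that you do not need the exact equality $J_{\widehat{G}_x}(u/a)=1$ from Lemma \ref{RE:Gt}(ii); the weaker bound $J_{\widehat{G}_x}(u/a)\leq 1$, which holds for any generalized N-function by monotone convergence in the Luxemburg infimum, already suffices and avoids invoking \eqref{cond:g0}--\eqref{Assump:g_3}.
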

\vspace{1\baselineskip}
Next, we define the fractional Musielak-Sobolev space.
\begin{definition}
Let $G_{x,y}$ be a generalized N-function, $s\in(0,1)$ and $\Omega$ an open subset of $\mathbb{R}^N$. The fractional Musielak-Sobolev space is defined as follows
 $$W^{s,G_{x,y}}(\Omega):=\{u\in L^{\widehat{G}_{x}}(\Omega):J_{s,G_{x,y}}(\lambda u) < +\infty\text{, for some }\lambda >0\},$$
 where
 $$J_{s,G_{x,y}}(u):=\int_{\Omega}\int_{\Omega}G_{x,y}\left(\frac{u(x)-u(y)}{|x-y|^s}\right)\frac{dx\,dy}{|x-y|^N}.$$
 The space $W^{s,G_{x,y}}(\Omega)$ is endowed with the norm
$$\|u\|_{W^{s,G_{x,y}}(\Omega)}:=\|u\|_{L^{\widehat{G}_{x}}(\Omega)}+[u]_{s,G_{x,y}}\text{, for all } u\in W^{s,G_{x,y}}(\Omega),$$
with $[u]_{s,G_{x,y}}$ is the $(s,G_{x,y})-$Gagliardo seminorm defined by
$$[u]_{s,G_{x,y}}:=\inf \{ \lambda >0:J_{s,G_{x,y}}\left(\frac{u}{\lambda}\right)\leq 1\}. $$
\end{definition}
\begin{remark}\label{rem2}
    The assumption \eqref{Assump:g_3} ensures that the functions $\widehat{G}_x$ and $\widetilde{\widehat{G}}_x$ satisfy the $\Delta_2$-condition. Consequently, the space $W^{s,G_{x,y}}(\Omega)$ is reflexive and separable as a Banach space.
\end{remark}
Let $\widehat{G}_x$ be defined as in \eqref{eqc}, the assumptions \eqref{cond:g0}-\eqref{assump:g2}  confirm that for each $x\in \Omega$, $\widehat{G}_x:\mathbb{R}_+\longrightarrow \mathbb{R}_+$  is an increasing homeomorphism. Hence, the inverse function $\widehat{G}_x^{-1}$ of $\widehat{G}_x$  exists. Then, we can define the inverse of an important function which is the Musielak-Sobolev conjugate function of $\widehat{G}_x$, denoted by $\widehat{G}_x^*$, as follows:
\begin{equation}\label{2eq10}
\left(\widehat{G}_x^*\right)^{-1}(t):= \int_{0}^{t} \frac{\widehat{G}^{-1}_x(\tau)}{\tau^{\frac{N+s}{N}}}\,d\tau,\ \ \text{for all}\ x\in \Omega\ \text{and all}\ t\geq 0.
\end{equation}
\begin{lemma}\label{RELATION:MODULAR & NORM}
     (see \cite[Lemma 2.10]{Bahrouni-Missaoui-Ounaies-2024})
  Assume that  assumptions \eqref{cond:g0}-\eqref{assump:g_4} hold with $g^-,g^+\in (1,\frac{N}{s})$ and  $s\in (0,1)$. Then, we have the following properties:
  \begin{enumerate}
   \item[$(i)$] $\displaystyle{\min\left\lbrace \left[u\right]_{s,G_{x,y}}^{g^-},\left[u\right]_{s,G_{x,y}}^{g^+}\right\rbrace \leq J_{s,G_{x,y}}(u)\leq \max\left\lbrace \left[u\right]_{s,G_{x,y}}^{g^-},\left[u\right]_{s,G_{x,y}}^{g^+}\right\rbrace}$, for all $u \in W^{s,G_{x,y}}(\Omega)$;
      \item[$(ii)$] $\displaystyle{\min\left\lbrace \Vert u\Vert_{L^{\widehat{G}_x^*}(\Omega)}^{g^{-}_{*,s}},\Vert u\Vert_{L^{\widehat{G}^*_x}(\Omega)}^{g^{+}_{*,s}}
      \right\rbrace\leq J_{\widehat{G}^*_x}(u)\leq \max\left\lbrace \Vert u\Vert_{L^{\widehat{G}^*_x}(\Omega)}^{g^{-}_{*,s}},
      \Vert u\Vert_{L^{\widehat{G}^*_x}(\Omega)}^{g^{+}_{*,s}}\right\rbrace }$, for all $u\in L^{\widehat{G}^*_x}(\Omega)$.
  \end{enumerate}
  where $\displaystyle{g^{-}_{*,s}=\frac{Ng^-}{N-sg^-}}$, $\displaystyle{g^{+}_{*,s}:=\frac{Ng^+}{N-sg^+}}$, and $\displaystyle{J_{\widehat{G}^*_x}(u):= \int_{\Omega} \widehat{G}^*_x(\vert u\vert)\,dx}$.
\end{lemma}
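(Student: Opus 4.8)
The plan is to recognize both (i) and (ii) as instances of a single mechanism: a two-sided power-type scaling inequality for the generating function passes, through the $\Delta_2$-condition, to the corresponding comparison between a modular and its Luxemburg (semi)norm. For (i) the scaling I need is already available as Lemma \ref{RE:Gt}(i); for (ii) the substantive step is to first establish the analogous scaling for the Musielak--Sobolev conjugate $\widehat{G}^*_x$ with the critical exponents $g^-_{*,s}$ and $g^+_{*,s}$, after which the two parts are proved by an identical argument.

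First I would isolate the modular step in abstract form, to be invoked twice. Suppose a generating function $\Psi_x$ satisfies
\[
\min\{\tau^{a},\tau^{b}\}\,\Psi_x(t)\le \Psi_x(\tau t)\le \max\{\tau^{a},\tau^{b}\}\,\Psi_x(t),\qquad \tau,t>0,\ 1<a\le b.
\]
Integrating this pointwise (with $t=|u(x)-u(y)|/|x-y|^s$ over the Gagliardo measure for (i), or $t=|u(x)|$ over $\Omega$ for (ii)) gives the same bound for the attached modular $\mathcal J$, that is $\min\{\tau^a,\tau^b\}\mathcal J(u)\le \mathcal J(\tau u)\le \max\{\tau^a,\tau^b\}\mathcal J(u)$. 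Fix $u\ne 0$ and let $\lambda$ denote its (semi)norm. Because $\widehat{G}_x$, and hence its conjugate, satisfies the $\Delta_2$-condition (Remark \ref{rem1}), the modular is finite and continuous in the scaling parameter, so the infimum defining $\lambda$ is attained and $\mathcal J(u/\lambda)=1$. Writing $u=\lambda(u/\lambda)$ and using the scaling with $\tau=\lambda$ gives $\min\{\lambda^a,\lambda^b\}\le \mathcal J(u)\le \max\{\lambda^a,\lambda^b\}$, which is precisely the assertion upon reading $\lambda=[u]_{s,G_{x,y}}$, $(a,b)=(g^-,g^+)$ for (i), and $\lambda=\Vert u\Vert_{L^{\widehat{G}^*_x}(\Omega)}$, $(a,b)=(g^-_{*,s},g^+_{*,s})$ for (ii). Since the hypothesis of this step is exactly Lemma \ref{RE:Gt}(i), part (i) is finished at once.

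The core of the proof is thus the scaling inequality for $\widehat{G}^*_x$, and for that it suffices to prove the growth condition
\[
g^-_{*,s}\le \frac{t\,(\widehat{G}^*_x)'(t)}{\widehat{G}^*_x(t)}\le g^+_{*,s},\qquad t>0,
\]
from which the scaling follows by integrating $\tfrac{d}{dt}\log \widehat{G}^*_x$, exactly as in the derivation of Lemma \ref{RE:Gt}(i) from \eqref{Assump:g_3}. I would work on the inverse, where \eqref{2eq10} is explicit: differentiating gives $\big((\widehat{G}^*_x)^{-1}\big)'(t)=\widehat{G}_x^{-1}(t)\,t^{-(N+s)/N}$. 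By Remark \ref{rem1} the bounds $g^-\le \widehat{g}_x(t)t/\widehat{G}_x(t)\le g^+$ invert to $1/g^+\le t(\widehat{G}_x^{-1})'(t)/\widehat{G}_x^{-1}(t)\le 1/g^-$; equivalently $\tau\mapsto \widehat{G}_x^{-1}(\tau)\tau^{-1/g^+}$ is nondecreasing and $\tau\mapsto \widehat{G}_x^{-1}(\tau)\tau^{-1/g^-}$ is nonincreasing. Comparing $\widehat{G}_x^{-1}(\tau)$ with $\widehat{G}_x^{-1}(t)$ for $0<\tau\le t$ through these monotonicities, inserting the resulting power bounds into $(\widehat{G}^*_x)^{-1}(t)=\int_0^t \widehat{G}_x^{-1}(\tau)\tau^{-(N+s)/N}\,d\tau$, and evaluating the elementary integrals $\int_0^t \tau^{1/g^\pm-(N+s)/N}\,d\tau$ (which converge at the origin exactly because $g^-,g^+<N/s$) reproduces the reciprocal constants $1/g^\pm_{*,s}$ with $g^\pm_{*,s}=\tfrac{Ng^\pm}{N-sg^\pm}$. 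This yields $1/g^+_{*,s}\le t\big((\widehat{G}^*_x)^{-1}\big)'(t)/(\widehat{G}^*_x)^{-1}(t)\le 1/g^-_{*,s}$, and inverting returns the growth condition for $\widehat{G}^*_x$.

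The main obstacle is this last paragraph: extracting sharp two-sided power bounds for the Sobolev conjugate $\widehat{G}^*_x$ from the implicit integral formula \eqref{2eq10}. The delicate points are (a) transferring the \eqref{Assump:g_3}/Remark \ref{rem1} bounds on $\widehat{G}_x$ to the reciprocal bounds on $\widehat{G}_x^{-1}$; (b) using the monotonicity of $\widehat{G}_x^{-1}(\tau)\tau^{-1/g^\pm}$ to control $\widehat{G}_x^{-1}$ on $(0,t)$ by its endpoint value; and (c) integrability at the origin, where the standing hypothesis $g^-,g^+\in(1,N/s)$ (equivalently the convergent half of \eqref{assump:g_4}) is indispensable, since otherwise $g^+_{*,s}=\infty$ and $\widehat{G}^*_x$ carries no finite upper growth index. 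The remaining ingredients, namely the $\Delta_2$-continuity of the modular and the pointwise-to-integral passage, are routine.
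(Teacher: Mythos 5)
Your proof is correct. There is nothing in the paper itself to compare it against: the authors do not prove this lemma but import it with a citation from \cite[Lemma 2.10]{Bahrouni-Missaoui-Ounaies-2024}, so your argument is a self-contained reconstruction, along what is surely the standard route. Both halves check out. The abstract step (two-sided power scaling of the generating function passing to a two-sided comparison of modular and Luxemburg (semi)norm, via the $\Delta_2$-condition and the normalization $\mathcal{J}(u/\lambda)=1$ at $\lambda$ equal to the (semi)norm) is sound, and for part (i) its hypothesis is exactly Lemma \ref{RE:Gt}(i). For part (ii), your computation of the growth indices of the Sobolev conjugate is right: Remark \ref{rem1} gives $1/g^{+}\le t(\widehat{G}_x^{-1})'(t)/\widehat{G}_x^{-1}(t)\le 1/g^{-}$, hence $\tau\mapsto\widehat{G}_x^{-1}(\tau)\tau^{-1/g^{+}}$ is nondecreasing and $\tau\mapsto\widehat{G}_x^{-1}(\tau)\tau^{-1/g^{-}}$ is nonincreasing, and inserting these bounds into \eqref{2eq10} with the identity $\int_0^t\tau^{1/g^{\pm}-(N+s)/N}\,d\tau=g^{\pm}_{*,s}\,t^{1/g^{\pm}_{*,s}}$ (convergent precisely because $g^{\pm}<N/s$) yields
\[
g^{-}_{*,s}\,\widehat{G}_x^{-1}(t)\,t^{-s/N}\;\le\;(\widehat{G}^*_x)^{-1}(t)\;\le\;g^{+}_{*,s}\,\widehat{G}_x^{-1}(t)\,t^{-s/N}.
\]
Since $\bigl((\widehat{G}^*_x)^{-1}\bigr)'(t)=\widehat{G}_x^{-1}(t)\,t^{-(N+s)/N}$, the logarithmic derivative of $(\widehat{G}^*_x)^{-1}$ lies in $\bigl[1/g^{+}_{*,s},\,1/g^{-}_{*,s}\bigr]$; inverting and then integrating $\frac{d}{dt}\log\widehat{G}^*_x$ gives the scaling inequality for $\widehat{G}^*_x$ with exponents $g^{\pm}_{*,s}$, which is exactly the input the abstract step needs for (ii).

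Two minor points to tighten, neither a genuine gap. First, for (ii) you justify attainment of the Luxemburg infimum by appealing to Remark \ref{rem1} for ``$\widehat{G}_x$, and hence its conjugate''; but the conjugate appearing in Remark \ref{rem1} is the complementary (Legendre) function $\widetilde{\widehat{G}}_x$, not the Sobolev conjugate $\widehat{G}^*_x$, so the $\Delta_2$-condition you actually need for $\widehat{G}^*_x$ is not quoted anywhere. It does follow from the scaling inequality you prove (take $\tau=2$), so the fix is purely organizational: establish the scaling for $\widehat{G}^*_x$ first, then run the modular step for (ii). Second, in (i) the Gagliardo quantity is only a seminorm a priori: if $[u]_{s,G_{x,y}}=0$ the normalization $\mathcal{J}(u/[u]_{s,G_{x,y}})=1$ is meaningless; but then $J_{s,G_{x,y}}(u/\lambda)\le 1$ for every $\lambda>0$, so the scaling inequality gives $J_{s,G_{x,y}}(u)\le\max\{\lambda^{g^{-}},\lambda^{g^{+}}\}$ for all $\lambda>0$, forcing $J_{s,G_{x,y}}(u)=0$, and the stated two-sided bound (with both sides zero) still holds. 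This degenerate case deserves one explicit sentence.
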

\begin{definition}
 We say that a generalized N-function $G_{x,y}$ satisfies the fractional boundedness condition, written $G_{x,y}\in \mathcal{B}_{f}$, if there exist $C_{1},C_{2}> 0$ such that
     \begin{equation}
         \label{fractional boundedness}
         \begin{aligned}
             C_{1}&\leq G_{x,y}(1)\leq C_{2}\qquad \text{for all  }(x,y)\in \Omega\times\Omega.
         \end{aligned}
     \end{equation}
 \end{definition}
 \begin{definition}
  Let $\widehat{A}_x$ and $\widehat{B}_x$ be two generalized  N-functions. We say that $\widehat{A}_x$ essentially grows more slowly than $\widehat{B}_x$ near
infinity, and we write  $\widehat{A}_x\prec\prec \widehat{B}_x$, if for all $k > 0$, we have
$$\lim\limits_{t\rightarrow +\infty}\frac{\widehat{A}_x(kt)}{\widehat{B}_x(t)}=0,\ \ \text{uniformly in }\ x\in \Omega.$$
\end{definition}
\begin{theorem}(see \cite{Azroul-Shimi-Srati-2023})\label{thm2}
  Let $s\in (0,1)$, $G_{x,y}$ a generalized N-function satisfying \eqref{cond:g0}-\eqref{Assump:g_3}, and $\Omega$
 a bounded domain in $\mathbb{R}^N$ with $C^{0,1}$-boundary regularity.
 \begin{enumerate}
     \item[$(i)$] If \eqref{fractional boundedness} and \eqref{assump:g_4} hold, then the embedding $W^{s,G_{x,y}}(\Omega)\hookrightarrow L^{\widehat{G}_x^*}(\Omega)$ is continuous.
     \item[$(ii)$]Moreover, for any generalized N-function $\widehat{A}_x$ such that $\widehat{A}_x\prec\prec \widehat{G}_x^*$, the embedding $W^{s,G_{x,y}}(\Omega)\hookrightarrow L^{\widehat{A}_x}(\Omega)$ is compact.
 \end{enumerate}
\end{theorem}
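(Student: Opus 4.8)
The plan is to establish the continuous embedding in $(i)$ first and then bootstrap it to the compact embedding in $(ii)$. For $(i)$ I would argue at the level of modulars: by the modular--norm equivalence of Lemma \ref{RELATION:MODULAR & NORM}, it suffices to prove a single modular inequality, namely that $J_{\widehat{G}^*_x}(u) \le C$ whenever the Gagliardo modular $J_{s,G_{x,y}}(u)$ is normalized to be at most one, with $C$ independent of $u$. This reduces the continuous embedding to controlling $\int_\Omega \widehat{G}^*_x(|u|)\,dx$ by $J_{s,G_{x,y}}(u)$.

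For the core modular inequality I would exploit the defining relation \eqref{2eq10} of the Sobolev conjugate together with the polynomial sandwiching of $G_{x,y}$. The bounds in \eqref{Assump:g_3}, through Lemma \ref{RE:Gt}$(i)$, trap $G_{x,y}(t)$ between the power functions $t^{g^-}$ and $t^{g^+}$ up to multiplicative constants, and the fractional boundedness condition \eqref{fractional boundedness} makes these constants uniform in $(x,y)$. This lets me compare $J_{s,G_{x,y}}$ with the classical fractional Gagliardo seminorms of exponents $g^-$ and $g^+$ and import the classical fractional Sobolev inequalities (with critical exponents $g^{-}_{*,s}$ and $g^{+}_{*,s}$) into the Orlicz scale. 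The integrability hypotheses \eqref{assump:g_4} guarantee that $\widehat{G}^*_x$ is a genuine $N$-function with the expected growth, so the imported estimate can be reassembled, level set by level set along a dyadic decomposition of $u$, and summed using the $\Delta_2$-property of $\widehat{G}_x$ (Remark \ref{rem1}) to produce the desired bound on $\int_\Omega \widehat{G}^*_x(|u|)\,dx$. An alternative, sharper route would replace the dyadic truncation by a fractional P\'olya--Szeg\H{o} rearrangement reducing to a one-dimensional Hardy inequality built from \eqref{2eq10}.

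For $(ii)$ I would combine $(i)$ with a compactness-by-domination argument. Given a sequence $(u_n)$ bounded in $W^{s,G_{x,y}}(\Omega)$, reflexivity (Remark \ref{rem2}) together with $(i)$ makes it bounded in $L^{\widehat{G}^*_x}(\Omega)$, so after passing to a subsequence $u_n \weak u$ in $W^{s,G_{x,y}}(\Omega)$. Using the polynomial lower bound on $G_{x,y}$ coming from Lemma \ref{RE:Gt}$(i)$, the uniform control of $J_{s,G_{x,y}}(u_n)$ bounds the $L^{g^-}$-modulus of continuity of translates, so a Riesz--Fr\'echet--Kolmogorov argument gives strong convergence in the base space $L^{g^-}(\Omega)$ and, along a further subsequence, almost everywhere convergence. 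The hypothesis $\widehat{A}_x \prec\prec \widehat{G}^*_x$ yields, for each $\varepsilon>0$, an estimate $\widehat{A}_x(k|v|) \le \varepsilon\,\widehat{G}^*_x(|v|) + C_\varepsilon$ uniform in $x$, which makes the family $\{\widehat{A}_x(|u_n-u|)\}$ uniformly integrable over the bounded set $\Omega$; together with the almost everywhere convergence, Vitali's theorem gives modular convergence $J_{\widehat{A}_x}(u_n-u)\to 0$, hence $\|u_n-u\|_{L^{\widehat{A}_x}(\Omega)}\to 0$, which is the asserted compactness.

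The step I expect to be the main obstacle is the core modular inequality in $(i)$, precisely because of the $(x,y)$-dependence and non-homogeneity of $G_{x,y}$. Unlike the constant-exponent situation, the Sobolev conjugate $\widehat{G}^*_x$ varies with the spatial variable, so the transfer of the classical fractional Sobolev inequality and the subsequent level-set reassembly must be carried out with all constants independent of $(x,y)$; this uniformity is exactly what the fractional boundedness condition \eqref{fractional boundedness} and the uniform polynomial control coming from \eqref{Assump:g_3} are there to supply. Once the uniform modular bound is secured, part $(ii)$ follows the standard domination-plus-base-space-compactness scheme with no further essential difficulty.
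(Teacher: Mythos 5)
First, a point of calibration: the paper does not prove this theorem at all. It is imported verbatim from \cite{Azroul-Shimi-Srati-2023}, so there is no internal proof to compare yours against; your proposal has to stand on its own as a proof of the embedding theorem, and it does not.

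The genuine gap is in part $(i)$, precisely at the step you yourself flag as the main obstacle. The power sandwich obtained from Lemma \ref{RE:Gt}$(i)$ together with \eqref{fractional boundedness} gives only $C_1\min\{t^{g^-},t^{g^+}\}\le G_{x,y}(t)\le C_2\max\{t^{g^-},t^{g^+}\}$, and this does \emph{not} allow you to compare $J_{s,G_{x,y}}$ with the classical Gagliardo seminorms of exponents $g^-$ and $g^+$. Writing $D^su=\frac{u(x)-u(y)}{|x-y|^s}$, the normalization $J_{s,G_{x,y}}(u)\le 1$ controls the $g^-$-integrand only on the set $\{|D^su|\ge 1\}$ and the $g^+$-integrand only on $\{|D^su|<1\}$; since the measure $\frac{dx\,dy}{|x-y|^N}$ has infinite mass near the diagonal, neither classical seminorm is bounded by the modular. (Borderline oscillations $|u(x)-u(y)|\sim |x-y|^s\bigl(\log\frac{1}{|x-y|}\bigr)^{-\alpha}$ with $\frac{1}{g^+}<\alpha<\frac{1}{g^-}$ have finite $G$-modular when $G(t)\sim t^{g^+}$ for small $t$, yet infinite $g^-$-seminorm, so $W^{s,G_{x,y}}(\Omega)\not\subset W^{s,g^-}(\Omega)$ in general.) Hence the classical fractional Sobolev inequalities simply cannot be invoked. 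There is a second, structural, obstruction even if they could: a two-sided power comparison can never recover the asserted target. Since $\widehat{G}^*_x(t)$ grows at least like $t^{g^{-}_{*,s}}$ for large $t$ (this is what Lemma \ref{RELATION:MODULAR & NORM}$(ii)$ encodes), the space $L^{\widehat{G}_x^*}(\Omega)$ is in general strictly smaller than $L^{g^{-}_{*,s}}(\Omega)$, which is the best target a worst-case power argument could yield; the whole point of the Sobolev conjugate \eqref{2eq10} is that it is tuned to the exact growth of $\widehat{G}_x$, reaching $t^{g^{+}_{*,s}}$-type behaviour at points where $\widehat{G}_x$ has $t^{g^+}$-growth. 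A correct proof must therefore work with the Musielak--Orlicz structure itself, e.g. Maz'ya-type dyadic truncation with the pieces weighted by $\widehat{G}^*_x$ and localized in $x$, which is what \cite{Bonder-Salort-2019, Alberico-Cianchi-Pick-Slavikova-2021-2, Azroul-Shimi-Srati-2023} actually do; in your sketch these appear only as unargued asides. Note also that your proposed ``sharper route'' via a fractional P\'olya--Szeg\H{o} inequality is unavailable here in principle: symmetrization is incompatible with an integrand depending on $(x,y)$.

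Part $(ii)$ follows the standard and essentially sound scheme (compactness in a base space, uniform integrability from $\widehat{A}_x\prec\prec\widehat{G}^*_x$, Vitali's theorem), but it inherits the gap from $(i)$, and the one-line claim that the modular bounds the $L^{g^-}$-modulus of continuity of translates repeats the same min/max error. That particular step is fixable: where $|D^su|<1$ the increment satisfies the pointwise bound $|u(x)-u(y)|^{g^-}<|x-y|^{sg^-}$, and where $|D^su|\ge 1$ it is modular-controlled, so a Riesz--Fr\'echet--Kolmogorov argument can be pushed through after this splitting. As written, however, the justification is missing, and the compactness claim rests on an embedding you have not proved.
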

Next, we introduce a closed linear subspace of $W^{s,G_{x,y}}(\Omega)$. To this end,  assume that $\Omega$ is an open set in $\mathbb{R}^N$ and $$Q= (\mathbb{R}^N \times\mathbb{R}^N)\setminus(\mathcal{C}\Omega\times \mathcal{C}\Omega), \quad \mathcal{C}\Omega=\mathbb{R}^N \setminus \Omega.$$  Denote
 $$W_{0}^{s,G_{x,y}}(\Omega)=\{u : u\in L^{\widehat{G}_x}(\mathbb{R}^{N}), \ u=0 \text{ in }\mathbb{R}^N \setminus\Omega,\frac{u(x)-u(y)}{|x-y|^s} \in L^{\widehat{G}_x}(Q, \frac{dxdy}{|x-y|^N})\}.$$
 The space $W_{0}^{s,G_{x,y}}(\Omega)$ is a normed linear subspace of $W^{s,G_{x,y}}(\mathbb{R}^N)$ equipped with the norm (\mbox{see \cite{Azroul-Shimi-Srati-2023}})
 $$\|u\|_{W_{0}^{s,G_{x,y}}(\Omega)}=[u]_{s,G_{x,y}}.$$
 Due to \eqref{cond:g0}-\eqref{Assump:g_3} and Theorem \ref{thm2}, $W_{0}^{s, G_{x,y}}(\Omega)$ is compactly embedded in $L^{\Phi}(\Omega)$ {\it i.e.} there exists a positive constant $C_{1,G}$ such that
 \begin{equation}
 \label{Compacrt embedding constant}
 \begin{aligned}
    \|u\|_{L^{\Phi}(\Omega)}&\leq C_{1,G} [u]_{s,G_{x,y}} \quad \text{for all} \ u \in W_{0}^{s, G_{x,y}}(\Omega).
 \end{aligned}
 \end{equation}
       
 \begin{remark}
The results derived above remain valid if we replace $W^{s,G_{x,y}}(\Omega)$ with $W_{0}^{s,G_{x,y}}(\Omega)$.
\end{remark}
Next, we present several important properties of the function $f$ satisfying the assumptions \eqref{assump:f_0}-\eqref{Assump:f_2}.
\begin{lemma}
    \label{tf(x,t)>0}
    Let $f$ satisfy the assumptions \eqref{assump:f_0}-\eqref{Assump:f_1}, then:
    \begin{enumerate}
        \item [$(i)$]
    $tf(x,t)\geq 0$ and $F(x,t)\geq 0$ for all $t\in \mathbb{R}$ and a.e. $x\in \Omega.$
    \item [$(ii)$]For each $x\in \Omega$, $f(x,.)$ is non-decreasing on $\mathbb{R}$.
    \end{enumerate}
    \end{lemma}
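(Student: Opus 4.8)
The plan is to establish part (ii) first and then deduce part (i) as an immediate consequence, since once $f(x,\cdot)$ is known to be non-decreasing with $f(x,0)=0$, the sign information required for (i) follows at once. This reverses the stated order, but it is the logically efficient route: the monotonicity is the substantive assertion, and the nonnegativity statements are corollaries of it.

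For part (ii), I would fix $x \in \Omega$ and work with $f'(x,\cdot)$, which exists and is continuous by \eqref{assump:f_0}. On $(0,+\infty)$ the convexity of $f(x,\cdot)$ from \eqref{Assump:f_1} means precisely that $t \mapsto f'(x,t)$ is non-decreasing there; passing to the limit $t \to 0^+$ and using continuity together with the normalization $f'(x,0)=0$ gives $f'(x,t) \geq f'(x,0) = 0$ for every $t > 0$. Symmetrically, on $(-\infty,0)$ the concavity makes $t \mapsto f'(x,t)$ non-increasing, so for $t < 0$ one has $f'(x,t) \geq \lim_{\tau \to 0^-} f'(x,\tau) = f'(x,0) = 0$. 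Combining these one-sided estimates with $f'(x,0)=0$ yields $f'(x,t) \geq 0$ for all $t \in \mathbb{R}$, whence $f(x,\cdot)$ is non-decreasing on $\mathbb{R}$.

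For part (i), I would use that $f(x,\cdot)$ is non-decreasing with $f(x,0)=0$, so $f(x,t) \geq 0$ whenever $t \geq 0$ and $f(x,t) \leq 0$ whenever $t \leq 0$; in either case $t$ and $f(x,t)$ share the same sign, hence $tf(x,t) \geq 0$. For the potential $F(x,t) = \int_0^t f(x,\tau)\,d\tau$ I would split into cases: for $t \geq 0$ the integrand is nonnegative on $[0,t]$, so the integral is nonnegative, while for $t < 0$ one writes $F(x,t) = -\int_t^0 f(x,\tau)\,d\tau$, where the integrand is nonpositive on $[t,0]$, again giving $F(x,t) \geq 0$.

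I do not anticipate any substantial obstacle. The only point requiring genuine care is the behaviour of $f'(x,\cdot)$ at the origin: since \eqref{Assump:f_1} only provides convexity on the positive half-line and concavity on the negative half-line (not a single sign of convexity across $0$), one must combine the one-sided monotonicity of the derivative with the continuity of $f'(x,\cdot)$ and the condition $f'(x,0)=0$ to transfer the sign of $f'$ through the origin. It is worth noting that the lower bounds on $F(x,\pm 1)$ in \eqref{assump:f_0} and the growth condition \eqref{Assump:f_2} play no role in this lemma and are reserved for the potential-well analysis.
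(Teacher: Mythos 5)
Your proof is correct and follows essentially the same route as the paper: both arguments reduce everything to showing $f'(x,t)\geq 0$ on all of $\mathbb{R}$ by combining the one-sided convexity/concavity of $f(x,\cdot)$ with $f'(x,0)=0$, and then read off the signs of $f(x,t)$, $tf(x,t)$ and $F(x,t)$. The only differences are cosmetic: the paper extracts the monotonicity of $f'(x,\cdot)$ by adding two gradient inequalities and setting $t_0=0$, and it proves (i) before (ii), whereas you invoke the $C^1$ characterization of convexity directly (handling the origin slightly more carefully, via the limit of $f'$) and reverse the order of the two parts.
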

    \begin{proof}
       $(i)$ From assumption \eqref{Assump:f_1} we have that $f(x,\cdot)$ is convex for $t>0$ and concave for $t<0.$ So,
        we divide the proof into two cases: \vspace{0.1cm}\\
\textbf{Case 1:} $t\geq 0$. In light of \eqref{Assump:f_1}, for any $t,t_{0}\geq 0$, we deduce that
        \begin{equation}
        \label{convexity for t}
        \begin{aligned}
            f(x,t)&\geq f(x,t_{0})+f'(x,t_{0})(t-t_{0})
        \end{aligned}
        \end{equation}
        and
        \begin{equation}
        \label{convexity for t_{0}}
        \begin{aligned}
            f(x,t_{0})&\geq f(x,t)+f'(x,t)(t_{0}-t).
        \end{aligned}
        \end{equation}
        Thus, adding \eqref{convexity for t} and \eqref{convexity for t_{0}}, we get

            $$\left( f'(x,t)-f'(x,t_{0})\right)(t_{0}-t)\leq 0,$$
          and
            $$\left( f'(x,t)-f'(x,t_{0})\right)(t-t_{0})\geq 0.
        $$
        Therefore, choosing $t_{0}=0$ and by \eqref{assump:f_0} , we obtain
         \begin{equation}\label{pos}f'(x,t)  \geq 0, \ \ \forall \ t\geq 0.\end{equation}
        Thus, again in view of \eqref{assump:f_0}, we conclude that $$f(x,t)t\geq 0, \ \forall \ t\geq 0.$$
\textbf{Case 2:} $t\leq 0$. By repeating the above argument, substituting the convexity of $f$ with its concavity, we infer that
        \begin{equation}\label{neg}f'(x,t)  \geq 0, \ \ f(x,t)\geq 0  \ \ \mbox{and} \ \ f(x,t)t \geq 0, \ \ \forall \ t\leq 0.\end{equation}
Having in mind that $F'(x,t)=f(x,t)$ and $F(x,0)=0$ for a.e. $x \in \Omega$, and exploiting \eqref{pos} and \eqref{neg}, we deduce that
         $$F(x,t)\geq 0, \ \ \forall \ t\in \mathbb{R}.$$
         $(ii)$ The proof of assertion $(ii)$ follows from \eqref{pos} and \eqref{neg}. This finishes the proof.
    \end{proof}
\begin{lemma}
\label{F:bounds}
    If $f$ satisfies the conditions \eqref{assump:f_0}-\eqref{Assump:f_2}, then:
    \begin{enumerate}
    \item[\text{(i)}] There exists a positive constant $A$ such that $$|F(x,t)| \leq A |t|^{h_2(x)}, \ \mbox{for all} \ \ t\in \mathbb{R}.$$
        \item[\text{(ii)}] There exists a positive constant $B$ such that  $$|F(x,t)| \geq B|t|^{h_1(x)}, \ \ \mbox{for all} \ \ |t|\geq 1.$$
    \end{enumerate}
\end{lemma}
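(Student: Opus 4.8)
The plan is to read the two-sided growth condition \eqref{Assump:f_2}, $h_1(x)F(x,t)\le tf(x,t)\le h_2(x)F(x,t)$, as a differential inequality in $t$ for $F(x,\cdot)$, exploiting that $\partial_t F(x,t)=f(x,t)$. By Lemma \ref{tf(x,t)>0} we have $F(x,t)\ge 0$, so $|F|=F$; moreover $f(x,\cdot)$ is non-decreasing with $f(x,0)=0$, whence $F(x,\cdot)$ increases on $[0,\infty)$ and decreases on $(-\infty,0]$, while \eqref{assump:f_0} gives $F(x,1),F(x,-1)\ge B>0$. The whole proof rests on the monotonicity of the rescaled quotients $t\mapsto F(x,t)/|t|^{h_i(x)}$, obtained by differentiating and inserting the appropriate side of \eqref{Assump:f_2}.

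For the lower bound $(ii)$, fix $x$ and let $t\ge 1$. Then
\[
\frac{d}{dt}\!\left(\frac{F(x,t)}{t^{h_1(x)}}\right)=\frac{tf(x,t)-h_1(x)F(x,t)}{t^{h_1(x)+1}}\ge 0
\]
by the left inequality in \eqref{Assump:f_2}, so $t\mapsto F(x,t)/t^{h_1(x)}$ is non-decreasing on $[1,\infty)$; evaluating at $t=1$ and using \eqref{assump:f_0} yields $F(x,t)\ge F(x,1)\,t^{h_1(x)}\ge B\,t^{h_1(x)}$. For $t\le -1$ I would run the mirror argument on the quotient $F(x,t)/(-t)^{h_1(x)}$, whose derivative equals $(-t)^{-h_1(x)-1}\big(h_1(x)F(x,t)-tf(x,t)\big)\le 0$; hence it is non-increasing in $t$ and dominates its value $F(x,-1)\ge B$ at $t=-1$ for all $t\le -1$, giving $F(x,t)\ge B|t|^{h_1(x)}$. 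This establishes $(ii)$ on $\{|t|\ge 1\}$.

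The upper bound $(i)$ uses the identical mechanism with the right inequality of \eqref{Assump:f_2} and the exponent $h_2$: the same computation shows $t\mapsto F(x,t)/t^{h_2(x)}$ is non-increasing on $(0,\infty)$, so $F(x,t)\le F(x,1)\,t^{h_2(x)}$ for $t\ge 1$, and symmetrically $F(x,t)\le F(x,-1)\,|t|^{h_2(x)}$ for $t\le -1$. Taking $A:=\sup_{x\in\Omega}\max\{F(x,1),F(x,-1)\}$ then delivers $|F(x,t)|\le A|t|^{h_2(x)}$.

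I expect the delicate points to be twofold. First, while the lower constant $B$ is supplied directly by \eqref{assump:f_0}, the finiteness and $x$-independence of $A=\sup_{x}\max\{F(x,1),F(x,-1)\}$ must be extracted from the Carath\'eodory and boundedness hypotheses on $f$ (it is transparent for the model nonlinearity of Remark \ref{rrem1}, where $F(x,1)=a(x)/q_1(x)+b(x)/q_2(x)$ is controlled by the bounds on $a,b$). Second, on $(0,1)$ the monotonicity of $F(x,t)/t^{h_2(x)}$ points the wrong way and there only produces a lower bound, the natural upper bound on $(0,1)$ carrying the smaller exponent $h_1$ instead, namely $F(x,t)\le F(x,1)t^{h_1(x)}$ with $t^{h_1(x)}\ge t^{h_2(x)}$ since $h_1\le h_2$. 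Consequently the sharp form of $(i)$ is the estimate on $\{|t|\ge 1\}$ matching $(ii)$; to cover $|t|\le 1$ one records separately $F(x,t)\le F(x,1)\le A$ by monotonicity of $F$, so that $(i)$ is applied in the form $|F(x,t)|\le A\max\{1,|t|^{h_2(x)}\}$.
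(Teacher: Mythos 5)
Your core mechanism is the same as the paper's: the paper integrates the logarithmic-derivative inequalities $F'(x,t)/F(x,t)\geq h_1(x)/t$ and $F'(x,t)/F(x,t)\leq h_2(x)/|t|$ (with mirrored versions for $t<0$), which is exactly your monotonicity of the quotients $t\mapsto F(x,t)/|t|^{h_i(x)}$. For part (ii) the two proofs coincide step by step, including the constant $B=\min\{F(x,1),F(x,-1)\}$ supplied by \eqref{assump:f_0}.

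The divergence is in part (i), and there your caution is not a defect of your argument but an exposure of a gap in the paper's. The paper integrates $F'/F\leq h_2(x)/|t|$ and asserts $F(x,t)\leq A|t|^{h_2(x)}$ for \emph{all} $t\in\mathbb{R}$; but, as you note, on $0<t<1$ the integration runs from $t$ to $1$ and delivers the \emph{lower} bound $F(x,t)\geq F(x,1)\,t^{h_2(x)}$, while the only upper bound available there is $F(x,t)\leq F(x,1)\,t^{h_1(x)}$, which does not imply the $h_2$-bound since $h_1\leq h_2$. Indeed (i) as stated fails in general: for the model case of Remark \ref{rrem1} with $a\equiv b\equiv 1$ and constant exponents $q_1<q_2$ one has $F(x,t)=|t|^{q_1}/q_1+|t|^{q_2}/q_2$, $h_1=q_1$, $h_2=q_2$, and $F(x,t)/|t|^{q_2}\to\infty$ as $t\to 0$. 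So your corrected formulation $|F(x,t)|\leq A\max\{1,|t|^{h_2(x)}\}$ (equivalently $A\max\{|t|^{h_1(x)},|t|^{h_2(x)}\}$), proved exactly on the region $|t|\geq 1$ where the method works and capped by $F(x,\pm 1)$ on $|t|\leq 1$, is the statement that is actually true; downstream uses of (i), e.g.\ in Lemmas \ref{positive:depth} and \ref{Lemma:3.3}, then acquire an extra additive constant of order $A|\Omega|$, which does not affect those arguments structurally. You are also right that the finiteness and $x$-uniformity of $A=\sup_{x}\max\{F(x,1),F(x,-1)\}$ is not literally guaranteed by \eqref{assump:f_0}--\eqref{Assump:f_2}; the paper writes ``for some positive constant $A$'' without comment, so a uniform upper bound on $F(x,\pm 1)$ must be read as an implicit hypothesis (it holds in Remark \ref{rrem1} because $a$ and $b$ are bounded).
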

\begin{proof}
$(i)$ Note that, from Lemma \ref{tf(x,t)>0}, we have $F(x,t)> 0$, for all $x\in \Omega$ and all $t\neq 0.$ On the other hand, from \eqref{Assump:f_2}, \eqref{pos} and \eqref{neg}, one has
    $$\frac{F'(x,t)}{F(x,t)}\leq h_{2}(x)\frac{1}{|t|}, \ \ \forall t\neq 0.$$
    It follows,  integrating the above inequality with respect to $t$ ($t\neq 0)$, that
$$\ln{\left(F(x,t)\right)}\leq h_{2}(x)\ln{(|t|)}+\ln{(A)} \ \ \mbox{and} \ \ \ln{\left(F(x,t)\right)}\leq \ln{\left(A|t|^{h_{2}(x)}\right)},$$
for some positive constant $A$, which implies that
$$|F(x,t)|=F(x,t)\leq A|t|^{h_{2}(x)}, \ \ \forall t\in \mathbb{R}.$$
$(ii)$ Recall that, from Lemma \ref{tf(x,t)>0}, $tf(x,t)$ and $F(x,t)$ are always non-negative. We consider two cases: \vspace{0.1cm}\\
\textbf{Case 1:} $t < 0$. Then, by \eqref{Assump:f_2}, one has
      $$
    \frac{h_{1}(x)}{t}\geq \frac{F'(x,t)}{F(x,t)}.
     $$
       Integrating the above inequality with respect to $t$ from $t$ to $-1$, we get
       \[F(x,t)\geq F(x,-1)(-t)^{h_{1}(x)}.\]
\textbf{Case 2:} $t >0$. Following the same argument used above, we obtain that
       \[F(x,t)\geq F(x,1)t^{h_{1}(x)}.\]
    From Cases $1$ and $2$, we conclude that
    \[F(x,t)\geq B|t|^{h_{1}(x)} \text{  for all } |t|\geq 1,\]
    where
    \[B=\min\{ F(x,-1),F(x,1)\}.\] 
    This ends the proof.
\end{proof}
\begin{corollary}
\label{bounds on f:f}
    Under the conditions of Lemma \ref{F:bounds}, we have:\\
    \begin{enumerate}
        \item[\text{(i)}] $|tf(x,t)|\leq h_2(x)A|t|^{h_2(x)}$, $|f(x,t)|\leq h_2(x)A|t|^{h_2(x)-1}$, for all $t\in \mathbb{R}$ and all $x\in \Omega$.
        \item[\text{(ii)}] $tf(x,t)\geq Bh_1(x)|t|^{h_1(x)}$, for  all $|t|\geq 1$ and all $x\in \Omega$.
    \end{enumerate}
    \end{corollary}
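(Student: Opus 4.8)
The plan is to derive both assertions directly from the two-sided comparison between $tf(x,t)$ and $F(x,t)$ furnished by assumption \eqref{Assump:f_2}, combined with the power bounds on $F$ already established in Lemma \ref{F:bounds}. Throughout, I would exploit the nonnegativity of both $tf(x,t)$ and $F(x,t)$ from Lemma \ref{tf(x,t)>0}, which allows absolute value signs to be inserted or removed freely.

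For part (i), I would start from the upper comparison $tf(x,t) \leq h_2(x) F(x,t)$ contained in \eqref{Assump:f_2}. Since both sides are nonnegative, this reads $|tf(x,t)| = tf(x,t) \leq h_2(x) F(x,t) = h_2(x)|F(x,t)|$, and feeding in the bound $|F(x,t)| \leq A|t|^{h_2(x)}$ from Lemma \ref{F:bounds}(i) yields $|tf(x,t)| \leq h_2(x) A |t|^{h_2(x)}$, the first claimed estimate. The pointwise bound on $f$ then follows by dividing through by $|t|$ when $t \neq 0$, giving $|f(x,t)| \leq h_2(x) A |t|^{h_2(x)-1}$; the value $t=0$ is treated separately, using $f(x,0)=0$ from \eqref{assump:f_0} together with $h_2(x) > 1$, so that both sides vanish there.

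For part (ii), I would instead invoke the lower comparison $h_1(x)F(x,t) \leq tf(x,t)$ from \eqref{Assump:f_2}. For $|t| \geq 1$, Lemma \ref{F:bounds}(ii) gives $F(x,t) = |F(x,t)| \geq B|t|^{h_1(x)}$, and chaining the two inequalities produces $tf(x,t) \geq h_1(x)F(x,t) \geq B\, h_1(x)|t|^{h_1(x)}$, exactly as asserted.

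I do not expect any substantial obstacle: the corollary is a clean consequence of assumption \eqref{Assump:f_2} and Lemma \ref{F:bounds}, with the nonnegativity from Lemma \ref{tf(x,t)>0} doing the bookkeeping on signs. The only point demanding mild care is the endpoint $t=0$ in the second estimate of part (i), where one must note that $h_2(x) - 1 > 0$ so that the right-hand side $h_2(x) A |t|^{h_2(x)-1}$ also vanishes and the inequality extends to all of $\mathbb{R}$.
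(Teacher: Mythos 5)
Your proposal is correct and follows exactly the route the paper intends: the paper's proof is a one-line citation of Lemma \ref{F:bounds}, Lemma \ref{tf(x,t)>0}, and condition \eqref{Assump:f_2}, and your argument simply carries out that chaining explicitly, including the harmless endpoint check at $t=0$ using $f(x,0)=0$ and $h_2(x)>1$.
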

    \begin{proof}
        Both assertions follow from Lemmas \ref{F:bounds} and \eqref{tf(x,t)>0}, combined with condition \eqref{Assump:f_2}.
        \end{proof}

\subsection{ Potential well}
Throughout this paper, we will use the following notations. For $x, y \in \mathbb{R}^N$ and $u \in W_{0}^{s,G_{x,y}}(\Omega)$, we denote:
\[
    d\mu := \frac{dx\,dy}{|x - y|^N} \quad \text{and} \quad
    D^{s}u := \frac{u(x) - u(y)}{|x - y|^s}.
\]
The energy functional $E : W_{0}^{s,G_{x,y}}(\Omega) \to \mathbb{R}$, corresponding to the stationary counterpart of the problem \eqref{main:problem}, is defined as:
\[
    E(u) := \iint_{Q} G_{x,y} \left( D^{s} u \right) d\mu - \int_{\Omega} F(x,u) \, dx.
\]

From Lemma \ref{F:bounds}, condition \eqref{Assump:f_2}, and the continuous embedding of $W_{0}^{s,G_{x,y}}(\Omega)$ into $L^{\Phi}(\Omega)$ (Theorem \ref{thm2}), it follows that the energy functional $E$ is well-defined. Moreover, $E \in C^{1}(W_{0}^{s,G_{x,y}}(\Omega), \mathbb{R})$ and
\[
   <E'(u),u> = I(u) := \iint_{Q} g_{x,y} \left( D^{s} u \right) \left( D^{s} u \right) d\mu - \int_{\Omega} f(x,u) u \, dx,
\]
 where $<\cdot,\cdot>$ is  the duality brackets for the pair $\left((W_{0}^{s,G_{x,y}}(\Omega))^{\ast}, W_{0}^{s,G_{x,y}}(\Omega)\right)$.\\
The functional $I : W_{0}^{s,G_{x,y}}(\Omega) \to \mathbb{R}$ is referred to as the Nehari functional. Next, we define the Nehari manifold as
\[
    \mathcal{N} := \left\{ u \in W_{0}^{s,G_{x,y}}(\Omega) \setminus \{ 0 \} \mid I(u) = 0 \right\}.
\]
The potential well and its corresponding sets are defined as
\[
    W := \left\{ u \in W_{0}^{s,G_{x,y}}(\Omega) \mid I(u) > 0, E(u) < d \right\} \cup \{0\}
\]
and
\[
    V := \left\{ u \in W_{0}^{s,G_{x,y}}(\Omega) \mid I(u) < 0, E(u) < d \right\},
\]
where the depth $d$ of the potential well $W$ is defined as
\[
    d := \inf_{u \in \mathcal{N}} E(u).
\]
       \begin{lemma}\label{positive:depth}
           Let $f$ satisfy the conditions \eqref{assump:f_0}-\eqref{Assump:f_2}, and let $g$ satisfy the conditions \eqref{cond:g0}-\eqref{assump:g_5}. Then, the depth $d$ of the potential well $W$ is positive. Moreover, there exists a positive constant $\delta$ such that $d > \delta$.

       \end{lemma}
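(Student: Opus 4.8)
The plan is to work entirely on the Nehari manifold $\mathcal{N}$ and extract a uniform lower bound, first on the Gagliardo seminorm $[u]_{s,G_{x,y}}$ and then on the energy $E(u)$, using the two-sided growth controls of \eqref{Assump:g_3} and \eqref{Assump:f_2} together with the strict exponent gap $g^+<\min\{h_1^-,h_2^-\}$ furnished by \eqref{assump:g_5}.

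\textbf{Step 1: a seminorm lower bound on $\mathcal{N}$.} Fix $u\in\mathcal{N}$, so $I(u)=0$, i.e. $\iint_{Q} g_{x,y}(D^{s}u)(D^{s}u)\,d\mu=\int_{\Omega}f(x,u)u\,dx$. On the left I would insert the lower growth bound of \eqref{Assump:g_3}, namely $g_{x,y}(t)t\geq g^{-}G_{x,y}(t)$, to get $\iint_{Q} g_{x,y}(D^{s}u)(D^{s}u)\,d\mu\geq g^{-}J_{s,G_{x,y}}(u)$; on the right I would use Corollary \ref{bounds on f:f}(i) to bound $f(x,u)u\leq A\,h_2^+\int_{\Omega}|u|^{h_2(x)}\,dx$. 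Combining the modular--norm relation of Lemma \ref{RELATION:MODULAR & NORM} for $J_{s,G_{x,y}}$, the variable-exponent modular--norm relation for $\Phi(x,t)=t^{h_2(x)}$ (which obeys the analogue of Lemma \ref{RE:Gt} with exponents $h_2^-,h_2^+$), and the compact embedding \eqref{Compacrt embedding constant}, I arrive at
\[
g^{-}\min\{[u]_{s,G_{x,y}}^{g^-},[u]_{s,G_{x,y}}^{g^+}\}\leq A\,h_2^+\max\{(C_{1,G}[u]_{s,G_{x,y}})^{h_2^-},(C_{1,G}[u]_{s,G_{x,y}})^{h_2^+}\}.
\]
Arguing by contradiction with a sequence in $\mathcal{N}$ whose seminorm tends to $0$, in the small-seminorm regime the active branches are $[u]^{g^+}$ on the left and $[u]^{h_2^-}$ on the right; dividing by $[u]^{g^+}$ and invoking $g^+<h_2^-$ from \eqref{assump:g_5} forces $g^{-}\leq C\,[u]^{h_2^--g^+}\to 0$, a contradiction. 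Hence there is $r_0>0$ with $[u]_{s,G_{x,y}}\geq r_0$ for all $u\in\mathcal{N}$.

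\textbf{Step 2: an energy lower bound.} Still for $u\in\mathcal{N}$, I would estimate $E(u)$ from below by combining the upper growth bound of \eqref{Assump:g_3}, in the form $G_{x,y}(t)\geq\frac{1}{g^+}g_{x,y}(t)t$, with the right inequality of \eqref{Assump:f_2}, namely $F(x,t)\leq\frac{1}{h_1(x)}tf(x,t)\leq\frac{1}{h_1^-}tf(x,t)$. Using $I(u)=0$ once more to replace $\iint_{Q} g_{x,y}(D^{s}u)(D^{s}u)\,d\mu$ by $\int_{\Omega}f(x,u)u\,dx$, this yields
\[
E(u)\geq\left(\frac{1}{g^+}-\frac{1}{h_1^-}\right)\int_{\Omega}f(x,u)u\,dx=\left(\frac{1}{g^+}-\frac{1}{h_1^-}\right)\iint_{Q} g_{x,y}(D^{s}u)(D^{s}u)\,d\mu.
\]
By \eqref{assump:g_5} the prefactor is strictly positive, and applying $g_{x,y}(t)t\geq g^{-}G_{x,y}(t)$ together with Lemma \ref{RELATION:MODULAR & NORM} and the bound $[u]_{s,G_{x,y}}\geq r_0$ from Step 1 gives $E(u)\geq\delta$ with the explicit constant $\delta:=(\tfrac{1}{g^+}-\tfrac{1}{h_1^-})\,g^{-}\min\{r_0^{g^-},r_0^{g^+}\}>0$. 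Taking the infimum over $\mathcal{N}$ yields $d\geq\delta>0$.

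The main obstacle is exactly the non-homogeneity: unlike the fractional $p$-Laplacian, neither the modular $J_{s,G_{x,y}}$ nor the nonlinearity scales as a single power, so every estimate produces the $\min/\max$ dichotomy of Lemmas \ref{RE:Gt} and \ref{RELATION:MODULAR & NORM}, and one must carefully identify which branch is active in the small-seminorm regime. The purpose of \eqref{assump:g_5} is precisely to secure the two strict exponent gaps ($g^+<h_2^-$ in Step 1 and $g^+<h_1^-$ in Step 2) that make both arguments close, while keeping the two growth directions of \eqref{Assump:g_3} straight (lower bound $g_{x,y}(t)t\geq g^{-}G_{x,y}(t)$ in Step 1, upper bound $g_{x,y}(t)t\leq g^+G_{x,y}(t)$ in Step 2) is the single place where a reversed inequality would break the proof.
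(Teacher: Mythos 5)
Your proposal is correct and follows essentially the same route as the paper's own proof: both work on $\mathcal{N}$, use $I(u)=0$ together with \eqref{Assump:g_3}, Corollary \ref{bounds on f:f}, Lemma \ref{RELATION:MODULAR & NORM} and the embedding \eqref{Compacrt embedding constant} to obtain a uniform lower bound on $[u]_{s,G_{x,y}}$, and then combine \eqref{Assump:f_2}, \eqref{Assump:g_3}, \eqref{assump:g_5} and $I(u)=0$ again to bound $E(u)$ below by a positive constant depending only on that seminorm bound. The only cosmetic differences are that the paper extracts the explicit threshold $\delta_{\min}$ by a direct case analysis ($[u]_{s,G_{x,y}}\geq 1$ versus $<1$) instead of your sequence/contradiction phrasing, and its Step-2 prefactor is $\left(1-\frac{g^{+}}{h_{1}^{-}}\right)$ rather than your equivalent (slightly smaller) $g^{-}\left(\frac{1}{g^{+}}-\frac{1}{h_{1}^{-}}\right)$.
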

       \begin{proof}
        Let $u\in\mathcal{N}$. Then, from  \eqref{Assump:g_3}, \eqref{Assump:f_2}, Lemma \ref{RELATION:MODULAR & NORM}, Corollary \ref{bounds on f:f} (i),  and Theorem \ref{thm2}, we obtain
        
        \begin{align}
         \label{lower bound of norm u}
   g^{-}\min\{[u]_{s,G_{x,y}}^{g^-},[u]_{s,G_{x,y}}^{g^+}\}&\leq g^{-} \iint_{Q}G_{x,y}\left(D^{s}u\right)d\mu \leq \iint_{Q}g_{x,y}\left(D^{s}u\right)\left(D^{s}u\right)d\mu \nonumber\\
   & = \int_{\Omega}f(x,u)u\ dx \leq \int_{\Omega}A h_{2}(x)|u|^{h_{2}(x)}dx \nonumber\\
   &\leq Ah_{2}^{+}\max\{\|u\|_{L^{\Phi}(\Omega)}^{h_{2}^{-}},\|u\|_{L^{\Phi}(\Omega)}^{h_{2}^{+}}\} \nonumber\\
  &\leq Ah_{2}^{+}\max\left\{C_{1,G}^{h_{2}^{-}}[u]_{s,G_{x,y}}^{h_{2}^{-}},  C_{1,G}^{h_{2}^{+}}[u]_{s,G_{x,y}}^{h_{2}^{+}}\right\} \nonumber\\
  &\leq Ah_{2}^{+}C^{*}_{G}\max\left\{[u]_{s,G_{x,y}}^{h_{2}^{-}},[u]_{s,G_{x,y}}^{h_{2}^{+}}\right\},
        \end{align}
        where
        $C^{*}_{G}:= \max\{C_{1,G}^{h_{2}^{-}},C_{1,G}^{h_{2}^{+}}\}.$
       Furthermore, we can divide \eqref{lower bound of norm u} into two cases: $[u]_{s,G_{x,y}} \geq 1$ and $[u]_{s,G_{x,y}} < 1$, which yields

        \begin{equation*}
            \label{lower bound of u in W0}
        \begin{aligned}
            [u]_{s,G_{x,y}}&\geq \min\left\{\left(\frac{g^{-}}{Ah_{2}^{+}C^{*}_{G}}\right)^{\frac{1}{h_{2}^{+}-g^{-}}}, \left(\frac{g^{-}}{Ah_{2}^{+}C^{*}_{G}}\right)^{\frac{1}{h_{2}^{-}-g^{+}}}\right\}:=\delta_{\min}.
        \end{aligned}
        \end{equation*}
        Therefore, by using \eqref{Assump:f_2}, \eqref{Assump:g_3}, \eqref{assump:g_5}, Lemmas \ref{RELATION:MODULAR & NORM} and \ref{tf(x,t)>0}, we infer that
       \begin{align*}
        E(u) & \geq {\iint_{Q}}G_{x,y}\left(D^{s}u\right)d\mu-\int_{\Omega}\frac{f(x,u)u}{h_{1}(x)}dx\\
        &\geq{\iint_{Q}}G_{x,y}\left(D^{s}u\right)d\mu-\frac{1}{h_{1}^{-}}{\iint_{Q}}g_{x,y}\left(D^{s}u\right)\left(D^{s}u\right)d\mu\\
        & \geq \iint_{Q} G_{x,y}\left(D^{s}u\right)d\mu-\frac{g^{+}}{h_{1}^{-}}{\iint_{Q}}G_{x,y}\left(D^{s}u\right)d\mu\\
        & \geq \left(1-\frac{g^{+}}{h_{1}^{-}}\right){\iint_{Q}}G_{x,y}\left(D^{s}u\right)d\mu\\
        &\geq \left(1-\frac{g^{+}}{h_{1}^{-}}\right) \min\{[u]_{s,G_{x,y}}^{g^-},[u]_{s,G_{x,y}}^{g^+}\}\\
        &\geq \left(1-\frac{g^{+}}{h_{1}^{-}}\right) \min\{\delta_{\min}^{g^{-}}, \delta_{\min}^{g^{+}}\}.
\end{align*}
       Thus, the desired result follows.
       \end{proof}
       \begin{lemma}
       \label{I(u)>0 implies E(u)>0}
         For $u \in W_{0}^{s,G_{x,y}}(\Omega)$, if $I(u) > 0$, $f$ satisfies the condition \eqref{Assump:f_2}, and $g$ satisfies the conditions \eqref{cond:g0}-\eqref{assump:g_5}, then $E(u) > 0$.
       \end{lemma}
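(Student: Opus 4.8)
The plan is to reproduce the modular estimate already used in the proof of Lemma \ref{positive:depth}, now exploiting the strict sign of $I(u)$ in place of the Nehari constraint. The only structural inputs are the one-sided comparison between $F$ and $tf$ furnished by \eqref{Assump:f_2}, the homogeneity-type bound $g_{x,y}(t)t \le g^+ G_{x,y}(t)$ coming from \eqref{Assump:g_3}, and the gap condition $g^+ < h_1^-$ provided by \eqref{assump:g_5}.

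First I would bound the nonlinear part of $E(u)$ from above. Since $f(x,u)u \ge 0$ and $F(x,u)\ge 0$ by Lemma \ref{tf(x,t)>0}, and $h_1(x) \ge h_1^-$, the left inequality in \eqref{Assump:f_2} yields the pointwise estimate $F(x,u) \le \frac{u f(x,u)}{h_1(x)} \le \frac{u f(x,u)}{h_1^-}$, whence $\int_{\Omega} F(x,u)\,dx \le \frac{1}{h_1^-}\int_{\Omega} f(x,u)u\,dx$. Substituting into the definition of $E$ gives
\begin{equation*}
E(u) \ge \iint_{Q} G_{x,y}(D^s u)\,d\mu - \frac{1}{h_1^-}\int_{\Omega} f(x,u)u\,dx.
\end{equation*}

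Next I would invoke the hypothesis $I(u) > 0$, which reads $\int_{\Omega} f(x,u)u\,dx < \iint_{Q} g_{x,y}(D^s u)(D^s u)\,d\mu$. Multiplying this strict inequality by $-1/h_1^- < 0$ and then applying \eqref{Assump:g_3} in the form $g_{x,y}(D^s u)(D^s u) \le g^+ G_{x,y}(D^s u)$, I obtain the chain
\begin{align*}
E(u) &> \iint_{Q} G_{x,y}(D^s u)\,d\mu - \frac{1}{h_1^-}\iint_{Q} g_{x,y}(D^s u)(D^s u)\,d\mu \\
&\ge \left(1 - \frac{g^+}{h_1^-}\right)\iint_{Q} G_{x,y}(D^s u)\,d\mu.
\end{align*}
Since \eqref{assump:g_5} guarantees $g^+ < h_1^-$, the prefactor $1 - g^+/h_1^-$ is strictly positive, while $\iint_{Q} G_{x,y}(D^s u)\,d\mu \ge 0$ because $G_{x,y}\ge 0$; hence the final right-hand side is nonnegative and, combined with the strict inequality above, forces $E(u) > 0$.

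I do not anticipate a genuine obstacle: the argument is a direct, monotone manipulation of the two functionals that parallels the computation in Lemma \ref{positive:depth}. The only points requiring slight care are the correct propagation of strictness — it must be inherited from $I(u) > 0$ rather than from the sign of the modular term (note that $I(u)>0$ already forces $u\neq 0$, so the modular is in fact strictly positive, but this is not needed) — and the use of the nonnegativity of $f(x,u)u$ from Lemma \ref{tf(x,t)>0}, which is what lets one replace the pointwise $h_1(x)$-bound by the uniform $h_1^-$-bound in the favorable direction.
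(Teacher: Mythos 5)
Your proof is correct and follows essentially the same argument as the paper: the identical chain of estimates $F(x,u) \le \frac{u f(x,u)}{h_1(x)} \le \frac{u f(x,u)}{h_1^-}$, the Nehari sign $I(u)>0$, and the bound $g_{x,y}(t)t \le g^{+}G_{x,y}(t)$, closed by the gap condition $g^{+} < h_1^{-}$ from \eqref{assump:g_5}. The only difference is presentational: the paper argues by contradiction (assuming $E(u)\le 0$ and deriving $h_1^{-}\le g^{+}$), whereas you run the same inequalities directly, correctly carrying the strictness from $I(u)>0$.
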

       \begin{proof}
            We proceed by contradiction. Suppose that there exists $u \in W_{0}^{s,G_{x,y}}(\Omega)$  such that $E(u)\leq 0$ and $I(u) >0$.
            From the conditions \eqref{Assump:f_2}, \eqref{Assump:g_3} and Lemma \ref{tf(x,t)>0}, we have
            \begin{align*}
            {\iint_{Q}}G_{x,y}\left(D^{s}u\right)d\mu&\leq\int_{\Omega}\frac{f(x,u)u}{h_{1}(x)}dx \leq \int_{\Omega}\frac{f(x,u)u}{h_{1}^{-}}dx\\
           &\leq\frac{1}{h_{1}^{-}}{\iint_{Q}}g_{x,y}\left(D^{s}u\right)\left(D^{s}u\right)d\mu\\
           &\leq\frac{g^{+}}{h_{1}^{-}}{\iint_{Q}}G_{x,y}\left(D^{s}u\right)d\mu.
                \end{align*}
             This implies that $h_{1}^{-} \leq g^{+}$, which contradicts condition \eqref{assump:g_5}. This contradiction completes the proof.
       \end{proof}
\begin{lemma}
\label{Lemma:3.3}
    Let $f$ satisfy the conditions \eqref{assump:f_0}--\eqref{Assump:f_3}, and $g$ satisfy the conditions \eqref{cond:g0}--\eqref{Assump:g_3}. Then, for any $u \in W_{0}^{s,G_{x,y}}(\Omega)$ with $[u]_{s,G_{x,y}} \neq 0$, we have:
\begin{enumerate}
    \item[$(i)$] $\lim_{\lambda \to 0^{+}} E(\lambda u) = 0$, \quad $\lim_{\lambda \to +\infty} E(\lambda u) = -\infty$.
    \item[$(ii)$] There exists a unique $\lambda^{*} = \lambda^{*}(u) > 0$ such that $\frac{dE(\lambda u)}{d\lambda} \big|_{\lambda = \lambda^{*}} = 0$. Moreover, $E(\lambda u)$ is increasing on $0 < \lambda \leq \lambda^{*}$, decreasing on $\lambda^{*} \leq \lambda < \infty$, and attains its maximum at $\lambda = \lambda^{*}$.
    \item[$(iii)$] $I(\lambda u) \geq 0$ for $0 < \lambda \leq \lambda^{*}$, $I(\lambda u) < 0$ for $\lambda^{*} < \lambda < \infty$, and $I(\lambda^{*} u) = 0$.
\end{enumerate}
\end{lemma}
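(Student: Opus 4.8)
The plan is to reduce everything to the one-variable function $\psi(\lambda) := E(\lambda u)$, $\lambda > 0$. Since $E \in C^1(W_0^{s,G_{x,y}}(\Omega),\mathbb{R})$ and $\lambda \mapsto \lambda u$ is a smooth (linear) curve, $\psi$ is $C^1$, and the chain rule together with the identity $\langle E'(v),v\rangle = I(v)$ gives the scaling relation
\[
\psi'(\lambda) = \langle E'(\lambda u), u\rangle = \frac{1}{\lambda}\,\langle E'(\lambda u),\lambda u\rangle = \frac{1}{\lambda}\, I(\lambda u).
\]
This single identity links (ii) and (iii): once the sign of $I(\lambda u)$ as a function of $\lambda$ is understood, the monotonicity of $\psi$ in (ii) follows immediately, since $\operatorname{sgn}\psi'(\lambda) = \operatorname{sgn} I(\lambda u)$. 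Because $D^s$ is linear, $D^s(\lambda u) = \lambda D^s u$, so both $\psi(\lambda)$ and $I(\lambda u)$ are honest integrals of $G_{x,y}(\lambda D^s u)$, $g_{x,y}(\lambda D^s u)\,\lambda D^s u$, $F(x,\lambda u)$ and $f(x,\lambda u)\,\lambda u$, to which the scaling lemmas apply directly.

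For part (i), I would estimate the two pieces of $\psi(\lambda)$ separately. As $\lambda \to 0^+$, Lemma \ref{RE:Gt}(i) gives $\iint_Q G_{x,y}(\lambda D^s u)\,d\mu \le \lambda^{g^-}\iint_Q G_{x,y}(D^s u)\,d\mu$ for $\lambda<1$, while Lemma \ref{F:bounds}(i) gives $\int_\Omega |F(x,\lambda u)|\,dx \le A\,\lambda^{h_2^-}\int_\Omega |u|^{h_2(x)}\,dx$; the last integral is finite by the embedding into $L^{\Phi}(\Omega)$, and since $g^-,h_2^->1$ both pieces vanish, so $\psi(0^+)=0$. As $\lambda\to+\infty$, Lemma \ref{RE:Gt}(i) bounds the gradient term from above by $\lambda^{g^+}\iint_Q G_{x,y}(D^s u)\,d\mu$, whereas, choosing $c>0$ with $|\{|u|\ge c\}|>0$ (possible since $[u]_{s,G_{x,y}}\neq0$ forces $u\neq0$), Lemma \ref{F:bounds}(ii) yields $\int_\Omega F(x,\lambda u)\,dx \ge C\,\lambda^{h_1^-}$ for all large $\lambda$. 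Since $h_1^->g^+$ by \eqref{assump:g_5}, the reaction term dominates and $\psi(\lambda)\to-\infty$.

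The heart of the argument, and the main obstacle, is parts (ii)--(iii), where the non-homogeneity of both $g_{x,y}$ and $f$ rules out the explicit scaling available for the pure $p$-Laplacian. My plan is to prove that the auxiliary quotient $\lambda \mapsto \lambda^{-g^+} I(\lambda u)$ is \emph{strictly decreasing}; this yields uniqueness of the zero and the full sign pattern in one stroke. Writing $I(\lambda u) = A(\lambda) - B(\lambda)$ with $A(\lambda) = \iint_Q g_{x,y}(\lambda D^s u)\,\lambda D^s u\,d\mu$ and $B(\lambda) = \int_\Omega f(x,\lambda u)\,\lambda u\,dx$, I would differentiate the integrands of $\lambda^{-g^+}A$ and $\lambda^{-g^+}B$. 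For $A$, the bound $g'_{x,y}(t)t \le (g^+-1)g_{x,y}(t)$ from \eqref{Assump:g_3} shows $\lambda\,\partial_\lambda(\cdot) - g^+(\cdot) \le 0$ for the integrand, so $\lambda^{-g^+}A$ is non-increasing. For $B$, the same manipulation produces exactly the integrand $t\big(f'(x,t)t - (g^+-1)f(x,t)\big)$ with $t=\lambda u$, which is strictly positive by \eqref{Assump:f_3}; hence $\lambda^{-g^+}B$ is strictly increasing (as $u\neq0$ on a set of positive measure). Therefore $\lambda^{-g^+}I(\lambda u)$ is strictly decreasing. Combined with $I(\lambda u)>0$ for small $\lambda$ and $I(\lambda u)<0$ for large $\lambda$ (the same two estimates as in part (i), now applied to $I$ via \eqref{Assump:g_3} and Corollary \ref{bounds on f:f}), strict monotonicity forces a unique $\lambda^*>0$ with $I(\lambda^* u)=0$, together with $I(\lambda u)>0$ on $(0,\lambda^*)$ and $I(\lambda u)<0$ on $(\lambda^*,\infty)$. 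This is exactly (iii), and feeding it back into $\psi'(\lambda)=\lambda^{-1}I(\lambda u)$ gives the monotonicity and unique interior maximum in (ii).

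I expect the only delicate points to be (a) isolating the quotient $\lambda^{-g^+}I(\lambda u)$ and recognizing that condition \eqref{Assump:f_3} is precisely the inequality needed to make its $f$-part strictly monotone, and (b) bookkeeping the evenness of $G_{x,y}$, i.e. reading $g_{x,y}(D^s u)(D^s u)$ as $g_{x,y}(|D^s u|)\,|D^s u|\ge0$, so that \eqref{Assump:g_3} (stated for positive arguments) applies under the integral sign for all $\lambda>0$.
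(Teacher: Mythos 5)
Your proof is correct, and for the core of the lemma it takes a genuinely different route from the paper's. Part (i) is essentially the paper's argument (two-sided estimates from Lemma \ref{RE:Gt} and Lemma \ref{F:bounds}, with $h_1^->g^+$ making the reaction term dominate; the paper handles $\lambda\to0^+$ by continuity of $E$ rather than by your direct estimate, a cosmetic difference). For (ii)--(iii) the paper works in the opposite order: it first proves (ii) by showing that at \emph{any} critical point $\lambda^*$ of $\lambda\mapsto E(\lambda u)$ the second derivative is strictly negative --- it computes $\frac{d^{2}}{d\lambda^{2}}E(\lambda u)\big|_{\lambda=\lambda^{*}}$, substitutes the first-order condition \eqref{exixsts lamds}, and invokes \eqref{Assump:g_3} and \eqref{Assump:f_3} to get \eqref{existence of lambda d lambda} --- and then rules out a second critical point by a Rolle-type contradiction (two strict local maxima would force an interior minimum); part (iii) is then read off from the identity $I(\lambda u)=\lambda\,\frac{dE(\lambda u)}{d\lambda}$ in \eqref{Derivative of I}. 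You instead prove (iii) first, by showing that the normalized Nehari quotient $\lambda^{-g^{+}}I(\lambda u)$ is strictly decreasing: \eqref{Assump:g_3} (the bound $g_{x,y}'(t)t\le (g^{+}-1)g_{x,y}(t)$) makes $\lambda^{-g^{+}}\iint_Q g_{x,y}(\lambda D^su)\lambda D^su\,d\mu$ non-increasing, while \eqref{Assump:f_3} makes $\lambda^{-g^{+}}\int_\Omega f(x,\lambda u)\lambda u\,dx$ strictly increasing on the set where $u\neq0$; then (ii) follows from the same identity. I checked your pointwise computations ($\lambda a'(\lambda)-g^{+}a(\lambda)\le0$ for the $g$-integrand and $\lambda b'(\lambda)-g^{+}b(\lambda)=(\lambda u)\bigl[f'(x,\lambda u)\lambda u-(g^{+}-1)f(x,\lambda u)\bigr]>0$ for the $f$-integrand), and they are right. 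Both proofs thus use \eqref{Assump:g_3} and \eqref{Assump:f_3} in exactly the same roles, but yours avoids second derivatives and the contradiction step, delivers the full sign pattern of $I(\lambda u)$ in one stroke, and mirrors the monotone-quotient technique the paper itself uses later (Claim 1, monotonicity of $\eta(\lambda)$, in the proof of Lemma \ref{prop:d(delta)}); what the paper's route buys is the explicit concavity inequality \eqref{existence of lambda d lambda}, which it reuses downstream (e.g.\ in the proof of Theorem \ref{Blow-up thm}). Two caveats you share with the paper rather than introduce: the endpoint behavior (positivity of $I(\lambda u)$ and $E(\lambda u)$ for small $\lambda$, negativity for large $\lambda$) needs $h_1^-,h_2^->g^+$, i.e.\ \eqref{assump:g_5}, which is not listed among the lemma's hypotheses but is invoked by the paper's proof as well; and \eqref{Assump:f_3} degenerates at $t=0$, so both arguments implicitly apply it only where $u\neq0$, which is harmless since $[u]_{s,G_{x,y}}\neq0$ gives $|\{u\neq0\}|>0$.
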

\begin{proof}
$(i)$ Let $\lambda>0$ and $u \in W_{0}^{s,G_{x,y}}(\Omega)$ with $[u]_{s,G_{x,y}} \neq 0$. Then, by invoking Lemmas \ref{RE:Gt} and \ref{F:bounds}, we obtain
\begin{align*}
    E(\lambda u) &\leq{\iint_{Q}}G_{x,y}\left(\lambda D^{s}u\right)d\mu-\int_{\Omega\cap\{|u|\geq \frac{1}{\lambda}\}}B\lambda^{h_{1}(x)}|u|^{h_1(x)}dx
\\ &\leq\max\{\lambda^{g^{+}},\lambda^{g^{-}}\}{\iint_{Q}}G_{x,y}\left(D^{s}u\right)d\mu\\
& \qquad - B\min\{\lambda^{h_{1}^{+}},\lambda^{h_{1}^{-}} \}\int_{\Omega\cap\{|u|\geq \frac{1}{\lambda}\}}|u|^{h_1(x)}dx \label{Eq:max bound}.
\end{align*}
It follows, due to \eqref{assump:g_5} and the continuity of $E$, that
\[\lim_{\lambda \to +\infty}E(\lambda u)=-\infty, \quad \text{and} \quad \lim_{\lambda \to 0^{+}}E(\lambda u)=E(0)= 0.\]
$(ii)$  Let $\lambda>0$ and $u \in W_{0}^{s,G_{x,y}}(\Omega)$ with $[u]_{s,G_{x,y}} \neq 0$. In light of Lemmas \ref{RE:Gt} and \ref{F:bounds}, we obtain
\begin{align*}
E(\lambda u)&\geq{\iint_{Q}}G_{x,y}\left(\lambda  D^{s}u\right)d\mu -\int_{\Omega}A\lambda^{h_{2}(x)}|u|^{h_2(x)}dx
\\ &\geq\min\{\lambda^{g^{+}},\lambda^{g^{-}}\}{\iint_{Q}}G_{x,y}\left(D^{s}u\right) d\mu\\
    &\qquad  - A\max\{\lambda^{h_{2}^{+}}, \lambda^{h_{2}^{-}}\}\int_{\Omega}|u|^{h_2(x)}dx,\label{Eq:min bound}
\end{align*}
which implies that
 $$E(\lambda u)>0 \ \ \mbox{ for all } \ \ \lambda\in\left(0,\min\left\{ \left(\frac{K_{1}(u)}{A K_{2}(u)}\right)^{\frac{1}{h_{2}^{-}-g^{+}}}, \left(\frac{K_{1}(u)}{A K_{2}(u)}\right)^{\frac{1}{h_{2}^{+}-g^{-}}} \right\}\right),$$
where $$K_1(u)={\iint_{Q}}G_{x,y}\left(D^{s}u\right)d\mu \ \ \text{  and  }\ \ K_2(u)=\int_{\Omega}|u|^{h_2(x)}dx.$$ \\
Therefore, having in mind that  $\lim_{\lambda \to +\infty}E(\lambda u)=-\infty$, there exists $\lambda=\lambda^{*}(u)>0$ such that $\frac{dE(\lambda u)}
   {d\lambda}|_{\lambda=\lambda^{*}}=0,$ namely,
   \begin{equation}
       \label{exixsts lamds}
       \begin{aligned}
           \iint_{Q}g_{x,y}\left(\lambda^{*} D^{s}u\right)\left(D^{s}u \right)d\mu&=\int_{\Omega}f(x,\lambda^* u)u\ dx.
       \end{aligned}
   \end{equation}
   On the other hand, by \eqref{Assump:f_3}, \eqref{Assump:g_3} and \eqref{exixsts lamds}, we obtain
   \begin{equation}
       \label{existence of lambda d lambda}
   \begin{aligned}
      \frac{d^{2}E(\lambda u)}{d\lambda^{2}}|_{\lambda=\lambda^*}&={\iint_{Q}}g_{x,y}'\left(\lambda^{*}D^{s}u\right)\left( D^{s}u\right)^{2}d\mu-\int_{\Omega}f'(x,\lambda^{*} u) u^{2}dx\\
      &= \frac{1}{(\lambda^{*})^{2}}{\iint_{Q}}g_{x,y}'\left(\lambda^{*}D^{s}u\right)\left( \lambda^{*} D^{s}u\right)^{2}d\mu-\int_{\Omega}f'(x,\lambda^{*} u) (\lambda^{*} u)^{2}dx\\ 
    &\leq \frac{1}{(\lambda^{*})^{2}} \left[{\iint_{Q}}(g^{+}-1) g_{x,y}\left(\lambda^{*}D^{s}u\right)\left(\lambda^{*}D^{s}u\right)d\mu-\int_{\Omega}f'(x,\lambda^{*} u)(\lambda^* u)^{2}dx \right]\\
    &\leq \frac{1}{(\lambda^{*})^{2}}\left[\int_{\Omega}\left[(g^{+}-1)f(x,\lambda^{*}u)-f'(x,\lambda^{*} u)(\lambda^* u)\right](\lambda^* u)dx\right]\\
    &<0.
     \end{aligned}
   \end{equation}
   This proves that $E(\lambda u)$ is increasing on $0 < \lambda \leq \lambda^{*}$ and decreasing on $\lambda^{*} \leq \lambda < \infty$.
Next, we prove that $\lambda = \lambda^{*}(u)$ is uniquely determined. Assume that there exist two distinct roots, say $\lambda_1$ and $\lambda_2$, such that

$$\frac{dE(\lambda u)}{d\lambda}|_{\lambda=\lambda_{1}}=0\text{, }\frac{d^{2}E(\lambda u)}{d\lambda^{2}}|_{\lambda=\lambda_{1}}<0,$$
and
$$\frac{dE(\lambda u)}{d\lambda}|_{\lambda=\lambda_{2}}=0\text{, }\frac{d^{2}E(\lambda u)}{d\lambda^{2}}|_{\lambda=\lambda_{2}}<0.$$
Thus, there exists $\lambda_3$ such that $\lambda_1 < \lambda_3 < \lambda_2$ and $E(\lambda_3 u)$ is the minimum of $E(\lambda u)$ on the interval $[\lambda_1, \lambda_2]$. Therefore, we have
$$
\frac{dE(\lambda u)}{d\lambda} \bigg|_{\lambda = \lambda_3} = 0, \quad \frac{d^2 E(\lambda u)}{d\lambda^2} \bigg|_{\lambda = \lambda_3} \geq 0,
$$
which leads to a contradiction with \eqref{existence of lambda d lambda}.\\
$(iii)$ Assertion $(iii)$ follows from $(ii)$ and the fact that
\begin{align}\label{Derivative of I} 
 I(\lambda u)&=\lambda\left(\frac{dE(\lambda u)}{d\lambda}\right).
\end{align}
\end{proof}
 For any $\delta>0$, we define the modified functional and Nehari manifold as follows:\\
$$I_{\delta}(u):=\delta {\iint_{Q}}g_{x,y}\left(D^{s}u\right)\left(D^{s}u\right)d\mu-\int_{\Omega}f(x,u)u\ dx,$$
and
$$\mathcal{N}_{\delta}:=\{u\in W_{0}^{s,G_{x,y}}(\Omega)\setminus\{0\}\ |\ I_{\delta}(u)=0\}.$$
The corresponding modified potential well and its corresponding set are defined as
$$W_{\delta}:=\{u\in W_{0}^{s,G_{x,y}}(\Omega)\ |\ I_{\delta}(u)>0,E(u)< d(\delta)\}\cup{\{0\}},$$
and
$$V_{\delta}:=\{u\in W_{0}^{s,G_{x,y}}(\Omega)\ |\ I_{\delta}(u)<0,E(u)< d(\delta) \},$$
 where the depth of the modified potential well $W_{\delta}$ is defined as
 $$d(\delta)=\inf_{u\in\mathcal{N}_{\delta}}E(u).$$
Set
\[ y(\delta)=\frac{\delta g^{-}}{h_{2}^{+}AC^{*}_{G}},  \  z(\delta)= \frac{\delta g^{-}}{h_{2}^{+}AC_{max}},\]

and 

\[
C^{*}_{G}:= \max\{C_{1,G}^{h_{2}^{-}},C_{1,G}^{h_{2}^{+}}\}, \quad  C_{max}:= \max \left\{ C^{g^{-}}_{1,G} \ ,\ C^{g^{+}}_{1,G} \right\}.
\]

\begin{lemma}
\label{Upper estimate of norm}
   Let the conditions \eqref{assump:f_0}-\eqref{Assump:f_2} and \eqref{cond:g0}-\eqref{assump:g_5} hold true. Then, for any $u\in W_{0}^{s,G_{x,y}}(\Omega)$,  we have:
   \begin{enumerate}
  \item[$(i)$] If $ I_{\delta}(u)<0$, then $[u]_{s,G_{x,y}} > \min\left\{\left(y(\delta)\right)^{\frac{1}{h_{2}^{-}-g^{+}}}, \left(y(\delta)\right)^{\frac{1}{h_{2}^{+}-g^{-}}}\right\}$.
   \item[$(ii)$] If $I_{\delta}(u)=0$, then either $[u]_{s,G_{x,y}}=0$ or $[u]_{s,G_{x,y}}\geq \min\left\{\left(y(\delta)\right)^{\frac{1}{h_{2}^{-}-g^{+}}}, \left(y(\delta)\right)^{\frac{1}{h_{2}^{+}-g^{-}}}\right\}.$
     \item[$(iii)$] If $ I_{\delta}(u)<0$, then  $\|u\|_{L^{\Phi}(\Omega)} > \min \left\{\left(z(\delta)\right)^{\frac{1}{h_{2}^{-}-g^{+}}}, \left(z(\delta)\right)^{\frac{1}{h_{2}^{+}-g^{-}}}\right\}$.
    
   \end{enumerate}
\end{lemma}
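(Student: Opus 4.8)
The plan is to run, for the modified functional $I_\delta$, the same modular chain of estimates as in \eqref{lower bound of norm u} from the proof of Lemma~\ref{positive:depth}, and then to turn the resulting self-referential inequality into a lower bound by splitting on the size of the seminorm relative to $1$. Write $t:=[u]_{s,G_{x,y}}$. For (i), note first that $I_\delta(u)<0$ forces $u\neq0$, so $t>0$. Combining \eqref{Assump:g_3} (which gives $g^{-}G_{x,y}(\tau)\le g_{x,y}(\tau)\tau$), Lemma~\ref{RELATION:MODULAR & NORM}(i), Corollary~\ref{bounds on f:f}(i), the modular--norm estimate for $\Phi$, and the embedding \eqref{Compacrt embedding constant}, exactly as in \eqref{lower bound of norm u} but keeping the factor $\delta$ in front of the gradient term, the inequality $\delta\iint_{Q}g_{x,y}(D^{s}u)(D^{s}u)\,d\mu<\int_\Omega f(x,u)u\,dx$ becomes
\[
\delta g^{-}\min\{t^{g^-},t^{g^+}\}\;<\;A h_2^+ C^{*}_{G}\,\max\{t^{h_2^-},t^{h_2^+}\},
\]
that is, $y(\delta)\,\min\{t^{g^-},t^{g^+}\}<\max\{t^{h_2^-},t^{h_2^+}\}$.

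Now I would perform the case split. If $t\ge1$ the active powers are $t^{g^-}$ on the left and $t^{h_2^+}$ on the right, so $y(\delta)<t^{h_2^+-g^-}$; if $t<1$ they are $t^{g^+}$ and $t^{h_2^-}$, so $y(\delta)<t^{h_2^--g^+}$. By \eqref{assump:g_5} we have $g^+<h_2^-\le h_2^+$ and $g^-\le g^+$, hence both exponents $h_2^+-g^-$ and $h_2^--g^+$ are strictly positive, and inverting gives $t>y(\delta)^{1/(h_2^+-g^-)}$ or $t>y(\delta)^{1/(h_2^--g^+)}$ respectively; in either case $t$ exceeds the stated minimum, proving (i). Part (ii) is identical except that $I_\delta(u)=0$ yields the non-strict inequality, so for $u\neq0$ one obtains $t\ge\min\{\dots\}$, while $u=0$ gives $t=0$.

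For (iii) I would keep the left-hand lower bound $\delta g^{-}\min\{t^{g^-},t^{g^+}\}$ but push it down to $\rho:=\|u\|_{L^{\Phi}(\Omega)}$. Since $\rho\le C_{1,G}\,t$ and $t\mapsto\min\{t^{g^-},t^{g^+}\}$ is increasing on $(0,\infty)$, I get $\min\{t^{g^-},t^{g^+}\}\ge\min\{(\rho/C_{1,G})^{g^-},(\rho/C_{1,G})^{g^+}\}\ge C_{max}^{-1}\min\{\rho^{g^-},\rho^{g^+}\}$, the last step using $C_{1,G}^{g^-},C_{1,G}^{g^+}\le C_{max}$. Bounding the right-hand side by $A h_2^+\max\{\rho^{h_2^-},\rho^{h_2^+}\}$ as before converts $I_\delta(u)<0$ into $z(\delta)\,\min\{\rho^{g^-},\rho^{g^+}\}<\max\{\rho^{h_2^-},\rho^{h_2^+}\}$, and the same split on whether $\rho\ge1$ or $\rho<1$ gives the asserted bound on $\|u\|_{L^{\Phi}(\Omega)}$.

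The only genuinely delicate point is the exponent bookkeeping: one must correctly identify which power is active on each side according to whether $t$ (resp.\ $\rho$) lies above or below $1$, and confirm via \eqref{assump:g_5} that $h_2^+-g^-$ and $h_2^--g^+$ are positive so the inequalities can be inverted without reversing direction. The only structural difference between (i)--(ii) and (iii) is that applying the embedding to the $\iint_{Q} G_{x,y}$ term (rather than to the $\int_\Omega|u|^{h_2(x)}$ term) replaces the constant $C^{*}_{G}$, with exponents $h_2^\pm$, by $C_{max}$, with exponents $g^\pm$; everything else is a routine repetition of the estimate underlying Lemma~\ref{positive:depth}.
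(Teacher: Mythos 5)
Your proposal is correct and follows essentially the same route as the paper's proof: the identical modular chain of estimates (condition \eqref{Assump:g_3}, Lemma \ref{RELATION:MODULAR & NORM}, Corollary \ref{bounds on f:f}, and the embedding \eqref{Compacrt embedding constant}) with the factor $\delta$ retained, followed by the same case split on whether $[u]_{s,G_{x,y}}$ (resp. $\|u\|_{L^{\Phi}(\Omega)}$) is at least $1$ or below $1$. For part $(iii)$ the paper likewise transfers the lower bound on the modular term to the $L^{\Phi}(\Omega)$-norm through the embedding constant, replacing $C^{*}_{G}$ by $C_{max}$ exactly as you do, so there is no substantive difference between the two arguments.
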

\begin{proof}
  $(i)$ Let  $u\in W_{0}^{s,G_{x,y}}(\Omega)$
such that $I_{\delta}(u) < 0.$ From condition \eqref{Assump:g_3}, Lemma \ref{RELATION:MODULAR & NORM}, Corollary \ref{bounds on f:f} and Theorem \ref{thm2}, we have
    \begin{align}\label{I delta u < 0}
    I_{\delta}(u) & \geq \delta g^{-} \iint_{Q}G_{x,y}\left(D^{s}u\right)d\mu - h_{2}^{+}A\int_{\Omega}|u|^{h_{2}(x)}dx  \\
    \label{lower bound in Phi}
     &\geq \delta g^{-} \min\left\{[u]_{s,G_{x,y}}^{g^{-}},  [u]_{s,G_{x,y}}^{g^{+}}\right\} -h_{2}^{+}A\max\{ \|u\|_{L^{\Phi}(\Omega)}^{h_{2}^{-}},\|u\|_{L^{\Phi}(\Omega)}^{h_{2}^{+}}\}\nonumber\\
      &\geq \delta g^{-} \min\left\{[u]_{s,G_{x,y}}^{g^{-}},  [u]_{s,G_{x,y}}^{g^{+}}\right\}-h_{2}^{+}A C^{*}_{G}\max\left\{[u]_{s,G_{x,y}}^{h_{2}^{-}},  [u]_{s,G_{x,y}}^{h_{2}^{+}}\right\}.
      \nonumber
    \end{align}
Since  $I_{\delta}(u)< 0$, from \eqref{I delta u < 0}, we get 
   \begin{equation} \label{I delta norm lower bound}
      h_{2}^{+}A C^{*}_{G}\max\left\{[u]_{s,G_{x,y}}^{h_{2}^{-}},  [u]_{s,G_{x,y}}^{h_{2}^{+}}\right\} >\delta g^{-} \min\left\{[u]_{s,G_{x,y}}^{g^{-}},  [u]_{s,G_{x,y}}^{g^{+}}\right\}, \end{equation}
%
which implies  that
$$
            \frac{ \max\left\{[u]_{s,G_{x,y}}^{h_{2}^{-}},  [u]_{s,G_{x,y}}^{h_{2}^{+}}\right\}}{\min\left\{[u]_{s,G_{x,y}}^{g^{-}},  [u]_{s,G_{x,y}}^{g^{+}}\right\} } >  \frac{\delta g^{-}}{h_{2}^{+}A C^{*}_{G}}.
            $$
        Therefore, by dividing \eqref{I delta norm lower bound} into two cases: $[u]_{s,G_{x,y}} \geq 1$ and $[u]_{s,G_{x,y}} <1 $, we conclude that
   $$[u]_{s,G_{x,y}}> \min \left\{\left(y(\delta)\right)^{\frac{1}{h_{2}^{-}-g^{+}}}, \left(y(\delta)\right)^{\frac{1}{h_{2}^{+}-g^{-}}}\right\}.$$

$(ii)$  Let  $u\in W_{0}^{s,G_{x,y}}(\Omega)$ such that $I_{\delta}(u) = 0$. If $[u]_{s,G_{x,y}} = 0$, we are done. Otherwise, if $[u]_{s,G_{x,y}} \neq 0$, then the proof follows by adapting the same argument used previously.\\
$(iii)$ Let  $u\in W_{0}^{s,G_{x,y}}(\Omega)$
such that $I_{\delta}(u) < 0$. Then, from \eqref{I delta u < 0} and   \eqref{Compacrt embedding constant}, we get
   \begin{align*}
     I_{\delta}(u) &  \geq \delta g^{-} \min\left\{\frac{\|u\|^{g^{-}}_{L^{\Phi}(\Omega)}}{{C^{g^{-}}_{1,G}}}, \frac{\|u\|^{g^{+}}_{L^{\Phi}(\Omega)}}{{C^{g^{+}}_{1,G}}}  \right\} -h_{2}^{+}A\max\{ \|u\|_{L^{\Phi}(\Omega)}^{h_{2}^{-}},\|u\|_{L^{\Phi}(\Omega)}^{h_{2}^{+}}\}.
   \end{align*}
   It follows, since  $I_{\delta}(u)< 0$, that
   
      $$ h_{2}^{+}A\max\{ \|u\|_{L^{\Phi}(\Omega)}^{h_{2}^{-}},\|u\|_{L^{\Phi}(\Omega)}^{h_{2}^{+}}\}  > \delta g^{-} \min\left\{\frac{\|u\|^{g^{-}}_{L^{\Phi}(\Omega)}}{{C^{g^{-}}_{1,G}}}, \frac{\|u\|^{g^{+}}_{L^{\Phi}(\Omega)}}{{C^{g^{+}}_{1,G}}}  \right\},
      $$
      which proves that
       \begin{align}
   \label{L Phi norm lower bound}
        \frac{\max\{ \|u\|_{L^{\Phi}(\Omega)}^{h_{2}^{-}},\|u\|_{L^{\Phi}(\Omega)}^{h_{2}^{+}}\}}{\min\left\{ \|u\|^{g^{-}}_{L^{\Phi}(\Omega)}, \|u\|^{g^{+}}_{L^{\Phi}(\Omega)}  \right\} }  >  \frac{\delta g^{-}}{ h_{2}^{+}A C_{max}}.
   \end{align}
    Therefore, by dividing \eqref{L Phi norm lower bound} into two cases: $\|u\|_{L^{\Phi}(\Omega)} \geq 1$ and $\|u\|_{L^{\Phi}(\Omega)} <1 $, we conclude that
    \[ \|u\|_{L^{\Phi}(\Omega)} > \min \left\{\left(z(\delta)\right)^{\frac{1}{h_{2}^{-}-g^{+}}}, \left(z(\delta)\right)^{\frac{1}{h_{2}^{+}-g^{-}}}\right\}.\]  

\end{proof}

By repeating the arguments of Lemma \ref{Lemma:3.3}, we have the following result for the modified functional $I_\delta:$ 

\begin{corollary}\label{cor}
\label{I delta}
   Let $f$ satisfy the conditions \eqref{assump:f_0}--\eqref{Assump:f_3}, and $g$ satisfy the conditions \eqref{cond:g0}--\eqref{Assump:g_3}. Then, for any $u \in W_{0}^{s,G_{x,y}}(\Omega)$ with $[u]_{s,G_{x,y}} \neq 0$ and $\delta>0 $ there exists a unique $\lambda^{*}=\lambda^{*}(\delta, u)$ such that:
$I_{\delta}(\lambda u) \geq 0$ for $0 < \lambda \leq \lambda^{*}$, $I_{\delta}(\lambda u) < 0$ for $\lambda^{*} < \lambda < \infty$, and $I_{\delta}(\lambda^{*} u) = 0$.
\end{corollary}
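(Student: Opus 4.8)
The plan is to mimic the proof of Lemma~\ref{Lemma:3.3} applied to the modified functional $I_\delta$, exploiting the homogeneity structure in the scaling parameter $\lambda$. The key observation is that, just as $I(\lambda u) = \lambda\,\frac{dE(\lambda u)}{d\lambda}$ in \eqref{Derivative of I}, there is an analogous identity linking $I_\delta(\lambda u)$ to a derivative of a suitably modified energy. Concretely, I would introduce the auxiliary functional
\[
    E_\delta(u) := \delta \iint_{Q} G_{x,y}\left(D^{s}u\right) d\mu - \int_{\Omega} F(x,u)\,dx,
\]
and verify by direct computation that $\frac{dE_\delta(\lambda u)}{d\lambda} = \frac{1}{\lambda} I_\delta(\lambda u)$ for $\lambda > 0$. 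With this, the sign behavior of $I_\delta(\lambda u)$ is governed entirely by the monotonicity of $\lambda \mapsto E_\delta(\lambda u)$, exactly as in the unmodified case.

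First I would fix $u$ with $[u]_{s,G_{x,y}} \neq 0$ and $\delta > 0$, and establish the two limiting behaviors $\lim_{\lambda \to 0^+} E_\delta(\lambda u) = 0$ and $\lim_{\lambda \to +\infty} E_\delta(\lambda u) = -\infty$; these follow verbatim from the estimates in part~$(i)$ of Lemma~\ref{Lemma:3.3}, since the extra factor $\delta$ multiplies the coercive $G_{x,y}$-term but does not affect which term dominates (the decisive inequality $h_2^- > g^+$ from \eqref{assump:g_5} is unchanged). Next I would show existence of a critical point $\lambda^* > 0$ via the same small-$\lambda$ positivity argument, and then—this is the crux—verify that at any critical point $\lambda^*$ one has $\frac{d^2 E_\delta(\lambda u)}{d\lambda^2}\big|_{\lambda=\lambda^*} < 0$, using conditions \eqref{Assump:f_3} and \eqref{Assump:g_3} precisely as in \eqref{existence of lambda d lambda}. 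The strict concavity at every critical point forces uniqueness of $\lambda^*$ by the same minimum-between-two-maxima contradiction argument used in Lemma~\ref{Lemma:3.3}$(ii)$.

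Finally, the three sign assertions for $I_\delta$ follow immediately: since $E_\delta(\lambda u)$ is increasing on $(0, \lambda^*]$ and decreasing on $[\lambda^*, \infty)$, the identity $I_\delta(\lambda u) = \lambda \frac{dE_\delta(\lambda u)}{d\lambda}$ gives $I_\delta(\lambda u) \geq 0$ for $0 < \lambda \leq \lambda^*$, $I_\delta(\lambda u) < 0$ for $\lambda > \lambda^*$, and $I_\delta(\lambda^* u) = 0$.

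I expect the main obstacle to be the second-derivative computation: one must carefully differentiate the $G_{x,y}$-term twice in $\lambda$, producing the factor $g_{x,y}'(\lambda^* D^s u)(D^s u)^2$, and then invoke the upper bound $g_{x,y}'(t)t \leq (g^+-1)g_{x,y}(t)$ from \eqref{Assump:g_3} together with \eqref{Assump:f_3} to control both terms with a common sign. The presence of the factor $\delta$ requires checking that it does not interfere with the concavity estimate—which it does not, since $\delta > 0$ multiplies a term that is being bounded above and keeps the same sign throughout. The rest of the argument is a routine transcription of Lemma~\ref{Lemma:3.3}, which is why the corollary can legitimately be proved ``by repeating the arguments.''
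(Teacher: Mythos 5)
Your proposal is correct and is essentially the paper's own proof: the paper disposes of this corollary with the single line ``by repeating the arguments of Lemma~\ref{Lemma:3.3}'', and your introduction of $E_\delta$ together with the identity $I_\delta(\lambda u) = \lambda\,\frac{dE_\delta(\lambda u)}{d\lambda}$ is exactly the natural way to carry out that repetition, with the factor $\delta>0$ harmlessly absorbed in both the limit estimates and the second-derivative (concavity) computation. The one blemish --- using \eqref{assump:g_5} for the limits $\lambda\to 0^+$ and $\lambda\to+\infty$ (note the latter actually needs $h_1^->g^+$, not $h_2^->g^+$) although it is not listed among the hypotheses --- is inherited verbatim from the paper's proof of Lemma~\ref{Lemma:3.3}, so it is not a defect of your argument relative to the paper's.
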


\begin{lemma}
\label{prop:d(delta)}
  Let the conditions \eqref{assump:f_0}--\eqref{Assump:f_3} and \eqref{cond:g0}--\eqref{Assump:g_3} hold. Then, the function $d(\delta)$ satisfies the following properties:
\begin{enumerate}
    \item[$(i)$] $\lim_{\delta \to 0^{+}} d(\delta) = 0$  \ and \ $\lim_{\delta \to \infty} d(\delta) = -\infty$.
    \item[$(ii)$] $d(\delta)$ is increasing on $0 < \delta \leq 1$, decreasing on $\delta \geq 1$, and attains its maximum, $d = d(1)$, at $\delta = 1$.
\end{enumerate}
\end{lemma}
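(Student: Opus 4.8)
The plan is to reduce the whole statement to a one–parameter fibering analysis along rays $\lambda\mapsto\lambda w$ and then pass to the infimum that defines $d(\delta)$. The starting point is the identity
\[
d(\delta)=\inf_{w\in W_{0}^{s,G_{x,y}}(\Omega),\ [w]_{s,G_{x,y}}\neq 0} E\big(\lambda^{*}(\delta,w)\,w\big),
\]
where $\lambda^{*}(\delta,w)>0$ is the unique scalar furnished by Corollary \ref{cor} with $I_{\delta}(\lambda^{*}(\delta,w)w)=0$. Indeed, every $v\in\mathcal N_{\delta}$ has $\lambda^{*}(\delta,v)=1$, so the right-hand side is $\le d(\delta)$, while $\lambda^{*}(\delta,w)w\in\mathcal N_{\delta}$ for each admissible $w$ gives the reverse inequality. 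Writing $\Lambda_{w}(\delta):=E(\lambda^{*}(\delta,w)w)$, the lemma will follow once the monotonicity and limiting behaviour of each $\Lambda_{w}$ is understood, since the infimum of a family of non-decreasing (resp.\ non-increasing) functions is again non-decreasing (resp.\ non-increasing).

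First I would record two monotonicity facts about $\lambda^{*}(\cdot,w)$. Because $I_{\delta}(\lambda w)=\delta\iint_{Q}g_{x,y}(\lambda D^{s}w)(\lambda D^{s}w)\,d\mu-\int_{\Omega}f(x,\lambda w)\lambda w\,dx$ is \emph{strictly increasing} in $\delta$ for each fixed $\lambda>0$ (the $g_{x,y}$-integral being strictly positive when $[w]_{s,G_{x,y}}\neq0$), the sign structure in Corollary \ref{cor} forces $\delta\mapsto\lambda^{*}(\delta,w)$ to be strictly increasing: if $\delta_{1}<\delta_{2}$ then $I_{\delta_{1}}(\lambda^{*}(\delta_{2},w)w)<I_{\delta_{2}}(\lambda^{*}(\delta_{2},w)w)=0$, whence $\lambda^{*}(\delta_{2},w)>\lambda^{*}(\delta_{1},w)$. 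Moreover $\lambda^{*}(1,w)$ coincides with the unique maximizer $\bar\lambda(w)$ of $\lambda\mapsto E(\lambda w)$ from Lemma \ref{Lemma:3.3}, since $I_{1}=I$ and $I(\bar\lambda(w)w)=0$ by \eqref{Derivative of I}.

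Part $(ii)$ is the core. For $\delta\le1$ monotonicity gives $\lambda^{*}(\delta,w)\le\bar\lambda(w)$, and as $E(\lambda w)$ is increasing on $(0,\bar\lambda(w)]$ (Lemma \ref{Lemma:3.3}$(ii)$) the function $\Lambda_{w}$ is non-decreasing on $(0,1]$; symmetrically, for $\delta\ge1$ we have $\lambda^{*}(\delta,w)\ge\bar\lambda(w)$ with $E(\lambda w)$ decreasing on $[\bar\lambda(w),\infty)$, so $\Lambda_{w}$ is non-increasing on $[1,\infty)$. Taking the infimum over $w$ transfers these properties to $d$, so $d$ is non-decreasing on $(0,1]$, non-increasing on $[1,\infty)$, and $d(1)=\inf_{\mathcal N_{1}}E=\inf_{\mathcal N}E=d$ is the maximum. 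This route deliberately avoids differentiating $\lambda^{*}$; the heuristic chain rule $\frac{d}{d\delta}\Lambda_{w}=\frac{1-\delta}{\lambda^{*}}\big(\iint_{Q}g_{x,y}(\lambda^{*}D^{s}w)(\lambda^{*}D^{s}w)\,d\mu\big)\frac{d\lambda^{*}}{d\delta}$, whose sign is that of $1-\delta$, captures the same phenomenon, but I expect the implicit-function-theorem justification of the differentiability of $\lambda^{*}$ to be the fussy point, which is precisely why the order-based comparison is preferable.

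Finally, part $(i)$. As $\delta\to0^{+}$, $\lambda^{*}(\delta,w)$ is increasing and bounded below by $0$, hence converges to some $L\ge0$; $L>0$ is impossible, for letting $\delta\to0^{+}$ in $\delta\iint_{Q}g_{x,y}(\lambda^{*}D^{s}w)(\lambda^{*}D^{s}w)\,d\mu=\int_{\Omega}f(x,\lambda^{*}w)\lambda^{*}w\,dx$ would yield $0=\int_{\Omega}f(x,Lw)Lw\,dx>0$, using positivity of $tf(x,t)$ for $t\neq0$ (from \eqref{Assump:f_2} together with $F(x,t)>0$ for $t\neq0$). Thus $\lambda^{*}(\delta,w)\to0$ and $\Lambda_{w}(\delta)\to E(0)=0$ by Lemma \ref{Lemma:3.3}$(i)$, giving $\limsup_{\delta\to0^{+}}d(\delta)\le0$; combined with the lower bound $E(u)\ge\big(1-\tfrac{\delta g^{+}}{h_{1}^{-}}\big)\iint_{Q}G_{x,y}(D^{s}u)\,d\mu\ge0$ for $u\in\mathcal N_{\delta}$ and $\delta\le1$ (as in Lemma \ref{positive:depth}, using $g^{+}<h_{1}^{-}$ from \eqref{assump:g_5}), this forces $\lim_{\delta\to0^{+}}d(\delta)=0$. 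As $\delta\to\infty$ the same reasoning gives $\lambda^{*}(\delta,w)\to\infty$, so $d(\delta)\le\Lambda_{w}(\delta)=E(\lambda^{*}(\delta,w)w)\to-\infty$ by Lemma \ref{Lemma:3.3}$(i)$. The main obstacle throughout is making the $\delta$-dependence of $\lambda^{*}$ rigorous; the comparison formulation above isolates it into the single monotonicity statement for $\lambda^{*}(\cdot,w)$ and leaves everything else elementary.
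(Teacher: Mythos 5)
Your reduction $d(\delta)=\inf_{w\neq 0}E(\lambda^{*}(\delta,w)w)$, the strict monotonicity of $\delta\mapsto\lambda^{*}(\delta,w)$ via the sign structure of Corollary \ref{cor}, and your treatment of part $(i)$ are correct. In fact, part $(i)$ is the paper's own argument in slightly different clothing: the function $\eta$ in \eqref{definition of eta} is precisely the inverse of your map $\delta\mapsto\lambda^{*}(\delta,w)$, and the paper likewise sends $\lambda(\delta,u)\to 0$ (resp.\ $+\infty$) and invokes Lemma \ref{Lemma:3.3}$(i)$; your explicit justification that $d(\delta)\geq 0$ for $\delta\leq 1$ is a point the paper leaves implicit.

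Part $(ii)$, however, contains a genuine gap. Each fiber map $\Lambda_{w}$ is indeed strictly increasing on $(0,1]$ and strictly decreasing on $[1,\infty)$, but a pointwise infimum of strictly increasing functions is in general only non-decreasing (consider $\inf_{n}\,x/n\equiv 0$ on $(0,1]$), and your conclusion is accordingly only ``non-decreasing/non-increasing.'' That is weaker than what the lemma must deliver: the proof of Lemma \ref{Lemma: 3.6} relies on the \emph{strict} inequality $E(u)=d(\delta_{1})=d(\delta_{2})<d(\overline{\delta})$ for $\delta_{1}<\overline{\delta}<\delta_{2}$, which fails if $d$ is merely constant near $\overline{\delta}$. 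This is exactly why the paper does not argue fiberwise and then take infima; instead it proves a gap estimate \emph{uniform in} $u$: for $0<\delta'<\delta''<1$ (or $\delta'>\delta''>1$) and every $u\in\mathcal{N}_{\delta''}$ there exists $v\in\mathcal{N}_{\delta'}$ with $E(u)-E(v)\geq \epsilon(\delta',\delta'')>0$, where $\epsilon$ does not depend on $u$. The uniformity comes from Lemma \ref{Upper estimate of norm}$(ii)$, which bounds $[u]_{s,G_{x,y}}$ from below on $\mathcal{N}_{\delta''}$ by a constant depending only on $\delta''$, and this bound is then fed into
\[
E(u)-E(v)=\int_{\lambda(\delta')}^{1}\frac{dE(\lambda u)}{d\lambda}\,d\lambda
\geq (1-\delta'')\,g^{-}\min\left\{[u]_{s,G_{x,y}}^{g^{-}},[u]_{s,G_{x,y}}^{g^{+}}\right\}\int_{\lambda(\delta')}^{1}\lambda^{g^{+}-1}\,d\lambda .
\]
Your order-theoretic shortcut discards precisely this quantitative ingredient. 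To close the gap you must show $\inf_{w}\left(\Lambda_{w}(\delta'')-\Lambda_{w}(\delta')\right)>0$, which amounts to reinstating the paper's computation (uniform seminorm lower bound on the Nehari-type set plus integration of the fibering derivative); without it, the proof yields only non-strict monotonicity and cannot support the later applications of the lemma.
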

\begin{proof}
$(i)$ By invoking Corollary \ref{cor}, for any $u\in W_{0}^{s,G_{x,y}}(\Omega)$ with $[u]_{s,G_{x,y}}\neq 0$ and $\delta> 0$, there exists a unique $\lambda=\lambda(\delta,u)$ such that $I_{\delta}(\lambda u)=0$. Thus,
$$\delta {\iint_{Q}}g_{x,y}\left(\lambda     D^{s}u\right)\left(\lambda D^{s}u\right)d\mu =\int_{\Omega}f(x, \lambda u) \lambda u\ dx.$$
Therefore, for a fixed $u\in W_{0}^{s,G_{x,y}}(\Omega)$ with $[u]_{s,G_{x,y}}\neq 0$, we define
\begin{equation}
\label{definition of eta}
\begin{aligned}
\delta=\eta(\lambda):&=\frac{\int_{\Omega}f(x, \lambda u) \lambda u\ dx}{{\iint_{Q}}g_{x,y}\left(\lambda D^{s}u\right)\left(\lambda D^{s}u\right)d\mu}.
\end{aligned}
\end{equation}

\textbf{Claim 1:} $\eta(\lambda)$ is increasing on $(0,+\infty)$.
To this end,
 differentiating with respect to $\lambda$, using the condition \eqref{Assump:g_3} along with Lemma \ref{tf(x,t)>0}, we obtain
\begin{align*}
\eta'(\lambda)&\geq \frac{\left({\iint_{Q}}g_{x,y}\left(\lambda D^{s}u\right)\left(\lambda D^{s}u\right)d\mu \right)}{\left({\iint_{Q}}g_{x,y}\left(\lambda D^{s}u\right)\left(\lambda D^{s}u\right)d\mu\right)^{2}}\times \left( \int_{\Omega}f'(x, \lambda u) \lambda u^{2}dx\right)\\
& -\frac{ \left( (g^{+}-1){\iint_{Q}}g_{x,y}\left(\lambda D^{s}u\right)\left( D^{s}u\right)d\mu\right) }{\left({\iint_{Q}}g_{x,y}\left(\lambda D^{s}u\right)\left(\lambda D^{s}u\right)d\mu\right)^{2}}\times\left(\int_{\Omega}f(x,\lambda u)\lambda u\ dx \right)\\
&\geq \frac{\left({\iint_{Q}}g_{x,y}\left(\lambda D^{s}u\right)\left(\lambda D^{s}u\right)d\mu\right)}{\lambda \left({\iint_{Q}}g_{x,y}\left(\lambda D^{s}u\right)\left(\lambda D^{s}u\right)d\mu\right)^{2}}\left[ \int_{\Omega}\left(f'(x,\lambda u)(\lambda u)^{2}-  (g^{+}-1) f(x,\lambda u)\lambda u\right)dx\right]\\
&>0,
\end{align*}
since, from the condition \eqref{Assump:f_3}, we know that
$$\left(f'(x,\lambda u)(\lambda u)^{2}- (g^{+}-1)f(x,\lambda u)\lambda u\right)>0.$$
This proves Claim $1$.\\

\textbf{Claim 2:} $\lim_{\lambda \to 0^{+}}\eta(\lambda)=0$ and $\lim_{\lambda \to +\infty}\eta(\lambda)=+\infty.$ Indeed, combining \eqref{Assump:g_3} and \eqref{assump:g_5} with Corollary \ref{bounds on f:f}, as well as Lemmas \ref{RE:Gt} and \ref{RELATION:MODULAR & NORM}, we obtain

\begin{align}
\label{eta:lim 0}
\eta(\lambda)&\leq \frac{\int_{\Omega} A h_{2}(x)|\lambda u|^{h_{2}(x)}dx}{g^{-}{\iint_{Q}}G_{x,y}\left(\lambda D^{s}u\right)d\mu},\nonumber\\
&\leq\frac{Ah_{2}^{+}\max\left\{\lambda^{h_{2}^{+}}, \lambda^{h_{2}^{-}}\right\}\int_{\Omega}|u|^{h_{2}(x)}}{g^{-} \min\left\{\lambda^{g^{-}}, \lambda^{g^{+}}\right\}\min\left\{ [u]_{s,G_{x,y}}^{g^{-}},[u]_{s,G_{x,y}}^{g^{+}} \right\}}\to 0 \text{ as } \lambda\to 0^{+},
\end{align}
and
\begin{align*}
    \quad \eta(\lambda)&\geq \frac{\int_{\Omega\cap\{|u|\geq\frac{1}{\lambda}\}}Bh_{1}(x)|\lambda u|^{h_{1}(x)}dx}{g^{+}{\iint_{Q}}G_{x,y}\left(\lambda D^{s}u\right)d\mu}\\
   &\geq \frac{Bh_{1}^{-}\min\left\{\lambda^{h_{1}^{+}}, \lambda^{h_{1}^{-}}\right\}\int_{\Omega \cap\{|u|\geq\frac{1}{\lambda}\}}|u|^{h_{1}(x)}} { g^{+}\max\left\{\lambda^{g^{-}}, \lambda^{g^{+}}\right\}\max\left\{ [u]_{s,G_{x,y}}^{g^{-}},[u]_{s,G_{x,y}}^{g^{+}} \right\}} \to \infty \text{ as } \lambda\to +\infty.
    \end{align*}
This proves Claim $2$.

From Claims 1 and 2, together with \eqref{definition of eta}, it follows that $\lambda(\delta, u) = \eta^{-1}(\delta)$, and that the mapping $\delta \mapsto \lambda(\delta, u)$ is increasing on $(0, +\infty)$. Moreover,

$$\lim_{\delta \to 0^{+}}\lambda(\delta,u)=0 \text{,          }\lim_{\delta \to +\infty}\lambda(\delta,u)=+\infty.$$
Since  $\lambda(\delta,u) u \in \mathcal{N_{\delta}}$, then from the definition of $d(\delta)$, we know  that $$d(\delta)\leq E(\lambda u).$$
Hence, using Lemma \ref{Lemma:3.3}(i), we conclude that
$$0\leq \lim_{\delta \to 0^{+}}d(\delta)\leq \lim_{\delta \to 0^{+}}E\left(\lambda(\delta,u)u\right)=0,$$
and
$$\lim_{\delta \to +\infty} d(\delta)\leq \lim_{\delta \to +\infty} E\left(\lambda(\delta,u) u\right)=-\infty.$$
This completes the proof of assertion (i).\\
$(ii)$ To establish assertion (ii), it suffices to prove that for any \(0 < \delta' < \delta'' < 1\) or \(\delta' > \delta'' > 1\), and for any \(u \in \mathcal{N}_{\delta''}\), there exists a \(v \in \mathcal{N}_{\delta'}\) and a constant \(\epsilon(\delta', \delta'') > 0\) such that
\[
E(u) - E(v) \geq \epsilon(\delta', \delta'').
\]
Indeed, for \(u \in \mathcal{N}_{\delta''}\), we have \(I_{\delta''}(u) = 0\), which implies that  \(\lambda(\delta'') = 1\). Using Lemma \ref{Upper estimate of norm}(ii), it follows that
\[
[u]_{s,G_{x,y}} \geq \min \left\{ \left( y(\delta'') \right)^{\frac{1}{h_2^{-} - g^{+}}}, \, \left( y(\delta'') \right)^{\frac{1}{h_2^{+} - g^{-}}} \right\}.\] By Lemma \ref{Lemma:3.3} (ii), there exists a constant \(\lambda(\delta') > 0\) such that \(v = \lambda(\delta') u \in \mathcal{N}_{\delta'}\).
Let \(g(\lambda) = E(\lambda u)\). Then,

\begin{align}\label{an}
    \frac{dg(\lambda)}{d\lambda}&=\frac{d}{d\lambda}\left[{\iint_{Q}}G_{x,y}\left(\lambda D^{s}u\right)d\mu-\int_{\Omega}F(x,\lambda u)dx \right] \nonumber\\
    &={\iint_{Q}}g_{x,y}\left(\lambda D^{s}u\right)\left(D^{s}u\right)d\mu-\int_{\Omega}f(x,\lambda u)u\ dx\nonumber\\
    &=\frac{1}{\lambda}\left[ {\iint_{Q}}g_{x,y}\left(\lambda D^{s}u\right)\left(\lambda D^{s}u\right)d\mu-\int_{\Omega}f(x,\lambda u)\lambda u\ dx\right]\nonumber\\
    &=\frac{1}{\lambda}\left[(1-\delta''){\iint_{Q}}g_{x,y}\left(\lambda D^{s} u \right)\left(\lambda D^{s}u\right)d\mu +  I_{\delta''}(\lambda u) \right]
        \end{align}
We consider two case:\\
    \textbf{Case $1$}: $0<\delta'<\delta''<1$. Since $\lambda$ is increasing and $\lambda(\delta'')=1$, then
    \begin{align*}
        E(u)-E(v)=g(1)-g(\lambda(\delta'))&=\int_{\lambda(\delta')}^{1}\frac{dg(\lambda)}{d\lambda}d\lambda.
    \end{align*}
 By Corollary \ref{I delta}, $I_{\delta''}(\lambda u) \geq 0$ for all $\lambda(\delta') < \lambda < 1$. Therefore, from \eqref{Assump:g_3}, \eqref{an}, Lemmas \ref{RE:Gt} and \ref{RELATION:MODULAR & NORM}, we get
    \begin{align*}
        E(u)-E(v) &\geq \int_{\lambda(\delta')}^{1}\frac{1}{\lambda}(1-\delta'')g^{-}{\iint_{Q}}G_{x,y}\left(\lambda D^{s}u\right)d\mu d\lambda \\
        &\geq g^{-}(1-\delta'' )\min\left\{ [u]_{s,G_{x,y}}^{g^{-}},[u]_{s,G_{x,y}}^{g^{+}} \right\} \int_{\lambda(\delta')}^{1}\min\left\{\lambda^{g^{-}-1}, \lambda^{g^{+}-1}\right\}d\lambda\\
        &\geq g^{-}(1-\delta'' )\min\left\{ [u]_{s,G_{x,y}}^{g^{-}},[u]_{s,G_{x,y}}^{g^{+}} \right\} \int_{\lambda(\delta')}^{1}\lambda^{g^{+}-1}d\lambda\\
   &\geq\frac{g^{-}}{g^{+}}\min\left\{ [u]_{s,G_{x,y}}^{g^{-}},[u]_{s,G_{x,y}}^{g^{+}} \right\}(1-\delta'')\left(1-\lambda(\delta'\right)^{g^{+}}).
    \end{align*}
    It follows, in light of Lemma \ref{Upper estimate of norm}(ii), that
$$
    E(u)-E(v)>\epsilon(\delta'',\delta'),
$$
where
$$\epsilon(\delta'',\delta')=\frac{g^{-}}{g^{+}}\min\left\{\left(y(\delta'')\right)^{\frac{g^{+}}{h_{2}^{-}-g^{+}}}, \left(y(\delta'')\right)^{\frac{g^{-}}{h_{2}^{+}-g^{-}}},\left(y(\delta'')\right)^{\frac{g^{-}}{h_{2}^{-}-g^{+}}}, \left(y(\delta'')\right)^{\frac{g^{+}}{h_{2}^{+}-g^{-}}}\right\}\alpha(\delta',\delta''),$$
and $$\alpha(\delta',\delta'')=(1-\delta'')(1-\lambda(\delta')^{g^{+}}).$$

    \textbf{Case 2}: \(\delta' > \delta'' > 1\). As $\lambda$ is increasing, it follows that  \(\lambda(\delta') > \lambda(\delta) > 1 = \lambda(\delta'')\).
    Hence, by repeating the same arguments as in \textbf{Case 1}, we obtain
    we get
    $$
        E(u)-E(v)>\epsilon(\delta',\delta''),
    $$
    where
   $$ \epsilon(\delta',\delta'')=\min\left\{\left(y(\delta'')\right)^{\frac{g^{+}}{h_{2}^{-}-g^{+}}}, \left(y(\delta'')\right)^{\frac{g^{-}}{h_{2}^{+}-g^{-}}},\left(y(\delta'')\right)^{\frac{g^{-}}{h_{2}^{-}-g^{+}}}, \left(y(\delta'')\right)^{\frac{g^{+}}{h_{2}^{+}-g^{-}}} \right\} \beta(\delta',\delta''),$$
   and
   $$\beta(\delta',\delta'')=(\delta''-1)(\lambda(\delta')^{g^{+}}-1).$$

Consequently, since \( d(\delta) \) is continuous, increasing for \( 0 < \delta \leq 1 \), and decreasing for \( \delta \geq 1 \), it attains its maximum value at \( \delta = 1 \), where \( d(1) = d \).
\end{proof}
\begin{lemma}
\label{Lemma: 3.6}
Suppose the assumptions of Lemma \ref{prop:d(delta)} are satisfied. Let \(u \in W_{0}^{s,G_{x,y}}(\Omega)\) with \(0 < E(u) < d\), and assume that \(\delta_1 < 1 < \delta_2\), where \(\delta_1\) and \(\delta_2\) satisfy the equation \(d(\delta) = E(u)\). Then, the sign of \(I_{\delta}(u)\) remains unchanged for \(\delta_1 < \delta < \delta_2\).
\end{lemma}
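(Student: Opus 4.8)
The plan is to exploit the affine (hence continuous) dependence of $I_{\delta}(u)$ on $\delta$ together with the strict monotonicity of $d(\delta)$ established in Lemma \ref{prop:d(delta)}. First I would record that, for a fixed $u$, the map $\delta \mapsto I_{\delta}(u) = \delta \iint_{Q} g_{x,y}\left(D^{s}u\right)\left(D^{s}u\right) d\mu - \int_{\Omega} f(x,u)u\,dx$ is affine in $\delta$, and in particular continuous. Writing $P(u) := \iint_{Q} g_{x,y}\left(D^{s}u\right)\left(D^{s}u\right) d\mu$ for its slope, note that $u \neq 0$ (since $E(u)>0=E(0)$), so by \eqref{Assump:g_3} and Lemma \ref{RELATION:MODULAR & NORM} one has $P(u)>0$; hence $\delta \mapsto I_{\delta}(u)$ is in fact strictly increasing and admits at most one zero in $(0,+\infty)$.

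Next I would use the hypotheses on $d$. By Lemma \ref{prop:d(delta)}, $d$ is continuous, \emph{strictly} increasing on $(0,1]$ and \emph{strictly} decreasing on $[1,+\infty)$, with maximum $d=d(1)$. Since $0<E(u)<d$, the two roots $\delta_1<1<\delta_2$ of $d(\delta)=E(u)$ are precisely those given in the statement. The crucial observation is the strict inequality $d(\delta) > E(u)$ for every $\delta \in (\delta_1,\delta_2)$: on $(\delta_1,1]$ this follows from strict increase together with $d(\delta_1)=E(u)$, and on $[1,\delta_2)$ from strict decrease together with $d(\delta_2)=E(u)$.

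Then I argue by contradiction. If the sign of $I_{\delta}(u)$ were not constant on $(\delta_1,\delta_2)$, then, by continuity in $\delta$ and the intermediate value theorem, there would exist some $\bar{\delta} \in (\delta_1,\delta_2)$ with $I_{\bar{\delta}}(u)=0$. Since $u \neq 0$, this means $u \in \mathcal{N}_{\bar{\delta}}$, and hence, by the definition $d(\bar{\delta}) = \inf_{v \in \mathcal{N}_{\bar{\delta}}} E(v)$, we obtain $E(u) \geq d(\bar{\delta})$. This contradicts the strict inequality $d(\bar{\delta}) > E(u)$ established above. Therefore $I_{\delta}(u) \neq 0$ for all $\delta \in (\delta_1,\delta_2)$, and by continuity its sign is constant on that interval.

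The only delicate point is securing the strict inequality $d(\delta)>E(u)$ on the open interval $(\delta_1,\delta_2)$, which rests on the \emph{strict} (not merely weak) monotonicity of $d$ on each side of $\delta=1$; this is exactly what the estimate $E(u)-E(v)\geq \epsilon(\delta',\delta'')>0$ in the proof of Lemma \ref{prop:d(delta)} provides. Once this is in place, the remainder is a direct application of the definition of $d(\bar{\delta})$ as an infimum over $\mathcal{N}_{\bar{\delta}}$, so I do not anticipate any further obstacle.
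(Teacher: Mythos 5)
Your proof is correct and takes essentially the same route as the paper: a contradiction argument in which a sign change forces some $\overline{\delta}\in(\delta_1,\delta_2)$ with $I_{\overline{\delta}}(u)=0$ (hence $u\in\mathcal{N}_{\overline{\delta}}$ since $u\neq 0$), so $E(u)\geq d(\overline{\delta})$, contradicting $E(u)=d(\delta_1)=d(\delta_2)<d(\overline{\delta})$ from Lemma \ref{prop:d(delta)}(ii). Your additional remarks on the affine dependence of $I_{\delta}(u)$ on $\delta$ and the strict monotonicity of $d$ merely make explicit details the paper leaves implicit.
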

\begin{proof}
   Clearly, since  $E(u) > 0$,  it follows that  $[u]_{s,G_{x,y}} \neq 0$.
 If the sign of \(I_{\delta}(u)\) changes for \(\delta_1 < \delta < \delta_2\), there exists \(\overline{\delta} \in (\delta_1, \delta_2)\) such that \(I_{\overline{\delta}}(u) = 0\). By the definition of \(d(\delta)\), this implies \(E(u) \geq d(\overline{\delta})\). However, this contradicts the fact that \(E(u) = d(\delta_1) = d(\delta_2) < d(\overline{\delta})\), as established in Lemma~\ref{prop:d(delta)}(ii).
\end{proof}
\begin{definition}
\label{Def:weak solution}
A function \( u \in L^{\infty}(0,T;W_{0}^{s,G_{x,y}}(\Omega)) \) with \( u_{t} \in L^{2}(0,T;L^{2}(\Omega)) \) is said to be a weak solution of the problem \eqref{main:problem} if 
\begin{enumerate}
    \item \( u(\cdot,0) = u_{0} \) a.e in $\Omega$.
    \item for all $\phi \in W_{0}^{s,G_{x,y}}(\Omega)$ and for a.e. $t \in  [0,T]$ the following equality holds:
\begin{equation}
\label{definition of weak sol}
\begin{aligned}
    \left(u_{t},\phi\right)_{L^2(\Omega)} + \left(u,\phi\right)_{W_{0}^{s,G_{x,y}}(\Omega)}
    &= \left(f(x,u),\phi \right)_{L^{2}(\Omega)},
\end{aligned}
\end{equation}
where
\[
\left(u,\phi \right)_{W_{0}^{s,G_{x,y}}(\Omega)} = \iint_{Q} g_{x,y}\left(D^{s}u\right) (D^{s}\phi) \ d\mu.
\]
\item  $u \in C(0, T; L^\Phi(\Omega))$ and the following energy  relation is satisfied:
\begin{equation}
\label{sol: u}
\begin{aligned}
    \int_{0}^{t} \|u_t(\cdot, \tau)\|^{2}_{L^{2}(\Omega)} \, d\tau + E(u(\cdot,t))
    &\leq E(u_{0}), \quad \text{a.e. } t \in [0,T).
\end{aligned}
\end{equation}

\end{enumerate}
\end{definition}

\begin{definition}
\label{def strong solution}
A  weak solution $u$ of the problem \eqref{main:problem} is said to be a strong solution if the following energy conservation law holds
\begin{equation}
    \label{strong solution}
\int_{0}^{t} \|u_t(\cdot, \tau)\|^{2}_{L^{2}(\Omega)} \, d\tau + E(u(\cdot,t))
    = E(u(\cdot,0)),
\end{equation}
for any time interval $[0,t]\subset [0,T).$
\end{definition}
\begin{definition}[Maximal existence time]
    Let \( u \) be a local or strong solution of the problem \eqref{main:problem}. We define the maximal existence time \( T_{\max} \) of \( u \) as follows:
    \begin{enumerate}
        \item If \( u \) exists for all \( 0 \leq t < +\infty \), then \( T_{\max} = +\infty \).
        \item If there exists \( t_0 \in (0, +\infty) \) such that \( u \) exists for all \( t \in (0, t_0) \) but does not exist at \( t = t_0 \) in the sense that 
        \[
        \|u(\cdot, t)\|_{W_{0}^{s,G_{x,y}}(\Omega)} \to +\infty \quad \text{as} \quad t \to t_0^-,
        \]
        then \( T_{\max} = t_0 \).
    \end{enumerate}
\end{definition}
\begin{lemma}
\label{continuity of E(u) with t}
    Let the conditions \eqref{assump:f_0}--\eqref{Assump:f_2}, \eqref{cond:g0}--\eqref{Assump:g_3}, and \eqref{Cond:g_6} hold, and let $u$ be a global strong solution of problem \eqref{main:problem} in the sense of Definition \eqref{def strong solution}. Then, we have $u\in C(0,\infty;W_{0}^{s,G_{x,y}}(\Omega))$, and the mappings
    \[
    t\mapsto E(u(\cdot,t))\qquad  \text{and} \qquad t\mapsto I(u(\cdot,t))
    \]
    are continuous on $[0,\infty)$.
\end{lemma}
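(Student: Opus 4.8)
The plan is to exploit the energy conservation law available for a \emph{strong} solution (the equality \eqref{strong solution}, not merely the inequality \eqref{sol: u}) to first control the scalar quantity $E(u(\cdot,t))$, and then to bootstrap this into strong continuity of the path $t \mapsto u(\cdot,t)$ in $W_{0}^{s,G_{x,y}}(\Omega)$ via the modular Radon--Riesz property of the space.

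First I would record that, since $u_t \in L^{2}(0,T;L^{2}(\Omega))$, the $L^{2}(\Omega)$-valued map $t \mapsto u(\cdot,t)$ agrees a.e.\ with $u(\cdot,0) + \int_{0}^{t} u_t(\cdot,\tau)\,d\tau$, and the Cauchy--Schwarz estimate $\|u(\cdot,t)-u(\cdot,\sigma)\|_{L^{2}(\Omega)} \leq |t-\sigma|^{1/2}\big(\int_{\sigma}^{t}\|u_t(\cdot,\tau)\|^{2}_{L^{2}(\Omega)}\,d\tau\big)^{1/2}$ yields $u \in C([0,\infty);L^{2}(\Omega))$. Next, rewriting \eqref{strong solution} as $E(u(\cdot,t)) = E(u(\cdot,0)) - \int_{0}^{t}\|u_t(\cdot,\tau)\|^{2}_{L^{2}(\Omega)}\,d\tau$ exhibits $t\mapsto E(u(\cdot,t))$ as the difference of a constant and an absolutely continuous function, hence continuous (indeed non-increasing) on $[0,\infty)$.

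The heart of the argument is to promote these scalar facts to strong continuity in $W_{0}^{s,G_{x,y}}(\Omega)$. Fix $t$ and a sequence $t_n \to t$. Since $u \in L^{\infty}(0,T;W_{0}^{s,G_{x,y}}(\Omega))$, the family $\{u(\cdot,t_n)\}$ is bounded in $W_{0}^{s,G_{x,y}}(\Omega)$; by the compact embedding of Theorem \ref{thm2} a subsequence converges strongly in $L^{\Phi}(\Omega)$, and its limit must coincide with $u(\cdot,t)$ by the $L^{2}$-convergence of the previous step (agreement of a.e.\ limits along further subsequences), so a routine subsequence argument gives $u(\cdot,t_n) \to u(\cdot,t)$ in $L^{\Phi}(\Omega)$. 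The growth bound $|F(x,t)| \leq A|t|^{h_2(x)}$ from Lemma \ref{F:bounds}(i), together with continuity of the associated Nemytskii map, then yields $\int_{\Omega}F(x,u(\cdot,t_n))\,dx \to \int_{\Omega}F(x,u(\cdot,t))\,dx$. Combining this with the continuity of $E$ and the identity $\iint_{Q}G_{x,y}(D^{s}u)\,d\mu = E(u) + \int_{\Omega}F(x,u)\,dx$ shows that the modular $t \mapsto \iint_{Q}G_{x,y}(D^{s}u(\cdot,t))\,d\mu$ is continuous. Now, passing to a further subsequence, boundedness yields $u(\cdot,t_n) \weak u(\cdot,t)$ weakly in $W_{0}^{s,G_{x,y}}(\Omega)$ (the weak limit again identified through the $L^{2}$-limit), while the modulars converge. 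Invoking the Radon--Riesz property with respect to the modular function established in \cite{Bahrouni-Missaoui-Ounaies-2024, M1} upgrades this to $u(\cdot,t_n) \to u(\cdot,t)$ \emph{strongly} in $W_{0}^{s,G_{x,y}}(\Omega)$; a final subsequence argument gives $u \in C([0,\infty);W_{0}^{s,G_{x,y}}(\Omega))$.

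Continuity of $t \mapsto I(u(\cdot,t))$ then follows immediately: both terms in $I(u) = \iint_{Q}g_{x,y}(D^{s}u)(D^{s}u)\,d\mu - \int_{\Omega}f(x,u)u\,dx$ are continuous functionals on $W_{0}^{s,G_{x,y}}(\Omega)$ (equivalently $I(u) = \langle E'(u),u\rangle$ with $E \in C^{1}$), the lower-order term being handled by the bound on $|tf(x,t)|$ in Corollary \ref{bounds on f:f}(i) combined with the compact embedding, so composition with the continuous path $t \mapsto u(\cdot,t)$ is continuous. \textbf{The main obstacle} is precisely the passage from convergence of the modular to strong convergence in $W_{0}^{s,G_{x,y}}(\Omega)$: the nonhomogeneity of $G_{x,y}$ rules out the elementary uniform-convexity estimates one would use for $L^{p}$-type norms, and this step rests decisively on the modular Radon--Riesz property of the fractional Musielak--Sobolev space (and, implicitly, on having a strong solution so that $E(u(\cdot,t))$ is genuinely continuous rather than merely non-increasing).
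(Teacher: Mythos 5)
Your proposal is correct, and its skeleton coincides with the paper's own proof: the energy equality \eqref{strong solution} yields continuity of $t \mapsto E(u(\cdot,t))$; continuity of $t \mapsto \int_{\Omega} F(x,u)\,dx$ then transfers this to the modular $t \mapsto \iint_{Q} G_{x,y}(D^{s}u)\,d\mu$; and finally one passes from convergence of the modulars to norm convergence in $W_{0}^{s,G_{x,y}}(\Omega)$. You differ from the paper in two places, both defensibly. First, at the modular-to-norm step the paper merely asserts the implication; you supply the actual mechanism (boundedness, identification of the weak limit through the $L^{2}$ limit, and the modular Radon--Riesz property from \cite{Bahrouni-Missaoui-Ounaies-2024, M1}), which is precisely the tool the paper is using tacitly, so your version makes explicit a step the paper glosses over. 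Second, for the continuity of $I$, the paper argues via \cite[Theorem 4.9]{Brezis}: strong convergence of $D^{s}u(\cdot,t)$ in $L^{\widehat{G}_{x}}(Q)$ gives an a.e.-convergent dominated (sub)sequence, and dominated convergence then handles $\iint_{Q} g_{x,y}(D^{s}u)D^{s}u\,d\mu$; you instead compose the already-stated regularity $E \in C^{1}(W_{0}^{s,G_{x,y}}(\Omega),\mathbb{R})$ with $I(u) = \langle E'(u),u\rangle$ and the norm-continuity of the path, which is shorter and avoids the measure-theoretic detour. One minor ordering remark: the uniform bound on $\{u(\cdot,t_{n})\}$ in $W_{0}^{s,G_{x,y}}(\Omega)$ that you draw from $u \in L^{\infty}(0,\infty;W_{0}^{s,G_{x,y}}(\Omega))$ is a priori only an a.e.-in-$t$ statement; it is cleaner to take the $L^{\Phi}$-continuity directly from item (3) of Definition \ref{Def:weak solution}, deduce continuity (hence local-in-time boundedness) of the modular first, and only then extract the weakly convergent subsequence and apply Radon--Riesz. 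This is a reordering of steps you already have, not a gap.
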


\begin{proof}
Let $u$ be a global strong solution of problem \eqref{main:problem}. Using Lemma \ref{F:bounds}(i), condition \eqref{Cond:g_6}, and the Lebesgue dominated convergence theorem, it is easy to show that for any $t_1 \in [0, \infty)$, we have
\begin{align}
    \label{F(x,u(t)) is cont.}
    &\lim_{t\to t_1}\int_{\Omega}F(x,u(x,t))\,dx=\int_{\Omega}F(x,u(x,t_1))\,dx
\end{align}
and 
\begin{align}
    \label{f(x,u(t))u is cont.}
    &\lim_{t \to t_1} \int_{\Omega}f(x,u(x,t)) u(x,t)\,dx = \int_{\Omega} f(x,u(x,t_1)) u(x,t_1)\,dx.
\end{align}
Let $\epsilon>0$ and choose $\delta = \epsilon \left(\|u_t\|_{L^{2}(0,\infty;L^2{(\Omega)})}\right)^{-1}$ such that $|t - t_{1}| < \delta$. Then, from \eqref{strong solution}, we obtain
\begin{align}
   \left|E(u(\cdot,t))- E(u(\cdot,t_1))\right| &=\left|\int_{0}^{t} \|u_t(\cdot, \tau)\|^{2}_{L^{2}(\Omega)} \, d\tau - \int_{0}^{t_1} \|u_t(\cdot, \tau)\|^{2}_{L^{2}(\Omega)} \, d\tau\right| \nonumber\\
   &=\left|\int_{t_1}^{t}\|u_t(\cdot, \tau)\|^{2}_{L^{2}(\Omega)} \, d\tau\right|\nonumber\\
   &\leq \|u_t\|_{L^{2}(0,\infty;L^2{(\Omega)})} \left|t - t_{1}\right| < \epsilon. \nonumber
\end{align}
Therefore, the map $t \mapsto E(u(\cdot,t))$ is continuous for all $t\in[0,\infty)$. Now, using the continuity of $E(u(\cdot,t))$ and \eqref{F(x,u(t)) is cont.}, we obtain
\[
\lim_{t \to t_1}\iint_{Q}G_{x,y}\left(D^{s}u(t)\right)d\mu=\iint_{Q}G_{x,y}\left(D^{s}u(t_1)\right)d\mu.
\]
This implies that $u\in C(0,\infty;W_{0}^{s,G_{x,y}}(\Omega))$ and
\begin{align}
\label{u(tn) strong conv}
D^{s}(u(\cdot,t)) \to D^{s}(u(\cdot,t_1)) \quad\text{in }\,L^{\widehat{G}_x}(Q), \quad \text{where} \quad D^{s}(u(\cdot,t)):= \frac{u(x,t)-u(y,t)}{|x-y|^s}.
\end{align}
From \eqref{u(tn) strong conv} and \cite[Theorem 4.9]{Brezis}, there exists a function $h \in L^{\widehat{G}_x}(Q)$ such that
\[
D^{s}(u(\cdot,t))\to D^{s}(u(\cdot,t_1)) \quad \text{as} \quad t \to t_1, \quad \text{for a.e. in } Q,
\]
and 
\[
\left|D^{s}(u(\cdot,t))\right|\leq  h(\cdot), \quad \text{a.e. in } Q.
\]
Since the function $t \mapsto g_{x,y}(t)$ is $C^{1}(0,\infty)$ and increasing, we obtain
\[
g_{x,y}(D^{s}(u(\cdot,t)))D^{s}(u(\cdot,t))\to g_{x,y}(D^{s}(u(\cdot,t_1)))D^{s}(u(\cdot,t_1)) \quad \text{a.e. in } Q,
\]
and 
\[
\left|g_{x,y}(D^{s}(u(\cdot,t)))D^{s}(u(\cdot,t))\right| \leq |g_{x,y}(h(x))h(x)|, \quad \text{a.e. in } Q.
\]
Applying the Lebesgue dominated convergence theorem, we obtain 
\begin{align}
\label{g{x,y}}
\lim_{t \to t_1}\iint_{Q}g_{x,y}(D^{s}(u(\cdot,t)))D^{s}(u(\cdot,t)) d\mu = \iint_{Q}g_{x,y}(D^{s}(u(\cdot,t_1)))D^{s}(u(\cdot,t_1))\, d\mu.
\end{align}
Combining \eqref{f(x,u(t))u is cont.} and \eqref{g{x,y}}, we conclude that
\[
\lim_{t \to t_1}I(u(\cdot,t))= I(u(\cdot,t_1)).
\]
Hence, the proof follows.
\end{proof}

\begin{lemma}
\label{u in W delta}
Assume that the conditions \eqref{assump:f_0}--\eqref{Assump:f_3} and \eqref{cond:g0}--\eqref{Assump:g_3} hold. Let \(u\) be a global  strong solution of problem \eqref{main:problem}  in the sense of Definition \ref{def strong solution}, with initial data \(u_0\) satisfying \(E(u_0) < d\) and \(E(u_0) = d(\delta_1) = d(\delta_2)\) for some \(\delta_1 < 1 < \delta_2\).
\begin{enumerate}
    \item[$(i)$] If \(I(u_0) > 0\), then \( u(\cdot,t) \in W_{\delta}\) for \(\delta_1 < \delta < \delta_2\)  and for \(0 < t < \infty \).
    \item[$(ii)$] If \(I(u_0) < 0\), then \(u(\cdot,t) \in V_{\delta}\) for \(\delta_1 < \delta < \delta_2\)  and for \(0 < t < \infty\).
\end{enumerate}
\end{lemma}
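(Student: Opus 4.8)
The plan is to split the defining conditions of $W_{\delta}$ (resp.\ $V_{\delta}$) into the energy part $E(u(\cdot,t)) < d(\delta)$ and the sign part $I_{\delta}(u(\cdot,t)) > 0$ (resp.\ $< 0$), dispose of the energy part once and for all, and then propagate the sign in time by a continuity/connectedness argument. First I would handle the energy part. Since $u$ is a strong solution, the conservation law \eqref{strong solution} gives $E(u(\cdot,t)) = E(u_0) - \int_{0}^{t}\|u_t(\cdot,\tau)\|_{L^2(\Omega)}^2\,d\tau \leq E(u_0)$ for all $t$. Because $E(u_0) = d(\delta_1) = d(\delta_2)$ and, by Lemma \ref{prop:d(delta)}(ii), $d$ is strictly increasing on $(0,1]$ and strictly decreasing on $[1,\infty)$, we have $d(\delta) > d(\delta_1) = E(u_0)$ for $\delta \in (\delta_1,1]$ and $d(\delta) > d(\delta_2) = E(u_0)$ for $\delta \in [1,\delta_2)$. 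Hence $E(u(\cdot,t)) \leq E(u_0) < d(\delta)$ for every $t \geq 0$ and every $\delta \in (\delta_1,\delta_2)$, so the energy condition is automatic and only the sign of $I_{\delta}(u(\cdot,t))$ must be tracked. Note also $E(u_0) = d(\delta_1) > 0$ by Lemma \ref{prop:d(delta)}, so $0 < E(u_0) < d$.

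For the base point $t=0$ I would use that $I = I_1$ and that $\delta_1 < 1 < \delta_2$ solve $d(\delta) = E(u_0)$: by Lemma \ref{Lemma: 3.6} the sign of $I_{\delta}(u_0)$ is constant on $(\delta_1,\delta_2)$, and evaluating at $\delta = 1$ shows $I_{\delta}(u_0) > 0$ in case $(i)$ and $I_{\delta}(u_0) < 0$ in case $(ii)$, for all $\delta \in (\delta_1,\delta_2)$. I also need the time continuity of $t \mapsto I_{\delta}(u(\cdot,t))$ for fixed $\delta$; this follows verbatim from the dominated-convergence arguments of Lemma \ref{continuity of E(u) with t}, which already yield continuity of $t \mapsto \iint_{Q} g_{x,y}(D^{s}u)(D^{s}u)\,d\mu$ and of $t \mapsto \int_{\Omega} f(x,u)u\,dx$, together with $u \in C(0,\infty;W_{0}^{s,G_{x,y}}(\Omega))$.

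The core step is the sign propagation; I describe case $(i)$. Fix $\delta \in (\delta_1,\delta_2)$ and set $t_0 = \inf\{t>0 : I_{\delta}(u(\cdot,t)) < 0\}$. If this set is empty, then $I_{\delta}(u(\cdot,t)) \geq 0$ for all $t$; combined with the fact that $I_{\delta}(u(\cdot,t)) = 0$ with $u(\cdot,t) \neq 0$ is impossible (it would place $u(\cdot,t) \in \mathcal{N}_{\delta}$, forcing $E(u(\cdot,t)) \geq d(\delta)$, against the energy bound), this gives $u(\cdot,t) \in W_{\delta}$ for all $t$. Otherwise, since $I_{\delta}(u_0) > 0$ and $I_{\delta}(u(\cdot,\cdot))$ is continuous, $t_0 > 0$ and $I_{\delta}(u(\cdot,t_0)) = 0$. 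Choosing $t_n \downarrow t_0$ with $I_{\delta}(u(\cdot,t_n)) < 0$, Lemma \ref{Upper estimate of norm}(i) provides a $\delta$-dependent positive lower bound for $[u(\cdot,t_n)]_{s,G_{x,y}}$; the $W_{0}^{s,G_{x,y}}$-continuity of $u$ then gives $[u(\cdot,t_0)]_{s,G_{x,y}} > 0$, i.e.\ $u(\cdot,t_0) \neq 0$. Thus $u(\cdot,t_0) \in \mathcal{N}_{\delta}$ and $E(u(\cdot,t_0)) \geq d(\delta)$, contradicting $E(u(\cdot,t_0)) < d(\delta)$. Hence the set is empty and $(i)$ holds for all $\delta \in (\delta_1,\delta_2)$. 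Case $(ii)$ is identical with the inequalities reversed, taking $t_0 = \inf\{t>0 : I_{\delta}(u(\cdot,t)) \geq 0\}$ and starting from $I_{\delta}(u_0) < 0$; here the lower bound additionally guarantees $u(\cdot,t) \neq 0$, so indeed $u(\cdot,t) \in V_{\delta}$.

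The main obstacle is exactly this transition analysis: ruling out that the trajectory changes the sign of $I_{\delta}$ by passing through $0$ or through the Nehari manifold $\mathcal{N}_{\delta}$. These two escape routes are closed off by two different facts — the strict lower bound of Lemma \ref{Upper estimate of norm} forbids $u(\cdot,t_0)=0$ while $I_{\delta}$ is negative nearby, and the strict energy gap $E(u(\cdot,t)) < d(\delta)$ forbids reaching $\mathcal{N}_{\delta}$ — and reconciling them through the $W_{0}^{s,G_{x,y}}(\Omega)$-continuity of $u$ (rather than mere $L^2$ continuity) is the delicate point that the nonhomogeneity of the operator makes nontrivial.
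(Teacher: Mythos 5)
Your proof is correct and takes essentially the same route as the paper's: the base case at $t=0$ via Lemma \ref{Lemma: 3.6}, the energy condition disposed of once and for all via the energy relation together with the (strict) monotonicity of $d(\delta)$ from Lemma \ref{prop:d(delta)}, and the sign propagation by a first-crossing-time contradiction through the Nehari manifold $\mathcal{N}_{\delta}$, using the continuity supplied by Lemma \ref{continuity of E(u) with t} and the lower bound of Lemma \ref{Upper estimate of norm} to rule out the trajectory passing through $0$ at the crossing time. The only differences are cosmetic and harmless: you track the first sign change of $I_{\delta}$ rather than the first exit time from $W_{\delta}$ (resp.\ $V_{\delta}$), and you invoke the seminorm bound of Lemma \ref{Upper estimate of norm}(i) in both cases where the paper uses the $L^{\Phi}$ bound of part (iii) in case (ii); if anything, your argument is slightly more careful at the point where the paper asserts nonvanishing of $u$ at the intermediate zero of $I_{\delta}$.
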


\begin{proof}
$(i)$ For \( 0 < E(u_{0}) = d(\delta_{1}) = d(\delta_{2}) < d \) and \( I(u_{0}) > 0 \), it follows from Lemma~\ref{Lemma: 3.6} that \( u_{0} \in W_{\delta} \) for \( \delta_{1} < \delta < \delta_{2} \) and \( \delta_1 < 1 < \delta_2 \). Next, we prove that \( u(\cdot,t) \in W_{\delta} \) for \( \delta_{1} < \delta < \delta_{2} \) and \( 0 < t < \infty \). Assume the contrary: there exist \(t_{0} \in (0,\infty) \) and \( \delta_{0} \in (\delta_{1},\delta_{2}) \) such that
\( u(\cdot, t_{0}) \notin  W_{\delta_{0}} \) and \(u(\cdot, t_{0}) \not\equiv 0\). If \(t_0\) is not unique, without loss of generality, we assume \(t_0\) is the first time such that \( u(\cdot, t_{0}) \notin W_{\delta_{0}} \). Then we have \( u(\cdot, t) \not\equiv 0 \) for all \(t \in (0, t_0]\) and
\[
I_{\delta_{0}}(u(\cdot, t_{0})) = 0\text{ or } I_{\delta_{0}}(u(\cdot, t_{0})) < 0  \text{ or } E(u(\cdot, t_{0})) = d(\delta_{0}) \text{ or } E(u(\cdot, t_{0})) > d(\delta_{0})  .
\]
Clearly, from \eqref{sol: u} and Lemma \ref{prop:d(delta)}, we know \( E(u(\cdot, t_{0})) < E(u_0) = d(\delta_1)=d(\delta_2) \leq d(\delta_{0}) \). Thus, either \(I_{\delta_{0}}(u(\cdot, t_{0})) = 0\) or \(I_{\delta_{0}}(u(\cdot, t_{0})) < 0\). Suppose \(I_{\delta_{0}}(u(\cdot, t_{0})) < 0\). 
Since \(I_{\delta_{0}}(u_{0})>0 \) and \(I_{\delta_{0}}(u(\cdot, t_{0}))< 0 \), and using  Lemma \ref{continuity of E(u) with t}, there exists \(t_{1} \in (0, t_0)\) such that \(I_{\delta_{0}}(u(\cdot, t_{1}))=0\).
Thus, \( I_{\delta_{0}}(u(\cdot, t_1)) = 0 \) and \( [u(\cdot, t_1)]_{s,G_{x,y}} \neq 0 \). This implies, from the definition of \( d(\delta_{0}) \), that \( E(u(\cdot, t_{1})) \geq d(\delta_{0}) \), which contradicts \eqref{sol: u}. A similar contradiction arises if \(I_{\delta_{0}}(u(\cdot, t_{0})) = 0\).

$(ii)$ Following the above argument, we have \( u_{0} \in V_{\delta} \) for \( \delta_{1} < \delta < \delta_{2} \). Next, we prove that \( u(\cdot,t) \in V_{\delta} \) for \( \delta_{1} < \delta < \delta_{2} \) and \( 0 < t < \infty \).
Assume the contrary: there exist \( t_{0} \in (0,\infty) \) and \( \delta_{0} \in (\delta_{1},\delta_{2}) \) such that
\( u(\cdot, t_{0}) \notin V_{\delta_{0}} \). If \(t_0\) is not unique, without loss of generality, we assume \(t_0\) is the first time such that \( u(\cdot, t_{0}) \notin V_{\delta_{0}} \). Then we have four possible cases:
\[
I_{\delta_{0}}(u(\cdot, t_{0})) = 0\text{ or } I_{\delta_{0}}(u(\cdot, t_{0})) > 0  \text{ or } E(u(\cdot, t_{0})) = d(\delta_{0}) \text{ or } E(u(\cdot, t_{0})) > d(\delta_{0})  .
\]
Clearly, from \eqref{sol: u} and Lemma \ref{prop:d(delta)}, we know \( E(u(\cdot, t_{0})) < d(\delta_{0}) \). Thus, either \(I_{\delta_{0}}(u(\cdot, t_{0})) = 0\) or \(I_{\delta_{0}}(u(\cdot, t_{0})) > 0.\)
Suppose \( I_{\delta_{0}}(u(\cdot, t_{0})) = 0 \), then it follows that \( I_{\delta_{0}}(u(\cdot, t)) < 0 \) for all \( 0 \leq t < t_{0} \).  Therefore,
by Lemma~\ref{Upper estimate of norm}(iii), we obtain
\[
\|u(\cdot,t)\|_{L^{\Phi}(\Omega)} > \min\left\{\left(z(\delta_{0})\right)^{\frac{1}{h_{2}^{-}-g^{+}}}, \left(z(\delta_{0})\right)^{\frac{1}{h_{2}^{+}-g^{-}}}\right\} >0, \quad \text{for } 0 \leq t < t_{0}.
\]
Since \(u \in C(0,\infty; L^\Phi(\Omega))\), this implies
 \[
\|u(\cdot,t_{0})\|_{L^{\Phi}(\Omega)} \neq 0  \quad \textit{i.e.} \quad u(\cdot,t_{0})\not \equiv 0.
\]
Combining this with \( I_{\delta_{0}}(u(\cdot, t_{0})) = 0 \), we deduce that \( u(\cdot, t_{0}) \in \mathcal{N}_{\delta_{0}} \), which contradicts \eqref{sol: u}. A similar contradiction arises if \( I_{\delta_{0}}(u(\cdot, t_{0}))>0\). This ends the proof.
\end{proof}
\section{Local existence of strong solution}
In this section, we examine the well-posedness of problem \eqref{main:problem} and establish the existence of a local strong solution. Before presenting the main result, we introduce some operators and functionals.  

Let \( H \) be a Hilbert space with inner product \( (\cdot,\cdot) \) and norm \( \|\cdot\|_{H} \). For a functional \( \varphi:H\to (-\infty,+\infty] \), we define the sublevel set as  
\[
D(\varphi,r):=\left\{u\in H \,:\, \varphi(u)\leq r \right\}, \quad \text{for } r\in \mathbb{R},  
\]
and the domain of \( \varphi \) as  
\[
D(\varphi)= \bigcup_{r\in \mathbb{R}} D(\varphi,r).
\]

\begin{definition}
    Let \( \varphi:H\to (-\infty,+\infty] \) be a functional. The subdifferential \( \partial\varphi \) of \( \varphi \) is defined as  
    \[
    \partial\varphi(u)=\left\{ f\in H \,: \, \varphi(v)-\varphi(u)\geq (f,v-u), \ \forall v\in H \right\}
    \]
    for any \( u \in H \).
\end{definition}
It is well known that  the subdifferential $\partial\varphi$ is maximal monotone operator and its domain satisfies $D(\partial \varphi)\subset D(\varphi)$.
Next, we recall the chain rule for subdifferentials (see, \cite[Lemma 3.3, p.73]{Heim}) and a key existence result (see,  \cite[Theorem 3.4, p. 297]{Ishii}).
\begin{lemma}
\label{continuity of strong solution}
    Let \( \varphi:H \to (-\infty,+\infty] \) be a proper, convex, and lower semicontinuous functional. Suppose that \( u \in W^{1,1}(0,T;H) \) and that \( u(\cdot,t) \in D(\partial\varphi) \) for almost every \( t \in [0,T] \), where \( T>0 \).  

    If there exists a function \( g \in L^{2}(0,T;H) \) such that  
    \[
    g(\cdot,t) \in \partial\varphi(u(\cdot,t)) \quad \text{for a.e. } t \in [0,T],
    \]  
    then the function \( t \mapsto \varphi(u(\cdot,t)) \) is absolutely continuous on \( [0,T] \) and satisfies  
    \[
    \frac{d}{dt} \varphi(u(\cdot,t)) = \left( g(\cdot,t), \frac{du(\cdot,t)}{dt} \right) \quad \text{for a.e. } t \in [0,T].
    \]
\end{lemma}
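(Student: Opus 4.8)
The plan is to read off both the absolute continuity and the derivative formula directly from the defining inequality of the subdifferential, using only the convexity of $\varphi$. Write $\psi(t):=\varphi(u(\cdot,t))$, which is finite for a.e. $t$ because $u(\cdot,t)\in D(\partial\varphi)\subset D(\varphi)$, and fix the full-measure set $E\subset[0,T]$ on which $g(\cdot,t)\in\partial\varphi(u(\cdot,t))$. For $s,t\in E$, applying the subdifferential inequality first with base point $u(\cdot,s)$ and test point $u(\cdot,t)$, and then with the two points interchanged, yields the two-sided estimate
\[
\left(g(\cdot,s),\,u(\cdot,t)-u(\cdot,s)\right)\le \psi(t)-\psi(s)\le \left(g(\cdot,t),\,u(\cdot,t)-u(\cdot,s)\right).
\]
Since $u\in W^{1,1}(0,T;H)$ is absolutely continuous, $u(\cdot,t)-u(\cdot,s)=\int_s^t u_\tau\,d\tau$, so this rewrites as
\[
\int_s^t\left(g(\cdot,s),u_\tau\right)d\tau\le \psi(t)-\psi(s)\le \int_s^t\left(g(\cdot,t),u_\tau\right)d\tau,
\]
which is the engine driving the whole argument.

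Next I would identify the candidate derivative $\theta(\tau):=\left(g(\cdot,\tau),u_\tau(\cdot,\tau)\right)$ and check $\theta\in L^1(0,T)$. In the setting where this lemma is applied the velocity satisfies $u_t\in L^2(0,T;L^2(\Omega))$, so together with $g\in L^2(0,T;H)$ the Cauchy--Schwarz inequality in time gives $\int_0^T|\theta|\,d\tau\le\|g\|_{L^2(0,T;H)}\|u_t\|_{L^2(0,T;L^2(\Omega))}<\infty$. Putting $t=\tau+h$ in the double inequality, dividing by $h$, and letting $h\to0^{\pm}$, at every common Lebesgue point $\tau\in E$ of $g$ and of $u_t$ both outer bounds converge to $\theta(\tau)$; hence $\psi$ is differentiable at a.e. $\tau$ with $\psi'(\tau)=\theta(\tau)$.

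The genuinely delicate step is upgrading a.e. differentiability to \emph{absolute continuity}, since $g$ is merely $L^2$ and need not be continuous, so a naive Riemann-sum argument fails. To handle this I would take a partition $0=t_0<\dots<t_n=T$ with all nodes chosen in $E$, telescope the middle term of the double inequality, and obtain that $\psi(T)-\psi(0)$ lies between $\int_0^T(g_n^{-},u_\tau)\,d\tau$ and $\int_0^T(g_n^{+},u_\tau)\,d\tau$, where $g_n^{-}$ and $g_n^{+}$ denote the left- and right-endpoint step-function approximations of $g$ attached to the partition. The crux is then to select a \emph{refining} sequence of partitions (dyadic, or with nodes at Lebesgue points) along which $g_n^{\pm}\to g$ in $L^2(0,T;H)$; a further application of Cauchy--Schwarz forces $\int_0^T(g_n^{\pm},u_\tau)\,d\tau\to\int_0^T\theta\,d\tau$, whence $\psi(T)-\psi(0)=\int_0^T\theta\,d\tau$. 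Running the identical argument on every subinterval $[s,t]$ gives $\psi(t)-\psi(s)=\int_s^t\theta\,d\tau$ with $\theta\in L^1$, which is exactly the absolute continuity of $t\mapsto\varphi(u(\cdot,t))$ together with the claimed identity. As an alternative route I would approximate $\varphi$ by its Moreau--Yosida regularizations $\varphi_\lambda$, which are $C^1$ and for which $\varphi_\lambda\circ u$ is classically absolutely continuous with $\frac{d}{dt}\varphi_\lambda(u)=(\nabla\varphi_\lambda(u),u_t)$, and then let $\lambda\to0^{+}$; in either route I expect the passage to the limit (the $L^2$ convergence of the step functions, or the identification of $\lim_{\lambda}\nabla\varphi_\lambda(u(\cdot,t))$ with $g(\cdot,t)$ in the velocity direction) to be the main technical obstacle.
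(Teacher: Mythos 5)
The first thing to note is that the paper does not prove this lemma at all: it is recalled verbatim (as ``the chain rule for subdifferentials'') from Brezis \cite[Lemma 3.3, p.~73]{Heim}, so your attempt has to be measured against that standard proof. Two things you do are right, and one is even a needed correction of the statement. The two-sided estimate
\begin{equation*}
\left(g(\cdot,s),\,u(\cdot,t)-u(\cdot,s)\right)\;\le\;\varphi(u(\cdot,t))-\varphi(u(\cdot,s))\;\le\;\left(g(\cdot,t),\,u(\cdot,t)-u(\cdot,s)\right)
\end{equation*}
is exactly the right engine, and your insistence on $u_t\in L^2(0,T;H)$ is not a cosmetic borrowing from the application: Brezis states the lemma for $u\in W^{1,2}(0,T;H)$, and the strengthening is unavoidable. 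With $W^{1,1}$ alone the conclusion is false: on $H=\ell^2$ take $\varphi(x)=\sum_n n|x_n|$ and let $u$ run through successive ``tents'' of height $n^{-2}$ and duration $n^{-4}$ in the coordinate directions $e_n$; then $u\in W^{1,1}(0,T;\ell^2)$, the section $g(t)=n\,\mathrm{sign}(u_n(t))e_n$ lies in $L^2(0,T;\ell^2)$, but $\varphi(u(\cdot))$ oscillates by $2/n$ on the $n$-th interval and so has infinite variation, hence is not absolutely continuous.

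Nevertheless your proof has two genuine gaps. First, the a.e.-differentiability step fails as written: in your squeeze the upper bound is $\left(g(\cdot,\tau+h),\,\frac1h\int_\tau^{\tau+h}u_t(\cdot,r)\,dr\right)$, and being a Lebesgue point of $g$ controls integral averages of $g$ near $\tau$, not the pointwise values $g(\cdot,\tau+h)$ at the moving endpoint; these can be arbitrarily large along admissible sequences $h\to0$ (take $g$ to be a sum of tall thin bumps centred on a dense set: then for every $\tau$ the essential upper limit of $\|g(\cdot,\tau+h)\|$ as $h\to 0$ is $+\infty$), so the right-hand side need not converge to $\theta(\tau)$ and differentiability does not follow. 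The correct order is the reverse: prove absolute continuity first; then $\psi$ and $u$ are differentiable a.e., and at such points the \emph{fixed}-base-point inequality $\psi(\tau+h)-\psi(\tau)\ge\left(g(\cdot,\tau),u(\cdot,\tau+h)-u(\cdot,\tau)\right)$, used for $h>0$ and for $h<0$ (dividing by $h$ flips the inequality with its sign), squeezes $\psi'(\tau)=\left(g(\cdot,\tau),u_t(\cdot,\tau)\right)$. Second, and more seriously, the absolute-continuity step --- which you yourself call the crux --- is left open, and the selection rules you propose do not work: pointwise values of an $L^2$ map on a prescribed countable set carry no information (take $g=\mathbf{1}_A$ with $A$ a fat Cantor set avoiding all dyadic rationals; every dyadic point is then a Lebesgue point of $g$ with value $0$, so all your step functions $g_n^{\pm}$ vanish identically and do not converge to $g$). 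What closes this step is a Fubini/averaging argument over the \emph{offset} of a uniform mesh: the mean over offsets $r$ of $\|g_n^{r}-g\|_{L^2(0,T;H)}^2$ is bounded by $\sup_{0\le\sigma\le 1/n}\|g(\cdot+\sigma)-g\|_{L^2}^2\to0$ by continuity of translations, so suitable offsets exist, and a.e.\ offset places all nodes in your full-measure set $E$. Since you supply no working selection, the proof is incomplete precisely at the step carrying the content of the lemma. The same verdict applies to your alternative Moreau--Yosida route, which is in fact Brezis's actual proof: what it needs, and what you leave open, are the domination $\|\nabla\varphi_\lambda(u(\cdot,t))\|\le\|(\partial\varphi)^{0}(u(\cdot,t))\|\le\|g(\cdot,t)\|$ and the strong convergence $\nabla\varphi_\lambda(u(\cdot,t))\to(\partial\varphi)^{0}(u(\cdot,t))$ as $\lambda\to0^{+}$, after which dominated convergence gives absolute continuity with derivative $\left((\partial\varphi)^{0}(u),u_t\right)$, and the fixed-base-point squeeze above upgrades the formula from the minimal section to any section, in particular to $g$.
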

\begin{theorem}
\label{wellposedness}
    Let \( \varphi:H \to (-\infty,+\infty] \) be a proper, convex, and lower semicontinuous functional, and let \( \psi: H \to \mathbb{R} \) be another functional. Assume that the following conditions hold:
    \begin{itemize}
        \item[$(i)$] For any \( r \in \mathbb{R} \), the sublevel set \( D(\varphi,r) \) is compact in \( H \).
        \item[$(ii)$] \( D(\varphi) \subset D(\psi) \).
        \item[$(iii)$] The set  
        \[
        \left\{(\partial\psi)^{0}(u) \mid u \in D(\varphi,r) \right\}
        \]
        is bounded in \( H \) for any \( r \in \mathbb{R} \), where \( (\partial\psi)^{0} \) denotes the element of minimal norm in the subdifferential \( \partial\psi(u) \).
    \end{itemize}
    
    Then, for each initial condition \( h \in D(\varphi) \), there exist \( T>0 \) and a strong solution \( u \) to the initial value problem:
    \begin{equation*}
    \begin{cases}
        \frac{du(\cdot, t)}{dt} + \partial\varphi(u(\cdot, t)) - \partial\psi(u(\cdot, t)) \ni 0, & \text{in } H, \quad t \in (0,T), \\[5pt]
        u(\cdot,0) = h(\cdot), & \text{in } \Omega.
    \end{cases}
    \end{equation*}
\end{theorem}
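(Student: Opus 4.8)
The plan is to recast the differential inclusion as a fixed-point problem for the monotone flow generated by $\partial\varphi$, in which the nonmonotone term is frozen and treated as an external forcing, and then to close the loop by the compactness encoded in hypothesis $(i)$. Since $\varphi$ is proper, convex and lower semicontinuous, $\partial\varphi$ is maximal monotone, so for any frozen profile $v$ and any forcing $f_v\in L^{2}(0,T;H)$ the Cauchy problem
\begin{equation*}
\frac{du}{dt} + \partial\varphi(u) \ni f_v, \qquad u(0)=h,
\end{equation*}
has a unique strong solution by the classical Br\'ezis theory of evolution equations governed by maximal monotone operators (here $h\in D(\varphi)\subset\overline{D(\partial\varphi)}$). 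Choosing $f_v(t):=(\partial\psi)^{0}(v(t))$, which is meaningful along trajectories valued in $D(\varphi)\subset D(\psi)$ by $(ii)$, defines a solution map $S:v\mapsto u$, and a fixed point of $S$ is exactly a strong solution of the original problem.

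Next I would derive the basic energy estimate. Pairing the equation with $du/dt$, applying the chain rule of Lemma~\ref{continuity of strong solution} to $t\mapsto\varphi(u(\cdot,t))$, and using Young's inequality gives
\begin{equation*}
\tfrac12\int_{0}^{t}\Big\|\frac{du}{d\tau}\Big\|_{H}^{2}\,d\tau + \varphi(u(\cdot,t)) \le \varphi(h) + \tfrac12\int_{0}^{t}\|f_v(\tau)\|_{H}^{2}\,d\tau .
\end{equation*}
Fix $R:=\varphi(h)+1$; by $(iii)$ the constant $M_R:=\sup\{\|(\partial\psi)^{0}(w)\|_{H}: w\in D(\varphi,R)\}$ is finite. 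Working in the closed convex set
\begin{equation*}
K:=\{v\in C([0,T];H): v(0)=h,\ \varphi(v(\cdot,t))\le R\ \text{for all}\ t\in[0,T]\},
\end{equation*}
which is nonempty (it contains the constant $h$) and closed by lower semicontinuity of $\varphi$, the estimate with $\|f_v\|_{H}\le M_R$ yields $\varphi(u(\cdot,t))\le\varphi(h)+\tfrac12 M_R^{2}\,t\le R$ provided $T\le 2M_R^{-2}$. Hence $S(K)\subset K$ for $T$ small, and simultaneously $\int_{0}^{T}\|du/d\tau\|_{H}^{2}\le 2\varphi(h)+M_R^{2}T=:L^{2}$, so every $u=S(v)$ is $\tfrac12$-H\"older in time with $\|u(\cdot,t)-u(\cdot,\tau)\|_{H}\le L|t-\tau|^{1/2}$.

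The compactness then comes from $(i)$: each trajectory takes values in the compact sublevel set $D(\varphi,R)$, and the uniform H\"older bound furnishes equicontinuity, so $S(K)$ is relatively compact in $C([0,T];H)$ by the Arzel\`a--Ascoli theorem. To produce a fixed point I would run the successive approximation $u_{0}\equiv h$, $u_{n}:=S(u_{n-1})$; all iterates lie in $K$, and by relative compactness a subsequence converges strongly in $C([0,T];H)$ to some $u$. It then remains to pass to the limit in $du_{n}/dt + \partial\varphi(u_{n})\ni (\partial\psi)^{0}(u_{n-1})$: the derivatives converge weakly in $L^{2}(0,T;H)$, the forcings are bounded there and converge weakly to some $\eta$, and the demiclosedness (strong--weak closedness) of the graphs of $\partial\varphi$ and of $\partial\psi$ lets me identify $\eta(\cdot,t)\in\partial\psi(u(\cdot,t))$ and $du/dt+\partial\varphi(u)\ni\eta$ for a.e. $t$, so that $u$ is the desired strong solution.

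I expect the main obstacle to be precisely this limit passage, because the perturbation is nonmonotone and $(\partial\psi)^{0}$ is in general only bounded, not continuous; neither a contraction nor a purely monotone argument is available to run a naive fixed-point scheme. The resolution hinges on hypothesis $(i)$, which upgrades the mere weak compactness of the derivatives to \emph{strong} convergence of the trajectories inside a fixed compact sublevel set, together with hypothesis $(iii)$, which keeps the forcing uniformly bounded and thereby confines the flow to that sublevel set over a short time interval; once strong convergence is in hand, the closedness of the graphs makes the passage to the limit routine.
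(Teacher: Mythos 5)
You should first be aware that the paper itself does not prove this statement: it is quoted verbatim as a known result of Ishii (the paper cites \cite[Theorem 3.4, p.~297]{Ishii}) and is then used as a black box to solve \eqref{subdifferenial main prob}. So your attempt is necessarily a reconstruction of Ishii's argument, and its setup is reasonable: freezing the perturbation, solving the monotone problem by Br\'ezis' theory, the energy inequality obtained by pairing with $du/dt$, the invariant set $K$ for $T\le 2M_R^{-2}$, and the compactness of trajectories via hypothesis $(i)$ plus Arzel\`a--Ascoli are all correct and are indeed the right a priori ingredients.

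The genuine gap is in the final fixed-point step. Successive approximation $u_{n}=S(u_{n-1})$ combined with compactness does not produce a fixed point of $S$: extracting a subsequence with $u_{n_k}\to u$ in $C([0,T];H)$, the equation satisfied by $u_{n_k}$ has forcing $(\partial\psi)^{0}(u_{n_k-1})$, and after a further extraction you only get $u_{n_k-1}\to v$ for some possibly \emph{different} limit $v$. Demiclosedness then yields $du/dt+\partial\varphi(u)\ni\eta$ with $\eta(t)\in\partial\psi(v(t))$, which is not the desired inclusion unless $u=v$; nothing in your scheme forces this (the iterates of a non-contractive, discontinuous map can approach a $2$-cycle, in which case consecutive iterates converge to two distinct points). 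Since you yourself note that $S$ is neither a contraction nor continuous (because $(\partial\psi)^{0}$ is merely bounded), neither Banach's theorem nor a naive limiting argument closes the loop. The standard repair --- and essentially what Ishii does --- is either (a) to treat the problem as a genuinely multivalued fixed-point problem, i.e.\ consider the map $v\mapsto\{u:\ du/dt+\partial\varphi(u)\ni f,\ f(t)\in\partial\psi(v(t))\}$, prove it has nonempty closed convex values and closed graph on the compact convex set $K$ (this is where your a priori estimates and hypothesis $(i)$ enter), and apply a Kakutani--Ky Fan/Schauder-type theorem; or (b) to regularize the perturbation (e.g.\ replace $\partial\psi$ by its Yosida approximation $\partial\psi_\lambda$, which is Lipschitz, so the frozen-coefficient problem has a unique solution and a genuine fixed point exists), derive the same $\lambda$-uniform estimates, and pass to the limit $\lambda\to 0$ using the compactness from $(i)$ and demiclosedness. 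Finally, note that your use of strong--weak closedness of the graph of $\partial\psi$ tacitly requires $\partial\psi$ to be maximal monotone, i.e.\ $\psi$ convex and lower semicontinuous; the statement as written only says ``another functional,'' so this hypothesis (which does hold for $\psi(u)=\int_\Omega F(x,u)\,dx$ in the paper's application) must be made explicit for the argument to run.
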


Now, we reformulate the system \eqref{main:problem} as a Cauchy problem for an abstract evolution equation in the Hilbert space \( H = L^{2}(\Omega) \). To achieve this, we define the functionals \( \varphi: H \to (-\infty,+\infty] \) and \( \psi: H \to (-\infty,+\infty] \) as follows:
\[
\varphi(u) =
\begin{cases}
    \displaystyle \iint_{Q} G_{x,y}\left(D^{s}u\right)\,d\mu, & \text{if } u\in W_{0}^{s,G_{x,y}}(\Omega),\\[5pt]
    +\infty, & \text{if } u\in H\setminus W_{0}^{s,G_{x,y}}(\Omega).
\end{cases}
\]
and
\[
\psi(u) =
\begin{cases}
    \displaystyle \int_{\Omega}F(x,u)\,dx, & \text{if } u\in L^{\Phi}(\Omega),\\[5pt]
     +\infty, & \text{if } u\in H\setminus L^{\Phi}(\Omega).
\end{cases}
\]
It is straightforward to verify that the functionals \( \varphi \) and \( \psi \) are proper, convex, and lower semicontinuous.

Next, we consider the operator \( \mathcal{L}: W_{0}^{s, G_{x,y}}(\Omega) \to \left(W_{0}^{s, G_{x,y}}(\Omega)\right)^{\ast} \) defined by
\[
(\mathcal{L}(u),v) := \iint_{Q} g_{x,y}\left(D^{s}u\right) (D^{s}v) \, d\mu, \qquad \forall \, u,v \in W_{0}^{s, G_{x,y}}(\Omega),
\]
where \( \left(W_{0}^{s, G_{x,y}}(\Omega)\right)^{\ast} \) denotes the dual space of \( W_{0}^{s, G_{x,y}}(\Omega) \).

According to \cite[Example 2.3.7, p.26]{Heim}, the operator \( \mathcal{L}_{H} \), which is the realization of \( \mathcal{L} \) in \( H = L^{2}(\Omega) \), is defined as
\begin{equation*}
 D(\mathcal{L}_{H}) = \left\{ u\in W_{0}^{s, G_{x,y}}(\Omega) \mid \mathcal{L}(u)\in H \right\}, 
\quad \text{with} \quad \mathcal{L}_{H}(u) = \mathcal{L}(u), \quad \forall u \in D(\mathcal{L}_{H}).
\end{equation*}
It follows that \( \mathcal{L}_{H} \) is a maximal monotone operator.
      \begin{lemma}
\label{operator and subdifferential}
Assume that the conditions \eqref{cond:g0}-\eqref{Assump:g_3} and \eqref{assump:f_0}-\eqref{Assump:f_2} hold. Then,
\[
\partial\varphi(u) = (-\Delta)^{s}_{g_{x,y}} u, \quad \text{and} \quad \partial\psi(u) = f(x,u).
\]
\end{lemma}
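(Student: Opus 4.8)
The plan is to identify each subdifferential with the Gâteaux derivative of the corresponding convex integral functional and then to recognize that derivative as the stated operator. Both $\varphi$ and $\psi$ are proper, convex and lower semicontinuous, so $\partial\varphi$ and $\partial\psi$ are maximal monotone; I will prove each identity by first establishing the inclusion of the candidate operator into the subdifferential and then upgrading it to equality.

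For $\psi$, since $F'(x,\cdot)=f(x,\cdot)$ is nondecreasing by Lemma \ref{tf(x,t)>0}(ii), the integrand $t\mapsto F(x,t)$ is convex, and hence $F(x,b)-F(x,a)\ge f(x,a)(b-a)$ for a.e.\ $x$ and all $a,b\in\mathbb{R}$. Taking $a=u(x)$, $b=v(x)$ and integrating over $\Omega$ yields $\psi(v)-\psi(u)\ge\big(f(\cdot,u),v-u\big)_{L^2(\Omega)}$, so that $f(\cdot,u)\in\partial\psi(u)$ as soon as $f(\cdot,u)\in L^{2}(\Omega)$; the latter follows from the growth bound $|f(x,t)|\le Ah_2(x)|t|^{h_2(x)-1}$ of Corollary \ref{bounds on f:f}(i), condition \eqref{Cond:g_6}, and the embedding of Theorem \ref{thm2}. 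For the reverse inclusion, given $w\in\partial\psi(u)$ I test with $v=u+\lambda\phi$, $\phi\in L^{\infty}(\Omega)$, divide by $\lambda$ and let $\lambda\to0$ through positive and negative values; the growth bounds furnish an integrable majorant, so the dominated convergence theorem gives $\int_{\Omega}f(x,u)\phi\,dx=(w,\phi)_{L^2(\Omega)}$ for every such $\phi$, whence $w=f(x,u)$ and $\partial\psi(u)=f(x,u)$.

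For $\varphi$, convexity of the generalized $N$-function $G_{x,y}$ in its last variable and linearity of $u\mapsto D^{s}u$ make $\varphi$ convex, and the pointwise inequality $G_{x,y}(b)-G_{x,y}(a)\ge g_{x,y}(a)(b-a)$ (valid because $G_{x,y}'=g_{x,y}$) evaluated at $a=D^{s}u$, $b=D^{s}v$ and integrated against $d\mu$ gives $\varphi(v)-\varphi(u)\ge(\mathcal{L}(u),v-u)$. When $u\in D(\mathcal{L}_H)$, i.e.\ $\mathcal{L}(u)\in H$, this reads $\varphi(v)-\varphi(u)\ge(\mathcal{L}_H(u),v-u)_{L^2(\Omega)}$, so $\mathcal{L}_H(u)\in\partial\varphi(u)$; thus $\mathcal{L}_H\subseteq\partial\varphi$ as graphs. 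Since $\mathcal{L}_H$ is maximal monotone while $\partial\varphi$ is monotone, this inclusion forces $\mathcal{L}_H=\partial\varphi$. Finally, symmetrizing the Gagliardo double integral $(\mathcal{L}(u),v)=\iint_{Q}g_{x,y}(D^{s}u)(D^{s}v)\,d\mu$ via Fubini's theorem and the symmetry $g_{x,y}=g_{y,x}$ together with the antisymmetry of $D^{s}u$ under $x\leftrightarrow y$ identifies $\mathcal{L}_H(u)$ with the principal-value operator in \eqref{main:operator}, giving $\partial\varphi(u)=(-\Delta)^{s}_{g_{x,y}}u$.

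The main obstacle is the passage from mere containment to equality, together with the requirement that the candidate subgradient genuinely be an element of $H=L^{2}(\Omega)$ rather than only of the dual of $W_{0}^{s,G_{x,y}}(\Omega)$: this is precisely why one restricts to the realization domain $D(\mathcal{L}_H)$, and why the growth hypotheses \eqref{Assump:g_3} and Corollary \ref{bounds on f:f} are needed to produce the dominating functions that legitimize differentiating under the integral sign. The maximal-monotonicity comparison lets me avoid a direct uniqueness argument for $\varphi$, but it hinges on the recorded fact that $\mathcal{L}_H$ is maximal monotone; some care is also required with the principal value and with the constant produced when symmetrizing the double integral into the pointwise form of $(-\Delta)^{s}_{g_{x,y}}$.
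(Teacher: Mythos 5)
Your proposal is correct and follows essentially the same route as the paper: the convexity inequality $G_{x,y}(t_1)-G_{x,y}(t_2)\geq g_{x,y}(t_2)(t_1-t_2)$ gives the graph inclusion $\mathcal{L}_H\subseteq\partial\varphi$ (trivially extended to $w\notin W_0^{s,G_{x,y}}(\Omega)$ since $\varphi(w)=+\infty$ there), and maximal monotonicity of $\mathcal{L}_H$ then upgrades the inclusion to equality, with the $\psi$ identity obtained from convexity of $t\mapsto F(x,t)$. The only difference is cosmetic: where the paper dismisses $\partial\psi(u)=f(x,u)$ with ``similarly,'' you supply an explicit G\^ateaux-derivative argument for the reverse inclusion (and note, correctly, that square-integrability of $f(\cdot,u)$ is the delicate point---though for that one really needs the stronger condition \eqref{assump:h_0} rather than \eqref{Cond:g_6}, exactly as the paper later uses in Lemma \ref{del phi is bounded}).
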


\begin{proof}
Both \( \mathcal{L}_{H} \) and \( \partial\varphi \) are maximal monotone operators in \( H \). Therefore, it suffices to show that \( \mathcal{L}_{H}(u) \subset \partial\varphi(u) \). 

Let \( u \in D(\mathcal{L}_{H}) \). Then, for all \( w \in W_{0}^{s, G_{x,y}}(\Omega) \), we have
\begin{align}
\label{convexity of G}
(\mathcal{L}_{H}(u), w - u) &= (\mathcal{L}(u), w - u) \nonumber \\
&= \iint_{Q} g_{x,y}\left(D^{s}u\right) D^{s}(w - u) \, d\mu \nonumber \\
&= \iint_{Q} g_{x,y}\left(D^{s}u\right) D^{s}w \, d\mu - \iint_{Q} g_{x,y}\left(D^{s}u\right) D^{s}u \, d\mu.
\end{align}
Using the convexity of \( G_{x,y}(\cdot) \), we obtain
\begin{equation*}
G_{x,y}(t_1) - G_{x,y}(t_2) \geq g_{x,y}(t_2)(t_1 - t_2), \quad \forall \ t_1, t_2 \in [0,\infty).
\end{equation*}
Setting \( t_1 = D^{s}w \) and \( t_2 = D^{s}u \), we get
\begin{align}
\label{g is convex}
G_{x,y}(D^{s}w) - G_{x,y}(D^{s}u) \geq g_{x,y}(D^{s}u) D^{s}w - g_{x,y}(D^{s}u) D^{s}u.
\end{align}
Combining \eqref{convexity of G} and \eqref{g is convex}, we obtain
\begin{align}
\label{v in subdifferential}
(\mathcal{L}_{H}(u), w - u) &\leq \iint_{Q}  G_{x,y}(D^{s}w) \, d\mu - \iint_{Q} G_{x,y}(D^{s}u) \, d\mu \nonumber \\
&= \varphi(w) - \varphi(u).
\end{align}
If \( w \in H \setminus W_{0}^{s,G_{x,y}}(\Omega) \), then \( \varphi(w) = +\infty \), and thus \eqref{v in subdifferential} still holds trivially. This proves that \( \mathcal{L}_{H}(u) \in \partial\varphi(u) \), which implies \( \mathcal{L}_{H}(u) = \partial\varphi(u) \).\\
Similarly, by exploiting the convexity of the mapping \( t \mapsto F(x,t) \), we can show that \( \partial\psi(u) = f(x,u) \).
\end{proof}

Now, in view of Lemma \ref{operator and subdifferential}, the system \eqref{main:problem} can be rewritten as the following abstract Cauchy problem:
\begin{equation}
\label{subdifferenial main prob}
\left\{
\begin{aligned}
    \frac{du}{dt} + \partial\varphi(u)-\partial\psi(u) &\ni 0,  && \text{in } H,\, 0<t<T, \\
    u(\cdot,0) &= u_{0}(\cdot) && \text{in } \Omega.
\end{aligned}
\right.
\end{equation}
Next, we assume the following condition:
\begin{enumerate}[leftmargin=1.5cm,label=\textnormal{($\tilde{g}_6$)},ref=\textnormal{$\tilde{g}_6$}]
    \item \label{assump:h_0}
    \( h_{2}^{+} \leq \frac{g^{-}_{\ast,s}}{2} + 1. \)
\end{enumerate}
\begin{remark}\label{rrem2}
We consider the same example as in Remark \ref{rrem1}, with  
\[
q_2^+ < \frac{N p^-}{2(N - s p^-)} + 1.
\]
Thus, the functions \( f \) and \( g \) satisfy the conditions \eqref{assump:f_0}-\eqref{Assump:f_3} and \eqref{cond:g0}-\eqref{assump:h_0}, respectively.
\end{remark}

\begin{lemma}
\label{D(phi,r) is compact}
    Let the conditions \eqref{cond:g0}-\eqref{assump:g_5}, \eqref{assump:h_0}, and \eqref{assump:f_0}-\eqref{Assump:f_2} hold. Then, the set \( D(\varphi,r) \) is compact in \( L^{2}(\Omega) \) for any \( r\in\mathbb{R} \). Moreover, \( D(\varphi) \subset D(\psi) \).
\end{lemma}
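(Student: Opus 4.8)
The plan is to obtain compactness of $D(\varphi,r)$ by chaining the compact embedding $W_{0}^{s,G_{x,y}}(\Omega)\hookrightarrow\hookrightarrow L^{\Phi}(\Omega)$ of Theorem \ref{thm2} with the continuous inclusion $L^{\Phi}(\Omega)\hookrightarrow L^{2}(\Omega)$, and to deduce $D(\varphi)\subset D(\psi)$ from the growth bound of Lemma \ref{F:bounds}(i). If $r<0$ then $D(\varphi,r)=\emptyset$ because $G_{x,y}\ge 0$, so we may assume $r\ge 0$. The first step is to show that $D(\varphi,r)$ is bounded in $W_{0}^{s,G_{x,y}}(\Omega)$: every $u\in D(\varphi,r)$ lies in $W_{0}^{s,G_{x,y}}(\Omega)$ with $\iint_{Q}G_{x,y}\!\left(D^{s}u\right)d\mu=\varphi(u)\le r$, so Lemma \ref{RELATION:MODULAR & NORM}(i) (valid on $W_{0}^{s,G_{x,y}}(\Omega)$ by the remark following it) gives $\min\{[u]_{s,G_{x,y}}^{g^{-}},[u]_{s,G_{x,y}}^{g^{+}}\}\le r$ and hence $[u]_{s,G_{x,y}}\le\max\{1,r^{1/g^{-}}\}$.

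Next I would establish sequential compactness in $L^{2}(\Omega)$. Let $(u_{n})\subset D(\varphi,r)$. By the bound just obtained and the reflexivity of $W_{0}^{s,G_{x,y}}(\Omega)$ (Remark \ref{rem2}), a subsequence satisfies $u_{n}\weak u$ in $W_{0}^{s,G_{x,y}}(\Omega)$. Since $\Phi\prec\prec\widehat{G}_{x}^{*}$ by condition \eqref{Assump:f_2}, Theorem \ref{thm2}(ii) provides the compact embedding into $L^{\Phi}(\Omega)$, so along a further subsequence $u_{n}\to u$ strongly in $L^{\Phi}(\Omega)$. Because $h_{2}^{-}>2$ by \eqref{assump:g_5}, on the bounded domain $\Omega$ one has $t^{2}\le 1+\Phi(x,t)$, which yields the continuous inclusion $L^{\Phi}(\Omega)\hookrightarrow L^{2}(\Omega)$; hence $u_{n}\to u$ in $L^{2}(\Omega)$. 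As $\varphi$ is lower semicontinuous on $H=L^{2}(\Omega)$, its sublevel set $D(\varphi,r)$ is closed in $L^{2}(\Omega)$, so the limit $u$ belongs to $D(\varphi,r)$. Thus every sequence in $D(\varphi,r)$ has a subsequence converging in $L^{2}(\Omega)$ to a point of $D(\varphi,r)$, i.e. $D(\varphi,r)$ is compact in $L^{2}(\Omega)$.

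For the inclusion $D(\varphi)\subset D(\psi)$, I note that, again by Lemma \ref{RELATION:MODULAR & NORM}(i), $\varphi$ is finite exactly on $W_{0}^{s,G_{x,y}}(\Omega)$, so $D(\varphi)=W_{0}^{s,G_{x,y}}(\Omega)$. For such $u$ the continuous embedding $W_{0}^{s,G_{x,y}}(\Omega)\hookrightarrow L^{\Phi}(\Omega)$ gives $u\in L^{\Phi}(\Omega)$, and Lemma \ref{F:bounds}(i) bounds $0\le\int_{\Omega}F(x,u)\,dx\le A\int_{\Omega}|u|^{h_{2}(x)}\,dx<\infty$. Hence $\psi(u)<\infty$ and $u\in D(\psi)$.

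The main obstacle is the passage from boundedness in $W_{0}^{s,G_{x,y}}(\Omega)$ to precompactness in $L^{2}(\Omega)$: no direct compact Sobolev embedding into $L^{2}(\Omega)$ is at hand, so the argument must be routed through the Musielak space $L^{\Phi}(\Omega)$, and the continuous inclusion $L^{\Phi}(\Omega)\hookrightarrow L^{2}(\Omega)$—which rests precisely on $h_{2}^{-}>2$ from \eqref{assump:g_5}—must be verified with care, together with the identification of the $L^{2}$-limit with the weak $W_{0}^{s,G_{x,y}}(\Omega)$-limit. The technical condition \eqref{assump:h_0} is not required for this lemma; it enters later, when verifying the boundedness hypothesis $(iii)$ of Theorem \ref{wellposedness} for the set $\{(\partial\psi)^{0}(u)\mid u\in D(\varphi,r)\}$.
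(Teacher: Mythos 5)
Your proof is correct, and its skeleton coincides with the paper's: boundedness of $D(\varphi,r)$ in $W_{0}^{s,G_{x,y}}(\Omega)$ via Lemma \ref{RELATION:MODULAR & NORM}(i), followed by compactness of an embedding supplied by Theorem \ref{thm2}. The real divergence is which embedding is used. The paper cites \eqref{assump:g_5}--\eqref{assump:h_0} to assert the compact embedding of $W_{0}^{s,G_{x,y}}(\Omega)$ directly into $L^{2}(\Omega)$ (the natural reading being that \eqref{assump:g_5} gives $h_{2}^{+}>2$ and \eqref{assump:h_0} then forces $g_{*,s}^{-}\geq 2(h_{2}^{+}-1)>2$, so that $t^{2}\prec\prec\widehat{G}_{x}^{*}$ and Theorem \ref{thm2}(ii) applies with $\widehat{A}_x(t)=t^{2}$), whereas you chain the compact embedding into $L^{\Phi}(\Omega)$ --- which needs only $\Phi\prec\prec\widehat{G}_{x}^{*}$ from \eqref{Assump:f_2} --- with the elementary continuous inclusion $L^{\Phi}(\Omega)\hookrightarrow L^{2}(\Omega)$ coming from $h_{2}^{-}>2$. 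This buys exactly the observation you make at the end: \eqref{assump:h_0} is never needed for this lemma and is a redundant hypothesis here, entering genuinely only in Lemma \ref{del phi is bounded}; that observation is accurate. You are also more explicit than the paper on two points it leaves implicit: the closedness of $D(\varphi,r)$ in $L^{2}(\Omega)$ (which you get from the lower semicontinuity of $\varphi$ asserted just before the lemma, and which is needed to upgrade precompactness to compactness), and the identification $D(\varphi)=W_{0}^{s,G_{x,y}}(\Omega)$ together with the bound $0\leq\psi(u)\leq A\int_{\Omega}|u|^{h_{2}(x)}\,dx<\infty$ yielding $D(\varphi)\subset D(\psi)$. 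Both routes are sound and rest on the same two ingredients; yours is more self-contained and sharper about which hypotheses are actually used.
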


\begin{proof}
    From assumptions \eqref{assump:g_5}-\eqref{assump:h_0} and Theorem \ref{thm2}, it follows that \( W_{0}^{s,G_{x,y}}(\Omega) \) is compactly embedded in \( L^{2}(\Omega) \) and continuously embedded in \( L^{\Phi}(\Omega) \). Hence, \( D(\varphi) \subset D(\psi) \), and by Lemma \ref{RELATION:MODULAR & NORM}, the set \( D(\varphi,r) \) is compact in \( L^{2}(\Omega) \) for any \( r\in\mathbb{R} \).
\end{proof}

\begin{lemma}
\label{del phi is bounded}
   Let the conditions \eqref{cond:g0}-\eqref{assump:g_4}, \eqref{assump:f_0}-\eqref{Assump:f_2}, and \eqref{assump:h_0} hold. Then,
   \[
   \left\{(\partial\psi)^{0}(u) \mid u\in D(\varphi,r)\right\} = \{f(\cdot,u)\}
   \]
   for any \( r\in \mathbb{R} \), and \( f(\cdot, u) \in L^{2}(\Omega) \).
\end{lemma}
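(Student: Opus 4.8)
The plan is to separate the statement into two independent assertions: the identification of the minimal-norm selection $(\partial\psi)^{0}(u)$, and the $L^{2}$-integrability of $f(\cdot,u)$. For the first, I would simply invoke Lemma~\ref{operator and subdifferential}, which already shows that $\partial\psi(u)=f(x,u)$ as a \emph{single} element of $H=L^{2}(\Omega)$. Because the subdifferential is single-valued here, its element of minimal norm coincides with the element itself, so $(\partial\psi)^{0}(u)=f(\cdot,u)$ for every $u\in D(\varphi)$. Since $D(\varphi,r)\subset D(\varphi)\subset D(\psi)$ by Lemma~\ref{D(phi,r) is compact}, the asserted set equality $\{(\partial\psi)^{0}(u)\mid u\in D(\varphi,r)\}=\{f(\cdot,u)\}$ is then immediate.

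The substantive content is the claim $f(\cdot,u)\in L^{2}(\Omega)$. I would begin from the pointwise growth bound of Corollary~\ref{bounds on f:f}(i), namely $|f(x,u)|\leq A\,h_{2}^{+}\,|u|^{h_{2}(x)-1}$, so that $|f(x,u)|^{2}\leq (A h_{2}^{+})^{2}\,|u|^{2(h_{2}(x)-1)}$. Splitting $\Omega$ into the sets $\{|u|\leq 1\}$ and $\{|u|>1\}$, the first contributes at most $(A h_{2}^{+})^{2}|\Omega|$ because $2(h_{2}(x)-1)>0$ (recall $h_{2}>1$ and $\Omega$ is bounded), while on the second we estimate $|u|^{2(h_{2}(x)-1)}\leq |u|^{2(h_{2}^{+}-1)}$. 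Everything then reduces to showing $u\in L^{2(h_{2}^{+}-1)}(\Omega)$.

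To obtain this, I would use condition \eqref{assump:h_0}, which gives exactly $2(h_{2}^{+}-1)\leq g_{*,s}^{-}$, and reduce matters to proving $u\in L^{g_{*,s}^{-}}(\Omega)$ (the lower integrability exponent then follows since $\Omega$ is bounded). For the latter I rely on the critical Sobolev embedding $W_{0}^{s,G_{x,y}}(\Omega)\hookrightarrow L^{\widehat{G}_x^{*}}(\Omega)$ from Theorem~\ref{thm2}(i), combined with a lower growth estimate $\widehat{G}_x^{*}(t)\geq c\,|t|^{g_{*,s}^{-}}$ for $|t|\geq 1$; the latter follows from the homogeneity bounds for the Sobolev-conjugate function (the analogue of Lemma~\ref{RE:Gt}(i) with indices $g_{*,s}^{-},g_{*,s}^{+}$, evaluated at $t=1$). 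Since $u\in W_{0}^{s,G_{x,y}}(\Omega)$ yields $J_{\widehat{G}_x^{*}}(u)=\int_{\Omega}\widehat{G}_x^{*}(|u|)\,dx<\infty$ by Lemma~\ref{RELATION:MODULAR & NORM}(ii), the growth bound forces $\int_{\{|u|\geq 1\}}|u|^{g_{*,s}^{-}}\,dx<\infty$, whence $u\in L^{g_{*,s}^{-}}(\Omega)$ and therefore $u\in L^{2(h_{2}^{+}-1)}(\Omega)$. Adding the contributions of the two regions gives $\int_{\Omega}|f(x,u)|^{2}\,dx<\infty$, i.e. $f(\cdot,u)\in L^{2}(\Omega)$.

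I expect the main obstacle to be this critical integrability step: extracting the sharp power $g_{*,s}^{-}$ from the Musielak-Sobolev conjugate embedding. In contrast to the constant-exponent case, the growth of $\widehat{G}_x^{*}$ is accessible only through the modular relation of Lemma~\ref{RELATION:MODULAR & NORM}(ii) and the $x$-dependent indices, so some care is needed to produce a lower bound $\widehat{G}_x^{*}(t)\gtrsim|t|^{g_{*,s}^{-}}$ that is uniform in $x\in\Omega$. In addition, the variable exponent $h_{2}(x)$ must be handled through its maximum $h_{2}^{+}$, so that the borderline case $2(h_{2}^{+}-1)=g_{*,s}^{-}$ permitted by \eqref{assump:h_0} is covered by the \emph{continuous} (rather than compact) embedding.
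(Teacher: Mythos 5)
Your proposal is correct and takes essentially the same route as the paper's own proof: identify $(\partial\psi)^{0}(u)$ with $f(\cdot,u)$ through the single-valuedness of $\partial\psi$, then establish $f(\cdot,u)\in L^{2}(\Omega)$ from the growth bound $|f(x,u)|\leq A h_{2}^{+}|u|^{h_{2}(x)-1}$, the splitting of $\Omega$ into $\{|u|\leq 1\}$ and $\{|u|>1\}$, condition \eqref{assump:h_0}, and the embedding of $W_{0}^{s,G_{x,y}}(\Omega)$ into $L^{g^{-}_{\ast,s}}(\Omega)$. The only presentational differences are that the paper runs the $L^{2}$ estimate by duality, bounding $\left|(f_{u},v)_{L^{2}(\Omega)}\right|$ via H\"older's inequality and taking the supremum over $\|v\|_{L^{2}(\Omega)}\leq 1$ (which also makes the bound uniform over $D(\varphi,r)$, as needed for Theorem~\ref{wellposedness}(iii)), and simply cites the embedding constant where you derive the power-type embedding from Theorem~\ref{thm2}(i) and the modular inequalities of Lemma~\ref{RELATION:MODULAR & NORM}.
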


\begin{proof}
    It is enough to consider \( r>0 \). Since \( \psi \in C^{1}(L^{\Phi}(\Omega), \mathbb{R}) \), there exists a unique \( f_{u} \in L^{2}(\Omega) \) such that \( \partial\psi(u) = \{ f_{u} \} \), and so, \( (\partial\psi)^{0}(u) = \{ f_{u} \} \). Moreover,
    \[
    \int_{\Omega} f(x,u) v \, dx = (f_{u},v)_{L^{2}(\Omega)}, \quad \text{for all } v\in C_{0}^{\infty}(\Omega).
    \]
    The above equality implies \( f(x,u) = f_{u} \) a.e. in \( \Omega \). Now, using Lemma \ref{F:bounds}(ii) and applying H\"older's inequality, we obtain
    \begin{align}
    \label{bound of least norm sub}
         \left| (f_{u},v)_{L^{2}(\Omega)}\right| &\leq A h_{2}^{+} \int_{\Omega} |u|^{h_{2}(x)-1} |v(x)|\,dx \nonumber\\
        &\leq A h_{2}^{+} \||u|^{h_{2}(x)-1}\|_{L^{2}(\Omega)} \|v\|_{L^{2}(\Omega)}.
    \end{align}
    From \eqref{assump:h_0}, we have 
    \begin{align}
    \label{bound of least norm sub-1}
        \int_{\Omega} |u|^{2(h_{2}(x)-1)}\,dx &= \int_{\Omega \cap \{|u| \leq 1\}} |u|^{2(h_{2}(x)-1)}\,dx + \int_{\Omega \cap \{|u| > 1\}} |u|^{2(h_{2}(x)-1)}\,dx \nonumber \\
        &\leq |\Omega| + \int_{\Omega \cap \{|u| > 1\}} |u|^{g^{-}_{\ast,s}}\,dx \nonumber \\
        &\leq |\Omega| + C [u]_{s,G_{x,y}}^{g^{-}_{\ast,s}},
    \end{align}
    where \( C>0 \) is the embedding constant. Since \( u\in D(\varphi,r) \), there exists a constant \( C_{1} = C_{1}(r) > 0 \) such that \( [u]_{s,G_{x,y}}^{g^{-}_{\ast,s}} \leq C_1(r) \). Now, combining \eqref{bound of least norm sub} and \eqref{bound of least norm sub-1}, we obtain
    \[
    \|f_{u}\|_{L^{2}(\Omega)} = \sup_{\|v\|_{L^{2}(\Omega)}\leq 1} \left| (f_{u},v)_{L^{2}(\Omega)}\right| \leq C_{r},
    \]
    where \( C_{r} = A h_2^+ \left(|\Omega| + C C_1(r)\right)^{\frac{1}{2}} \).
\end{proof}

\begin{theorem}\label{loc}
    Let \( u_{0}\in W_{0}^{s,G_{x,y}}(\Omega) \), and let the conditions \eqref{cond:g0}-\eqref{assump:g_5}, \eqref{assump:f_0}-\eqref{Assump:f_2}, and \eqref{assump:h_0} hold. Then, there exists a \( T>0 \) such that the problem \eqref{main:problem} admits a strong solution \( u \) on \( \Omega\times[0,T] \) in the sense of Definition \ref{def strong solution}.
\end{theorem}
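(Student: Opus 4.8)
The strategy is to recast \eqref{main:problem} as the abstract Cauchy problem \eqref{subdifferenial main prob} in $H = L^{2}(\Omega)$ and to invoke the existence result Theorem \ref{wellposedness}, whose three hypotheses have been arranged in advance. First I would check that the initial datum lies in the relevant domain: since $u_{0} \in W_{0}^{s,G_{x,y}}(\Omega)$, Lemma \ref{RELATION:MODULAR & NORM}$(i)$ gives $\varphi(u_{0}) = \iint_{Q} G_{x,y}(D^{s}u_{0})\,d\mu \leq \max\{[u_{0}]_{s,G_{x,y}}^{g^{-}}, [u_{0}]_{s,G_{x,y}}^{g^{+}}\} < \infty$, so $u_{0} \in D(\varphi)$. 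Hypothesis $(i)$ of Theorem \ref{wellposedness} (compactness of the sublevel sets $D(\varphi,r)$ in $L^{2}(\Omega)$) and hypothesis $(ii)$ ($D(\varphi) \subset D(\psi)$) are exactly the content of Lemma \ref{D(phi,r) is compact}, while hypothesis $(iii)$ (boundedness of $\{(\partial\psi)^{0}(u) : u \in D(\varphi,r)\}$ in $L^{2}(\Omega)$) is Lemma \ref{del phi is bounded}. Therefore Theorem \ref{wellposedness} furnishes some $T > 0$ and a strong solution $u$ of \eqref{subdifferenial main prob}, with $u_{t} \in L^{2}(0,T;L^{2}(\Omega))$, $u(\cdot,0) = u_{0}$, and $u(\cdot,t) \in D(\partial\varphi)$ for a.e. $t \in [0,T]$; moreover the localized construction keeps $\sup_{t\in[0,T]}\varphi(u(\cdot,t)) < \infty$, which by Lemma \ref{RELATION:MODULAR & NORM}$(i)$ means $u \in L^{\infty}(0,T;W_{0}^{s,G_{x,y}}(\Omega))$.

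Next I would translate this abstract solution back to the PDE. By Lemma \ref{operator and subdifferential}, $\partial\varphi(u) = (-\Delta)^{s}_{g_{x,y}}u$ and $\partial\psi(u) = f(x,u)$, so \eqref{subdifferenial main prob} reads $u_{t} + (-\Delta)^{s}_{g_{x,y}}u = f(x,u)$ in $L^{2}(\Omega)$ for a.e.\ $t$. Pairing this identity in $H$ with an arbitrary $\phi \in W_{0}^{s,G_{x,y}}(\Omega)$ and recalling the definition of $\mathcal{L}$ gives
\[
(u_{t},\phi)_{L^{2}(\Omega)} + \iint_{Q} g_{x,y}(D^{s}u)(D^{s}\phi)\,d\mu = (f(x,u),\phi)_{L^{2}(\Omega)},
\]
which is precisely the weak formulation \eqref{definition of weak sol}.

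It then remains to verify the energy conservation law \eqref{strong solution}, and for this I would apply the chain rule of Lemma \ref{continuity of strong solution} separately to the convex functionals $\varphi$ and $\psi$. The perturbation $f(\cdot,u)$ lies in $L^{\infty}(0,T;L^{2}(\Omega)) \subset L^{2}(0,T;L^{2}(\Omega))$ by Lemma \ref{del phi is bounded} together with the uniform bound $\sup_{t}[u(\cdot,t)]_{s,G_{x,y}} < \infty$, and hence the selection $(-\Delta)^{s}_{g_{x,y}}u = u_{t} + f(x,u)$ belongs to $L^{2}(0,T;L^{2}(\Omega))$ as well; both $(-\Delta)^{s}_{g_{x,y}}u \in \partial\varphi(u)$ and $f(x,u) \in \partial\psi(u)$ hold for a.e.\ $t$. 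Lemma \ref{continuity of strong solution} then yields, for a.e.\ $t$,
\[
\frac{d}{dt}\varphi(u(\cdot,t)) = \big((-\Delta)^{s}_{g_{x,y}}u, u_{t}\big)_{L^{2}(\Omega)}, \qquad \frac{d}{dt}\psi(u(\cdot,t)) = \big(f(x,u), u_{t}\big)_{L^{2}(\Omega)}.
\]
Subtracting and using $E = \varphi - \psi$ together with $u_{t} = -\big((-\Delta)^{s}_{g_{x,y}}u - f(x,u)\big)$ gives $\frac{d}{dt}E(u(\cdot,t)) = -\|u_{t}(\cdot,t)\|_{L^{2}(\Omega)}^{2}$; integrating over $[0,t]$ produces \eqref{strong solution}. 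Finally, the continuity of $t \mapsto E(u(\cdot,t))$ obtained in this way, combined with the dominated-convergence argument used in the proof of Lemma \ref{continuity of E(u) with t}, gives $u \in C(0,T;W_{0}^{s,G_{x,y}}(\Omega))$ and hence, by the continuous embedding into $L^{\Phi}(\Omega)$, $u \in C(0,T;L^{\Phi}(\Omega))$, completing the verification of Definition \ref{def strong solution}.

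I expect the main obstacle to be the energy identity rather than the existence itself: one must confirm that \emph{both} subdifferential selections are genuine $L^{2}(0,T;L^{2}(\Omega))$ functions, so that Lemma \ref{continuity of strong solution} legitimately applies to each of $\varphi$ and $\psi$, and then that the a.e.-in-time abstract equation can be tested against $u_{t}$ to fuse the two chain-rule identities into the single energy balance. The compactness and least-norm boundedness needed to launch Theorem \ref{wellposedness} have already been discharged by Lemmas \ref{D(phi,r) is compact} and \ref{del phi is bounded}, so the existence step is comparatively routine.
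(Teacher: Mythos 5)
Your proposal is correct and follows essentially the same route as the paper: existence via Theorem \ref{wellposedness} together with Lemmas \ref{D(phi,r) is compact} and \ref{del phi is bounded}, then the subdifferential chain rule (Lemma \ref{continuity of strong solution}) applied to the selection $f(x,u)-u_t\in\partial\varphi(u)$ (and, as you make explicit, to $f(x,u)\in\partial\psi(u)$) to obtain $u\in C([0,T];W_{0}^{s,G_{x,y}}(\Omega))$ and the energy identity \eqref{strong solution}. Your version merely spells out details the paper leaves implicit (that $u_0\in D(\varphi)$, the separate chain rule for $\psi$, and the verification of the weak formulation), which is consistent with, not different from, the paper's argument.
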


\begin{proof}
    The existence of a strong solution \( u \) follows directly from Theorem \ref{wellposedness}, combined with Lemmas \ref{D(phi,r) is compact} and \ref{del phi is bounded}. Since \( u \) satisfies \eqref{v in subdifferential}, it follows that  
\[ 
g = f(x,u) - u_t \in \partial \varphi(u).
\]  
Therefore, by setting \( g = f(\cdot,u) - u_t \) in Lemma \ref{continuity of strong solution}, we obtain  
\[
u \in C([0,T];W_{0}^{s,G_{x,y}}(\Omega)).
\]
Moreover, the energy identity holds:
\[
\int_{0}^{t} \|u_{t}(\cdot,\tau)\|_{L^{2}(\Omega)}^{2} + E(u(\cdot,t)) = E(u(\cdot,0)), \qquad \forall\, t\in [0,T].
\]
\end{proof}

\section{Global existence of solutions and finite time blow up}
In this section, we study the existence of global weak solutions to equation \eqref{main:problem} and the phenomenon of finite-time blow-up of strong solution. We consider three cases: low initial energy \(\big(E(u_0) < d\big)\), critical initial energy \(\big(E(u_0) = d\big)\), and high initial energy \(\big(E(u_0) > d\big)\).
\subsection{On the Case of Low Initial Energy:  $E(u_0)<d$}
In this subsection, we analyze the global existence and blow-up of weak solutions under the condition \(E(u_0) < d\). Specifically, we demonstrate that if \(I(u_0) > 0\), problem~\eqref{main:problem} admits a global weak solution. Moreover, if \(I(u_0) < 0\), we establish the finite-time blow-up of a strong solution of problem~\eqref{main:problem}, meaning that the strong solution ceases to exist in finite time.\\
Now, in order to construct Galerkin's approximation scheme, we assume the following condition:
\begin{enumerate}[leftmargin=1.5cm, label=\textnormal{($g_7$)}, ref=\textnormal{$g_7$}]
    \item \label{Cond:g_7} The function \( G_{x,y} \) belongs to \( \mathcal{B}_{f} \), and both \( G_{x,y} \) and \( \widehat{G}_{x} \) are locally integrable. That is, for any \( t > 0 \) and every compact set \( A \subset \Omega \), we have  
    \[
    \int_{A} \int_{A} G_{x,y}(t) \,dx\,dy < \infty, \qquad \text{and} \qquad \int_{A} \widehat{G}_{x}(t)\,dx < \infty.
    \]
\end{enumerate}

\begin{enumerate}[leftmargin=1.5cm, label=\textnormal{($g_8$)}, ref=\textnormal{$g_8$}]
    \item \label{Cond:g_8} The function \( G(x-z, y-z, t) \) satisfies the translation invariance property:  
    \[
    G(x-z, y-z, t) = G(x,y,t), \quad \forall \ (x,y), (z,z) \in \Omega \times \Omega, \ \forall \ t \geq 0.
    \]
\end{enumerate}

\begin{remark}
We consider the same example as in Remark \ref{rrem1}, with  
$$p(x,y)=Q(|x-y|),$$
where $Q:\mathbb{R}\to \mathbb{R}$ is a continuous function. Thus, the functions \( f \) and \( g \) satisfy the conditions \eqref{assump:f_0}-\eqref{Assump:f_3} and \eqref{cond:g0}-\eqref{Cond:g_8}, respectively.
\end{remark}

Set $\{e_i\}$ and $\{\lambda_i\}$ be the eigenfunctions and the corresponding eigenvalues
of the Dirichlet problem for the Laplacian
such that $\{e_i\}$ forms an orthogonal basis of $L^2(\Omega).$ Since $\Omega$ is a smooth bounded domain, following the arguments in \cite[Section 5]{Arora-Shmarev-2023-2} and using the fact that $C_0^\infty(\Omega)$ is dense in $W_{0}^{s,G_{x,y}}(\Omega)$ (see \cite[Theorem 1.7]{M18}), we have 
\[
\bigcup_{m=1}^\infty \mathcal{P}_m \ \text{is dense in} \ W_{0}^{s,G_{x,y}}(\Omega),
\ \text{and} \ 
\bigcup_{m=1}^\infty \mathcal{N}_m \ \text{is dense in} \ L^{1}(0,\infty;W_{0}^{s,G_{x,y}}(\Omega)) \cap L^2(0,\infty; L^2(\Omega))
\]
where 
\[
\mathcal{P}_m:= \{e_1, e_2, \dots, e_m\} \quad \text{and} \quad \mathcal{N}_m := \{w(x,t) : w(x,t) = \sum_{i=1}^m \theta_i(t) e_i(x), \theta_i \in C^{0,1}[0,\infty)\}.
\]

  \begin{theorem}\label{thm:global-existence-weak}
     Let the conditions \eqref{assump:f_0}--\eqref{Assump:f_3} and \eqref{cond:g0}--\eqref{Cond:g_8} hold, and assume that \( u_0 \in W_{0}^{s,G_{x,y}}(\Omega) \). If \( E(u_0) < d \) and \( I(u_0) > 0 \), then problem \eqref{main:problem} admits a global weak solution  
\[
u \in L^\infty(0, \infty; W_{0}^{s,G_{x,y}}(\Omega)), \quad u_t \in L^2(0, \infty; L^2(\Omega)),
\]
such that \( u(\cdot, t) \in W \) for all \( 0 \leq t < \infty \). Moreover, the weak solution is unique if it is bounded.
\end{theorem}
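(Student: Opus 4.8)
The plan is to construct the global weak solution by a Galerkin scheme adapted to the stable set $W$, in the spirit of the potential well method. First I would seek approximations of the form $u_m(x,t)=\sum_{i=1}^{m}\theta_{im}(t)e_i(x)\in\mathcal{N}_m$ solving the finite-dimensional system
$$(\partial_t u_m,e_j)_{L^2(\Omega)}+(u_m,e_j)_{W_{0}^{s,G_{x,y}}(\Omega)}=(f(x,u_m),e_j)_{L^2(\Omega)},\qquad j=1,\dots,m,$$
with $u_m(\cdot,0)=u_{0m}\to u_0$ in $W_{0}^{s,G_{x,y}}(\Omega)$, which is possible since $\bigcup_m\mathcal P_m$ is dense there. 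Because $g_{x,y}$ is $C^1$ by \eqref{cond:g1} and $f(x,\cdot)\in C^1$ by \eqref{assump:f_0}, Carath\'eodory theory yields a local-in-time solution of this ODE system. Testing with $\theta_{jm}'(t)$ and summing over $j$ produces the approximate energy identity
$$\int_0^t\|\partial_t u_m(\cdot,\tau)\|_{L^2(\Omega)}^2\,d\tau+E(u_m(\cdot,t))=E(u_{0m}),$$
and since $E(u_{0m})\to E(u_0)<d$ and $I(u_{0m})\to I(u_0)>0$, for $m$ large we have $u_{0m}\in W$ and $E(u_m(\cdot,t))\le E(u_{0m})<d$.

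The structural heart of the argument is the invariance of $W$ along the flow. I would argue that $u_m(\cdot,t)\in W$ for every $t$ in the interval of existence: if not, taking $t_0$ to be the first exit time, the energy identity keeps $E(u_m(\cdot,t_0))<d$, so the only way to leave $W$ is $I(u_m(\cdot,t_0))=0$ with $u_m(\cdot,t_0)\neq0$, i.e. $u_m(\cdot,t_0)\in\mathcal N$, which forces $E(u_m(\cdot,t_0))\ge d=\inf_{\mathcal N}E$, a contradiction. Inside $W$, combining $I(u_m)>0$ with \eqref{Assump:g_3}, \eqref{Assump:f_2} and \eqref{assump:g_5} (as in Lemma \ref{I(u)>0 implies E(u)>0}) gives
$$\Big(1-\tfrac{g^{+}}{h_1^{-}}\Big)\iint_{Q}G_{x,y}(D^s u_m)\,d\mu<E(u_m(\cdot,t))<d,$$
so by Lemma \ref{RELATION:MODULAR & NORM} the seminorm $[u_m(\cdot,t)]_{s,G_{x,y}}$ is bounded uniformly in $m$ and $t$. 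Hence the approximations are global, bounded in $L^\infty(0,\infty;W_{0}^{s,G_{x,y}}(\Omega))$, and the energy identity bounds $\partial_t u_m$ in $L^2(0,\infty;L^2(\Omega))$.

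Next I would pass to a subsequence with $u_m\rightharpoonup u$ weakly-$*$ in $L^\infty(0,\infty;W_{0}^{s,G_{x,y}}(\Omega))$ and $\partial_t u_m\rightharpoonup\partial_t u$ in $L^2(0,\infty;L^2(\Omega))$; the compact embedding $W_{0}^{s,G_{x,y}}(\Omega)\hookrightarrow\hookrightarrow L^2(\Omega)$ from Theorem \ref{thm2}, via an Aubin--Lions argument, gives $u_m\to u$ strongly in $L^2(0,T;L^2(\Omega))$ and a.e. in $\Omega\times(0,T)$. The growth bounds of Corollary \ref{bounds on f:f} together with the compact embedding into $L^{\Phi}(\Omega)$ then yield, through a Vitali/dominated convergence argument, convergence of the source term $(f(x,u_m),\phi)_{L^2(\Omega)}\to(f(x,u),\phi)_{L^2(\Omega)}$. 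I expect the \textbf{main obstacle} to be passing to the limit in the nonlinear nonlocal term $\iint_{Q}g_{x,y}(D^s u_m)(D^s\phi)\,d\mu$: here I would use the monotonicity of $\mathcal L$ together with the $(S_+)$-property of $(-\Delta)^s_{g_{x,y}}$, employing the energy identity to obtain $\limsup_m\langle\mathcal L(u_m),u_m-u\rangle\le0$ and thereby upgrade weak to strong convergence of $D^s u_m$ in $L^{\widehat{G}_x}(Q)$, after which $g_{x,y}(D^s u_m)\to g_{x,y}(D^s u)$ identifies the limit.

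With the limit passage complete, I would verify the three requirements of Definition \ref{Def:weak solution}: the weak identity \eqref{definition of weak sol} and the initial condition follow from the convergences above, the regularity $u\in C(0,\infty;L^{\Phi}(\Omega))$ from $u\in L^\infty(W_{0}^{s,G_{x,y}})$ and $\partial_t u\in L^2(L^2)$, and the energy inequality \eqref{sol: u} from the approximate identity by weak lower semicontinuity of $E$. The invariance $u(\cdot,t)\in W$ is then inherited in the limit: strong convergence of $D^s u_m$ gives $I(u_m(\cdot,t))\to I(u(\cdot,t))$, so $I(u(\cdot,t))\ge0$, while $E(u(\cdot,t))<d$ excludes membership in $\mathcal N$, forcing $I(u(\cdot,t))>0$ or $u(\cdot,t)=0$. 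Finally, for uniqueness under boundedness I would subtract two bounded solutions, test with their difference, and close with Gr\"onwall's inequality, using the monotonicity of $\mathcal L$ to discard the operator contribution and the local Lipschitz bound on $f$ guaranteed by the assumed $L^\infty$ bound to control the source term.
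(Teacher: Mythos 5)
Your proposal follows the same overall strategy as the paper's proof: a Galerkin scheme built on the basis $\{e_i\}$, the approximate energy identity, a first-exit-time argument showing the approximations stay in $W$, the resulting uniform bounds on $[u_m]_{s,G_{x,y}}$ and on $\int_0^t\|\partial_t u_m\|^2_{L^2(\Omega)}\,d\tau$, Aubin--Lions compactness, a monotone-operator (Minty-type) limit passage in the nonlocal term, lower semicontinuity for the energy inequality \eqref{sol: u}, and the same Gronwall argument for uniqueness of bounded solutions. Up to that point the two arguments are essentially identical.

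The one genuine divergence is how the invariance $u(\cdot,t)\in W$ is transferred to the limit, and there your argument has a gap. You deduce $I(u(\cdot,t))\geq 0$ from the claim that $I(u_m(\cdot,t))\to I(u(\cdot,t))$, which requires strong convergence of $D^s u_m(\cdot,t)$ in $L^{\widehat G_x}(Q)$ at \emph{fixed} (or at least a.e.) $t$. The $(S_+)$/Minty machinery, even when carried out correctly (note that $u_m-u$ is not an admissible test function in the Galerkin system, so one must insert the projection of $u$ onto the span of $\{e_1,\dots,e_m\}$), yields at best strong convergence in a space--time integrated sense, hence a.e.-in-$t$ convergence only along a further subsequence, and even then only for a.e. $t$ rather than for all $t$ as the theorem asserts. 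The paper avoids this issue entirely in its Claim 5: testing the Galerkin system with $u^{(n)}$ itself gives $\tfrac{1}{2}\tfrac{d}{dt}\|u^{(n)}(\cdot,t)\|^2_{L^2(\Omega)}=-I(u^{(n)}(\cdot,t))<0$, so the approximate $L^2$-norms are nonincreasing in $t$, and this monotonicity survives the limit thanks to the strong convergence $u^{(n)}\to u$ in $C(0,T_{\max};L^{\Phi}(\Omega))\hookrightarrow C(0,T_{\max};L^{2}(\Omega))$ supplied by Aubin--Lions; if $I(u(\cdot,t_0))<0$ at a first exit time, then by \eqref{u_t} the $L^2$-norm of $u$ would be strictly increasing near $t_0$, contradicting the limit of nonincreasing functions, while the case $I(u(\cdot,t_0))=0$ is excluded by $E(u(\cdot,t_0))<d$ and the definition of $d$. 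You should either adopt this $L^2$-monotonicity comparison or supply the missing pointwise-in-time strong convergence; as written, that step of your proof does not close.
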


\begin{proof} Consider the Galerkin approximations:
\begin{equation*}
u^{(n)}(x, t) = \sum_{j=1}^n c_j^{(n)}(t) e_j(x),
\end{equation*}
where the functions $c_k^{(n)}(t): [0, T] \to \mathbb{R}$ satisfy the following system of ordinary differential equations for $j = 1, 2, \dots, n$:
\begin{equation}
\label{Galerkin:approximate}
\left\{
\begin{aligned}
    \left(\frac{du^{(n)}}{dt}, e_j\right)_{L^2(\Omega)} + \left(u^{(n)}, e_j\right)_{W_{0}^{s,G_{x,y}}(\Omega)} &= \left(f(x, u^{(n)}), e_j\right)_{L^2(\Omega)}, \\
    u^{(n)}(x, 0) &= \sum_{i=1}^n \left(u(x, 0), e_i\right)_{L^2(\Omega)} e_i(x),
\end{aligned}
\right.
\end{equation}
with
\begin{equation*}
\left(u^{(n)}, e_j\right)_{W_{0}^{s,G_{x,y}}(\Omega)} = \iint_Q g_{x,y}\left(\frac{u^{(n)}(x,t) - u^{(n)}(y,t)}{|x-y|^s}\right) \frac{e_j(x) - e_j(y)}{|x-y|^s} \, d\mu,
\end{equation*}
and the initial condition
\begin{equation*}
 u^{(n)}(x, 0) \to u_0 \quad \text{in } \ W_{0}^{s,G_{x,y}}(\Omega), \quad \text{as } n \to \infty.
\end{equation*}
It follows from \eqref{Galerkin:approximate} that
\begin{align}\label{aneq}
   \frac{dc^{(n)}_{j}}{dt}(t) &= -\iint_{Q} g_{x,y}\left(\frac{u^{(n)}(x,t) - u^{(n)}(y,t)}{|x-y|^s}\right) \frac{e_j(x) - e_j(y)}{|x-y|^s} \, d\mu \\
   &\qquad\qquad + \left(f(x, u^{(n)}), e_j\right)_{L^2(\Omega)}. \nonumber
\end{align}
Define
\begin{align*}
F_j^{(n)}(c^{(n)}(t)) &= -\iint_{Q} g_{x,y}\left(\frac{\sum_{i=1}^n c_i^{(n)}(t)e_i(x) - \sum_{i=1}^n c_i^{(n)}(t)e_i(y)}{|x-y|^s}\right) \frac{e_j(x) - e_j(y)}{|x-y|^s} \, d\mu \\
&\qquad\qquad + \int_{\Omega} f\left(x, \sum_{i=1}^n c_i^{(n)}(t)e_i(x)\right) e_j(x) \, dx,
\end{align*}
and
\[
c^{(n)}(\cdot) = \left(c_i^{(n)}(\cdot)\right)_{i=1}^n, \quad F^{(n)}(\cdot) = \left(F_i^{(n)}(\cdot)\right)_{i=1}^n, \quad c^{(n)}_0 = \left(\left(u_0, e_i\right)_{L^2(\Omega)}\right)_{i=1}^n.
\]
In view of \eqref{aneq}, equation \eqref{Galerkin:approximate} can be rewritten as
\begin{equation}
\label{Galerkin:system}
\left\{
\begin{aligned}
    \frac{dc^{(n)}}{dt}(t) &= F^{(n)}(c^{(n)}(t)), \\
    c^{(n)}(0) &= c^{(n)}_0.
\end{aligned}
\right.
\end{equation}

\textbf{Claim $1$}:  Equation \eqref{Galerkin:system} admits a solution in $[0, T_{\max})$. To this end, 
by multiplying the first equality of \eqref{Galerkin:system} by $c^{n}(t)$, we obtain
\begin{align*}
    \frac{1}{2}\frac{d|c^{(n)}(t)|^{2}}{dt} & = c^{(n)}(t)\frac{dc^{(n)}(t)}{dt} \\
    & = -\iint_{Q}g_{x,y}\left(\frac{\sum_{i=1}^{n}c_{i}^{(n)}(t)e_{i}(x)-\sum_{i=1}^{n}c_{i}^{(n)}(t)e_{i}(y)}{|x-y|^{s}}\right)\\
    &\qquad \qquad \qquad \times \frac{\left(\sum_{i=1}^{n}c_{i}^{(n)}(t)e_{i}(x)-\sum_{i=1}^{n}c_{i}^{(n)}(t)e_{i}(y)\right)}{|x-y|^{s}}d\mu\\
    & + \int_{\Omega}f(x,\sum_{i=1}^{n}c_{i}^{(n)}(t)e_{i}(x))\sum_{i=1}^{n}c_{i}^{(n)}(t)e_{i}(x)dx.
    \end{align*}
    Note that, from \eqref{Assump:g_3}, the first term on the right side of the above inequality is negative. Now by using  Lemma \ref{RELATION:MODULAR & NORM} and Corollary \ref{bounds on f:f}(i), we get
\begin{align}\label{aneqq}
\frac{1}{2}\frac{d|c^{(n)}(t)|^{2}}{dt}&\leq \int_{\Omega}Ah_{2}(x)\left|\sum_{i=1}^{n}c_{i}^{(n)}(t)e_{i}(x))\right|^{h_2(x)}dx \\
&\leq Ah_{2}^{+}\int_{\Omega}\sum_{i=1}^{n}|c_{i}^{(n)}(t)|^{h_{2}(x)}\sum_{i=1}^{n}|e_{i}(x)|^{h_{2}(x)}dx\nonumber\\
&\leq Ah_{2}^{+} \sum_{i=1}^{n}\max\left\{|c_{i}^{(n)}(t)|^{h_{2}^{+}}, |c_{i}^{(n)}(t)|^{h_{2}^{-}}\right\}\int_{\Omega}\sum_{i=1}^{n}|e_{i}(x)|^{h_{2}(x)}dx \nonumber \\
&\leq Ah_{2}^{+} \sum_{i=1}^{n}\max\left\{|c_{i}^{(n)}(t)|^{h_{2}^{+}}, |c_{i}^{(n)}(t)|^{h_{2}^{-}}\right\}\sum_{i=1}^{n}\int_{\Omega}|e_{i}(x)|^{h_{2}(x)}dx\nonumber\\
&\leq Ah_{2}^{+} \sum_{i=1}^{n}\max\left\{|c_{i}^{(n)}(t)|^{h_{2}^{+}}, |c_{i}^{(n)}(t)|^{h_{2}^{-}}\right\}\nonumber\\
&\qquad\qquad \qquad \sum_{i=1}^{n}\max\left\{\|e_{i}(x)\|^{h_{2}^{-}}_{L^{\Phi}(\Omega)},\|e_{i}(x)\|^{h_{2}^{+}}_{L^{\Phi}(\Omega)}\right\}\nonumber\\
&\leq Ah_{2}^{+} \sum_{i=1}^{n}\max\left\{|c_{i}^{(n)}(t)|^{h_{2}^{+}}, |c_{i}^{(n)}(t)|^{h_{2}^{-}}\right\}\nonumber\\
& \qquad \qquad \qquad \sum_{i=1}^{n}\max\left\{(C^{*}_{G})^{h_{2}^{-}}[e_{i}(x)]^{h_{2}^{-}}_{s,G_{x,y}},(C^{*}_{G})^{h_{2}^{+}}[e_{i}(x)]^{h_{2}^{+}}_{s,G_{x,y}}\right\}\nonumber\\
\label{C^{n}(t)<1}
&\leq A h_{2}^{+} \sum_{i=1}^{n}\max\left\{ |c_{i}^{(n)}(t)|^{h_{2}^{+}}, |c_{i}^{(n)}(t)|^{h_{2}^{-}}\right\}C(n),
\end{align}
where
\[C(n)= \sum_{i=1}^{n}\max\left\{(C^{*}_{G})^{h_{2}^{-}}[e_{i}(x)]^{h_{2}^{-}}_{s,G_{x,y}},(C^{*}_{G})^{h_{2}^{+}}[e_{i}(x)]^{h_{2}^{+}}_{s,G_{x,y}}\right\}.\]

Next, to derive the upper estimates of $|c^{(n)}(t)|$, we consider two cases $|c^{(n)}(t)|<1$ and $|c^{(n)}(t)|\geq 1.$ If $|c^{(n)}(t)|<1$, then we are done. Otherwise, from \eqref{C^{n}(t)<1} we infer that
\begin{align*}
\frac{1}{2}\frac{d|c^{(n)}(t)|^{2}}{dt}&\leq A h_{2}^{+} |c^{(n)}(t)|^{h_{2}^{+}}C(n).
\end{align*}
Solving the ordinary differential inequality and using \eqref{assump:g_5}, we find that
\begin{align}
|c^{(n)}(t)|&\leq\frac{1}{\left(|c_{0}^{(n)}|^{2-h_{2}^{+}}+(2-h_{2}^{+})A h_{2}^{+}(C(n))t \right)^{\frac{1}{h_{2}^{+}-2}}}, \quad \forall \   t\in [0,\tilde{T}[,
\end{align}
where $\tilde{T}=\frac{|c_{0}^{(n)}|^{2-h_{2}^{+}}}{(h_{2}^{+}-2)A h_{2}^{+}C(n)}.$
Then, there exists a sufficiently small $\epsilon > 0$ such that $|c^{(n)}(t)| \leq C$  for all $t \in [0, \tilde{T}-\epsilon]$, where $C=C(\tilde{T}-\epsilon)$ is a constant. Thus, the map
$$
t \mapsto F^{(n)}(c^{(n)}(t))
$$
is bounded for $t \in [0, \tilde{T} - \epsilon]$, with its supremum denoted by $M_0$. Denote

\begin{align*}
 E:=\left\{(t,c^{(n)}(t))\in [0, \tilde{T}-\epsilon] \times \mathbb{R^{N}} : |c^{(n)}(t)-c^{(n)}_{0}|\leq C \right\}.
\end{align*}
By Peano's theorem, there exists solutions of \eqref{Galerkin:system} on $[0,T_{1}],$
where $T_{1}=min\left\{\tilde{T}-\epsilon, C M_0^{-1} \right\}.$
Now, by taking $T_{1}$ as new initial point 
and repeating the above argument in view of Peano's theorem, there exists a solution \(c^{n}(t)\) to equation \eqref{Galerkin:system} in $[0, T_{\max})$. This proves Claim $1$.
\\

\textbf{Claim 2}: We prove that \(u^{(n)}(\cdot,t) \in W\) for all $t \in [0, T_{\max})$. For this purpose, we multiply \eqref{Galerkin:approximate} by \(\frac{dc_j^{(n)}}{dt}\), sum over \(j\) from \(1\) to \(n\), and integrate from \(0\) to \(t\).
As a result, we obtain:
\begin{equation}
\label{Energy:approximation}
\begin{aligned}
\int_{0}^{t}||u^{(n)}_t(\cdot,\tau)||^{2}_{L^{2}(\Omega)} ~d\tau +E(u^{(n)}(\cdot, t))&=E(u^{(n)}(\cdot, 0)).
\end{aligned}
\end{equation}
Hence, from the fact that $u^{(n)}(\cdot,0)\rightarrow u_{0} \text{ in }W_{0}^{s,G_{x,y}}(\Omega)$, we get 
\[
E(u^{(n)}(\cdot,0))\rightarrow E(u_{0}) < d \quad \text{and} \quad I(u^{(n)}(\cdot,0))\rightarrow I(u_{0})>0 \ \text{as} \ n \to \infty. 
\]
Therefore, for sufficiently large \( n \), we infer that  
\begin{equation}
\label{for large n}
    \int_{0}^{t} \|u^{(n)}_t(\cdot,\tau)\|^{2}_{L^{2}(\Omega)} \,d\tau + E(u^{(n)}(\cdot, t)) = E(u^{(n)}(\cdot, 0)) < d, \quad \text{and} \quad I(u^{(n)}(\cdot,0)) > 0,
\end{equation}  
which implies that \( u^{(n)}(\cdot,0) \in W \).  

Now, suppose that \( u^{(n)}(\cdot,t) \notin W \) for sufficiently large \( n \). Then, there exists \( t_0 > 0 \) such that \( u^{(n)}(\cdot,t_0) \notin W \). If \( t_0 \) is not unique, we may assume, without loss of generality, that \( t_0 \) is the first time for which \( u^{(n)}(\cdot, t_{0}) \notin W \). This implies that \( u^{(n)}(\cdot, t) \not\equiv 0 \) for all \( t \in (0, t_0] \), and  
\[
E(u^{(n)}(\cdot,t_0)) = d, \quad \text{or} \quad E(u^{(n)}(\cdot,t_0)) > d, \quad \text{or} \quad I(u^{(n)}(\cdot,t_0)) = 0, \quad \text{or} \quad I(u^{(n)}(\cdot,t_0)) < 0.
\]
Clearly, from \eqref{for large n}, we have \(E(u^{(n)}(\cdot,t_0)) < d\).
Now, suppose \(I(u^{(n)}(\cdot,t_0)) = 0\). This implies that \(u^{(n)}(\cdot,t_0) \in \mathcal{N}\), {\it i.e.},
$$
d \leq E(u^{(n)}(\cdot,t_0)).
$$
This contradicts \eqref{for large n}. Now, if \( I(u^{(n)}(\cdot,t_0))< 0 \), then, since \( I \) is a continuous functional and \( I(u^{(n)}(\cdot,0))>0 \), there exists \( t_{1}\in (0,t_{0}) \) such that  
\[
I(u^{(n)}(\cdot,t_{1}))=0 \quad \text{and} \quad u^{(n)}(\cdot, t_1) \not\equiv 0.
\]  
This further implies that \( u^{(n)}(\cdot,t_1) \in \mathcal{N} \), \textit{i.e.},  
\[
d \leq E(u^{(n)}(\cdot,t_1)),
\]  
which contradicts \eqref{for large n} once again, thereby proving \textbf{Claim 2}.
\\
\textbf{Claim 3}: Uniform boundedness of $u^{(n)}$ in $W_{0}^{s,G_{x,y}}(\Omega))$ and of $f(x,u^{(n)})$ in $L^{\Phi^\ast}(\Omega)$ for all $t \in [0, T_{\max})$. \\ From Claim $2$,   \eqref{Assump:f_2} and \eqref{Assump:g_3}, we obtain
\begin{align*}
d>E(u^{(n)})&=\iint_{Q}G_{x,y}\left(D^{s}u^{(n)}\right)d\mu-\int_{\Omega}F(x,u^{(n)})~dx\\
&\geq\iint_{Q}G_{x,y}\left(D^{s}u^{(n)}\right)d\mu-\int_{\Omega}\frac{1}{h_{1}(x)}u^{(n)}f(x,u^{(n)})dx\\
&\geq\iint_{Q}G_{x,y}\left(D^{s}u^{(n)}\right)d\mu-\int_{\Omega}\frac{1}{h_{1}^{-}}u^{(n)}f(x,u^{(n)})dx\\
&\geq\iint_{Q}G_{x,y}\left(D^{s}u^{(n)}\right)d\mu-\frac{1}{h_{1}^{-}}\iint_{Q}g_{x,y}\left(D^{s}u^{(n)}\right)\left(D^{s}u^{(n)}\right)d\mu+\frac{1}{h_{1}^{-}}I(u^{(n)})\\
&\geq\iint_{Q}G_{x,y}\left(D^{s}u^{(n)}\right)d\mu-\frac{g^{+}}{h_{1}^{-}}\iint_{Q}G_{x,y}\left(D^{s}u^{(n)}\right)d\mu\\
&\geq\left(1-\frac{g^{+}}{h_{1}^{-}}\right)\iint_{Q}G_{x,y}\left(D^{s}u^{(n)}\right)d\mu\\
&\geq\left(1-\frac{g^{+}}{h_{1}^{-}}\right)\min\left\{[u^{(n)}]^{g^{-}}_{s,G_{x,y}},[u^{(n)}]^{g^{+}}_{s,G_{x,y}}\right\}.
\end{align*}
Thus, due to \eqref{for large n}, we conclude that
$$\int_{0}^{t}||u^{(n)}_t(\cdot,\tau) ||^{2}_{L^{2}(\Omega)} ~d \tau+\left(1-\frac{g^{+}}{h_{1}^{-}}\right)\min\left\{[u^{(n)}]^{g^{-}}_{s,G_{x,y}},[u^{(n)}]^{g^{+}}_{s,G_{x,y}}\right\}\leq E(u^{(n)}(0))<d,$$
which shows that
$$\int_{0}^{t}||u^{(n)}_t(\cdot,\tau) ||^{2}_{L^{2}(\Omega)} ~d\tau <d $$
and
$$\min\left\{[u^{(n)}]^{g^{-}}_{s,G_{x,y}},[u^{(n)}]^{g^{+}}_{s,G_{x,y}}\right\}<\frac{dh_{1}^{-}}{h_{1}^{-}-g^{+}}.$$
Combining, the above pieces of information, we infer that
\begin{equation}\label{bound of u with W0 norm}
    [u^{(n)}]_{s, G_{x,y}} < C_{d}, \quad C_{d}:= \max\left\{\left(\frac{dh_{1}^{-}}{h_{1}^{-}-g^{+}}\right)^{\frac{1}{g^{+}}}, \left(\frac{dh_{1}^{-}}{h_{1}^{-}-g^{+}}\right)^{\frac{1}{g^{-}}} \right\}.
\end{equation}
Furthermore, using Lemma \ref{RELATION:MODULAR & NORM} and Corollary \ref{bounds on f:f}(i), one has
\begin{align*}
    \varrho_{\Phi^{*}}(f(x,u^{(n)}))&=\int_{\Omega}\frac{h_{2}(x)-1}{h_{2}(x)}|f(x,u^{(n)})|^{\frac{h_{2}(x)}{h_{2}(x)-1}}dx\\
    &\leq \max \left\{A^{\frac{h_{2}^{+}}{h_{2}^{-}-1}},A^{\frac{h_{2}^{-}}{h_{2}^{+}-1}}\right\} \frac{h_{2}^{+}}{h_{2}^{-}}\int_{\Omega}|h_{2}(x)|^{\frac{h_{2}(x)}{h_{2}(x)-1}}|u^{(n)}|^{h_{2}(x)}dx\\
    &\leq \max \left\{A^{\frac{h_{2}^{+}}{h_{2}^{-}-1}},A^{\frac{h_{2}^{-}}{h_{2}^{+}-1}}\right\} \frac{h_{2}^{+}}{h_{2}^{-}}|h_{2}^{+}|^{h_{2}^{+}}\int_{\Omega}|u^{(n)}|^{h_{2}(x)}dx\\
    &\leq \max \left\{A^{\frac{h_{2}^{+}}{h_{2}^{-}-1}},A^{\frac{h_{2}^{-}}{h_{2}^{+}-1}}\right\} \frac{h_{2}^{+}}{h_{2}^{-}}|h_{2}^{+}|^{h_{2}^{+}}\max\left\{\|u^{(n)}\|^{h_{2}^{-}}_{L^{\Phi}(\Omega)},\|u^{(n)}\|^{h_{2}^{+}}_{L^{\Phi}(\Omega)}\right\}\\
    &\leq C^{*}_{G}\max \left\{A^{\frac{h_{2}^{+}}{h_{2}^{-}-1}},A^{\frac{h_{2}^{-}}{h_{2}^{+}-1}}\right\} \frac{h_{2}^{+}}{h_{2}^{-}}|h_{2}^{+}|^{h_{2}^{+}}\max \left\{[u^{(n)}]^{{h_{2}}^{-}}_{s,G_{x,y}},[u^{(n)}]^{h_{2}^{+}}_{s,G_{x,y}}\right\}.
\end{align*}
Hence, in light of \eqref{bound of u with W0 norm}, there exists a positive constant $\tilde{C}>0$ such that
$$\varrho_{\Phi^{*}}(f(x,u^{(n)}))\leq \tilde{C}, \ \mbox{and} \
 \min \left\{\|f(x,u^{(n)})\|_{L^{\Phi^{*}}(\Omega)}^{{h_{2}}^{*-}},\|f(x,u^{(n)})\|_{L^{\Phi^{*}}(\Omega)}^{{h_{2}}^{*+}}\right\}< \tilde{C}, $$
where
$$\tilde{C}:=C^{*}_{G}\max \left\{(A^{\frac{h_{2}^{+}}{h_{2}^{-}-1}},A^{\frac{h_{2}^{-}}{h_{2}^{+}-1}}\right\}\frac{h_{2}^{+}}{h_{2}^{-}}|h_{2}^{+}|^{h_{2}^{+}}\max \left\{(C_{d})^{{h_{2}}^{-}},(C_{d})^{h_{2}^{+}}\right\}.$$
This finishes the proof of Claim $3$. \\

The last claim establishes the existence of $u$ and a subsequence $\{u^{(n)}\}_{n\in \mathbb{N}}$ (still denoted by $\{u^{(n)}\}_{n\in \mathbb{N}}$) such that, as $n \to \infty$, we have
 \begin{align}
 \label{weak convergence}
 u^{(n)}&\stackrel{\ast}{\rightharpoonup} u\qquad\text{ in }L^{\infty}(0,T_{\max};W_{0}^{s,G_{x,y}}(\Omega));\\
 u_{t}^{n}&\weak u_{t} \qquad\text{ in }L^{2}(0,T_{\max};L^{2}(\Omega));\\
 f(x,u^{(n)})&\stackrel{\ast}{\rightharpoonup}\xi\qquad\text{ in }L^{\infty}(0,T_{\max};L^{\Phi^{*}}(\Omega))\nonumber.
 \end{align}
Then, invoking Theorem \ref{thm2} and  from Aubin-Lions compactness theorem\cite[Corollary 4, p. 85]{simon}, we get
 \begin{equation}
 \label{stong:conv}
 \begin{aligned}
 u^{(n)}&\to u \text{ in } C(0,T_{\max};L^{\Phi}(\Omega)),\text{  as } n \to \infty,
 \end{aligned}
  \end{equation}
and so, $\xi=f(x,u).$\\
\textbf{ Claim 4}: The function $u$ is a weak solution to problem (\ref{main:problem}). \\
To this end, choose $v\subseteq C^{1}(0,T_{\max};C_{0}^{\infty}(\Omega))$ with the following form
 $$v=\sum_{j=1}^{k}l_{j}(t)e_{j},$$
 where $l_{j}(t)\in C^{1}(0,T_{\max})$ with $j=1,2,3,\dots,k $ ($k\leq n $). Multiplying the first equality of \eqref{Galerkin:approximate} by $l_{j}(t)$ summing for $j$ from 1 to $n$, integrating with respect to $t$ from $0$ to $T$, we get
 $$\int_{0}^{T}\left(u_{t}^{n},v\right)_{L^2(\Omega)}dt+\int_{0}^{T}\left(u^{(n)},v\right)_{W_{0}^{s,G_{x,y}}(\Omega)} dt=\int_{0}^{T}\left(f(x,u^{(n)}),v\right)_{L^{2}(\Omega)}dt.$$
Therefore, taking the limit as \( n \to \infty \) and applying the theory of monotone and hemicontinuous operators as presented in \cite{Aberqi}, we obtain
 \begin{equation}
 \label{sol:v}
 \begin{aligned}
\int_{0}^{T}\left(u_{t},v\right)_{L^2(\Omega)}dt+\int_{0}^{T}\left(u,v\right)_{W_{0}^{s,G_{x,y}}(\Omega)} dt &=\int_{0}^{T}\left(f(x,u),v\right)_{L^{2}(\Omega)}dt.
\end{aligned}
\end{equation}
Since \( C^{1}(0,T_{\max};C_{0}^{\infty}(\Omega)) \) is dense in \( L^{2}(0,T_{\max};W_{0}^{s,G_{x,y}}(\Omega)) \), the identity \eqref{sol:v} holds for \( v \in L^{2}(0,T_{\max};W_{0}^{s,G_{x,y}}(\Omega)) \). Moreover, by the arbitrariness of \( T \in (0, T_{\max}) \), we have
\[
\left(u_{t},\phi \right)_{L^2(\Omega)} + \left(u,\phi\right)_{W_{0}^{s,G_{x,y}}(\Omega)} = \left(f(x,u),\phi\right)_{L^{2}(\Omega)},
\]
for all \( \phi \in W_{0}^{s,G_{x,y}}(\Omega) \) and a.e. \( t \in (0, T_{\max}) \). 
By \eqref{stong:conv} and the fact that \( u^{(n)}(x,0) \to u_{0}(x) \) in \( W_{0}^{s,G_{x,y}}(\Omega) \), it follows that \( u(x,0) = u_{0}(x) \). On the other hand, from \eqref{Energy:approximation} we have 
\begin{align*}
\int_{0}^{t}||u^{(n)}_t(\cdot,\tau)||^{2}_{L^{2}(\Omega)}d\tau +\iint_{Q}G_{x,y}\left(D^{s}u^{(n)}\right)d\mu &= E(u^{(n)}(0)) + \int_{\Omega}F(x,u^{(n)})dx.
\end{align*}
Then, by using \eqref{weak convergence}-\eqref{stong:conv}, the lower semicontinuity of modular functions \cite[Lemma 3.1.4]{M32} and the fact that \( u^{(n)}(\cdot, 0) \to u_{0} \) in \( W_{0}^{s,G_{x,y}}(\Omega) \), we obtain
\begin{align*}
   \int_{0}^{t}||u_t(\cdot, \tau)||^{2}_{L^{2}(\Omega)}d\tau +\iint_{Q}G_{x,y}\left(D^{s}u\right)d\mu &\leq \liminf_{n \to \infty}\int_{0}^{t}||u^{(n)}_t(\cdot,\tau)||^{2}_{L^{2}(\Omega)}d\tau + \liminf_{n \to \infty} \iint_{Q}G_{x,y}\left(D^{s}u^{(n)}\right)d\mu\\
   &\leq  \liminf_{n \to \infty} \left(\int_{0}^{t}||u^{(n)}_t(\cdot,\tau)||^{2}_{L^{2}(\Omega)}d\tau + \iint_{Q}G_{x,y}\left(D^{s}u^{(n)}\right)d\mu \right)\\
   &=\liminf_{n \to \infty}\left( E(u^{(n)}(0)) + \int_{\Omega}F(x,u^{(n)})dx \right)\\
    &=\lim_{n \to \infty}\left( E(u^{(n)}(0)) + \int_{\Omega}F(x,u^{(n)})dx \right)\\
    &=E(u_{0}) + \int_{\Omega}F(x,u)dx.
\end{align*}
Thus, we have
\begin{equation}\label{energy:ineq:relation}
    \int_{0}^{t}||u_t(\cdot, \tau)||^{2}_{L^{2}(\Omega)} ~d\tau + E(u)\leq E(u_{0})\qquad \text{a.e. } t \in (0, T_{\max}).
\end{equation}

\textbf{Claim 5:} $u(\cdot, t) \in W$ for all $0 \leq  t< T_{\max}.$ To achieve this, 
assume the contrary: there exists \(t_{0} \in (0,\infty) \) such that
\( u(\cdot, t_{0}) \notin  W \). If $t_0$ is not unique, without loss of generality, we suppose $t_0$ is the first time such that \( u(\cdot, t_{0}) \notin W \). Then we have \( u(\cdot, t) \not\equiv 0 \) for all $t \in (0, t_0]$ and
\[
I(u(\cdot, t_{0})) = 0\text{ or } I(u(\cdot, t_{0})) < 0  \text{ or } E(u(\cdot, t_{0})) = d \text{ or } E(u(\cdot, t_{0})) > d  .
\]
Clearly, from \eqref{energy:ineq:relation}, we know that \( E(u(\cdot, t_{0})) < E(u_0)<d\). Thus either $I(u(\cdot, t_{0})) = 0\text{ or } I(u(\cdot, t_{0})) < 0$. 
Suppose that \( I(u(\cdot, t_0)) = 0 \). Since \( u(\cdot, t_0) \notin W \), it follows that \( u(\cdot, t_0) \not\equiv 0 \), and thus \( u(\cdot, t_0) \in \mathcal{N} \). By the definition of \( d \), we then obtain \( E(u(\cdot, t_0)) \geq d \), which contradicts the energy relation given in \eqref{energy:ineq:relation}.
\\
Since $u\in C(0,T_{\max};L^{\Phi}(\Omega))$ and from \eqref{assump:g_5} we have $2<h_{2}^-$ and $ u\in C(0,T_{\max};L^{2}(\Omega))$.
Now, by taking $\phi =u(\cdot, t)$ for $0< t< T_{\max}$ in \eqref{definition of weak sol}, we obtain
 \begin{align}
 \label{u_t}
        \frac{1}{2}\frac{d}{dt}\|u(\cdot,t)\|^{2}_{L^{2}(\Omega)}&=-I(u(\cdot,t)).
    \end{align}
Suppose $I(u(\cdot, t_{0}))<0$. Then, from \eqref{u_t}, we get
\[ \frac{d}{dt}\|u(\cdot,t)\|^{2}_{L^{2}(\Omega)}|_{t=t_{0}}>0.\]
Therefore, since $ u\in C(0,T_{\max};L^{2}(\Omega))$, there exists $\epsilon> 0$ such that
\begin{align}
\label{U increasing}
  \|u(\cdot,t_{1})\|^{2}_{L^{2}(\Omega)}&<\|u(\cdot,t_{0})\|^{2}_{L^{2}(\Omega)}<\|u(\cdot,t_{2})\|^{2}_{L^{2}(\Omega)} \qquad\text{for all } t_{1},t_{2}\in (t_{0}-\epsilon, t_{0}+\epsilon).  
\end{align}
Now, multiplying the first equality of \eqref{Galerkin:approximate} by $c^{(n)}_{j}(t)$ and summing for $j$ from 1 to $n$, we get
 \begin{align}
        \frac{1}{2}\frac{d}{dt}\|u^{(n)}(\cdot,t)\|^{2}_{L^{2}(\Omega)}&=-I(u^{(n)}(\cdot,t))\nonumber.
    \end{align}
    Then, since $u^{(n)}(\cdot,t)\in W$ for all $0\leq t < T_{\max}$ and $u^{(n)}\in C(0,T_{\max};L^{2}(\Omega))$, $\|u^{(n)}(\cdot,t)\|^{2}_{L^{2}(\Omega)}$ is decreasing for all $0\leq t <T_{\max}.$ Therefore, we have 
\begin{align}
    \|u^{(n)}(\cdot,t_1)\|^{2}_{L^{2}(\Omega)}&> \|u^{(n)}(\cdot,t_0)\|^{2}_{L^{2}(\Omega)}>\|u^{(n)}(\cdot,t_2)\|^{2}_{L^{2}(\Omega)}.\nonumber
\end{align}
Taking the limit as \( n \to \infty \), we obtain  
\begin{align}
    \label{U decreasing}
     \|u(\cdot,t_1)\|^{2}_{L^{2}(\Omega)} &\geq \|u(\cdot,t_0)\|^{2}_{L^{2}(\Omega)} \geq \|u(\cdot,t_2)\|^{2}_{L^{2}(\Omega)},
\end{align}
which contradicts \eqref{U increasing}. Hence, we conclude that \( u(\cdot,t) \in W \) for all \( 0 \leq t < T_{\max} \).\\

\textbf{Claim 6:} $T_{\max} = + \infty.$ For this purpose, 
suppose by contradiction $T_{\max} < +\infty$ {\it i.e} $\|u(\cdot, t)\|_{W_{0}^{s,G_{x,y}}(\Omega)} \to + \infty$ as $t \to T_{\max}^-.$ By using the \textbf{Claim 5}, \eqref{Assump:f_2} and \eqref{Assump:g_3}, we obtain
\begin{align*}
d>E(u(\cdot, t))
&\geq\iint_{Q}G_{x,y}\left(D^{s}u(\cdot, t)\right)d\mu-\int_{\Omega}\frac{1}{h_{1}(x)} u(\cdot, t) f(x, u(\cdot, t))dx\\
&\geq\iint_{Q}G_{x,y}\left(D^{s} u(\cdot, t) \right)d\mu-\frac{1}{h_{1}^{-}}\iint_{Q}g_{x,y}\left(D^{s} u(\cdot, t) \right)\left(D^{s} u(\cdot, t) \right)d\mu+\frac{1}{h_{1}^{-}}I(u(\cdot, t))\\
&\geq\iint_{Q}G_{x,y}\left(D^{s} u(\cdot, t) \right)d\mu-\frac{g^{+}}{h_{1}^{-}}\iint_{Q}G_{x,y}\left(D^{s}u(\cdot, t)\right)d\mu\\
&\geq\left(1-\frac{g^{+}}{h_{1}^{-}}\right)\min\left\{[u(\cdot, t)]^{g^{-}}_{s,G_{x,y}},[u(\cdot, t)]^{g^{+}}_{s,G_{x,y}}\right\} \to +\infty \quad \text{as} \ t \to T_{\max}^-
\end{align*}
which is a contradiction. Hence, $T_{\max} = +\infty.$
\\

 It remains to prove the uniqueness of bounded solution. Accordingly, assume $u$ and $v$ bounded weak solutions of equation \eqref{main:problem}. Then, by Definition~\ref{Def:weak solution}, for any test function $\phi \in W_{0}^{s,G_{x,y}}(\Omega)$, we have

$$\left(u_{t},\phi\right)_{L^2(\Omega)}+\left(u,\phi\right)_{W_{0}^{s,G_{x,y}}(\Omega)} =\left(f(x,u),\phi \right)_{L^{2}(\Omega)},$$
and
$$\left(v_{t},\phi\right)_{L^2(\Omega)}+\left(v,\phi\right)_{W_{0}^{s,G_{x,y}}(\Omega)} =\left(f(x,v),\phi \right)_{L^{2}(\Omega)}.$$
Subtracting the two equalities above, setting \( \phi = u - v \in W_{0}^{s,G_{x,y}}(\Omega) \), and integrating with respect to \( t \) over the interval \([0, t]\), we get
\begin{align*}
&\int_{0}^{t}\int_{\Omega}(u_{t}-v_{t})(u-v)\,dx\,dt\\
&+\int_{0}^{t}\iint_{Q}g_{x,y}\left(\frac{u(x,t)-u(y,t)}{|x-y|^s}\right)\left(\frac{u(x,t)-v(x,t)-u(y,t)+v(y,t)}{|x-y|^s}\right)d\mu\, dt\\
&-\int_{0}^{t}\iint_{Q}g_{x,y}\left(\frac{v(x,t)-v(y,t)}{|x-y|^s}\right)\left(\frac{u(x,t)-v(x,t)-u(y,t)+v(y,t)}{|x-y|^s}\right)d\mu\, dt\\
&=\qquad \int_{0}^{t}\int_{\Omega}\left(f(x,u)-f(x,v)\right)(u-v)\,dx\,dt
\end{align*}
We can rewrite the above equality as
\begin{align*}
&\int_{0}^{t}\int_{\Omega}(u_{t}-v_{t})(u-v)\,dx\,dt\\
&+\int_{0}^{t}\iint_{Q}\left[g_{x,y}\left(\frac{u(x,t)-u(y,t)}{|x-y|^s}\right)- g_{x,y}\left(\frac{v(x,t)-v(y,t)}{|x-y|^s}\right)\right] \left(\frac{u(x,t)-u(y,t)}{|x-y|^s}-\frac{v(x,t)-v(y,t)}{|x-y|^s}\right)d\mu\,dt\\
&=\int_{0}^{t}\int_{\Omega}\left(f(x,u)-f(x,v)\right)(u-v)\,dx\,dt
\end{align*}

Note that, from \eqref{assump:g2}, the second term on the left-hand side of the equality is always  non-negative. So, we have
\begin{align*}
\int_{0}^{t}\int_{\Omega}(u_{t}-v_{t})(u-v)\,dx\,dt \leq \int_{0}^{t}\int_{\Omega}\left(f(x,u)-f(x,v)\right)(u-v)\,dx\,dt.
\end{align*}
Thus, since $u$ and $v$ are bounded weak solutions, from Corollary \ref{bounds on f:f} and \eqref{assump:f_0}, we have $f(\cdot,u)$ and $f(\cdot,v)$ are bounded and locally Lipschitz uniformly in $x \in \Omega$. Now, by using Cauchy Schwartz's inequality, we get 
\begin{align*}
\int_{\Omega}\int_{0}^{t} (u_{t} - v_{t}) (u-v) dt\, dx \leq C\int_{0}^{t}\int_{\Omega}\left(u-v\right)^{2}\,dx\,dt.
\end{align*}
Thus, by Gronwall's inequality and the fact that $u(x,0)=v(x,0)$, we deduce that $u=v$ a.e. in $\Omega.$
\end{proof}
 
\begin{theorem}\label{thm:global-existence-strong}
     Let conditions \eqref{assump:f_0}--\eqref{Assump:f_3}, \eqref{cond:g0}--\eqref{assump:g_5}, \eqref{Cond:g_7}--\eqref{Cond:g_8}, and \eqref{assump:h_0} hold, and let \( u_0 \in W_{0}^{s,G_{x,y}}(\Omega) \). If \( E(u_0) < d \) and \( I(u_0) > 0 \), then problem \eqref{main:problem} admits a global strong solution \( u \in L^\infty(0, \infty; W_{0}^{s,G_{x,y}}(\Omega)) \) with \( u_t \in L^2(0, \infty; L^2(\Omega)) \) and \( u(\cdot, t) \in W \) for all \( 0 \leq t < \infty \). Moreover, there exist constants \( \delta' \in (0,1) \) and \( C_{*} > 0 \) such that the following estimates hold for all \( 0 \leq t < +\infty \):

\begin{enumerate}[leftmargin=1.5cm, label=(\roman*), font=\normalfont]
    \item If \( [u(\cdot, t)]_{s,G_{x,y}} \geq 1 \), then
    \begin{equation*}
    \|u(\cdot, t)\|^2_{L^2(\Omega)} \leq
    \begin{cases}
     \left( \|u_0\|^{2-g^-}_{L^2(\Omega)} - (2 - g^-)(1 - \delta')g^{-}C_*^{-g^-}t \right)^{\frac{2}{2-g^-}}_{+}, & \text{if } g^- < 2,\\
     \|u_0\|^2_{L^2(\Omega)} e^{-2( 1-\delta')C_*^{-2}t}, & \text{if } g^- = 2,\\
     \left( \frac{1}{\|u_0\|^{2-g^-}_{L^2(\Omega)} + (2 - g^-)(\delta' - 1)g^{-}C_*^{-g^-}t} \right)^{\frac{2}{g^- - 2}}, & \text{if } g^- > 2,
    \end{cases}
    \end{equation*}
    where \( (z)_{+}:=\max\{z,0\} \). In particular, if \( g^- < 2 \), the solution vanishes in finite time  
    \[
    t^{*}=\frac{\|u_{0}\|^{2-g^{-}}_{L^{2}(\Omega)}}{(2-g^{-})(1-\delta')g^{-}C_{*}^{-g^{-}}}.
    \]
    
    \item If \( [u(\cdot, t)]_{s,G_{x,y}} < 1 \), then
    \begin{equation*}
    \|u(\cdot, t)\|^2_{L^2(\Omega)} \leq
    \begin{cases}
        \left( \|u_0\|^{2-g^+}_{L^2(\Omega)} - (2 - g^+)(1 - \delta')g^{-}C_*^{-g^+}t \right)^{\frac{2}{2-g^+}}_{+}, & \text{if } g^+ < 2,\\
        \|u_0\|^2_{L^2(\Omega)} e^{-2( 1-\delta')C_*^{-2}t}, & \text{if } g^+ = 2,\\
        \left( \frac{1}{\|u_0\|^{2-g^+}_{L^2(\Omega)} + (2 - g^+)(\delta' - 1)g^{-}C_*^{-g^+}t} \right)^{\frac{2}{g^+ - 2}}, & \text{if } g^+ > 2,
    \end{cases}
    \end{equation*}
    where \( (z)_{+}:=\max\{z,0\} \). In particular, if \( g^+ < 2 \), the solution \( u \) vanishes in finite time  
    \[
    t^{*}=\frac{\|u_{0}\|^{2-g^{+}}_{L^{2}(\Omega)}}{(2-g^{+})(1-\delta')g^{-}C_{*}^{-g^{+}}}.
    \]
\end{enumerate}
\end{theorem}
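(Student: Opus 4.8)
The plan is to split the argument into two stages: first upgrading the global weak solution furnished by Theorem \ref{thm:global-existence-weak} into a global strong solution, and then extracting the quantitative decay estimates from a scalar differential inequality for $t\mapsto \|u(\cdot,t)\|^2_{L^2(\Omega)}$. For the first stage I would invoke Theorem \ref{loc} to produce a local strong solution on some $[0,T]$, which is in particular a bounded weak solution; by the uniqueness-of-bounded-solutions part of Theorem \ref{thm:global-existence-weak} this strong solution coincides on $[0,T]$ with the already-constructed global weak solution, which remains in the stable set $W$ and is thus uniformly bounded in $W_{0}^{s,G_{x,y}}(\Omega)$, cf. \eqref{bound of u with W0 norm}. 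Since $[u(\cdot,t)]_{s,G_{x,y}}$ cannot blow up, the maximal existence time of the strong solution is $+\infty$, yielding a global strong solution obeying the energy identity \eqref{strong solution}.

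The heart of the matter is the decay estimate. Since $I(u_0)>0$ forces $E(u_0)>0$ by Lemma \ref{I(u)>0 implies E(u)>0}, we have $0<E(u_0)<d$; choose $\delta_1<1<\delta_2$ with $d(\delta_1)=d(\delta_2)=E(u_0)$ and fix any $\delta'\in(\delta_1,1)$. By Lemma \ref{u in W delta}(i) the strong solution satisfies $u(\cdot,t)\in W_{\delta'}$ for all $t\geq 0$, so that $I_{\delta'}(u(\cdot,t))\geq 0$. Testing the weak formulation \eqref{definition of weak sol} with $\phi=u(\cdot,t)$ gives the identity $\tfrac{1}{2}\tfrac{d}{dt}\|u(\cdot,t)\|^2_{L^2(\Omega)}=-I(u(\cdot,t))$, and writing $I(u)=I_{\delta'}(u)+(1-\delta')\iint_Q g_{x,y}(D^{s}u)(D^{s}u)\,d\mu$ together with \eqref{Assump:g_3} and Lemma \ref{RELATION:MODULAR & NORM} yields
$$
I(u(\cdot,t))\geq (1-\delta')\,g^{-}\,\min\bigl\{[u(\cdot,t)]^{g^{-}}_{s,G_{x,y}},\,[u(\cdot,t)]^{g^{+}}_{s,G_{x,y}}\bigr\}.
$$
The continuous embedding $W_{0}^{s,G_{x,y}}(\Omega)\hookrightarrow L^2(\Omega)$ (valid because $2<g_{*,s}^{-}$, as forced by \eqref{assump:g_5}--\eqref{Cond:g_6}, via Theorem \ref{thm2}(ii) with $\widehat{A}_x(t)=t^2$), with constant $C_{*}$, then converts the right-hand side into a power of $\|u(\cdot,t)\|_{L^2(\Omega)}$.

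Setting $y(t)=\|u(\cdot,t)\|^2_{L^2(\Omega)}$, this produces the separable differential inequality $y'(t)\leq -2(1-\delta')g^{-}C_{*}^{-g^{\pm}}\,y(t)^{g^{\pm}/2}$, where the exponent $g^{-}$ is used on the set where $[u(\cdot,t)]_{s,G_{x,y}}\geq 1$ (there the minimum equals $[u]^{g^{-}}$) and $g^{+}$ on the set where $[u(\cdot,t)]_{s,G_{x,y}}<1$. Integrating in the three regimes---$g^{\pm}<2$ (extinction in finite time at the stated $t^{*}$, after taking the positive part), $g^{\pm}=2$ (exponential decay), and $g^{\pm}>2$ (polynomial decay)---reproduces exactly the displayed bounds in parts (i) and (ii), with $\delta'$ the fixed parameter above. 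The main obstacle I anticipate is not the elementary ODE integration but cleanly justifying the lower bound on $I(u(\cdot,t))$: it rests on the invariance $u(\cdot,t)\in W_{\delta'}$ for \emph{every} $t$, which in turn depends on the continuity in time of $t\mapsto E(u(\cdot,t))$ and $t\mapsto I(u(\cdot,t))$ established in Lemma \ref{continuity of E(u) with t}, and hence genuinely requires the strong---not merely weak---solution produced in the first stage.
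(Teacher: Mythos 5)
Your second stage (the decay estimates) is essentially the paper's own argument, step for step: the invariance $u(\cdot,t)\in W_{\delta'}$ from Lemmas \ref{I(u)>0 implies E(u)>0} and \ref{u in W delta}, the identity $\tfrac12\tfrac{d}{dt}\|u\|^2_{L^2(\Omega)}=-I(u)$, the splitting $I(u)=I_{\delta'}(u)+(1-\delta')\iint_Q g_{x,y}(D^su)(D^su)\,d\mu$ together with \eqref{Assump:g_3}, Lemma \ref{RELATION:MODULAR & NORM}(i), the embedding constant $C_*$, and the case analysis $[u]_{s,G_{x,y}}\gtrless 1$ followed by ODE integration. That part is correct and matches the paper.

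The gap is in your first stage. You identify the local strong solution of Theorem \ref{loc} with the global weak solution of Theorem \ref{thm:global-existence-weak} by invoking the uniqueness-of-bounded-solutions statement, asserting that the local strong solution "is in particular a bounded weak solution." But "bounded" in that uniqueness result means bounded in $L^\infty$ (pointwise in $x$ and $t$): the paper's uniqueness proof uses Corollary \ref{bounds on f:f} and \eqref{assump:f_0} to conclude that $f(\cdot,u)$ and $f(\cdot,v)$ are bounded and locally Lipschitz \emph{uniformly in $x$}, which requires the solutions themselves to take values in a fixed bounded interval. A strong solution is only known to lie in $C([0,T];W_{0}^{s,G_{x,y}}(\Omega))$, and membership in $W_{0}^{s,G_{x,y}}(\Omega)$ does not give $L^\infty(\Omega)$ bounds (the embedding only reaches $L^{\widehat G^*_x}(\Omega)$); the Galerkin weak solution is likewise not known to be in $L^\infty$. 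So the identification — and with it the transfer of the stable-set invariance and of the a priori bound \eqref{bound of u with W0 norm} to the strong solution, hence your continuation argument — is unjustified. The paper avoids this entirely: it upgrades the \emph{same} global weak solution directly, observing via \eqref{assump:h_0} and Lemma \ref{del phi is bounded} that $f(\cdot,u(\cdot,t))\in L^2(\Omega)$, so that $g=f(\cdot,u)-u_t\in L^2(0,T;L^2(\Omega))$ satisfies $g(\cdot,t)\in\partial\varphi(u(\cdot,t))$ a.e. (Lemma \ref{operator and subdifferential}), and then applying the subdifferential chain rule, Lemma \ref{continuity of strong solution}, to obtain absolute continuity of $t\mapsto\varphi(u(\cdot,t))$ and hence the energy identity \eqref{strong solution}. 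No uniqueness, no continuation principle, and no $L^\infty$ information is needed; you should replace your first stage with this argument.
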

\begin{proof}
The existence of global weak solution follows by Theorem \ref{thm:global-existence-weak}. Using \eqref{assump:h_0} and Lemma \ref{del phi is bounded}, we obtain $f(\cdot, u) \in L^2(\Omega).$ By taking $g= f(\cdot, u) - u_t$ in Lemma \ref{continuity of strong solution}, we obtain \[u\in C([0,T];W_{0}^{s,G_{x,y}}(\Omega)) \quad \text{and} \quad \int_{0}^{t}\|u_{t}(\cdot,\tau)\|_{L^{2}(\Omega)}^{2}+E(u(\cdot,t))=E(u(\cdot,0)), \quad \text{for all } \, t\in [0,\infty).\]
Hence, the existence of strong solution $u$ of the problem \eqref{main:problem}.

Now, we derive the upper estimates. By applying Lemmas \ref{I(u)>0 implies E(u)>0} and \ref{u in W delta}, we conclude that \(u(\cdot,t) \in W_{\delta}\) for \(0 < t < \infty\) and \(\delta_{1} < \delta < \delta_{2}\), where \(\delta_1 < 1 < \delta_2\). The parameters \(\delta_1\) and \(\delta_2\) are determined as the solutions of the equation \(d(\delta) = E(u_0)\).
Furthermore, \(u(\cdot,t) \in W_{\delta}\) implies that \(I_{\delta'}(u) > 0\) for all \(0 < t < \infty\) and \(\delta_{1} < \delta' < 1 \). Now, by taking $\phi =u(\cdot, t)$ for $0< t< \infty$ in \eqref{definition of weak sol}, we obtain
\begin{equation}
\label{u_t is nonzero}
    \begin{aligned}
        \frac{1}{2}\frac{d}{dt}\|u\|^{2}_{L^{2}(\Omega)}&=-I(u).
    \end{aligned}
\end{equation}
Therefore, for \(\delta_{1} < \delta' < 1\) and using condition \eqref{Assump:g_3}, we obtain

\begin{align*}
    \frac{1}{2}\frac{d}{dt}\|u\|^{2}_{L^{2}(\Omega)}&=-I(u) =(\delta'-1){\iint_{Q}}g_{x,y}\left(D^{s}u\right)\left(D^{s}u\right)d\mu-I_{\delta'}(u)\\
    &\leq (\delta'-1){\iint_{Q}}g_{x,y}\left(D^{s}u\right)\left(D^{s}u\right)d\mu \leq (\delta'-1)g^{-}{\iint_{Q}}G_{x,y}\left(D^{s}u\right)d\mu,
\end{align*}
which implies, from Lemma \ref{RELATION:MODULAR & NORM}(i),
\begin{equation}
\label{blow up eqn}
\begin{aligned}
    \frac{1}{2}\frac{d}{dt}\|u\|^{2}_{L^{2}(\Omega)}&\leq (\delta'-1)g^{-}\min\{ [u]_{s,G_{x,y}}^{g^-},[u]_{s,G_{x,y}}^{g^+}\}.
\end{aligned}
\end{equation}
\noindent To estimate further, we consider two cases: \\
\textbf{Case 1:} $[u(\cdot, t)]_{s,G_{x,y}}\geq 1$. \vspace{0.1cm}\\
From \eqref{blow up eqn} and
 the embedding of $W_{0}^{s,G_{x,y}}(\Omega)$ into $L^{2}(\Omega)$, we get
     \begin{align*}
   \frac{1}{2}\frac{d}{dt}\|u\|^{2}_{L^{2}(\Omega)} &\leq(\delta'-1)g^{-} C_{*}^{-g^{-}}\|u\|_{L^{2}(\Omega)}^{g^-}.
\end{align*}
Now, if $g^{-}<2$, we obtain
$$\|u\|^{2}_{L^{2}(\Omega)}\leq \left(\|u_{0}\|^{2-g^{-}}_{L^{2}(\Omega)}-(2-g^{-})(1-\delta')g^{-}C_{*}^{-g^{-}}t\right)_+^{\frac{2}{2-g^{-}}},\text{\hspace{1cm}              } \forall \ 0\leq t < \infty.$$
This implies that the solution vanishes at a time $t^{*}=\frac{\|u_{0}\|^{2-g^{-}}_{L^{2}(\Omega)}}{(2-g^{-})(1-\delta')g^{-}C_{*}^{-g^{-}}}.$\\
If $g^{-}=2$, we get 
$$\|u\|^{2}_{L^{2}(\Omega)}\leq \|u_{0}\|^{2}_{L^{2}(\Omega)} e^{-2(1-\delta')C_{*}^{-2}t},\text{\hspace{1cm}} \forall \ 0\leq t< \infty. $$
If $g^{-}>2$, we get 
$$ \|u\|^{2}_{L^{2}(\Omega)}\leq \left(\frac{1}{\|u_{0}\|^{2-g^{-}}_{L^{2}(\Omega)}+(2-g^{-})(\delta'-1)g^{-}C_{*}^{-g^{-}}t}\right)^{\frac{2}{g^{-}-2}},\text{\hspace{1cm}} \forall \ 0\leq t< \infty.$$
\textbf{Case 2:}  $[u(\cdot, t)]_{s,G_{x,y}}>1 $ \vspace{0.1cm}\\
From \eqref{blow up eqn} and the embedding from $W_{0}^{s,G_{x,y}}(\Omega)$ into $L^{2}(\Omega)$, we get
     \begin{align*}
   \frac{1}{2}\frac{d}{dt}\|u\|^{2}_{L^{2}(\Omega)} 
    &\leq(\delta'-1)g^{-} C_{*}^{-g^{+}}\|u\|_{L^{2}(\Omega)}^{g^+}.
\end{align*}
Now, by repeating the above arguments, we obtain the required claim.
\end{proof}
Next, we are concerned with the blow-up in finite time behavior of the strong solution. For this, we assume that
 \begin{enumerate}[leftmargin=1.5cm,label=\textnormal{($\tilde{f}_3$)},ref=\textnormal{($\tilde{f}_3$)}]
     \item \label{Assump: tilde{f}} 
     $ t \left( f'(x,t)t - (\alpha -1) f(x,t) \right) > 0, \quad \forall \ (x,t) \in \Omega \times \mathbb{R}, \quad \text{where} \ \alpha = \begin{cases}
     g^+, & \ \text{if} \ g^+ > 2,\\
     > 2, & \ \text{if} \ g^+ \leq  2.
     \end{cases}$
 \end{enumerate}

\begin{theorem}
\label{Blow-up thm}
    Let the conditions \eqref{assump:f_0}--\eqref{Assump:f_2}, \ref{Assump: tilde{f}}, \eqref{cond:g0}--\eqref{assump:g_5} and \eqref{assump:h_0} hold and $u_{0} \in W_{0}^{s,G_{x,y}}(\Omega)$. Assume further that $E(u_{0}) < d$ and $I(u_{0}) < 0$. Then, the strong solution $u$ of problem \eqref{main:problem} exhibits blow-up in the sense that
\[
\lim_{t \to T^{*}} \int_{0}^{t} \|u(\cdot, \tau)\|^{2}_{L^{2}(\Omega)} \, d\tau = +\infty \quad \text{where} \quad T^\ast:= \frac{4\|u_0\|_{L^2(\Omega)}^2(\alpha-1)}{\alpha(\alpha-2)^{2}(d-E(u_{0}))}.
\]
\end{theorem}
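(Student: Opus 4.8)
The plan is to prove blow-up by Levine's concavity method. Throughout, let $u$ be the local strong solution on its maximal existence interval $[0,T_{\max})$ provided by Theorem \ref{loc}; it obeys the energy identity \eqref{strong solution} and, upon testing \eqref{definition of weak sol} with $\phi=u$, the relation $\tfrac{1}{2}\tfrac{d}{dt}\|u(\cdot,t)\|_{L^2(\Omega)}^2=-I(u(\cdot,t))$. First I would record that the trajectory stays in the unstable set: since $E(u_0)<d$ and $I(u_0)<0$, the invariance argument of Lemma \ref{u in W delta}(ii), together with the time–continuity of $E$ and $I$ from Lemma \ref{continuity of E(u) with t}, gives $I(u(\cdot,t))<0$ and $E(u(\cdot,t))\le E(u_0)<d$ for all $t\in[0,T_{\max})$.

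Next I would introduce the auxiliary functional
\[
M(t):=\int_0^t\|u(\cdot,\tau)\|_{L^2(\Omega)}^2\,d\tau+(T_0-t)\|u_0\|_{L^2(\Omega)}^2+\beta\,(t+\sigma)^2,
\]
with positive parameters $\beta,\sigma,T_0$ fixed at the end. Differentiating and using the identity above, $M'(t)=2\int_0^t(u_\tau,u)_{L^2(\Omega)}\,d\tau+2\beta(t+\sigma)$ and $M''(t)=-2I(u(\cdot,t))+2\beta$. Writing $\eta(t):=\int_0^t\|u_\tau\|_{L^2(\Omega)}^2\,d\tau=E(u_0)-E(u(\cdot,t))$ via the energy identity, the mechanism rests on the key lower bound
\[
-I(u(\cdot,t))\ \ge\ \alpha\big(d-E(u(\cdot,t))\big),\qquad\text{i.e.}\qquad \alpha E(u(\cdot,t))-I(u(\cdot,t))\ge \alpha d,
\]
which after substituting $\eta$ becomes $M''(t)\ge 2\alpha\eta(t)+2\alpha(d-E(u_0))+2\beta$.

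I expect the proof of this last bound to be the main obstacle, precisely because the nonhomogeneity destroys the clean scaling identities available for the fractional $p$-Laplacian. I would establish it by a monotone–scaling argument. Fix $t$ and set $w=u(\cdot,t)\in V$; since $I(w)<0$, Lemma \ref{Lemma:3.3}(iii) gives a unique $\lambda^\ast\in(0,1)$ with $I(\lambda^\ast w)=0$, so $\lambda^\ast w\in\mathcal N$ and $E(\lambda^\ast w)\ge d$. Consider $\Psi(\lambda):=\alpha E(\lambda w)-I(\lambda w)$, namely
\[
\Psi(\lambda)=\iint_{Q}\Big[\alpha G_{x,y}(\lambda D^s w)-g_{x,y}(\lambda D^s w)(\lambda D^s w)\Big]d\mu+\int_\Omega\Big[f(x,\lambda w)\lambda w-\alpha F(x,\lambda w)\Big]dx.
\]
Differentiating in $\lambda$, the fractional integrand has nonnegative derivative because $\tfrac{g_{x,y}'(\tau)\tau}{g_{x,y}(\tau)}\le g^+-1\le\alpha-1$ by \eqref{Assump:g_3}, while the derivative of the second integrand is $w\big(f'(x,\lambda w)\lambda w-(\alpha-1)f(x,\lambda w)\big)$, which is strictly positive by \ref{Assump: tilde{f}}. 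Hence $\Psi$ is nondecreasing, so $\Psi(1)\ge\Psi(\lambda^\ast)=\alpha E(\lambda^\ast w)\ge\alpha d$, which is the claimed estimate. Condition \ref{Assump: tilde{f}}, with its tailored constant $\alpha$, is engineered exactly to make $\Psi$ monotone and is the device replacing homogeneity.

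Finally I would close the concavity estimate. Cauchy–Schwarz on $L^2(0,t;L^2(\Omega))\oplus\mathbb R$ gives $\big(\int_0^t(u_\tau,u)\,d\tau+\beta(t+\sigma)\big)^2\le P(t)(\eta(t)+\beta)$ with $P(t):=\int_0^t\|u\|_{L^2(\Omega)}^2d\tau+\beta(t+\sigma)^2\le M(t)$ for $t\le T_0$; since $M'(t)=2\big(\int_0^t(u_\tau,u)\,d\tau+\beta(t+\sigma)\big)$, this is $(M'(t))^2\le 4M(t)(\eta(t)+\beta)$. Choosing $1+\theta=\alpha/2$ (so $\theta=\tfrac{\alpha-2}{2}>0$ as $\alpha>2$) cancels the $\eta$–terms, and choosing $\beta=\tfrac{\alpha(d-E(u_0))}{\alpha-1}$ absorbs the constant, yielding
\[
M(t)M''(t)-(1+\theta)(M'(t))^2\ \ge\ M(t)\big[M''(t)-2\alpha(\eta(t)+\beta)\big]\ \ge\ 0 .
\]
Since $M(0),M'(0)>0$, the classical concavity lemma of Levine forces $M$, and hence $\int_0^t\|u\|_{L^2(\Omega)}^2\,d\tau$ (the remaining terms being bounded), to diverge at a time no larger than $M(0)/(\theta M'(0))$. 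Optimizing $\sigma=\tfrac{2\|u_0\|_{L^2(\Omega)}^2}{(\alpha-2)\beta}$ and $T_0=\tfrac{4\|u_0\|_{L^2(\Omega)}^2}{(\alpha-2)^2\beta}$ reduces this bound to exactly $T^\ast=\tfrac{4\|u_0\|_{L^2(\Omega)}^2(\alpha-1)}{\alpha(\alpha-2)^2(d-E(u_0))}$, which completes the argument.
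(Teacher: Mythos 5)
Your proposal is correct and follows essentially the same route as the paper: the same auxiliary function $M(t)$, the same key estimate $-I(u(\cdot,t))\geq \alpha\bigl(d-E(u(\cdot,t))\bigr)$ obtained by showing a scaled functional is monotone in $\lambda$ between $\lambda^{*}$ and $1$, the same Cauchy--Schwarz step $(M')^{2}\leq 4M(\eta+\beta)$, Levine's concavity lemma, and the same parameter optimization yielding $T^{*}$. The only (cosmetic) difference is that you fix the optimal parameter from the outset — your $\Psi(\lambda)=\alpha E(\lambda w)-I(\lambda w)$ is exactly $\alpha$ times the paper's $g(\lambda)=E(\lambda w)-\beta I(\lambda w)$ at $\beta=\tfrac{1}{\alpha}$, a value the paper only selects in its final minimization over $\beta\in\bigl[\tfrac{1}{\alpha},\tfrac{1}{2}\bigr)$ and $a,b$ — and you run the concavity argument directly rather than by contradiction with $T_{\max}=+\infty$.
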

\begin{proof}
Arguing by contradiction, we assume that the solution is global in time, {\it i.e.} $T_{max} = +\infty.$ For $T>0$, we define the auxiliary function $M:[0, T] \rightarrow (0,\infty)$ as
\begin{equation}
\label{def of M}
\begin{aligned}
  M(t)&:= \int_{0}^{t}\|u(\cdot, \tau)\|_{L^2(\Omega)}^{2}\,d\tau+(T-t)\|u_{0}\|_{L^2(\Omega)}^{2}+b(t+a)^{2}, \quad \forall \ t \in [0, T],
  \end{aligned}
\end{equation}
where $a$ and $b$ are positive constants satisfying suitable conditions which will be stated later.
By differentiating $M$ with respect to $t$ and from \eqref{u_t is nonzero}, we obtain
    \begin{equation}
    \label{DM(t):u}
    \begin{aligned}
     & M'(t)=\|u(\cdot, t)\|^{2}_{L^{2}(\Omega)}-\|u_{0}\|^{2}_{L^{2}(\Omega)}+2b(t+a) \ \ \text{and} \ \ M''(t)=-2I(u(\cdot, t))+2b, \quad \forall \ t \in [0, T].
    \end{aligned}
\end{equation}
Now, by using the fact that $u_{0}\in V$ and applying Lemma \ref{u in W delta}(ii) for $\delta=1$, we get 
    \begin{equation}
        \label{u in V}
I(u(\cdot,t))<0, \quad \forall \ t \geq 0 \ \text{and the maps} \ t \longmapsto M(t),  t \longmapsto M'(t) \ \text{are strictly increasing in} \ [0, T].
     \end{equation}
Moreover, by the definition of $d$ and Lemma \ref{Lemma:3.3}(iii), there exists a $\lambda^{*}\in (0,1)$ such that 
     \begin{equation}\label{lambda-star-est}
    I(\lambda^{*}u(\cdot,t))=0\quad\text{and} \quad d\leq E(\lambda^{*}u(\cdot,t)), \quad \forall \ t \geq 0.
     \end{equation}
Now, for a fixed $t \in [0, T]$ and $\beta \in \big[\frac{1}{\alpha}, \frac{1}{2}\big)$, we define a function $g:[\lambda^{*},1]\rightarrow (0,\infty)$ as
    \[g(\lambda):= E(\lambda u(\cdot,t))-\beta I(\lambda u(\cdot,t)).\]
By differentiating the function $g$ with respect to $\lambda$ and using \eqref{existence of lambda d lambda}-\eqref{Derivative of I}, we obtain
\begin{align*}
    \frac{dg(\lambda)}{d\lambda} &= \frac{d}{d\lambda}E(\lambda u(\cdot,t))-\frac{d}{d\lambda}\left(\beta I(\lambda u(\cdot,t)) \right)\\
    &=\frac{I(\lambda u(\cdot,t))}{\lambda}-\beta \frac{d}{d\lambda}\left(\lambda \frac{d}{d\lambda}E(\lambda u(\cdot,t)) \right)\\
    &= \frac{I(\lambda u(\cdot,t))}{\lambda}-\frac{\beta}{\lambda}I(\lambda u(\cdot,t))- \beta\lambda\frac{d^{2}}{d\lambda^{2}}E(\lambda u(\cdot,t))\\
    &=\frac{(1-\beta)}{\lambda}I(\lambda u(\cdot,t))-\frac{\beta}{\lambda}\left[\iint_{Q}g_{x,y}'\left(\lambda D^{s}u\right)\left(\lambda D^{s}u\right)^{2}d\mu-\int_{\Omega}f'(x,\lambda u)(\lambda u)^{2}dx \right].
\end{align*}
Now, by using the definition of $I$, \eqref{Assump:g_3}, \ref{Assump: tilde{f}}, and \eqref{u in V} combined with $\beta \geq \frac{1}{\alpha}$ in the above estimate, we deduce that
\begin{align}
         \qquad \qquad \frac{dg(\lambda)}{d\lambda}&\geq \frac{(1-\beta)}{\lambda}\iint_{Q}g_{x,y}\left(\lambda D^{s}u\right)\left(\lambda D^{s}u\right)d\mu- \frac{(1-\beta)}{\lambda}\int_{\Omega}f(x,\lambda u)(\lambda u)dx \nonumber\\
     &\qquad\qquad -\frac{\beta}{\lambda}(g^{+}-1)\iint_{Q}g_{x,y}\left(\lambda D^{s}u\right)\left(\lambda D^{s}u\right)d\mu+\frac{\beta}{\lambda}\int_{\Omega}f'(x,\lambda u)(\lambda u)^{2}dx\nonumber\\
    &> \frac{(1-\beta g^{+})}{\lambda}\left(\iint_{Q}g_{x,y}\left(\lambda D^{s}u\right)\left(\lambda D^{s}u\right)d\mu \right) + \frac{(\beta \alpha -1)}{\lambda}\int_{\Omega}f(x,\lambda u)(\lambda u)dx\nonumber\\
    & > \begin{cases}
    \frac{\beta}{\lambda}(\alpha-g^{+})\int_{\Omega}f(x,\lambda u)(\lambda u)dx, & \ \text{if} \ \beta g^+ \geq 1,\\
    \frac{(\beta \alpha -1)}{\lambda} \int_{\Omega}f(x,\lambda u)(\lambda u)dx , & \ \text{if} \ \beta g^+ < 1,
    \end{cases}\\
    & \geq 0 \nonumber.
     \end{align}
This implies that $g$ is an increasing function in $[\lambda^{*},1]$. Moreover, in view of \eqref{lambda-star-est}, we have
     \begin{align}
     \label{estimate for I}
         I(u(\cdot,t))&\leq \frac{1}{\beta}\left(E(u(\cdot,t))-d\right), \quad \forall \ t \in [0, T].
     \end{align}
Next, we show that 
\begin{equation*}
    M''(t)M(t) - \frac{1}{2\beta} \left(M'(t)\right)^{2} \geq 0, \quad \forall \ t \in [0, T].
\end{equation*}
We begin with estimating the term $M'(t)$. For this, we introduce the auxiliary functions $\zeta$ and $\rho$ given by
\begin{equation}
\label{Def of zeta}
    \begin{aligned}
\zeta(t)&=\left(\int_{0}^{t}\|u(\cdot, \tau)\|_{L^{2}(\Omega)}^2 \,d\tau\right)^{\frac{1}{2}}\quad \text{and}\quad \rho(t)=\left(\int_{0}^{t}\|u_{\tau}(\cdot, \tau)\|_{L^{2}(\Omega)}^2 \,d\tau\right)^{\frac{1}{2}}.
    \end{aligned}
\end{equation}
By applying the Cauchy-Schwarz inequality, we obtain
\begin{equation}\label{M'-est auxiliary function}
\begin{split}
    0 & \leq  \left[\sqrt{b}\zeta(t)-\sqrt{b}(t+a)\rho(t)\right]^{2} =\left[\left(\sqrt{b}\zeta(t)\right)^{2}+\left(\sqrt{b}(t+a)\rho(t)\right)^{2}-2\zeta(t)\rho(t)b(t+a)\right]\\
    & = \left[(\zeta(t))^{2}+b(t+a)^{2} \right]\left[(\rho(t))^{2}+b\right]-\left[ \zeta(t)\rho(t)+b(t+a)\right]^{2}\\
    & \leq \left(\int_{0}^{t}\|u(\cdot, \tau)\|_{L^2(\Omega)}^{2}\,d\tau+b(t+a)^{2}\right)\left( \int_{0}^{t}\|u_{\tau}(\cdot, \tau)\|_{L^{2}(\Omega)}^2 \,d\tau+b\right)\\
     & -\left(\int_{0}^{t}\int_{\Omega}u(\cdot, \tau)u_{\tau}(\cdot, \tau)\,dx\,d\tau+b(t+a)\right)^{2}.
\end{split}
\end{equation}
From \eqref{def of M}-\eqref{DM(t):u} and \eqref{M'-est auxiliary function}, we have
\begin{equation}\label{m-est}
    \begin{split}
        \left(M'(t)\right)^{2}
&=\left(\left(\|u(\cdot, t)\|^{2}_{L^{2}(\Omega)}-\|u_{0}\|^{2}_{L^{2}(\Omega)}\right)+ 2b(t+a)\right)^{2} = \left(\int_0^t \frac{d}{d\tau} \|u(x, \tau)\|_{L^2(\Omega)}^2 ~d \tau+ 2 b(t+a)\right)^{2} \\
&=4\left(\int_{0}^{t}\int_{\Omega}u(\cdot, \tau)u_{\tau}(\cdot, \tau)\,dx\,d\tau+b(t+a)\right)^{2}\\
&=4\left(M(t)-(T-t)\|u_{0}\|_{L^2(\Omega)}^{2}\right)\left(\int_{0}^{t}\|u_{\tau}(\cdot, \tau)\|_{L^{2}(\Omega)}^2 \,d\tau+b\right)\\
&-4\Bigg[\left(\int_{0}^{t}\|u(\cdot, \tau)\|_{L^2(\Omega)}^{2}\,d\tau+b(t+a)^{2}\right)\left( \int_{0}^{t}\|u_{\tau}(\cdot, \tau)\|_{L^{2}(\Omega)}^2 \,d\tau+b\right)\\
~&~ -\left(\int_{0}^{t}\int_{\Omega}u(\cdot, \tau)u_{\tau}(\cdot, \tau)\,dx\,d\tau+b(t+a)\right)^{2}\Bigg]\\
&\leq 4 M(t)\left(\int_{0}^{t}\|u_{\tau}(\cdot, \tau)\|_{L^{2}(\Omega)}^2 \,d\tau+b\right), \quad \forall\, t\in[0,T].
    \end{split}
\end{equation}

Finally, by using \eqref{DM(t):u}, \eqref{strong solution}, \eqref{estimate for I} and \eqref{m-est}, we obtain
\begin{equation*}
\begin{split}
    M''(t) & = -2 I(u(\cdot, t)) + 2b \geq \frac{2}{\beta}(d -E(u(\cdot,t)))+2b\\
    &\geq \frac{2}{\beta}\left(d- E(u_{0})+\int_{0}^{t}\|u_{\tau}(\cdot,\tau)\|_{L^{2}(\Omega)}^{2}\, d\tau\right)+2b\\
    &\geq \frac{2}{\beta}\left( \frac{\left(M'(t)\right)^{2}}{4M(t)}-b\right)+ \frac{2}{\beta}\left( d-E(u_{0})\right)+2b, \quad \forall \ t \in [0, T].
    \end{split}
\end{equation*}
This further implies
\begin{equation}
    \label{final estimate}
    \begin{aligned}
       M(t)M''(t)-\frac{1}{2\beta}\left(M'(t)\right)^{2}\geq M(t)\left[\frac{2}{\beta}\left( d-E(u_{0})\right)-2b\left(\frac{1}{\beta}-1\right)\right] \geq 0, \quad \forall \ t \in [0, T],   
    \end{aligned}
\end{equation}
where 
\begin{equation}\label{cond:b}
   \frac{1}{\alpha} \leq \beta <1 \quad \text{and} \quad b \leq \frac{d-E(u_{0})}{1-\beta}.
\end{equation}
Now, by following the concavity method introduced by Levine \cite[Theorem I]{Levine-1973}, we observe that \eqref{u in V}, \eqref{final estimate} and $M'(0) = 2ab>0$ gives
\[
\ \left( M^{-\theta}(t) \right)' = -\theta M^{-\theta-1}(t) M'(t) < 0, \quad \forall \ t \in [0, T],
\]
and \[
\left( M^{-\theta}(t) \right)'' = \theta M^{-\theta-2}(t) \left( (1+\theta)\left(M'(t)\right)^{2} - M(t)M''(t) \right) \leq 0, \quad \forall \ t \in [0, T],
\]
where \( \theta := \frac{1 - 2\beta}{2\beta} > 0\) and $\beta \in \big[\frac{1}{\alpha}, \frac{1}{2}\big)$. The above inequalities show that $M^{-\theta}$ is a positive  decreasing and a concave function in $[0, T]$. Since concave functions lie below any tangent line, we obtain

\[
0 < M^{-\theta}(t) \leq M^{-\theta}(0) + t \left(M^{-\theta}\right)'(0) = M^{-\theta}(0) - \theta t M^{-\theta-1}(0) M'(0), \quad \forall \ t \in [0, T],
\]
which implies 
\[ M(t) \geq M^{(1+\frac{1}{\theta})}(0)\left\{M(0)-\theta t M'(0) \right\}^{-\frac{1}{\theta}} \geq 0, \quad t \in [0, T].\]
From the facts that \( M(0)) > 0 \) and \( M'(0) > 0 \), it follows that
\begin{equation}
    \label{Tmax}
    \begin{aligned}
        &M(t) \to +\infty \ \text{as} \ t \to  \frac{M(0)}{\theta M'(0)} = \frac{T \|u_0\|_{L^2(\Omega)}^2 + ba^2}{2ab\l(\frac{1}{2\beta}-1\r)} \leq T
    \end{aligned}
\end{equation}
where the last inequality follows by choosing $a$ and $T$ large enough such that 
\begin{equation}
    \label{T-ast}
    \begin{aligned}
        \frac{\|u_0\|_{L^2(\Omega)}^2}{\l(\frac{1}{2\beta}-1\r)2a} < b \leq \frac{d-E(u_0)}{1-\beta} \qquad \text{and} \qquad T_a(b, \beta):=\frac{a^{2}b}{2ab\l(\frac{1}{2\beta}-1\r)-\|u_0\|_{L^2(\Omega)}^2}\leq T.
    \end{aligned}
\end{equation}
Moreover, by taking $T= T_a(b, \beta)$ in \eqref{Tmax}, we have
\[
M(t) \to +\infty \ \text{as} \ t \to  T_a(b, \beta):= \frac{a^{2}b}{2ab\l(\frac{1}{2\beta}-1\r)-\|u_0\|_{L^2(\Omega)}^2}
\]
which is a contradiction of $u$ being a global strong solution. Hence, $T_{\max} < +\infty.$ 

Next, we minimize the blow-up time $T_a(b, \beta)$ with respect to the parameters $a, b$ and $\beta$ such that $(b, \beta) \in \mathcal{R}_a:= \bigg(\frac{\|u_0\|_{L^2(\Omega)}^2}{\l(\frac{1}{2\beta}-1\r)2a}, \frac{d-E(u_0)}{1-\beta} \bigg] \times \bigg[\frac{1}{\alpha}, \frac{1}{2}\bigg)$. 
It is easy to see that the maps $\beta \mapsto T_a(b, \beta)$ and $b \mapsto T_a(b, \beta)$ are increasing and decreasing, respectively. Therefore,  the minimum value of $T_a(b, \beta)$ in $\mathcal{R}_a$ is given by

\begin{equation}
    \label{h_a increasing}
    \begin{aligned}
         T_a\l(\frac{\alpha(d-E(u_{0}))}{\alpha-1},\frac{1}{\alpha}\r) = \frac{\alpha a^2 (d-E(u_{0}))}{a \alpha(d-E(u_{0})) \l(\alpha-2\r) - (\alpha-1)\|u_0\|^2_{L^2(\Omega)}}.
    \end{aligned}
\end{equation}
Next, we define $g: \mathbb{S} \to \mathbb{R}$ such that
\[
g(a):= T_a\l(\frac{\alpha(d-E(u_{0}))}{\alpha-1},\frac{1}{\alpha}\r) \quad \text{for} \ a \in \mathbb{S}:= \l(\frac{\|u_0\|_{L^2(\Omega)}^2(\alpha-1)}{(\alpha-2)\alpha(d-E(u_{0}))}, +\infty\r).
\]
Note that the function $g$ attained its minimum value at $\frac{2(\alpha-1)\|u_0\|_{L^2(\Omega)}^2}{\alpha(\alpha-2)(d-E(u_{0}))}$ and the minimum value of $g$ in $\mathbb{S}$ is given by
\[
g\l(\frac{2\|u_0\|_{L^2(\Omega)}^2(\alpha-1)}{\alpha(\alpha-2)(d-E(u_{0}))}\r) = \frac{4  (\alpha-1) \|u_0\|_{L^2(\Omega)}^2}{\alpha (\alpha-2)^2 (d-E(u_0)}.
\]
Therefore, the least blow time $T^\ast$ independent of paramteres $a, b$ and $\beta$ is given by
\[T^{\ast}:=\frac{4\|u_0\|_{L^2(\Omega)}^2(\alpha-1)}{\alpha(\alpha-2)^{2}(d-E(u_{0}))} \quad \text{such that} \quad \lim_{t \to T^{*}} \int_{0}^{t} \|u(\cdot, \tau)\|^{2}_{L^{2}(\Omega)} \, d\tau = +\infty.\]
\end{proof}

\subsection{On the Case of Critical Initial Energy: $E(u_0) = d$}
In this subsection, we address the case of critical initial energy \( E(u_{0}) = d \). Specifically, we will prove the following:\\
\begin{enumerate}

 \item[-] If \( I(u_{0}) \geq 0 \), then problem \eqref{main:problem} admits a global weak solution.
\item [-] If \( I(u_{0}) < 0 \), then all strong solutions of problem \eqref{main:problem} blow up in finite time.
\end{enumerate}
\vspace{0,5 cm}
The main result of this section is stated as follows:
\begin{theorem}\label{main-exist-critical-weak}
    Let the conditions \eqref{assump:f_0}–\eqref{Assump:f_3} and \eqref{cond:g0}–\eqref{Cond:g_8} hold, and let \( u_{0} \in W_{0}^{s,G_{x,y}}(\Omega) \). If \( E(u_{0}) = d \) and \( I(u_{0}) \geq 0 \), then problem \eqref{main:problem} admits a global weak solution \( u \in L^{\infty}(0,\infty; W_{0}^{s,G_{x,y}}(\Omega)) \) with \( u_{t} \in L^{2}(0,\infty; L^{2}(\Omega)) \) and \( u(\cdot, t) \in W \cup \partial W \) for \( 0 \leq t < \infty \). Moreover, the weak solution is unique if it is bounded. Furthermore, if there exists \( t^{*} > 0 \) such that \( I(u(\cdot,t)) > 0 \) for \( 0 < t < t^{*} \) and \( I(u(\cdot,t^{*})) = 0 \), then there exists a weak solution \( u(\cdot,t) \) which vanishes in finite time \( t^{*} \).
\end{theorem}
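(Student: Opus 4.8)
The plan is to deduce the critical case $E(u_0)=d$ from the already-proven low-energy result (Theorem \ref{thm:global-existence-weak}) by a scaling-and-approximation argument, and then to handle uniqueness and finite-time vanishing separately. First I would note that, since $d>\delta>0$ by Lemma \ref{positive:depth}, the assumption $E(u_0)=d$ forces $u_0\neq0$, so $[u_0]_{s,G_{x,y}}\neq0$. Because $I(u_0)\geq0$, Lemma \ref{Lemma:3.3}(iii) gives $\lambda^{*}(u_0)\geq1$, where $\lambda^{*}$ is the unique maximizer of $\lambda\mapsto E(\lambda u_0)$. I would then fix a sequence $\lambda_m\uparrow1^{-}$ and set $u_{0,m}:=\lambda_m u_0$. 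Since $\lambda_m<1\leq\lambda^{*}$, Lemma \ref{Lemma:3.3}(ii)--(iii) gives $I(u_{0,m})>0$, and the strict monotonicity of $E(\lambda u_0)$ on $(0,\lambda^{*})$ (recall $I(\lambda u_0)=\lambda\,\tfrac{d}{d\lambda}E(\lambda u_0)$ from \eqref{Derivative of I}) yields $E(u_{0,m})<E(u_0)=d$. Hence $u_{0,m}\in W$ for every $m$, while $u_{0,m}\to u_0$ in $W_{0}^{s,G_{x,y}}(\Omega)$.

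For each $m$, Theorem \ref{thm:global-existence-weak} then provides a global weak solution $u_m$ of \eqref{main:problem} with datum $u_{0,m}$, satisfying $u_m(\cdot,t)\in W$ for all $t\geq0$ and
\[
\int_{0}^{t}\|u_{m,t}(\cdot,\tau)\|_{L^2(\Omega)}^2\,d\tau+E(u_m(\cdot,t))\leq E(u_{0,m})<d.
\]
As in Claim 3 of the proof of Theorem \ref{thm:global-existence-weak}, condition \eqref{assump:g_5} together with \eqref{Assump:f_2} and \eqref{Assump:g_3} converts this into $m$-independent bounds for $[u_m(\cdot,t)]_{s,G_{x,y}}$, for $\|u_{m,t}\|_{L^2(0,\infty;L^2(\Omega))}$, and for $f(x,u_m)$ in $L^{\Phi^{\ast}}(\Omega)$. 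Passing to a subsequence, I would reproduce the limit procedure of that proof: weak-$\ast$ convergence in $L^{\infty}(0,\infty;W_{0}^{s,G_{x,y}}(\Omega))$, weak convergence of the time derivatives in $L^{2}(0,\infty;L^{2}(\Omega))$, and, by Aubin--Lions, strong convergence in $C(0,T;L^{\Phi}(\Omega))$; the latter identifies the weak limit of $f(x,u_m)$ with $f(x,u)$ and, since $u_{0,m}\to u_0$, gives $u(\cdot,0)=u_0$. Lower semicontinuity of the modular functional delivers the energy inequality \eqref{sol: u}, so $u$ is a global weak solution. Finally, since each $u_m(\cdot,t)\in W$, the limit satisfies $E(u(\cdot,t))\leq d$ and $I(u(\cdot,t))\geq0$ (arguing as in Claim 5 via the dissipation identity \eqref{u_t} and the comparison of $\|u_m(\cdot,t)\|_{L^2(\Omega)}^2$), i.e. $u(\cdot,t)\in W\cup\partial W$ for all $t\geq0$.

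Uniqueness of a bounded solution follows verbatim from the Gronwall argument closing the proof of Theorem \ref{thm:global-existence-weak}, using the monotonicity \eqref{assump:g2} of $g_{x,y}$ and the local Lipschitz bound on $f$ from Corollary \ref{bounds on f:f}. For the vanishing statement, I would assume $I(u(\cdot,t))>0$ on $(0,t^{*})$ and $I(u(\cdot,t^{*}))=0$. By \eqref{u_t}, $\tfrac12\tfrac{d}{dt}\|u\|_{L^2(\Omega)}^2=-I(u)<0$ on $(0,t^{*})$, so $\|u(\cdot,t)\|_{L^2(\Omega)}$ is strictly decreasing there; in particular $u_t\not\equiv0$, whence $\int_{0}^{t^{*}}\|u_\tau\|_{L^2(\Omega)}^2\,d\tau>0$ and \eqref{sol: u} forces $E(u(\cdot,t^{*}))<E(u_0)=d$. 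If $u(\cdot,t^{*})\neq0$, then $I(u(\cdot,t^{*}))=0$ would place $u(\cdot,t^{*})\in\mathcal{N}$, giving $E(u(\cdot,t^{*}))\geq d$, a contradiction. Therefore $u(\cdot,t^{*})=0$, and extending by the zero solution for $t\geq t^{*}$ produces a weak solution that vanishes in finite time $t^{*}$.

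The genuinely delicate point is not the compactness machinery, which is inherited from the low-energy case, but the behavior at the critical level. The main obstacle is twofold: securing the strict inequalities $I(u_{0,m})>0$ and $E(u_{0,m})<d$ through the fine monotonicity of $\lambda\mapsto E(\lambda u_0)$, so that Theorem \ref{thm:global-existence-weak} genuinely applies to each approximation, and then showing that the limit remains confined to the \emph{closed} set $W\cup\partial W$ even though strict inequalities may degenerate under weak convergence. This last step is where the dissipation relation \eqref{u_t} and the variational characterization of $d$ must be combined to exclude $I(u(\cdot,t))<0$, and it is exactly this interplay that also underpins the finite-time extinction conclusion.
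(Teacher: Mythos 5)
Your proposal is correct and follows essentially the same route as the paper: the paper likewise approximates $u_0$ by $u_{0k}=\lambda_k u_0$ with $\lambda_k=1-\tfrac1k\uparrow 1$, invokes Lemma \ref{Lemma:3.3}(ii)--(iii) to get $I(u_{0k})>0$ and $E(u_{0k})<d$, applies Theorem \ref{thm:global-existence-weak} to each approximation, passes to the limit by the compactness machinery of Claims 3--4, and proves the finite-time vanishing via \eqref{u_t is nonzero}, \eqref{sol: u}, and the variational characterization of $d$ exactly as you do.
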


\begin{proof}
    Let $\lambda_k = 1 - \frac{1}{k}$, for $k \in \mathbb{N}$. Consider the following initial value problem:
\begin{equation} \label{modified:problem}
    \left\{
    \begin{aligned}
      u_t + (-\Delta)^{s}_{{g}_{x,y}} u &= f(x,u), && \text{in } \Omega \times (0, \infty), \\
      u &= 0, && \text{in } \mathbb{R}^N \setminus \Omega \times (0, \infty), \\
      u(x,0) &= \lambda_k u_0(x) := u_{0k}, && \text{in } \Omega.
    \end{aligned}
    \right.
\end{equation}
Let $I(u_0) \geq 0$. By  Lemma \ref{positive:depth} and Lemma \ref{Lemma:3.3}(iii), we deduce that $u_0 \not \equiv 0$ and there exists a unique $\lambda^* = \lambda^*(u_0) \geq 1$ such that $I(\lambda^* u_0) = 0$. Since $\lambda_k < 1 \leq \lambda^*$, and using Lemma \ref{Lemma:3.3}(ii)-(iii), we obtain the following:
$$ I(u_{0k}) = I(\lambda_k u_0) > 0 \quad \text{and} \quad E(u_{0k}) = E(\lambda_k u_0) < E(u_0) = d. $$
Therefore, by Theorem \ref{thm:global-existence-weak}, for each $k$, the problem \eqref{modified:problem} has a global weak solution $u^{(k)} \in L^\infty(0, \infty; W_0^{s, G_{x,y}}(\Omega))$, with $u_t^{(k)} \in L^2(0, \infty; L^2(\Omega))$, and $u^{(k)} \in W$, satisfying
$$ \int_0^t \| u^{(k)}_t(\cdot, \tau) \|_{L^2(\Omega)}^2 \,d\tau + E(u^{(k)}(\cdot, t)) = E(u_{0k}) < d \ \text{for} \ t>0.$$
Repeating the argument in \textbf{Claim 3} and \textbf{Claim 4} of Theorem \ref{thm:global-existence-weak}, there exists a subsequence (denoted with same notation) $\{u^{(k)}\}_{k \in \mathbb{N}}$ that converges to a function $u$, and $u$ is a weak solution of the problem \eqref{main:problem} with $I(u(\cdot, t)) \geq 0$ and $E(u(\cdot, t)) \leq d$ for $0 \leq t < \infty$. It implies that $u(\cdot,t)\in W\cup\partial W$ for $0 \leq t < \infty$. The proof of the uniqueness of the bounded solutions follows from the same reasoning as in Theorem \ref{thm:global-existence-weak}. If we assume that $I(u(\cdot,t))>0$ for $0<t<t^{*}$ and $I(u(\cdot,t^{*}))=0$. Then \eqref{u_t is nonzero} implies $u_{t}(\cdot, t)\not \equiv 0$ for $0<t<t^{*}$.
Therefore, by \eqref{sol: u} we have
$$E(u(\cdot,t^{*}) \leq \  d-\int_{0}^{t^{*}}\|u_t(\cdot,\tau)\|^{2}_{L^{2}(\Omega)}\,d\tau < d.$$
By the definition of $d$, we get $[u(\cdot,t^{*})]_{s,G_{x,y}}=0$. Now, by extending the function $u$ as $u(\cdot,t)\equiv 0$ for $t\geq t^{*}$, we obtain a weak solution vanishing in finite time $t^{*}$.
\end{proof}
\begin{theorem}\label{main-exist-critical-strong}
    Let the conditions \eqref{assump:f_0}–\eqref{Assump:f_3} and \eqref{cond:g0}-\eqref{assump:g_5}, \eqref{Cond:g_7}-\eqref{Cond:g_8}, \eqref{assump:h_0} hold, and let \( u_{0} \in W_{0}^{s,G_{x,y}}(\Omega) \). If \( E(u_{0}) = d \) and \( I(u_{0}) \geq 0 \), then problem \eqref{main:problem} admits a global strong solution \( u \in L^{\infty}(0,\infty; W_{0}^{s,G_{x,y}}(\Omega)) \) with \( u_{t} \in L^{2}(0,\infty; L^{2}(\Omega)) \) and \( u(\cdot, t) \in W \cup \partial W \) for \( 0 \leq t < \infty \).  Furthermore, if \( I(u(\cdot, t)) > 0 \) for \( 0 < t < \infty \), then for any \( t_{0} > 0 \) there exists a \( \delta^{'} \in (0,1) \) such that the following estimates hold for $t_0 < t< + \infty$:
 \begin{enumerate}[leftmargin=1.5cm,label=(\roman*),font=\normalfont]
    \item If $[u(\cdot, t)]_{s,G_{x,y}} \geq 1$, then
    \begin{equation*}
    \|u(\cdot, t)\|^2_{L^2(\Omega)} \leq
    \begin{cases}
     \left( \|u(\cdot, t_0)\|^{2-g^-}_{L^2(\Omega)} - (2 - g^-)(1 - \delta^{'})g^{-}C_*^{-g^-}t \right)^{\frac{2}{2-g^-}}_{+} \ & \ \text{if} \ g^- < 2,\\
     \|u(\cdot, t_0)\|^2_{L^2(\Omega)} e^{-2( 1-\delta^{'})C_*^{-2}t} \ & \ \text{if} \ g^- = 2,\\
     \left( \frac{1}{\|u(\cdot, t_0)\|^{2-g^-}_{L^2(\Omega)} + (2 - g^-)(\delta^{'} - 1)g^{-}C_*^{-g^-}t} \right)^{\frac{2}{g^- - 2}} \ & \ \text{if} \ g^- >2,
    \end{cases}
    \end{equation*}
    where $(z)_{+}:=\max\{z,0\}$. In particular, if $g^-<2$, the solution vanishes in finite time $t^{*}=\frac{\|u(\cdot, t_0)\|^{2-g^{-}}_{L^{2}(\Omega)}}{(2-g^{-})(1-\delta')g^{-}C_{*}^{-g^{-}}}.$\\
    \item If $[u(\cdot, t)]_{s,G_{x,y}} < 1$, then
    \begin{equation*}
    \|u(\cdot, t)\|^2_{L^2(\Omega)} \leq
    \begin{cases}
        \left( \|u(\cdot, t_0)\|^{2-g^+}_{L^2(\Omega)} - (2 - g^+)(1 - \delta^{'})g^{-}C_*^{-g^+}t \right)^{\frac{2}{2-g^+}}_{+} \ & \ \text{if} \ g^+<2,\\
        \|u(\cdot, t_0)\|^2_{L^2(\Omega)} e^{-2( 1-\delta^{'})C_*^{-2}t} \ & \ \text{if} \ g^+ =2,\\
        \left( \frac{1}{\|u(\cdot, t_0)\|^{2-g^+}_{L^2(\Omega)} + (2 - g^+)g^{-}(\delta^{'} - 1)C_*^{-g^+}t} \right)^{\frac{2}{g^+ - 2}} \ & \ \text{if} \ g^+>2,
    \end{cases}
    \end{equation*}
      where $(z)_{+}:=\max\{z,0\}$. In particular, if $g^+<2$ the solution $u$ vanishes in finite time $t^{*}=\frac{\|u(\cdot, t_0)\|^{2-g^{+}}_{L^{2}(\Omega)}}{(2-g^{+})(1-\delta')g^{-}C_{*}^{-g^{+}}}.$
\end{enumerate}
    
\end{theorem}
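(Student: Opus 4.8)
The plan is to argue in two stages: first obtain a global strong solution by upgrading the weak solution delivered by Theorem \ref{main-exist-critical-weak}, and then, under the extra hypothesis $I(u(\cdot,t))>0$, reduce the asymptotic analysis to the low-energy machinery of Theorem \ref{thm:global-existence-strong}, now re-centered at a positive time $t_0$.

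For the existence part, I would start from the global weak solution $u$ given by Theorem \ref{main-exist-critical-weak}, which already enjoys $u\in L^{\infty}(0,\infty;W_{0}^{s,G_{x,y}}(\Omega))$, $u_{t}\in L^{2}(0,\infty;L^{2}(\Omega))$ and $u(\cdot,t)\in W\cup\partial W$. To promote it to a strong solution it suffices to recover the energy conservation law \eqref{strong solution}. Condition \eqref{assump:h_0} together with Lemma \ref{del phi is bounded} yields $f(\cdot,u)\in L^{2}(\Omega)$, so $g:=f(\cdot,u)-u_{t}\in L^{2}(0,\infty;L^{2}(\Omega))$. The weak formulation \eqref{definition of weak sol} then says $\mathcal{L}(u)=g$ in the dual sense; since $g$ takes values in $H=L^{2}(\Omega)$, we get $u\in D(\mathcal{L}_{H})$ and, by Lemma \ref{operator and subdifferential}, $g\in\partial\varphi(u)$ for a.e. $t$. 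Applying the chain rule of Lemma \ref{continuity of strong solution} with this $g$ gives $u\in C([0,\infty);W_{0}^{s,G_{x,y}}(\Omega))$ and $\tfrac{d}{dt}\varphi(u)=(g,u_{t})$, which integrates to the conservation law \eqref{strong solution}. Hence $u$ is a global strong solution lying in $W\cup\partial W$.

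For the decay estimates, I would assume $I(u(\cdot,t))>0$ for all $0<t<\infty$ and fix $t_{0}>0$. By the identity \eqref{u_t is nonzero}, $\tfrac12\frac{d}{dt}\|u\|^{2}_{L^{2}(\Omega)}=-I(u)$, the positivity of $I$ forces $u_{t}\not\equiv 0$ on $(0,t_{0})$, so the energy equality furnishes the \emph{strict} bound $E(u(\cdot,t_{0}))<E(u_{0})=d$. Taking $t_{0}$ as a new initial time, the datum $u(\cdot,t_{0})$ satisfies exactly the low-energy hypotheses $E(u(\cdot,t_{0}))<d$ and $I(u(\cdot,t_{0}))>0$. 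Applying Lemmas \ref{I(u)>0 implies E(u)>0} and \ref{u in W delta}(i) to the strong solution on $[t_{0},\infty)$, I obtain $u(\cdot,t)\in W_{\delta}$ for $\delta_{1}<\delta<\delta_{2}$ and $t>t_{0}$, where $\delta_{1}<1<\delta_{2}$ solve $d(\delta)=E(u(\cdot,t_{0}))$; in particular $I_{\delta'}(u(\cdot,t))>0$ for each $\delta_{1}<\delta'<1$ (this is where $\delta'$, depending on $t_{0}$, is produced).

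Finally, I would reproduce the computation of Theorem \ref{thm:global-existence-strong} with the integration started at $t_{0}$. Writing $-I(u)=(\delta'-1)\iint_{Q}g_{x,y}(D^{s}u)(D^{s}u)\,d\mu-I_{\delta'}(u)$ and discarding the nonnegative term $I_{\delta'}(u)$, condition \eqref{Assump:g_3} and Lemma \ref{RELATION:MODULAR & NORM}(i) give $\tfrac12\frac{d}{dt}\|u\|^{2}_{L^{2}(\Omega)}\le(\delta'-1)g^{-}\min\{[u]^{g^{-}}_{s,G_{x,y}},[u]^{g^{+}}_{s,G_{x,y}}\}$; combining this with the continuous embedding $W_{0}^{s,G_{x,y}}(\Omega)\hookrightarrow L^{2}(\Omega)$ (constant $C_{*}$) and splitting into the cases $[u(\cdot,t)]_{s,G_{x,y}}\ge1$ and $<1$ produces a Bernoulli-type differential inequality for $\|u\|^{2}_{L^{2}(\Omega)}$, whose integration over $[t_{0},t]$ yields the three regimes $g^{\mp}<2$, $=2$, $>2$, with finite-time vanishing when $g^{\mp}<2$. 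The main obstacle, and the only step needing genuine care beyond quoting earlier results, is the strict inequality $E(u(\cdot,t_{0}))<d$: it rests on the energy \emph{equality} of the strong solution, which is precisely why the weak-to-strong upgrade of the first stage is indispensable, and on checking that Lemma \ref{u in W delta} may legitimately be re-centered at $t_{0}$ with $u(\cdot,t_{0})$ serving as the initial datum.
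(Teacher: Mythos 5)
Your proposal is correct, and its second half coincides with the paper's own argument: from \eqref{u_t is nonzero} and $I(u(\cdot,t))>0$ you get $u_t\not\equiv 0$, hence the strict drop $E(u(\cdot,t_0))<d$ via the energy identity and Lemma \ref{I(u)>0 implies E(u)>0}; re-centering at $t_0$ and invoking Lemma \ref{u in W delta} gives $I_{\delta'}(u(\cdot,t))>0$ for $\delta'\in(\delta_1,1)$, after which the Bernoulli-type computation of Theorem \ref{thm:global-existence-strong} yields the three regimes. The only place you genuinely deviate is the existence of the global strong solution: the paper re-runs the approximation scheme of Theorem \ref{main-exist-critical-weak} (data $\lambda_k u_0$) with the strong solutions of Theorem \ref{thm:global-existence-strong} substituted for the weak ones of Theorem \ref{thm:global-existence-weak}, and passes to the limit; you instead take the weak solution already delivered by Theorem \ref{main-exist-critical-weak} and upgrade it in place, using \eqref{assump:h_0} and Lemma \ref{del phi is bounded} to get $f(\cdot,u)\in L^2(\Omega)$, Lemma \ref{operator and subdifferential} to identify $g=f(\cdot,u)-u_t\in\partial\varphi(u)$, and the chain rule of Lemma \ref{continuity of strong solution} to recover continuity in $W_0^{s,G_{x,y}}(\Omega)$ and the energy equality \eqref{strong solution}. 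This is the same mechanism the paper itself employs inside Theorem \ref{thm:global-existence-strong}, just applied one step later in the chain, and your ordering is arguably the more robust one: a limit of strong solutions is a priori only a weak solution (energy equality is not preserved under weak convergence), so the paper's one-line citation implicitly relies on exactly the upgrade you make explicit. Both routes deliver the same conclusion; yours trades the redundant re-approximation for a direct application of the subdifferential machinery.
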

\begin{proof}
The existence of strong solution can be proved by using the same arguments as in Theorem \ref{main-exist-critical-weak} by using the Theorem \ref{thm:global-existence-strong} in place of Theorem \ref{thm:global-existence-weak}. Now, if $I(u(\cdot, t))> 0$ for $0<t<\infty$, then \eqref{u_t is nonzero} implies that $u_{t}(\cdot, t) \not \equiv 0$. Therefore, by Lemma \ref{I(u)>0 implies E(u)>0} and \eqref{sol: u},  for any $t_0>0$ we have
 $$ 0< E(u(\cdot, t_{0})) \  \leq \ d-\int_{0}^{t_0} \|u_t (\cdot, \tau)\|^{2}_{L^{2}(\Omega)}\, d\tau <d.$$
Now, by taking $t=t_{0}$ as the initial time, from Lemma \ref{u in W delta}, we know that $u(\cdot,t)\in W_{\delta}$ for $\delta_{1}<\delta<\delta_{2}$ and $t_{0}<t<\infty$ under the condition $E(u(\cdot, t_{0}))< d$ and $I(u(\cdot,t_{0}))> 0$, where $\delta_{1}<\delta<\delta_{2}$ are two roots of $d(\delta)=E(u(\cdot,t_{0}))$. Thus,  $I_{\delta^{'}}(u(\cdot,t)) > 0$ for $\delta^{'}\in (\delta_{1},1)$ and $t_{0}<t<\infty$. Finally, by repeating the arguments of Theorem \ref{thm:global-existence-strong}, we obtain the required estimate.
\end{proof}
Next, we are concerned with blow-up in finite time.

\begin{theorem}\label{Main thm3}
Let conditions \eqref{assump:f_0}-\eqref{Assump:f_2}, \ref{Assump: tilde{f}}, \eqref{cond:g0}-\eqref{assump:g_5} and \eqref{assump:h_0} hold and $u_{0}\in W_{0}^{s,G_{x,y}}(\Omega)$. If $E(u_{0})=d$ and $I(u_{0})<0$,  then there exists a finite time $t_0>0$ such that $E(u(\cdot, t)) <d$ for all $t \geq t_0$ and the strong solution of the problem \eqref{main:problem} blows up in the sense of
$$\lim_{t\to T^{**}}\int_{0}^{t}\|u(\cdot,\tau)\|_{L^{2}(\Omega)}^{2}\,d\tau=+\infty \quad \text{where} \quad  T^{**}:=\frac{4\|u_0\|_{L^2(\Omega)}^2(\alpha-1)}{\alpha(\alpha-2)^{2}(d-E(u(\cdot, t_0)))}.$$
\end{theorem}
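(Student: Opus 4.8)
The plan is to argue by contradiction, assuming the strong solution $u$ is global (i.e.\ $T_{\max}=+\infty$), so that the continuity of $t\mapsto E(u(\cdot,t))$ and $t\mapsto I(u(\cdot,t))$ furnished by Lemma \ref{continuity of E(u) with t} is available, together with the energy conservation law \eqref{strong solution}. The first and main task is to show that the orbit drops strictly below the critical level while keeping $I$ negative: namely that $I(u(\cdot,t))<0$ for every $t>0$ and that $E(u(\cdot,t_0))<d$ for some (hence every) $t_0>0$. Once this is established, I would restart the clock at $t_0$, observe that $u(\cdot,t_0)$ meets the hypotheses $E(u(\cdot,t_0))<d$ and $I(u(\cdot,t_0))<0$ of the low-energy blow-up Theorem \ref{Blow-up thm}, and invoke that theorem (equivalently, rerun its concavity argument with $u(\cdot,t_0)$ as initial datum) to force a finite-time blow-up, contradicting globality.

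For the negativity of $I$ I would suppose the contrary and set $\bar t:=\inf\{t>0:\ I(u(\cdot,t))\geq 0\}$. Since $I(u_0)<0$ and $I$ is continuous, one has $\bar t>0$, with $I(u(\cdot,t))<0$ on $(0,\bar t)$ and $I(u(\cdot,\bar t))=0$. On $(0,\bar t)$ the identity \eqref{u_t is nonzero} shows that $\|u(\cdot,t)\|^2_{L^2(\Omega)}$ is strictly increasing, so $u_t\not\equiv 0$ there and, by \eqref{strong solution}, $E(u(\cdot,\bar t))=d-\int_0^{\bar t}\|u_t(\cdot,\tau)\|^2_{L^2(\Omega)}\,d\tau<d$. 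On the other hand, Lemma \ref{Upper estimate of norm}(iii) applied with $\delta=1$ on $(0,\bar t)$ gives a uniform positive lower bound for $\|u(\cdot,t)\|_{L^{\Phi}(\Omega)}$, and since $u\in C(0,\infty;L^{\Phi}(\Omega))$ this passes to the limit to yield $u(\cdot,\bar t)\not\equiv 0$. Hence $u(\cdot,\bar t)\in\mathcal{N}$, and the definition of $d$ forces $E(u(\cdot,\bar t))\geq d$, a contradiction. Therefore $I(u(\cdot,t))<0$ for all $t>0$; the same strict monotonicity of $\|u\|^2_{L^2(\Omega)}$ gives $u_t\not\equiv 0$ on $(0,t_0)$ and thus $E(u(\cdot,t_0))<d$ for every $t_0>0$, and since $E$ is non-increasing in $t$ this yields $E(u(\cdot,t))<d$ for all $t\geq t_0$, as claimed.

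With $t_0>0$ fixed, the configuration at time $t_0$ is exactly that of the subcritical case: $E(u(\cdot,t_0))<d$ and $I(u(\cdot,t_0))<0$, so $u(\cdot,t_0)\in V$. I would then reproduce the argument of Theorem \ref{Blow-up thm} with the shifted initial time, forming the auxiliary functional $M(t)=\int_{t_0}^{t}\|u(\cdot,\tau)\|^2_{L^2(\Omega)}\,d\tau+(T-t)\|u(\cdot,t_0)\|^2_{L^2(\Omega)}+b(t+a)^2$, using \ref{Assump: tilde{f}}, \eqref{Assump:g_3} and the comparison $I(u(\cdot,t))\leq \beta^{-1}\bigl(E(u(\cdot,t))-d\bigr)$ obtained via Lemma \ref{Lemma:3.3}(iii), to verify the concavity inequality $M(t)M''(t)-\tfrac{1}{2\beta}(M'(t))^2\geq 0$ and invoke Levine's method \cite{Levine-1973}. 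Optimizing over the parameters $a,b,\beta$ exactly as in Theorem \ref{Blow-up thm} (now with $E(u(\cdot,t_0))$ playing the role of $E(u_0)$ in the denominator) produces the claimed finite blow-up time $T^{**}$, contradicting $T_{\max}=+\infty$.

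The hard part is the first step: pinning down that the orbit remains in the region $I<0$ and crosses strictly below the critical level $d$. Unlike the strictly subcritical setting, Lemma \ref{u in W delta} is \emph{not} directly applicable here since $E(u_0)=d$, so this invariance must be re-derived by hand. The delicate point is excluding a return of $I$ to zero along an orbit that simultaneously approaches $u\equiv 0$; this is precisely what the lower bound in Lemma \ref{Upper estimate of norm}(iii) forbids. Once the solution has been pushed strictly below energy level $d$ with $I$ negative, the remainder is a routine transcription of the low-energy concavity analysis.
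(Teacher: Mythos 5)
Your proposal is correct and follows essentially the same route as the paper: use the continuity of $E(u(\cdot,t))$ and $I(u(\cdot,t))$ (Lemma \ref{continuity of E(u) with t}) together with the identity \eqref{u_t is nonzero} and the energy law \eqref{strong solution} to show the solution drops strictly below the level $d$ at some $t_0>0$ while $I$ stays negative, then restart the problem at $t_0$ with $u(\cdot,t_0)\in V$ and run the Levine concavity argument of Theorem \ref{Blow-up thm}. The only divergence is bookkeeping: the paper obtains $I<0$ merely on a short interval $(0,t_0]$ by continuity, shifts time, and then invokes Lemma \ref{u in W delta}(ii) -- which \emph{is} applicable after the shift, since $E(u(\cdot,t_0))<d$ -- whereas you re-derive that invariance by hand via a first-crossing argument at $\delta=1$ using Lemma \ref{Upper estimate of norm}(iii); this is valid but duplicates the proof of the lemma you could simply have cited once the energy is subcritical.
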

\begin{proof}
   Applying Lemma \ref{positive:depth} and Lemma \ref{continuity of E(u) with t}, we obtain $E(u_{0})=d>0$, and $E(u(\cdot,t))$ and $I(u(\cdot,t))$ are continuous with respect to $t$. Then, there exists a $t_{0}$ such that $E(u(\cdot,t))>0$ and $I(u(\cdot,t))< 0$ for $0<t \leq t_{0}.$ Using \eqref{u_t is nonzero}, we have $u_{t}(\cdot,t)\not\equiv 0$  for $0<t \leq t_{0}$. From \eqref{strong solution}
   $$ 0< E(u(\cdot,t_{0}))= d-\int_{0}^{t_{0}}\|u_t(\cdot,\tau)\|^{2}_{L^{2}(\Omega)}d\tau < d. $$ 
 Taking $t=t_{0}$ as the initial time we have $E(u(\cdot,t_{0}))<d$ and $I(u(\cdot,t_{0}))<0$ {\it i.e.} $u(\cdot,t_{0})\in V$. By Lemma \ref{u in W delta}(ii) we have $E(u(\cdot,t))<d$ and  $I(u(\cdot,t))<0$ for all $t\geq t_{0}$. The rest of the proof is the same as in Theorem \ref{Blow-up thm} for all $t\geq t_{0}$.
\end{proof}

\subsection{On the case of high initial energy: $E(u_{0})> d$}
   This subsection gives sufficient conditions for the global existence of strong solutions and blow-up in finite time regarding the high initial energy.  Before proving the main results, we introduce the following notations:\\
   Define
$$\mathcal{N}_{+}:=\{u\in W_{0}^{s,G_{x,y}}(\Omega)\,|\,I(u)> 0\},\qquad \mathcal{N}_{-}:=\{u\in W_{0}^{s,G_{x,y}}(\Omega)\,|\,I(u)< 0\}$$
and
$$O_{\zeta}:=\{u\in W_{0}^{s,G_{x,y}}(\Omega)\,|\,E(u)<\zeta\}.$$
By the definition of $E$, $\mathcal{N}$, $O_{\zeta}$ and $d,$ we get
$$\mathcal{N}_{\zeta}:=\mathcal{N}\cap O_{\zeta}= \{u\in\mathcal{N}\,|\,E(u)<\zeta\}\neq\emptyset \qquad \text{  for all    }\zeta >d.$$
For $\zeta>d$, define
$$\lambda_{\zeta}:=\inf \{\|u\|_{L^{2}(\Omega)}\,|\,u\in\mathcal{N}_{\zeta}\}, \qquad \Lambda_{\zeta}:=\sup \{\|u\|_{L^{2}(\Omega)}\,|\,u\in\mathcal{N}_{\zeta}\}.$$
Note that the map $\zeta\mapsto \lambda_{\zeta}$ is non-increasing and $\zeta\mapsto\Lambda_{\zeta}$ is non-decreasing.

\begin{lemma}
\label{N+ is bounded}
    If the conditions \eqref{assump:f_0}-\eqref{Assump:f_2} and \eqref{cond:g0}-\eqref{Cond:g_6} hold, then
 \begin{enumerate}
 \item[$(i)$] $0$ is away from both $\mathcal{N}$ and $\mathcal{N_{-}}$, {\it i.e.} $\dist(0,\mathcal{N})> 0$ and $\dist(0,\mathcal{N_{-}})> 0.$
 \item[$(ii)$] For any $\zeta> 0$, the set $O_{\zeta}\cap \mathcal{N_{+}}$ is bounded in $W_{0}^{s,G_{x,y}}(\Omega).$
\end{enumerate}
\end{lemma}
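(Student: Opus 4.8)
The plan is to reduce both parts to modular estimates already derived for $I$ and $E$, after recording that the norm on $W_{0}^{s,G_{x,y}}(\Omega)$ is precisely the Gagliardo seminorm $[\,\cdot\,]_{s,G_{x,y}}$, so that $\dist(0,S)=\inf_{u\in S}[u]_{s,G_{x,y}}$ for any subset $S$. Hence part $(i)$ is equivalent to exhibiting a uniform positive lower bound for $[u]_{s,G_{x,y}}$ on $\mathcal{N}$ and on $\mathcal{N}_{-}$, while part $(ii)$ is equivalent to a uniform upper bound for $[u]_{s,G_{x,y}}$ on $O_{\zeta}\cap\mathcal{N}_{+}$.

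For $(i)$ I would specialize Lemma \ref{Upper estimate of norm} to $\delta=1$, noting that $I_{1}=I$. If $u\in\mathcal{N}_{-}$, then $I_{1}(u)<0$ and Lemma \ref{Upper estimate of norm}$(i)$ gives
\[
[u]_{s,G_{x,y}}>\min\left\{\left(y(1)\right)^{\frac{1}{h_{2}^{-}-g^{+}}},\left(y(1)\right)^{\frac{1}{h_{2}^{+}-g^{-}}}\right\}=:\kappa,
\]
with $y(1)=\dfrac{g^{-}}{h_{2}^{+}AC^{*}_{G}}>0$ and both exponents positive by \eqref{assump:g_5}. If $u\in\mathcal{N}$, then $I_{1}(u)=0$, and $u\not\equiv0$ forces $[u]_{s,G_{x,y}}\neq0$, so Lemma \ref{Upper estimate of norm}$(ii)$ yields the same bound $[u]_{s,G_{x,y}}\geq\kappa$. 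Therefore $\dist(0,\mathcal{N})\geq\kappa>0$ and $\dist(0,\mathcal{N}_{-})\geq\kappa>0$, proving $(i)$.

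For $(ii)$ I would repeat the a priori energy estimate of Claim $3$ in Theorem \ref{thm:global-existence-weak}, with $d$ replaced by $\zeta$. Fix $u\in O_{\zeta}\cap\mathcal{N}_{+}$, so $E(u)<\zeta$ and $I(u)>0$. Bounding $F(x,u)\leq\frac{1}{h_{1}^{-}}uf(x,u)$ by \eqref{Assump:f_2}, using $\int_{\Omega}uf(x,u)\,dx=\iint_{Q}g_{x,y}(D^{s}u)(D^{s}u)\,d\mu-I(u)\leq\iint_{Q}g_{x,y}(D^{s}u)(D^{s}u)\,d\mu$ since $I(u)>0$, and then applying \eqref{Assump:g_3} together with Lemma \ref{RELATION:MODULAR & NORM}$(i)$, one obtains
\[
\left(1-\frac{g^{+}}{h_{1}^{-}}\right)\min\left\{[u]_{s,G_{x,y}}^{g^{-}},[u]_{s,G_{x,y}}^{g^{+}}\right\}\leq\left(1-\frac{g^{+}}{h_{1}^{-}}\right)\iint_{Q}G_{x,y}(D^{s}u)\,d\mu\leq E(u)<\zeta .
\]
By \eqref{assump:g_5} the factor $1-g^{+}/h_{1}^{-}$ is strictly positive, whence $\min\{[u]_{s,G_{x,y}}^{g^{-}},[u]_{s,G_{x,y}}^{g^{+}}\}<\zeta h_{1}^{-}/(h_{1}^{-}-g^{+})$; distinguishing $[u]_{s,G_{x,y}}\geq1$ from $[u]_{s,G_{x,y}}<1$ then produces the uniform bound $[u]_{s,G_{x,y}}<\max\{1,(\zeta h_{1}^{-}/(h_{1}^{-}-g^{+}))^{1/g^{-}}\}$, so $O_{\zeta}\cap\mathcal{N}_{+}$ is bounded in $W_{0}^{s,G_{x,y}}(\Omega)$.

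Both arguments are essentially bookkeeping once the correct earlier results are invoked, so I expect no serious obstacle. The only points demanding a little care are the well-definedness of $\int_{\Omega}F(x,u)\,dx$ and $\int_{\Omega}uf(x,u)\,dx$ — guaranteed by the growth bounds of Lemma \ref{F:bounds} and Corollary \ref{bounds on f:f} together with the continuous embedding $W_{0}^{s,G_{x,y}}(\Omega)\hookrightarrow L^{\Phi}(\Omega)$ from Theorem \ref{thm2} under \eqref{Cond:g_6} — and the consistent treatment of the modular--seminorm equivalence across the two regimes $[u]_{s,G_{x,y}}\geq1$ and $[u]_{s,G_{x,y}}<1$, which is the sole mildly technical step in the computation.
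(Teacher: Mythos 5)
Your proposal is correct, and both halves of it are sound under the stated hypotheses (conditions \eqref{cond:g0}--\eqref{Cond:g_6} include \eqref{assump:g_5}, so Lemma \ref{Upper estimate of norm} and the positivity of $1-g^{+}/h_{1}^{-}$ are indeed available). Part $(ii)$ and the $\mathcal{N}_{-}$ half of part $(i)$ coincide in substance with the paper's proof: the paper re-derives by hand, via the chain \eqref{lower bound of norm u}--\eqref{dist(0,N-)}, exactly the inequality that is packaged in Lemma \ref{Upper estimate of norm}$(i)$ at $\delta=1$, and its part $(ii)$ estimate is the same energy chain you use, only with the slightly smaller constant $g^{-}\left(\tfrac{1}{g^{+}}-\tfrac{1}{h_{1}^{-}}\right)$ in place of your $1-\tfrac{g^{+}}{h_{1}^{-}}$ (both positive, so immaterial). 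The one genuine divergence is your treatment of $\dist(0,\mathcal{N})$: the paper bounds $E(u)$ from above by $\max\{[u]^{g^-}_{s,G_{x,y}},[u]^{g^+}_{s,G_{x,y}}\}+AC^{*}_{G}\max\{[u]^{h_2^-}_{s,G_{x,y}},[u]^{h_2^+}_{s,G_{x,y}}\}$ and combines this with $d\leq E(u)$ and the positivity of the depth $d$ (Lemma \ref{positive:depth}), obtaining a lower bound of the form $\left(d/(1+AC^{*}_{G})\right)^{1/g^{-}}$, whereas you invoke Lemma \ref{Upper estimate of norm}$(ii)$ at $\delta=1$ together with $u\neq 0$ in $W_{0}^{s,G_{x,y}}(\Omega)$. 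Your route is marginally more economical: it never uses $d>0$, treats $\mathcal{N}$ and $\mathcal{N}_{-}$ by one and the same constant $\kappa$, and reduces part $(i)$ entirely to a lemma already proved; the paper's route, by contrast, produces a quantitative bound expressed in terms of the well depth $d$, which is the form later exploited in the potential-well framework. Either bound suffices for the lemma as stated.
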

\begin{proof}
$(i)$    Let $u\in \mathcal{N}$. From Lemma \ref{F:bounds}(i) and Lemma \ref{RELATION:MODULAR & NORM}(i), we get
    \begin{align*}
        d\leq E(u)&={\iint_{Q}}G_{x,y}\left(D^{s}u\right)d\mu-\int_{\Omega}F(x,u)\,dx\\
        &\leq \max\{ [u]_{s,G_{x,y}}^{g^-},[u]_{s,G_{x,y}}^{g^+}\}+ A\max\{ \|u\|_{L^{\Phi}(\Omega)}^{h_{2}^{-}},\|u\|_{L^{\Phi}(\Omega)}^{h_{2}^{+}}\}.
    \end{align*}
    Since $W_{0}^{s,G_{x,y}}(\Omega)$ is embedded into $L^{\Phi}(\Omega)$, we further obtain
    \begin{equation}
        \label{dist(0,N)}
    \begin{aligned}
        \qquad \quad\qquad d &\leq \max\{ [u]_{s,G_{x,y}}^{g^-},[u]_{s,G_{x,y}}^{g^+}\}+ AC^{*}_{G}\max\{ [u]_{s,G_{x,y}}^{h_{2}^{-}},[u]_{s,G_{x,y}}^{h_{2}^{+}}\},
    \end{aligned}
     \end{equation}
 where $C^{\ast}_G$ is defined in \eqref{lower bound of norm u}. Now, if $ [u]_{s,G_{x,y}}\geq 1$, then clearly dist$(0,\mathcal{N})>0$. Otherwise, if $ [u]_{s,G_{x,y}} < 1$, then from \eqref{dist(0,N)} and condition \eqref{assump:g_5}, we find that
\begin{align*}
      [u]_{s,G_{x,y}}&\geq \left(\frac{d}{1+AC^{*}_{G}}\right)^{\frac{1}{g^{-}}}.
\end{align*}
This implies that there exists a constant $\rho >0$ such that
$$\text{dist}(0, \mathcal{N})=\inf_{u\in \mathcal{N}}[u]_{s,G_{x,y}}\geq\rho>0.$$
For $u\in \mathcal{N_{-}}$, we get $[u]_{s,G_{x,y}}\neq 0$. From \eqref{lower bound of norm u} we have
\begin{align}
\label{dist(0,N-)}
  g^{-} \min\{ [u]_{s,G_{x,y}}^{g^-},[u]_{s,G_{x,y}}^{g^+}\} 
 & <A h_{2}^{+} C^{*}_{G} \max\{ [u]_{s,G_{x,y}}^{h_{2}^{-}}, [u]_{s,G_{x,y}}^{h_{2}^{+}}\}.
 \end{align}
  

    
\noindent If $[u]_{s,G_{x,y}}\geq 1$, then clearly dist$(0,\mathcal{N}_{-})>0.$ Otherwise, if $[u]_{s,G_{x,y}} < 1$, then from \eqref{dist(0,N-)} and \eqref{assump:g_5} we get
   \begin{align*}
[u]_{s,G_{x,y}}& > \left(\frac{g^{-}}{A h_{2}^{+} C^{*}_{G}}\right)^{\frac{1}{h_{2}^{-}-g^{+}}}.
   \end{align*}
   This implies that there exists a constant $\delta >0$ such that
$$\text{dist}(0, \mathcal{N}_{-})=\inf_{u\in \mathcal{N}_{-}}[u]_{s,G_{x,y}}\geq\delta.$$
$(ii)$ If $u\in O_{\zeta}\cap \mathcal{N_{+}}$, then $E(u)<\zeta$ and $I(u)>0.$ Therefore, from condition \eqref{Assump:f_2}, \eqref{Assump:g_3}, \eqref{assump:g_5} and Lemma \ref{RELATION:MODULAR & NORM}(i) we get
\begin{align}
\label{E tau is bounded}
      \zeta> E(u)  
    &\geq {\iint_{Q}}G_{x,y}\left(D^{s}u\right)d\mu-\int_{\Omega}\frac{1}{h_{1}(x)}f(x,u)u\ dx \nonumber\\
     &\geq \left(\frac{1}{g^{+}}-\frac{1}{h_{1}^{-}}\right){\iint_{Q}}g_{x,y}\left(D^{s}u\right)\left(D^{s}u\right)d\mu+\frac{1}{h_{1}^{-}}I(u) \nonumber\\
     &\geq \left(\frac{1}{g^{+}}-\frac{1}{h_{1}^{-}}\right){\iint_{Q}}g_{x,y}\left(D^{s}u\right)\left(D^{s}u\right)d\mu \nonumber\\
     & \geq g^{-}\left(\frac{1}{g^{+}}-\frac{1}{h_{1}^{-}}\right)\min\{ [u]_{s,G_{x,y}}^{g^-},[u]_{s,G_{x,y}}^{g^+}\}.
\end{align}
If $[u]_{s,G_{x,y}} \leq 1$, then clearly  $O_{\zeta}\cap \mathcal{N_{+}}$ is bounded. Otherwise, if $[u]_{s,G_{x,y}} > 1$, then from \eqref{E tau is bounded} we have
\begin{align*}
    [u]_{s,G_{x,y}}&<\left(\frac{\zeta h_{1}^{-}g^{+}}{g^{-}(h_{1}^{-}-g^{+})} \right)^{\frac{1}{g^{-}}}.
\end{align*}
This implies that the set $O_{\zeta}\cap \mathcal{N_{+}}$ is bounded in $W_{0}^{s,G_{x,y}}(\Omega)$.
\end{proof}
\noindent Next we assume the following condition:
\begin{enumerate}[leftmargin=1.5cm,label=\textnormal{($\hat{g}_6$)},ref=\textnormal{$\hat{g}_{6}$}]
    \item \label{assump:h_1}
    $h_{2}^{+}\leq g^{-}\left(1+\frac{2s}{N}\right).$
\end{enumerate}
\begin{lemma}
    Let the conditions \eqref{cond:g0}-\eqref{Cond:g_6}, \eqref{assump:f_0}-\eqref{Assump:f_2}, and \eqref{assump:h_1} hold. Then, for any $\zeta > d$, the constants $\lambda_{\zeta}$ and $\Lambda_{\zeta}$ satisfy  
\[
0 < \lambda_{\zeta} \leq \Lambda_{\zeta} < +\infty.
\]
\end{lemma}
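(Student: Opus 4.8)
The plan is to prove the three inequalities separately. The middle inequality $\lambda_\zeta \leq \Lambda_\zeta$ is immediate: since $\mathcal{N}_\zeta \neq \emptyset$ for $\zeta > d$, the infimum of the nonempty set $\{\|u\|_{L^2(\Omega)} : u \in \mathcal{N}_\zeta\}$ does not exceed its supremum. For the finiteness $\Lambda_\zeta < +\infty$, I would argue exactly as in Lemma~\ref{N+ is bounded}(ii): for $u \in \mathcal{N}_\zeta$ one has $I(u) = 0$ and $E(u) < \zeta$, so combining \eqref{Assump:f_2} (through $F(x,u) \leq \frac{1}{h_1^-} f(x,u)u$), \eqref{Assump:g_3}, and Lemma~\ref{RELATION:MODULAR & NORM}(i) yields
$$\zeta > E(u) \geq \left(1 - \frac{g^+}{h_1^-}\right)\iint_{Q} G_{x,y}(D^{s}u)\,d\mu \geq \left(1 - \frac{g^+}{h_1^-}\right)\min\{[u]_{s,G_{x,y}}^{g^-}, [u]_{s,G_{x,y}}^{g^+}\},$$
where $1 - g^+/h_1^- > 0$ by \eqref{assump:g_5}. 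This gives a uniform upper bound on $[u]_{s,G_{x,y}}$, and the continuous embedding $W_{0}^{s,G_{x,y}}(\Omega) \hookrightarrow L^{g^{-}_{*,s}}(\Omega) \hookrightarrow L^2(\Omega)$ (valid since $g^{-}_{*,s} > h_2^+ > 2$) then bounds $\|u\|_{L^2(\Omega)} \leq C[u]_{s,G_{x,y}}$ uniformly on $\mathcal{N}_\zeta$. Hence $\Lambda_\zeta < +\infty$.

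The core of the statement is the lower bound $\lambda_\zeta > 0$, which is where condition \eqref{assump:h_1} enters decisively. For $u \in \mathcal{N}_\zeta \subset \mathcal{N}$, the relation $I(u) = 0$ together with \eqref{Assump:g_3}, Lemma~\ref{RELATION:MODULAR & NORM}(i), and Corollary~\ref{bounds on f:f}(i) gives
$$g^- \min\{[u]_{s,G_{x,y}}^{g^-}, [u]_{s,G_{x,y}}^{g^+}\} \leq \iint_{Q} g_{x,y}(D^{s}u)(D^{s}u)\,d\mu = \int_\Omega f(x,u)u\,dx \leq A h_2^+ \int_\Omega |u|^{h_2(x)}\,dx.$$
I would then split $\int_\Omega |u|^{h_2(x)}\,dx \leq \|u\|_{L^{h_2^-}(\Omega)}^{h_2^-} + \|u\|_{L^{h_2^+}(\Omega)}^{h_2^+}$ and apply a Gagliardo--Nirenberg interpolation between $L^2(\Omega)$ and the Sobolev target $L^{g^{-}_{*,s}}(\Omega)$: for $q \in \{h_2^-, h_2^+\}$,
$$\|u\|_{L^q(\Omega)} \leq C\,\|u\|_{L^2(\Omega)}^{1-\theta_q}\,[u]_{s,G_{x,y}}^{\theta_q}, \qquad \frac{1}{q} = (1-\theta_q)\frac{1}{2} + \theta_q\frac{1}{g^{-}_{*,s}}.$$
The decisive algebraic point is that condition \eqref{assump:h_1}, namely $h_2^+ \leq g^-(1 + 2s/N)$, is exactly equivalent to $\theta_q\, q \leq g^-$ for $q = h_2^+$ (hence also for $q = h_2^-$), so the power of $[u]_{s,G_{x,y}}$ produced by interpolation never exceeds $g^-$. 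Substituting into the displayed inequality and treating the cases $[u]_{s,G_{x,y}} \geq 1$ and $[u]_{s,G_{x,y}} < 1$ separately, the seminorm factors are absorbed into $g^-\min\{[u]^{g^-}, [u]^{g^+}\}$ on the left (using the two-sided bound $0 < \rho \leq [u]_{s,G_{x,y}} \leq C$ on $\mathcal{N}_\zeta$ from Lemma~\ref{N+ is bounded}(i) and the finiteness step above), leaving $c_0 \leq C(\|u\|_{L^2(\Omega)}^{a_-} + \|u\|_{L^2(\Omega)}^{a_+})$ with positive exponents $a_\pm = (1-\theta_\pm)h_2^\pm$ and $c_0 > 0$ independent of $u$. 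This forces $\|u\|_{L^2(\Omega)}$ to be bounded below uniformly on $\mathcal{N}_\zeta$, giving $\lambda_\zeta > 0$.

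I expect the main obstacle to be justifying the Gagliardo--Nirenberg interpolation inequality within the fractional Musielak--Orlicz framework, where the Sobolev target is the conjugate space $L^{\widehat{G}_x^*}(\Omega)$ rather than a single Lebesgue space; one must pass through the modular relations of Lemma~\ref{RELATION:MODULAR & NORM}(ii) and the continuous embedding of Theorem~\ref{thm2}(i) to control $\|u\|_{L^{g^{-}_{*,s}}(\Omega)}$ by $[u]_{s,G_{x,y}}$, and then track the (possibly $[u]$-dependent) powers so that the exponent identity forced by \eqref{assump:h_1} can be exploited. A cleaner route, which sidesteps the explicit bookkeeping, is a compactness/contradiction argument: if some sequence $u_n \in \mathcal{N}_\zeta$ had $\|u_n\|_{L^2(\Omega)} \to 0$, then $[u_n]_{s,G_{x,y}}$ is bounded above (by the $\Lambda_\zeta$ estimate) and below (by Lemma~\ref{N+ is bounded}(i)), so interpolation together with $h_2^\pm < g^{-}_{*,s}$ (from \eqref{Cond:g_6}) forces $\int_\Omega |u_n|^{h_2(x)}\,dx \to 0$, contradicting the identity $\int_\Omega f(x,u_n)u_n\,dx = \iint_{Q} g_{x,y}(D^{s}u_n)(D^{s}u_n)\,d\mu \geq g^-\min\{\rho^{g^-},\rho^{g^+}\} > 0$ inherited from $I(u_n) = 0$.
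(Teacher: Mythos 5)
Your proposal is correct, and its main line is essentially the paper's own proof: $\Lambda_\zeta<+\infty$ follows from the same energy estimate as in Lemma \ref{N+ is bounded}(ii), and $\lambda_\zeta>0$ follows from $I(u)=0$, the bound $\int_\Omega f(x,u)u\,dx\le Ah_2^+\int_\Omega|u|^{h_2(x)}\,dx$, Lebesgue interpolation between $L^2(\Omega)$ and $L^{g^-_{*,s}}(\Omega)$ combined with the Sobolev-type embedding, and the distance bound $\dist(0,\mathcal N)>0$ from Lemma \ref{N+ is bounded}(i). Your ``decisive algebraic point'' is exactly the paper's computation: with the paper's $\alpha$ one has $\alpha h_2^+=\theta_{h_2^+}h_2^+$ and $g^--\alpha h_2^+=\frac{g^-_{*,s}}{g^-_{*,s}-2}\bigl(g^-(1+\tfrac{2s}{N})-h_2^+\bigr)$, so \eqref{assump:h_1} is precisely the condition $\theta_q q\le g^-$ you identify.

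The differences are one of bookkeeping and one of substance. Cosmetically, you split $\int_\Omega|u|^{h_2(x)}\,dx$ into $\|u\|_{L^{h_2^-}}^{h_2^-}+\|u\|_{L^{h_2^+}}^{h_2^+}$ and interpolate both norms, whereas the paper passes through $\|u\|_{L^{\Phi}(\Omega)}$ and then $\|u\|_{L^{h_2^+}(\Omega)}$ with a case distinction $\|u\|_{L^{h_2^+}(\Omega)}\gtrless 1$; both are fine. Substantively, because your absorption step invokes the two-sided bound $0<\rho\le[u]_{s,G_{x,y}}\le C$ on $\mathcal N_\zeta$, the sign condition on the exponents enforced by \eqref{assump:h_1} is actually redundant in your argument, and your closing compactness/contradiction variant makes this explicit: boundedness of $[u_n]_{s,G_{x,y}}$ above and below, together with $2<h_2^\pm<g^-_{*,s}$ (from \eqref{assump:g_5} and \eqref{Cond:g_6}), already forces $\int_\Omega|u_n|^{h_2(x)}\,dx\to 0$ against the uniform positive lower bound coming from $I(u_n)=0$, with no use of \eqref{assump:h_1} at all. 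The paper, by contrast, uses only the lower bound $\dist(0,\mathcal N)>0$, and therefore genuinely needs $g^\pm-\alpha h_2^+\ge 0$, i.e.\ \eqref{assump:h_1}. So your variant buys the lemma under weaker hypotheses (and is arguably cleaner), at the price of also invoking the upper seminorm bound established in the $\Lambda_\zeta$ step.
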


\begin{proof}
   If $u\in \mathcal{N}_{\zeta}$, then from \eqref{E tau is bounded} we have
    \begin{align}
        \zeta> E(u) 
 & \geq g^{-}\left(\frac{1}{g^{+}}-\frac{1}{h_{1}^{-}}\right)\min\{ [u]_{s,G_{x,y}}^{g^-},[u]_{s,G_{x,y}}^{g^+}\}\nonumber.
\end{align}
This further gives
\begin{align*}
[u]_{s,G_{x,y}} &< \max \left\{ \left(\frac{\zeta h_{1}^{-}g^{+}}{g^{-}(h_{1}^{-}-g^{+})} \right)^{\frac{1}{g^{-}}}, \left(\frac{\zeta h_{1}^{-}g^{+}}{g^{-}(h_{1}^{-}-g^{+})} \right)^{\frac{1}{g^{+}}} \right\}=:\delta_{\max}(\zeta).
\end{align*}
Since $W_{0}^{s,G_{x,y}}(\Omega)$ is embedded in $L^{2}(\Omega)$, we have
$$\|u\|_{L^{2}(\Omega)}\leq C_{*}[u]_{s,G_{x,y}}\leq C_{*}\delta_{\max}(\zeta). $$
This implies that
$$\Lambda_{\zeta}\leq C_{*}\delta_{\max}(\zeta) < +\infty. $$
Now we will prove that $\lambda_{\zeta}> 0.$ Let $u\in \mathcal{N}_{\zeta}$, then
 from \eqref{lower bound of norm u} and Theorem \ref{thm2} we get
\begin{align}
\label{LH NORM}
 g^{-} \min\{ [u]_{s,G_{x,y}}^{g^-},[u]_{s,G_{x,y}}^{g^+}\} \leq g^{-}{\iint_{Q}}G_{x,y}\left(D^{s}u\right)d\mu \nonumber
 &\leq A h_{2}^{+} \max\left\{ \|u\|_{L^{\Phi}{(\Omega)}}^{{h_{2}}^{+}}, \|u\|_{L^{\Phi}{(\Omega)}}^{{h_{2}}^{-}}\right\}\\\nonumber
&\leq  A h_{2}^{+} \max\left\{ (C_h)^{h_{2}^{+}}\|u\|_{L^{h_{2}^{+}}{(\Omega)}}^{{h_{2}}^{+}}, (C_h)^{h_{2}^{-}}\|u\|_{L^{h_{2}^{+}}{(\Omega)}}^{{h_{2}}^{-}}\right\}\\
&\leq  A h_{2}^{+}C_{h,\max} \max\left\{ \|u\|_{L^{h_{2}^{+}}(\Omega)}^{{h_{2}}^{+}},\|u\|_{L^{h_{2}^{+}}(\Omega)}^{{h_{2}}^{-}}\right\},
\end{align}
where \[C_{h,\max}=\max\{ (C_h)^{h_{2}^{+}}, (C_h)^{h_{2}^{-}} \}.\]
 We can further divide \eqref{LH NORM} into two cases:\\
\textbf{Case: 1} $\|u\|_{L^{h_{2}^{+}}(\Omega)}\geq 1$.\\
From \eqref{LH NORM}, we have
\begin{align}
g^{-} \min\{ [u]_{s,G_{x,y}}^{g^-},[u]_{s,G_{x,y}}^{g^+}\} &\leq A h_{2}^{+}C_{h,\max} \|u\|_{L^{h_{2}^{+}}(\Omega)}^{{h_{2}}^{+}}.\nonumber
\end{align}
Therefore, by using \eqref{assump:g_5} -\eqref{Cond:g_6}, interpolation inequality \cite[Remark 2, p. 93]{Brezis} and the embedding of $W_{0}^{s,G_{x,y}}(\Omega)$ into $L^{g_{*,s}^{-}}(\Omega)$ we obtain
\begin{align}
    g^{-} \min\{ [u]_{s,G_{x,y}}^{g^-},[u]_{s,G_{x,y}}^{g^+}\}&\leq A h_{2}^{+}C_{h,\max} \|u\|^{\alpha h_{2}^{+}}_{L^{g^{-}_{\ast,s}}(\Omega)}\|u\|^{(1-\alpha)h_{2}^{+}}_{L^{2}(\Omega)}\nonumber\\ \nonumber
    &\leq A C_{1}^{*} h_{2}^{+}C_{h,\max} [u]^{\alpha h_{2}^{+}}_{s,G_{x,y}}\|u\|^{(1-\alpha)h_{2}^{+}}_{L^{2}(\Omega)},
\end{align}
where $\alpha = \frac{g_{*,s}^{-}(h_{2}^{+}-2)}{h_{2}^{+}(g_{*,s}^{-}-2)} \in (0,1)$ and $C_{1}^{*}$ is the embedding constant. The above equation can be rewritten as
\begin{equation}
\label{Lamdba>0}
\begin{aligned}
\|u\|^{(1-\alpha)h_{2}^{+}}_{L^{2}(\Omega)} &\geq\frac{g^{-}}{ A C_{1}^{*} h_{2}^{+}C_{h,\max}}\min\{ [u]_{s,G_{x,y}}^{g^{-}-\alpha h_{2}^{+}},[u]_{s,G_{x,y}}^{g^{+}-\alpha h_{2}^{+}}\}.
\end{aligned}
\end{equation}

Note that in view of \eqref{assump:h_1}, we have
\begin{align*}
\label{g-alpha is positive}
     g^{+}-\alpha h_{2}^{+} > g^{-}-\alpha h_{2}^{+} = g^{-}-\frac{g_{*,s}^{-}(h_{2}^{+}-2)}{g_{*,s}^{-}-2} =\frac{{g_{*,s}^{-}}}{g_{*,s}^{-}-2}\left(g^{-}\left(1+\frac{2s}{N}\right)-h_{2}^{+}\right) >0.
 \end{align*}

By Lemma \ref{N+ is bounded} and the embedding of $W_{0}^{s,G_{x,y}}(\Omega)$ into $L^{2}(\Omega)$, it follows that the right-hand side of \eqref{Lamdba>0} is bounded and strictly positive. Therefore, by the definition of $\lambda_{\zeta}$, we conclude that $\lambda_{\zeta} > 0$.
\\
\textbf{Case: 2} $\|u\|_{L^{h_{2}^{+}}(\Omega)} < 1$.\\
From \eqref{LH NORM}, we have
\begin{align}
g^{-} \min\{ [u]_{s,G_{x,y}}^{g^-},[u]_{s,G_{x,y}}^{g^+}\} &\leq A h_{2}^{+}C_{h,\max} \|u\|_{L^{h_{2}^{+}}(\Omega)}^{{h_{2}}^{-}}.\nonumber
\end{align}
Now, by following the same argument as in \textbf{case 1}, we conclude that $\lambda_{\zeta}>0$.
\end{proof}
To state the main results concerning the case of high initial energy (\(E(u_{0}) > d\)), we introduce the following sets:  
\[
\mathcal{B} = \left\{ u_{0} \in W_{0}^{s,G_{x,y}}(\Omega) \mid \text{ the strong solution } u \text{ to problem } \eqref{main:problem} \text{ blows up (in the } L^2\text{-norm) in finite time} \right\},
\]
\[
\mathcal{G}_{0} = \left\{ u_{0} \in W_{0}^{s,G_{x,y}}(\Omega) \mid \text{ the strong solution } u \text{ to problem } \eqref{main:problem} \text{ satisfies } u(\cdot,t) \to 0 \text{ in } W_{0}^{s,G_{x,y}}(\Omega) \text{ as } t \to \infty \right\}.
\]
We define the $\omega$-limit set $\omega(u_{0})$ of the initial data $u_{0}\in W_{0}^{s,G_{x,y}}(\Omega)$ by
\[\omega(u_{0})=\bigcap_{\ell \geq 0} \overline{\left\{u(\cdot,t)\,|\, t \geq \ell \right\}}^{W_{0}^{s,G_{x,y}}(\Omega)}.\]
\begin{theorem}\label{mainthm3}
\label{high initial energy main thm}
    Let the conditions \eqref{assump:f_0}-\eqref{Assump:f_3}, \eqref{cond:g0}-\eqref{Cond:g_6} and \eqref{assump:h_1} hold and $u_{0}\in W_{0}^{s,G_{x,y}}(\Omega).$ If $E(u_{0})>d$, then the following statements hold:
    \begin{enumerate}
    \item[$(i)$] If $u_{0}\in \mathcal{N}_{+}$ and $\|u_{0}\|_{L^{2}(\Omega)}\leq \lambda_{E(u_{0})}$, then $u_{0} \in \mathcal{G}_{0}.$
    \item[$(ii)$] If $u_{0}\in \mathcal{N}_{-}$ and $\|u_{0}\|_{L^{2}(\Omega)}\geq \Lambda_{E(u_{0})}$, then $u_{0} \in \mathcal{B}.$
    \end{enumerate}
\end{theorem}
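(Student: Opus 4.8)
The plan is to prove both parts by a flow-invariance argument at the fixed energy level $\zeta = E(u_0)$. The two engines are the identity \eqref{u_t is nonzero}, namely $\tfrac{d}{dt}\|u(\cdot,t)\|^2_{L^2(\Omega)} = -2I(u(\cdot,t))$, and the energy law \eqref{strong solution}, which makes $t\mapsto E(u(\cdot,t))$ nonincreasing with $E(u(\cdot,t)) < E(u_0)$ for $t>0$ (since $u_t\not\equiv 0$, as $I(u_0)\neq 0$). Throughout I would use the continuity of $t\mapsto I(u(\cdot,t))$ from Lemma \ref{continuity of E(u) with t} and the fact that $0<\lambda_{E(u_0)}\le\Lambda_{E(u_0)}<\infty$.

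For part $(i)$ I would first show $I(u(\cdot,t))>0$ for all $t$. If not, let $t_1$ be the first zero; on $[0,t_1)$ we have $I>0$, so $\|u(\cdot,t)\|_{L^2(\Omega)}$ strictly decreases, forcing $\|u(\cdot,t_1)\|_{L^2(\Omega)}<\|u_0\|_{L^2(\Omega)}\le\lambda_{E(u_0)}$; but $u(\cdot,t_1)\neq 0$ with $I(u(\cdot,t_1))=0$ and $E(u(\cdot,t_1))<E(u_0)$ place $u(\cdot,t_1)\in\mathcal N_{E(u_0)}$, whence $\|u(\cdot,t_1)\|_{L^2(\Omega)}\ge\lambda_{E(u_0)}$, a contradiction. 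Thus $u(\cdot,t)\in O_{E(u_0)}\cap\mathcal N_{+}$, which is bounded in $W_0^{s,G_{x,y}}(\Omega)$ by Lemma \ref{N+ is bounded}$(ii)$, and $\|u(\cdot,t)\|_{L^2(\Omega)}\downarrow\ell\ge 0$. Since $I>0$ gives $E>0$ by Lemma \ref{I(u)>0 implies E(u)>0}, \eqref{strong solution} yields $\int_0^\infty\|u_t\|^2_{L^2(\Omega)}\le E(u_0)<\infty$, so $\omega(u_0)$ is a nonempty set of steady states (extract $t_n$ with $\|u_t(\cdot,t_n)\|_{L^2(\Omega)}\to 0$ and pass to the limit in the weak formulation, using the compact embeddings of Theorem \ref{thm2}). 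Every $w\in\omega(u_0)$ has $\|w\|_{L^2(\Omega)}=\ell$ and $I(w)=0$; if $\ell>0$ then $w\in\mathcal N_{E(u_0)}$ gives $\ell=\|w\|_{L^2(\Omega)}\ge\lambda_{E(u_0)}\ge\|u_0\|_{L^2(\Omega)}>\ell$, impossible, so $\ell=0$. Finally, $I(u)>0$ gives $\iint_Q G_{x,y}(D^s u)\,d\mu\le\tfrac{h_2^{+}A}{g^{-}}\int_\Omega|u|^{h_2(x)}\,dx$, and since $\|u\|_{L^2(\Omega)}\to 0$ with $[u]_{s,G_{x,y}}$ bounded, the interpolation permitted by \eqref{Cond:g_6} ($h_2^{+}<g^{-}_{*,s}$) forces $\int_\Omega|u|^{h_2(x)}\,dx\to 0$, hence $[u(\cdot,t)]_{s,G_{x,y}}\to 0$, i.e. $u_0\in\mathcal G_0$.

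For part $(ii)$ the same first-zero argument with $\Lambda$ replacing $\lambda$ shows $I(u(\cdot,t))<0$ on $[0,T_{\max})$, so $J(t):=\|u(\cdot,t)\|^2_{L^2(\Omega)}$ is strictly increasing and stays $\ge\Lambda_{E(u_0)}^2$. The hard part is genuine finite-time blow-up: the concavity scheme of Theorem \ref{Blow-up thm} is \emph{not} available, since it relies on $E(u_0)<d$ through \eqref{cond:b} (the bound $b\le(d-E(u_0))/(1-\beta)$ needs $d-E(u_0)>0$). Instead I would derive a self-contained superlinear differential inequality. Combining \eqref{Assump:g_3}, \eqref{Assump:f_2} and $h_1^{-}>g^{+}$ from \eqref{assump:g_5} gives $-I(u)\ge\bigl(1-\tfrac{g^{+}}{h_1^{-}}\bigr)\int_\Omega f(x,u)u\,dx-g^{+}E(u(\cdot,t))$; bounding $\int_\Omega fu$ from below by Corollary \ref{bounds on f:f}$(ii)$ and Hölder (legitimate as $h_1^{-}>2$) and using $E(u(\cdot,t))\le E(u_0)$ yields $\tfrac12 J'(t)=-I(u)\ge c_1 J(t)^{p}-c_3$ with $p:=h_1^{-}/2>1$. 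Once $J$ exceeds the threshold $(2c_3/c_1)^{1/p}$ one has $J'\ge c_1 J^{p}$, which blows up in finite time. It then remains to rule out that $J$ stays bounded: if it did, then $\|u\|_{L^2(\Omega)}$ would be bounded and, via \eqref{assump:h_1} and the $L^2$/$L^{g^-_{*,s}}$ interpolation, the whole trajectory would be bounded in $W_0^{s,G_{x,y}}(\Omega)$, so $\omega(u_0)$ would be a nonempty set of steady states; but any $w\in\omega(u_0)$ satisfies $\|w\|_{L^2(\Omega)}=\lim J^{1/2}>\Lambda_{E(u_0)}$ while $w\in\mathcal N_{E(u_0)}$ forces $\|w\|_{L^2(\Omega)}\le\Lambda_{E(u_0)}$, a contradiction. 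Hence $J\to+\infty$ and the blow-up is in finite time, so $u_0\in\mathcal B$.

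I expect the decisive difficulty to lie in part $(ii)$: separating \emph{finite-time} blow-up from mere grow-up. The superlinear inequality only forces finite-time blow-up once $J$ is large, so the essential new ingredient is the $\omega$-limit/$\Lambda_{E(u_0)}$ dichotomy that excludes a bounded global trajectory, and this is exactly where the interplay of $\|u_0\|_{L^2(\Omega)}\ge\Lambda_{E(u_0)}$ with conditions \eqref{assump:g_5} and \eqref{assump:h_1} is indispensable.
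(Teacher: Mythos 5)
Your overall strategy --- flow invariance at the energy level $E(u_0)$ via a first-zero argument, followed by an $\omega$-limit analysis built on \eqref{u_t is nonzero} and \eqref{strong solution} --- is exactly the paper's strategy, and your part $(ii)$ actually goes beyond it. The paper's own proof of $(ii)$ assumes $T_{\max}=\infty$, shows every element of $\omega(u_0)$ would have $L^2$-norm strictly above $\Lambda_{E(u_0)}$ while lying in $\mathcal{N}_{E(u_0)}$, concludes $\omega(u_0)=\emptyset$, and declares this a contradiction with $T_{\max}=\infty$; but an empty $\omega$-limit set is perfectly consistent with a global trajectory that grows up in infinite time, so the paper's argument as written does not separate finite-time blow-up from grow-up. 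Your superlinear inequality $\tfrac12 J'(t)=-I(u)\ge c_1J(t)^p-c_3$ with $p=h_1^-/2>1$ --- whose derivation from \eqref{Assump:g_3}, \eqref{Assump:f_2}, Corollary \ref{bounds on f:f}(ii), H\"older (using $h_1^->2$ from \eqref{assump:g_5}) and $E(u(\cdot,t))\le E(u_0)$ is correct, and which requires no smallness of $E(u_0)$, unlike the concavity scheme of Theorem \ref{Blow-up thm} --- combined with the $\omega$-limit argument to exclude the bounded alternative, does close this gap and yields genuine finite-time blow-up of the $L^2$-norm, which is what membership in $\mathcal{B}$ requires. This is a genuinely different, and more complete, route for part $(ii)$.

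Part $(i)$, however, contains a real error in its last step. You claim that $I(u)>0$ gives $\iint_Q G_{x,y}(D^su)\,d\mu \le \tfrac{h_2^+A}{g^-}\int_\Omega|u|^{h_2(x)}\,dx$. The inequality runs the wrong way: by \eqref{Assump:g_3} one has $g^-\iint_Q G_{x,y}(D^su)\,d\mu \le \iint_Q g_{x,y}(D^su)(D^su)\,d\mu$, and the right-hand modular is dominated by $\int_\Omega f(x,u)u\,dx$ precisely when $I(u)\le 0$; on $\mathcal{N}_+$, where your trajectory lives, you only get the reverse comparison, so the displayed bound is unavailable and $[u(\cdot,t)]_{s,G_{x,y}}\to 0$ does not follow as written. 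The step is fixable with ingredients you already assembled: from $\int_0^\infty\|u_t\|^2_{L^2(\Omega)}\,d\tau<\infty$ choose $t_n\to\infty$ with $\|u_t(\cdot,t_n)\|_{L^2(\Omega)}\to 0$; testing the equation with $u(\cdot,t_n)$ gives $I(u(\cdot,t_n))=-(u_t(\cdot,t_n),u(\cdot,t_n))_{L^2(\Omega)}\to 0$, while the compact embedding and $\|u(\cdot,t_n)\|_{L^2(\Omega)}\to 0$ give $\int_\Omega f(x,u(t_n))u(t_n)\,dx\to 0$; hence $\iint_Q g_{x,y}(D^su(t_n))(D^su(t_n))\,d\mu = I(u(\cdot,t_n))+\int_\Omega f(x,u(t_n))u(t_n)\,dx \to 0$ and so $E(u(\cdot,t_n))\to 0$. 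By monotonicity of the energy this upgrades to $E(u(\cdot,t))\to 0$ along the full limit, and then $\iint_Q G_{x,y}(D^su(t))\,d\mu = E(u(\cdot,t))+\int_\Omega F(x,u(t))\,dx\to 0$, your interpolation via \eqref{Cond:g_6} killing the last term; Lemma \ref{RELATION:MODULAR & NORM} then gives $[u(\cdot,t)]_{s,G_{x,y}}\to 0$. (The paper itself is terser here: it concludes $\omega(u_0)=\{0\}$ and reads off $u_0\in\mathcal{G}_0$, implicitly using precompactness of the trajectory.)
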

\begin{proof}
$(i)$ If \( u_{0} \in \mathcal{N}_{+} \) and \( \|u_{0}\|_{L^{2}(\Omega)} \leq \lambda_{E(u_{0})} \), then we claim that \( u(\cdot,t) \in \mathcal{N}_{+} \) for all \( t \in [0,T_{\max}) \). If \( u(\cdot,t) \notin \mathcal{N}_{+} \) for some \( t \in [0,T_{\max}) \), then, by Lemma \ref{continuity of E(u) with t}, there exists \( t_{0} \in [0,T_{\max}) \) such that \( u(\cdot,t) \in \mathcal{N}_{+} \) for \( 0 \leq t < t_{0} \) and \( u(\cdot,t_{0}) \in \mathcal{N} \). Therefore, from \eqref{u_t is nonzero}, we have \( u_{t}(\cdot,t) \not\equiv 0 \) for \( t \in (0,t_{0}) \). It follows, due to \eqref{strong solution}, that \( E(u(\cdot,t_{0})) < E(u_{0}) \) and \( u(\cdot,t_{0}) \in \mathcal{N}_{E(u_{0})} \). Thus, by the definition of \( \lambda_{E(u_{0})} \), we obtain
\begin{equation}
\label{lambda L2 norm}
\|u(\cdot,t_{0})\|_{L^{2}(\Omega)} \geq \lambda_{E(u_{0})}.
\end{equation}
Noticing that \( I(u(\cdot,t)) > 0 \) for \( t \in [0,t_{0}) \) and using \eqref{u_t is nonzero}, we deduce that \( \|u(\cdot,t)\|_{L^{2}(\Omega)} \) is decreasing for \( 0 \leq t < t_{0} \). Therefore, we have
\[
\|u(\cdot,t_{0})\|_{L^{2}(\Omega)} = \lim_{t \to t_{0}^-} \|u(\cdot,t)\|_{L^{2}(\Omega)} < \|u(\cdot,0)\|_{L^{2}(\Omega)} \leq \lambda_{E(u_{0})},
\]
which contradicts \eqref{lambda L2 norm}. Therefore, \( u(\cdot,t) \in \mathcal{N}_{+} \) for all \( t \in [0,T_{\max}) \).

Next, from \eqref{u_t is nonzero} and \eqref{strong solution}, we get \( u(\cdot,t) \in O_{E(u_{0})} \) for all \( t \in [0,T_{\max}) \). By Lemma \ref{N+ is bounded}, we conclude that \( u(\cdot,t) \) is bounded in \( W_{0}^{s,G_{x,y}}(\Omega) \) for all \( t \in [0,T_{\max}) \).

Now, since \( E(u(\cdot,t)) < E(u_{0}) \) and \( I(u(\cdot,t)) > 0 \) for all \( t \in [0,T_{\max}) \), and following the same argument as in \textbf{Claim 6} of Theorem \ref{thm:global-existence-weak}, we can prove that \( T_{\max} = +\infty \). This implies that \( u \in \mathcal{N}_{+} \cap O_{E(u_{0})} \) for all \( 0 \leq t < \infty \). Since \( I(u(\cdot,t)) > 0 \) for all \( 0 \leq t < \infty \), it follows from \eqref{u_t is nonzero} that \( \|u(\cdot,t)\|_{L^{2}(\Omega)} \) is decreasing for \( 0 \leq t < \infty \) and \( E(u(\cdot,t)) < E(u_{0}) \) for all \( 0 \leq t < \infty \).

Therefore, for any \( w \in \omega(u_{0}) \), using the compact embedding of \( W_{0}^{s,G_{x,y}}(\Omega) \) into \( L^{2}(\Omega) \) and the lower semicontinuity of the modular function \cite[Lemma 3.1.4]{M32}, we obtain
\begin{align}
\label{w limit}
\|w\|_{L^{2}(\Omega)} = \lim_{t \to \infty} \|u(\cdot,t)\|_{L^{2}(\Omega)} < \lambda_{E(u_{0})}, \  \ \mbox{and} \ 
E(w) \leq \liminf_{t \to \infty} E(u(\cdot,t)) < E(u_{0}).
\end{align}
From \eqref{w limit} and the definition of \( \lambda_{E(u_{0})} \), we obtain \( \omega(u_{0}) \cap \mathcal{N} = \emptyset \). Since \( w \) is the solution of the stationary counterpart of the problem \eqref{main:problem}, we conclude that \( \omega(u_{0}) = \{0\} \), i.e., \( u_{0} \in \mathcal{G}_{0} \).\\
$(ii)$ If \( u_{0} \in \mathcal{N}_{-} \) and \( \|u_{0}\|_{L^{2}(\Omega)} \geq \Lambda_{E(u_{0})} \), then we claim that \( u(\cdot,t) \in \mathcal{N}_{-} \) for all \( t \in [0,T_{\max}) \). If \( u(\cdot,t) \notin \mathcal{N}_{-} \) for some \( t \in [0,T_{\max}) \), then, by Lemma \ref{continuity of E(u) with t}, there exists \( t_{1} \in (0,T_{\max}) \) such that \( u(\cdot,t) \in \mathcal{N}_{-} \) for \( 0 \leq t < t_{1} \) and \( u(\cdot,t_{1}) \in \mathcal{N} \). 

As in the previous case, we obtain \( E(u(\cdot,t_{1})) < E(u_{0}) \), which implies \( u(\cdot,t_{1}) \in O_{E(u_{0})} \). Furthermore, \( u(\cdot,t_{1}) \in \mathcal{N}_{E(u_{0})} \). By the definition of \( \Lambda_{E(u_{0})} \), we have
\begin{equation}
\label{wedge L2 norm}
\|u(\cdot,t_{1})\|_{L^{2}(\Omega)} \leq \Lambda_{E(u_{0})}.
\end{equation}

Since \( I(u(\cdot,t)) < 0 \) for all \( t \in [0,t_{1}) \), it follows from \eqref{u_t is nonzero} that \( \|u(\cdot,t)\|_{L^{2}(\Omega)} \) is increasing for \( 0 \leq t < t_{1} \). Consequently, we obtain
\[
\|u(\cdot,t_{1})\|_{L^{2}(\Omega)} > \|u(\cdot,0)\|_{L^{2}(\Omega)} \geq \Lambda_{E(u_{0})},
\]
which contradicts \eqref{wedge L2 norm}.

Suppose \( T_{\max} = \infty \). This implies that \( u \in \mathcal{N}_{-} \cap O_{E(u_{0})} \) for all \( 0 \leq t < \infty \). Since \( I(u(\cdot,t)) > 0 \) for all \( 0 \leq t < \infty \), it follows from \eqref{u_t is nonzero} that \( \|u(\cdot,t)\|_{L^{2}(\Omega)} \) is strictly increasing for \( 0 \leq t < \infty \), and \( E(u(\cdot,t)) < E(u_{0}) \) for all \( 0 \leq t < \infty \). Therefore, for any \( w \in \omega(u_{0}) \), using the compact embedding of \( W_{0}^{s,G_{x,y}}(\Omega) \) into \( L^{2}(\Omega) \) and the lower semicontinuity of the modular function \cite[Lemma 3.1.4]{M32}, we obtain
\begin{align}
\label{w'' limit}
\|w\|_{L^{2}(\Omega)} = \lim_{t \to \infty} \|u(\cdot,t)\|_{L^{2}(\Omega)} > \Lambda_{E(u_{0})}, \ \ \mbox{and} \ 
E(w) \leq \liminf_{t \to \infty} E(u(\cdot,t)) < E(u_{0}).
\end{align}

From \eqref{w'' limit} and the definition of \( \Lambda_{E(u_{0})} \), we deduce that \( \omega(u_{0}) \cap \mathcal{N} = \emptyset \). However, since \( \text{dist}(0, \mathcal{N}_{-}) > 0 \) by Lemma \ref{N+ is bounded}, we also have \( 0 \notin \omega(u_{0}) \), and \( w \) is the solution of the stationary counterpart of the problem \eqref{main:problem}. Therefore, \( \omega(u_{0}) = \emptyset \), which contradicts \( T_{\max} = \infty \). 

Thus, \( u_{0} \in \mathcal{B} \).
\end{proof}
Next, we show that the set $\{u_0 \in \mathcal{N}_- : \|u_0\|_{L^2(\Omega)} \geq \Lambda_{E(u_0)}\}$ is nonempty for all high initial energy data $u_0$ satisfying $E(u_0)>d.$
\begin{corollary}
\label{high initial blow up}
    Let conditions \eqref{assump:f_0}-\eqref{Assump:f_3}, \eqref{cond:g0}-\eqref{Cond:g_6} and \eqref{assump:h_1} hold and $u_{0}\in W_{0}^{s,G_{x,y}}(\Omega)$. If
    \begin{align}
        \label{E(u0)<d}
         &d<E(u_{0})< g^{-}\left(\frac{h_{1}^{-}-g^{+}}{h_{1}^{-}g^{+}}\right)C_{*,max}\min\left\{ \|u_{0}\|_{L^{2}(\Omega)}^{g^{-}},\|u_{0}\|_{L^{2}(\Omega)}^{g^{+}} \right\},
    \end{align}
    then $u_{0}\in \mathcal{N}_{-}$ and $\|u_{0}\|_{L^{2}(\Omega)}\geq \Lambda_{E(u_{0})}$, where  $C_{*,max}= \max\{ C_{*}^{-g^{-}},C_{*}^{-g^{+}}\}$. 
\end{corollary}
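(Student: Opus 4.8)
The plan is to verify the two assertions separately: first that $I(u_0)<0$ (so that $u_0\in\mathcal{N}_-$), and then that $\|u_0\|_{L^2(\Omega)}\geq\Lambda_{E(u_0)}$. Both rest on a single coercivity estimate already isolated inside the proof of Lemma \ref{N+ is bounded} (the chain culminating in \eqref{E tau is bounded}), namely that whenever $I(u)\geq 0$ one has
\begin{equation*}
E(u)\geq g^{-}\left(\frac{1}{g^{+}}-\frac{1}{h_{1}^{-}}\right)\min\{[u]_{s,G_{x,y}}^{g^-},[u]_{s,G_{x,y}}^{g^+}\},
\end{equation*}
together with the continuous embedding $W_{0}^{s,G_{x,y}}(\Omega)\hookrightarrow L^{2}(\Omega)$, written $\|u\|_{L^{2}(\Omega)}\leq C_{*}[u]_{s,G_{x,y}}$. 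Throughout I abbreviate $A:=g^{-}(h_{1}^{-}-g^{+})/(h_{1}^{-}g^{+})$, which is exactly the coefficient appearing in \eqref{E(u0)<d}.

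For the first assertion I would argue by contradiction. Assuming $I(u_0)\geq 0$, the displayed estimate gives $E(u_0)\geq A\min\{[u_0]_{s,G_{x,y}}^{g^-},[u_0]_{s,G_{x,y}}^{g^+}\}$. Since $[u_0]_{s,G_{x,y}}\geq C_{*}^{-1}\|u_0\|_{L^{2}(\Omega)}$ and $x\mapsto\min\{x^{g^-},x^{g^+}\}$ is increasing, I pass to the $L^2$ norm and reach $E(u_0)\geq A\,C_{*,max}\min\{\|u_0\|_{L^{2}(\Omega)}^{g^-},\|u_0\|_{L^{2}(\Omega)}^{g^+}\}$, which directly contradicts the upper bound imposed in \eqref{E(u0)<d}. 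Hence $I(u_0)<0$, i.e. $u_0\in\mathcal{N}_-$.

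For the second assertion I would fix an arbitrary $v\in\mathcal{N}_{E(u_0)}=\mathcal{N}\cap O_{E(u_0)}$, so that $I(v)=0$ and $E(v)<E(u_0)$ (the set is nonempty since $E(u_0)>d$). Applying the same estimate, valid because $I(v)=0\geq 0$, yields $A\min\{[v]_{s,G_{x,y}}^{g^-},[v]_{s,G_{x,y}}^{g^+}\}\leq E(v)<E(u_0)$; combining with the upper bound in \eqref{E(u0)<d} gives $\min\{[v]_{s,G_{x,y}}^{g^-},[v]_{s,G_{x,y}}^{g^+}\}<C_{*,max}\min\{\|u_0\|_{L^{2}(\Omega)}^{g^-},\|u_0\|_{L^{2}(\Omega)}^{g^+}\}$. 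Translating the seminorm of $v$ back to its $L^2$ norm through the embedding then forces $\|v\|_{L^{2}(\Omega)}\leq\|u_0\|_{L^{2}(\Omega)}$, and taking the supremum over $v\in\mathcal{N}_{E(u_0)}$ delivers $\Lambda_{E(u_0)}\leq\|u_0\|_{L^{2}(\Omega)}$. With both claims established, $u_0$ satisfies the hypotheses of Theorem \ref{high initial energy main thm}(ii), so the set in question is nonempty and consists of finite-time blow-up data.

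The step I expect to be the most delicate is the conversion between the Gagliardo seminorm $[\cdot]_{s,G_{x,y}}$ and the $L^2$ norm. Because the modular--norm relation of Lemma \ref{RELATION:MODULAR & NORM} returns the minimum (or maximum) of the two powers $g^-$ and $g^+$ according to whether the relevant quantity exceeds $1$, I must split into the cases $[u]_{s,G_{x,y}}\gtrless 1$ and $\|u\|_{L^{2}(\Omega)}\gtrless 1$ and track the embedding constant $C_*$ carefully through each regime; the bundled constant $C_{*,max}=\max\{C_{*}^{-g^-},C_{*}^{-g^+}\}$ is precisely what is designed to make the two one-sided inequalities above close uniformly across these cases.
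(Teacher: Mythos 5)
Your proposal is correct and follows essentially the same route as the paper's own proof: the paper derives the identical coercivity estimate (its \eqref{NE(U0)}, which is the chain of \eqref{E tau is bounded} with the $\tfrac{1}{h_1^-}I(u_0)$ term retained) to read off $I(u_0)<0$ directly rather than by contradiction, and then applies the same estimate to elements of $\mathcal{N}_{E(u_0)}$, with the same case split $\|u\|_{L^2(\Omega)}\gtrless 1$, to conclude $\|u_0\|_{L^2(\Omega)}\geq \Lambda_{E(u_0)}$. The one caveat is the step passing from $\min\{[u]_{s,G_{x,y}}^{g^-},[u]_{s,G_{x,y}}^{g^+}\}$ to $C_{*,max}\min\{\|u\|_{L^2(\Omega)}^{g^-},\|u\|_{L^2(\Omega)}^{g^+}\}$, which strictly yields the constant $\min\{C_*^{-g^-},C_*^{-g^+}\}$ rather than the maximum; this imprecision is inherited from the statement and appears identically in the paper's proof, so it is not a defect of your argument relative to the paper.
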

\begin{proof}
From the conditions \eqref{Assump:f_2}, \eqref{Assump:g_3}, \eqref{assump:g_5}, Lemma \eqref{RELATION:MODULAR & NORM}(i), and the embedding of \( W_{0}^{s,G_{x,y}}(\Omega) \) into \( L^{2}(\Omega) \), we obtain the following inequality:
\begin{align}
\label{NE(U0)}
    E(u_0) &\geq \frac{1}{g^+} \iint_{Q} g_{x,y} \left( D^s u_0 \right) \left( D^s u_0 \right) d\mu - \frac{1}{h_1^-} \int_{\Omega} f(x, u_0) u_0 \, dx \nonumber \\
    &\geq \frac{h_1^- - g^+}{h_1^- g^+} \iint_{Q} g_{x,y} \left( D^s u_0 \right) \left( D^s u_0 \right) d\mu + \frac{1}{h_1^-} I(u_0) \nonumber \\
    &\geq g^{-} \left( \frac{h_1^- - g^+}{h_1^- g^+} \right) \iint_{Q} G_{x,y} \left( D^s u_0 \right) d\mu + \frac{1}{h_1^-} I(u_0) \nonumber \\
    &\geq g^{-} \left( \frac{h_1^- - g^+}{h_1^- g^+} \right) C_{*,\max} \min \left\{ [u_0]_{s,G_{x,y}}^{g^-}, [u_0]_{s,G_{x,y}}^{g^+} \right\} + \frac{1}{h_1^-} I(u_0).
\end{align}

Then, from \eqref{E(u0)<d}, we have:
$$
E(u_0) > E(u_0) + \frac{1}{h_1^-} I(u_0),
$$
which implies that \( I(u_0) < 0 \). Therefore, we conclude that \( u_0 \in \mathcal{N}_- \).


Next, let \( u \in \mathcal{N}_{E(u_0)} \). Following the same reasoning as above, and using \eqref{NE(U0)} by replacing \( u_0 \) with \( u \), we obtain the following energy estimate:
\begin{equation}
\label{E(u0)}
\begin{aligned}
    g^{-} \left( \frac{h_1^- - g^+}{h_1^- g^+} \right) C_{*,\max} \min \left\{ \|u\|_{L^2(\Omega)}^{g^-}, \|u\|_{L^2(\Omega)}^{g^+} \right\} &\leq E(u) < E(u_0).
\end{aligned}
\end{equation}

We now divide this into two cases:

\textbf{Case 1: \( \|u\|_{L^2(\Omega)} \leq 1 \).}

In this case, using the estimates from \eqref{E(u0)<d} and \eqref{E(u0)}, we obtain:
\begin{align*}
    \|u\|_{L^2(\Omega)}^{g^+} &< \min \left\{ \|u_0\|_{L^2(\Omega)}^{g^-}, \|u_0\|_{L^2(\Omega)}^{g^+} \right\}.
\end{align*}
Taking the supremum over the set \( \mathcal{N}_{E(u_0)} \), we get:
$$
\Lambda_{E(u_0)}^{g^+} < \min \left\{ \|u_0\|_{L^2(\Omega)}^{g^-}, \|u_0\|_{L^2(\Omega)}^{g^+} \right\}.
$$
This implies:
$$
\|u_0\|_{L^2(\Omega)} > \Lambda_{E(u_0)}.
$$
\textbf{Case 2: \( \|u\|_{L^2(\Omega)} > 1 \).}

The same reasoning and calculations apply in this case as well, yielding the same conclusion that:
$$
\|u_0\|_{L^2(\Omega)} > \Lambda_{E(u_0)}.
$$
\end{proof}
\begin{theorem}  
\label{finthm}
    Let conditions \eqref{assump:f_0}-\eqref{Assump:f_3}, \eqref{cond:g0}-\eqref{Cond:g_6} hold. For any $M> d$, then there exists $u_{M}\in \mathcal{N}_{-}$ such that $E(u_{M})=M$ and $\|u_{M}\|_{L^{2}(\Omega)}\geq \Lambda_{E(u_{M})}$.
    \end{theorem}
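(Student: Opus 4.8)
The plan is to reduce the statement to producing, at the prescribed energy level $M$, a function with negative Nehari functional and sufficiently large $L^2$-norm, and then to close the argument using the finiteness of $\Lambda_\zeta$ established above (together, optionally, with Corollary \ref{high initial blow up}). Since $M>d$, the quantity $\Lambda_M$ (that is, $\Lambda_\zeta$ evaluated at $\zeta=M$) is a fixed finite number by the preceding lemma. Hence it suffices to exhibit a single $u_M\in W_0^{s,G_{x,y}}(\Omega)$ with $E(u_M)=M$, $I(u_M)<0$ and $\|u_M\|_{L^2(\Omega)}>\Lambda_M$: the membership $u_M\in\mathcal N_-$ is then immediate, and $\|u_M\|_{L^2(\Omega)}\ge\Lambda_{E(u_M)}$ holds by construction.

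First I would fix a profile $\phi$ with $[\phi]_{s,G_{x,y}}\ne0$ and exploit the fibering analysis of Lemma \ref{Lemma:3.3}: the map $\lambda\mapsto E(\lambda\phi)$ increases on $(0,\lambda^\ast)$, attains its maximum $d_\phi:=E(\lambda^\ast\phi)\ge d$ at $\lambda^\ast=\lambda^\ast(\phi)$, then decreases to $-\infty$, while $I(\lambda\phi)<0$ for every $\lambda>\lambda^\ast$. Consequently, as soon as $d_\phi>M$, the intermediate value theorem yields a unique $\lambda_M=\lambda_M(\phi)>\lambda^\ast$ with $E(\lambda_M\phi)=M$ and $I(\lambda_M\phi)<0$; the candidate is $u_M:=\lambda_M\phi$, whose $L^2$-norm equals $\lambda_M\|\phi\|_{L^2(\Omega)}$. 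The task thus becomes the selection of $\phi$ so that simultaneously $d_\phi>M$ and $\lambda_M(\phi)\,\|\phi\|_{L^2(\Omega)}>\Lambda_M$.

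To achieve this I would work with a family $\{\phi_n\}$ of profiles whose $L^2$-mass diverges, built either as superpositions of $n$ mutually disjointly supported bumps or as increasingly concentrated spikes, and use the paper's structural bounds. By Lemma \ref{RELATION:MODULAR & NORM} and condition \eqref{Assump:g_3}, the fractional energy $\iint_{Q} G_{x,y}(D^s(\lambda\phi_n))\,d\mu$ grows at most like $\lambda^{g^+}$, whereas by Lemma \ref{F:bounds}(ii) the potential $\int_\Omega F(x,\lambda\phi_n)\,dx$ grows at least like $\lambda^{h_1^-}$ for $\lambda\ge1$ on $\{|\phi_n|\ge1\}$; since $h_1^->\max\{2,g^+\}$ by \eqref{assump:g_5}, the superlinear potential dominates, so $E(\lambda\phi_n)\to-\infty$ and, after projecting onto the Nehari manifold, $d_{\phi_n}\to+\infty$ as $n\to\infty$. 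On the descending branch $\lambda_M(\phi_n)$ is bounded below by $\lambda^\ast(\phi_n)$, so that $\|u_M\|_{L^2(\Omega)}=\lambda_M(\phi_n)\,\|\phi_n\|_{L^2(\Omega)}$ can be driven to infinity with the diverging mass of $\phi_n$. Taking $n$ large enough that this exceeds $\Lambda_M$ produces the required $u_M$, and applying $I(u_M)<0$ one concludes $u_M\in\mathcal N_-$ with $\|u_M\|_{L^2(\Omega)}\ge\Lambda_{E(u_M)}$.

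The main obstacle will be the nonlocal, nonhomogeneous nature of the energy: unlike the local or homogeneous fractional $p$-Laplacian settings of \cite{Liao-Liu-Ye-2020, boudd}, the Gagliardo term $\iint_{Q} G_{x,y}(D^s\phi_n)\,d\mu$ does not split additively over disjoint supports, because the kernel $|x-y|^{-N-s}$ couples distant bumps and $G_{x,y}$ is only trapped between two power laws. The technical heart of the proof is therefore a pair of two-sided estimates controlling these cross-interaction (or concentration) terms, guaranteeing that both the maximal ray-energy $d_{\phi_n}$ and the $L^2$-norm at level $M$ diverge. Once these estimates are in place, the intermediate value theorem closes the argument. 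As a cleaner alternative to be pursued in parallel, I would instead verify directly the sufficient condition of Corollary \ref{high initial blow up}, namely $M<g^-\frac{h_1^--g^+}{h_1^-g^+}C_{*,\max}\min\{\|u_M\|^{g^-}_{L^2(\Omega)},\|u_M\|^{g^+}_{L^2(\Omega)}\}$, which again reduces to making $\|u_M\|_{L^2(\Omega)}$ large at the fixed level $M$ and bypasses any separate appeal to $\Lambda_M$.
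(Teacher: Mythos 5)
Your reduction is sound as far as it goes: $\Lambda_M<+\infty$ does hold under the stated hypotheses (the upper-bound half of the lemma on $\lambda_\zeta,\Lambda_\zeta$ only uses \eqref{Assump:f_2}, \eqref{Assump:g_3}, \eqref{assump:g_5} and the embedding into $L^2(\Omega)$, not \eqref{assump:h_1}), and Corollary \ref{high initial blow up} is indeed the right sufficient criterion. The genuine gap is that you never actually produce the function: both of your routes terminate exactly where the real work begins, namely at ``making $\|u_M\|_{L^2(\Omega)}$ large at the fixed energy level $M$'', and you explicitly defer the required estimates. Note that inside your single-ray framework this cannot be repaired by rescaling: rays are scale-invariant, since $\lambda_M(c\phi)=\lambda_M(\phi)/c$ gives $\lambda_M(c\phi)\,c\phi=\lambda_M(\phi)\,\phi$, so $\|u_M\|_{L^2(\Omega)}$ depends only on the \emph{direction} of $\phi$ and ``driving the mass of $\phi_n$ to infinity'' by itself changes nothing. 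You are therefore forced to genuinely multi-bump directions, and then the nonlocal energy does not decouple over disjoint supports: the pairwise interactions through the kernel $|x-y|^{-N-s}$ must be controlled from above (to prevent $\lambda_M(\phi_n)$ from collapsing) and the fiber maxima $d_{\phi_n}$ from below. None of this is carried out; as written, the proposal is a plan whose self-declared ``technical heart'' is absent.

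The paper closes this gap with a device your proposal misses: a \emph{two-function} decomposition with disjoint supports and asymmetric roles, which makes all cross-term estimates unnecessary. Fix disjoint subdomains $\Omega_1,\Omega_2\subset\Omega$ and a nonzero $v\in W_0^{s,G_{x,y}}(\Omega_1)$; since along the ray $E(\lambda v)\to-\infty$ while $\|\lambda v\|_{L^2(\Omega)}\to\infty$, one can fix $\zeta$ with $E(\zeta v)\le 0$ and $\min\bigl\{\|\zeta v\|_{L^2(\Omega)}^{g^-},\|\zeta v\|_{L^2(\Omega)}^{g^+}\bigr\}>\frac{h_1^-g^+}{g^-C_{*,\max}(h_1^--g^+)}\,M$. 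Then one chooses $\mu\in W_0^{s,G_{x,y}}(\Omega_2)$ so that $E(\mu+\zeta v)=M$: this is an intermediate-value argument in $\mu$ (at $\mu=0$ the energy is $\le 0<M$, while highly oscillatory $\mu$ of bounded amplitude makes the Gagliardo term blow up and the potential term stay bounded, so the energy exceeds $M$). The point is that the energy is \emph{prescribed}, not estimated, and the $L^2$ norm is \emph{exactly additive} over the disjoint supports, so $\|u_M\|_{L^2(\Omega)}\ge\|\zeta v\|_{L^2(\Omega)}$ no matter what $\mu$ is; condition \eqref{E(u0)<d} then holds for $u_M=\mu+\zeta v$, and Corollary \ref{high initial blow up} yields $u_M\in\mathcal{N}_-$ and $\|u_M\|_{L^2(\Omega)}\ge\Lambda_{E(u_M)}$ in one stroke. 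Replacing your one-parameter ray by this two-parameter family $(\zeta v,\mu)$ is precisely the missing idea needed to complete your argument.
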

    \begin{proof}
        Assume that \( M > d \) and \( \Omega_1, \Omega_2 \) are two arbitrary disjoint open subdomains of \( \Omega \). Denote
\[
Q_i = \left( \mathbb{R}^N \times \mathbb{R}^N \right) \setminus \left( \mathcal{C} \Omega_i \times \mathcal{C} \Omega_i \right), \quad \mathcal{C} \Omega_i = \mathbb{R}^N \setminus \Omega_i, \quad i = 1, 2.
\]
Define
\[
W_0^{s, G_{x,y}}(\Omega_i) = \left\{ u : u \in L^{\widehat{G}_x}(\Omega_i), \, u = 0 \text{ in } \mathcal{C} \Omega_i, \, \frac{u(x) - u(y)}{|x - y|^s} \in L^{\widehat{G}_x}(Q_i) \right\}, \quad i = 1, 2.
\]
Furthermore, assume that \( v \in W_0^{s, G_{x,y}}(\Omega_1) \) is an arbitrary nonzero function. Then, choose \( \zeta \) large enough such that
\[
E(\zeta v) = \iint_{Q} G_{x,y} \left( \zeta D^s u \right) d\mu - \int_\Omega F(x, \zeta u) \, dx \leq 0,
\]
and
\[
\min \left\{ \|\zeta v\|_{L^2(\Omega)}^{g^-}, \|\zeta v\|_{L^2(\Omega)}^{g^+} \right\} > \frac{h_1^- g^+}{g^- C_{*,\max} (h_1^- - g^+)} M.
\]
We fix such a value of \( \zeta \) and choose a function \( \mu \in W_0^{s, G_{x,y}}(\Omega_2) \) such that \( M = E(\mu + \zeta v) = E(u_M) \) (where \( u_M = \mu + \zeta v \)). It follows that
\[
\min \left\{ \|u_M\|_{L^2(\Omega)}^{g^-}, \|u_M\|_{L^2(\Omega)}^{g^+} \right\} \geq \min \left\{ \|\zeta v\|_{L^2(\Omega)}^{g^-}, \|\zeta v\|_{L^2(\Omega)}^{g^+} \right\} > \frac{h_1^- g^+}{g^- C_{*, \max} (h_1^- - g^+)} E(u_M).
\]
By Corollary \ref{high initial blow up}, we conclude that \( u_M \in \mathcal{N}_- \) and \( \|u_M\|_{L^2(\Omega)} \geq \Lambda_{E(u_M)} \).
\end{proof}

\section{Classes of Problems, Final Remarks, and Open Questions}\label{final-cooments}

In this section, we present various examples of the function \( g \) that generalize existing results and models. Additionally, we introduce new examples that, to the best of our knowledge, have not been previously studied. We also provide concluding remarks on our work and highlight open and challenging questions.

\subsection{Classes of Problems}

In this subsection, we present examples of functions \( g \) that extend known results and introduce cases that have not been explored before.

\medskip

$\bullet$ \textbf{The fractional $p$-Laplacian.} When \( g_{x,y}(t) = |t|^{p-2}t \) with \( p > \max\left\{\frac{2N}{N+2s},1\right\} \), equation \eqref{main:problem} reduces to
\begin{equation*} 
        \left\{
    \begin{aligned}
      u_{t} + (-\Delta)_{p}^{s} u &=  f(x,u), &&
 \text{in } \Omega \times (0, \infty),\\
 u &= 0, && \text{in}~  \mathbb{R}^N \setminus \Omega \times (0, \infty),\\
 u(x,0) &= u_0(x), && \text{in}~ \Omega,
\end{aligned}
    \right.
 \end{equation*}
where \( (-\Delta)_{p}^{s} \) is the well-known fractional \( p \)-Laplacian. Our results extend those of \cite{Liao-Liu-Ye-2020} by establishing the existence of local strong solutions without assuming condition \eqref{Assump:f_3}; see Theorem \ref{loc} and finite time blow up of strong solution; see Theorems \ref{Blow-up thm} and \ref{Main thm3}.

\medskip

$\bullet$ \textbf{The fractional $p(x,\cdot)$-Laplacian with variable exponent.} Let 
\[
g_{x,y}(t) = \frac{1}{p(x,y)}|t|^{p(x,y)-2}t, 
\]
where \( p \) is a symmetric, continuous function satisfying
\[
1 < p^- = \min_{(x,y) \in \overline{\Omega} \times \overline{\Omega}} p(x,y) \leq p(x,y) < p^+ = \max_{(x,y) \in \overline{\Omega} \times \overline{\Omega}} p(x,y) < +\infty.
\]
Then, equation \eqref{main:problem} becomes
\begin{equation*} 
    \left\{
    \begin{aligned}
      u_{t} + (-\Delta)_{p(x,\cdot)}^{s} u &=  f(x,u), && \text{in } \Omega \times (0, \infty),\\
      u &= 0, && \text{in } \left(\mathbb{R}^N \setminus \Omega\right) \times (0, \infty),\\
      u(x,0) &= u_0(x), && \text{in } \Omega.
    \end{aligned}
    \right.
\end{equation*}
This operator is relatively new in the literature, with only one known study with only one known study on evolution equations \cite{boudd}, which considers a specific form of \( f(x,t)=|t|^{q(x)-2}t \) and focuses on local solutions with low initial energy. In contrast, our work examines a more general nonlinearity and provides a comprehensive analysis, including the critical and high initial energy cases.

\medskip

$\bullet$ \textbf{The fractional Orlicz $g$-Laplacian.} If \( g_{x,y}(t) = g(t) \), then the fractional Musielak \( g_{x,y} \)-Laplacian reduces to the fractional Orlicz \( g \)-Laplacian. The study of this operator began in \cite{Bonder-Salort-2019}. To our knowledge, no work has addressed the parabolic equation involving the fractional Orlicz Laplacian, making our results novel in this direction.

\medskip

$\bullet$ \textbf{The fractional double phase operator.} For \( 1<p<q<N \) and a non-negative symmetric function \( a\in L^\infty(\Omega\times \Omega) \), we define
\[
g_{x,y}(t)=|t|^{p-2}t+ a(x,y) |t|^{q-2}t.
\]
This leads to the operator  
\[
(-\Delta)^s_{\Phi_{x,y}}  u := (-\Delta)_p^s u +  (-\Delta)_{q,a}^s u,
\]
which is associated with the energy functional  
\[
J_{s,G}(u)=\int_{\Omega\times\Omega} \left( \frac{|D_su|^p}{p} + a(x,y)\frac{|D_su|^q}{q} \right) \mathrm{d}\mu, \quad u\in W_0^{s,G_{x,y}}(\Omega).
\]

The double phase operator has gained significant attention due to its applications in mathematical physics, particularly in composite materials, fractional quantum mechanics, fractional superdiffusion, and modified electromagnetic models; see \cite{Ambrosio-Radulescu1, Zhang-Tang-Radulescu}.

To the best of our knowledge, only one work \cite{Aberqi} has studied the evolution equation involving the fractional double phase operator, focusing on global solutions for the low initial energy case. Our work is the first to provide a comprehensive study, including cases of higher initial energy. 

The study of the double phase operator and the associated function space was recently advanced by Crespo–Blanco, Gasinski, Harjulehto, and Winkert \cite{Crespo-Blanco-Gasinski-Harjulehto-Winkert-2022}, and Arora and Shmarev \cite{Arora-Shmarev-2023, Arora-Shmarev-2023-2}. Specifically, the authors investigated a quasilinear elliptic and parabolic equations involving a double phase operator with variable exponents, given by:

\begin{align}\label{I4}
A(u) = -\operatorname{div}\left(|\nabla u|^{p(x)-2} \nabla u + \mu(x)|\nabla u|^{q(x)-2} \nabla u\right),
\end{align}
where the function governing the growth conditions is given by 
\[
g_{x,y}(t) = |t|^{p(x)-2}t +\mu(x) |t|^{q(x)-2}t.\]

In this context, the operator \((-\Delta)_{g_{x,y}}^{s}\) can be regarded as the fractional counterpart of the newly introduced double phase operator. Consequently, our work constitutes the first study on the corresponding evolution equations, providing a significant extension of the existing framework.

\subsection{Concluding Remarks and Open Problems}

In this subsection, we summarize our main findings and propose some interesting open problems for future research.\\

$\bullet$ In our paper, we studied a more general case, where the fractional Musielak space encompasses all existing and well-treated nonlocal function spaces. Moreover, we provided a comprehensive study by addressing local, global, and strong solutions in the cases of low initial energy, critical initial energy, and high initial energy. 

The main challenges of our problem stem from its nonlocality and nonhomogeneity, which necessitated the development of new technical analysis methods. These techniques may prove useful for other problems in the field.\\

$\bullet$ The proof of the main results is strongly based on assumption \eqref{Assump:g_3}, namely the $\Delta_2$ condition. This condition plays a crucial role in ensuring the reflexivity of the associated space and in establishing all the necessary technical lemmas. In \cite{Chlebicka-Gwiazda-Goldstein-2018}, the authors studied the parabolic equations involving local Musielak-Orlicz Laplacian without assuming the $\Delta_2$ condition. Their proof relies on truncation techniques, the Young measures method, and monotonicity arguments. Therefore, it is essential to investigate whether the results obtained in \cite{Chlebicka-Gwiazda-Goldstein-2018} can be extended to our setting, particularly to equation \eqref{main:problem}.\\

$\bullet$ Another important question is whether our results can recover the local Musielak-Orlicz Laplacian by taking the limit as \( s 	\to 1 \) in the nonlocal problem \eqref{main:problem}. This type of problem was studied in \cite{rossi} for the fractional \( p \)-Laplacian. Therefore, it is essential to investigate whether this approach can be extended to our setting.


\end{document}